\newtheorem{theorem}{Theorem}[section]
\newtheorem{corollary}{Corollary}[theorem]
\newtheorem{lemma}[theorem]{Lemma}
\newtheorem{proposition}{Proposition}[section]
\theoremstyle{definition}
\newtheorem{definition}[theorem]{Definition}
\theoremstyle{remark}
\newtheorem{remark}[theorem]{Remark}
\numberwithin{equation}{section}
\newcommand{\bx}{{\bf x}}
\newcommand{\by}{\mathbf{y}}
\newcommand{\bA}{\mathbb{A}}
\newcommand{\R}{\mathbb{R}}
\newcommand{\Q}{\mathbb{Q}}
\newcommand{\C}{\mathbb{C}}
\renewcommand{\H}{\mathbb{H}}
\newcommand{\W}{\mathbb{W}}
\newcommand{\Oo}{\mathcal{O}}
\newcommand{\cL}{\mathcal{L}}
\newcommand{\calA}{\mathcal{A}}
\newcommand{\p}{\mathfrak{p}}
	\newcommand{\Orth}{\operatorname{O}}
	\newcommand{\Uni}{\operatorname{U}}
	\newcommand{\Gr}{\operatorname{Gr}}
\newcommand{\Sym}{\mathrm{Sym}}
\newcommand{\Pol}{\mathrm{Pol}}
\newcommand{\Herm}{\mathrm{Herm}}
\newcommand{\Hom}{\mathrm{Hom}}
\newcommand{\rU}{\mathrm{U}}
\newcommand{\rO}{\mathrm{O}}
\newcommand{\Sp}{\mathrm{Sp}}
\newcommand{\Mp}{\mathrm{Mp}}
\newcommand{\GL}{\mathrm{GL}}
\newcommand{\Span}{\mathrm{span}}
\newcommand{\Det}{\mathrm{det}}
\begin{document}

	\title[Theta series and cycles]{Theta series and generalized special cycles on Hermitian locally symmetric manifolds}

		\author[Yousheng Shi] {Yousheng Shi*}

		\thanks{* Partially supported by NSF grant DMS-1518657}
		
	    \date{\today}
				\address{Department of Mathematics, University of Wisconsin, Madison, WI
			53706, USA}
		\email{shi58@wisc.edu}
		
		\begin{abstract}
		 We study generalized special cycles on Hermitian locally symmetric spaces $\Gamma \backslash D$ associated to the groups $G=\mathrm{U}(p,q)$, $\mathrm{Sp}(2n,\R) $ and $\mathrm{O}^*(2n) $. These cycles are algebraic and covered by  symmetric spaces associated to subgroups of $G$ which are of the same type. We show that Poincar\'e duals of these generalized special cycles can be viewed as Fourier coefficients of a theta series. This gives new cases of theta lifts from the cohomology of Hermitian locally symmetric manifolds associated to $G $ to vector-valued automorphic functions on the groups $G'=\mathrm{U}(m,m)$, $\mathrm{O}(m,m)$ or $\mathrm{Sp}(m,m)$ which forms a reductive dual pair with $G$.

		\end{abstract}

		\maketitle

\setcounter{tocdepth}{1}
\tableofcontents

\section{Introduction}
\subsection{Generalized special cycles}

There are four classes of irreducible reductive dual pairs over $\R$ of type I in the sense of Howe \cite{Howe1} (c.f. \cite{Adams}):
\begin{enumerate}
    \item $(\Orth(p,q),\Sp(2n,\R))$,\
    \item $(\Uni(p,q),\Uni(r,s))$, \
    \item $(\Sp(p,q),\Orth^*(2n))$, \
    \item $(\Orth(m,\C),\Sp(2n,\C))$.
\end{enumerate}
Each group $G$ belonging to any of the seven families of groups in the above table is the group that preserves a non-degenerate Hermitian or skew-Hermitian form $(,)$ over a real, complex or quaternionic vector space $V$. Let $\Gamma$ be a  torsion free congruence subgroup of $G$ such that $\Gamma\backslash G$ is compact. In this introduction we  assume that $G = \mathbb{G}(\mathbb{R})$ is the set of real points of an algebraic group $\mathbb{G}$ and $\Gamma$ is a congruence subgroup of $\mathbb{G}(\mathbb{Z})$. In general we will need to assume  $\mathbb{G}(\mathbb{R}) = G \times G_c$ where $G_c$ is compact, but we omit this technicality in the introduction. Furthermore we assume we have chosen a lattice $\mathcal{L}$ in $V$ which is invariant under $\Gamma$.  In each of the cases that we are interested in, the symmetric space $D=G/K$ associated to $G$, where $K$ is a fixed maximal compact subgroup, has a realization as an open subspace of a Lagrangian Grassmannian associated to $V$. In what follows let $M = \Gamma\backslash D$. Once we have chosen an orientation of $D$, after passing to a (possibility deeper) congruence subgroup of $\Gamma$ we may assume $M$ is a compact oriented manifold. 

We define and study cycles which are called ``generalized special cycles'', to be denoted  $C_{\bx,z'}$ (see below for the explanation of the notation),  in the locally symmetric spaces $M$. In this paper, we restrict our attention to the cases
$G=\Uni(p,q)$, $\Sp(2n,\R) $ and $\Orth^*(2n)$ which we denote by case A, B and C respectively throughout the paper. The members of the above three families of groups are all the groups that show up in real reductive dual pairs of type I  whose symmetric spaces are of Hermitian type with the exception of $\rO(p,2)$. In these cases $M$ is a compact K\"ahler manifold which is in fact a connected complex algebraic variety (\cite{BailyBorel}), and the cycles $C_{\bx,z'}$ are algebraic cycles (see Theorem \ref{specialcyclesaresubvarieties}). 

We now briefly introduce the definition of $C_{\bx,z'}$ (see Section \ref{generalizedspecial} for more details). Roughly speaking these cycles come from embeddings of smaller groups of the same type as $G$ into $G$.
In what follows we will let $V = \C^{p+q}$ for case A, $V = \R^{2n}$ for case B, and $V=\H^n$ for case C, where $\H$, the Hamilton quaternions, acts by right multiplication. We assume that $(,)$ is a Hermitian form, skew symmetric form and a skew-Hermitian form on $V$ respectively and $G$ is the linear isometry group of $(,)$.  
Let $\bx = (x_1, x_2,\cdots,x_m)  \in V^m$. We assume that the vectors $x_1,x_2,\cdots,x_m$ are linearly independent and the restriction of the form $(,)$ on $U=\Span\{\bx\}$ is non-degenerate. In particular, in case B this implies that $m=2r$ for some positive integer $r$. In case A, let $(r,s)$ be the signature of $(,)|_U$.
We then have the orthogonal splitting
\begin{equation}\label{splittingofV}
V=U\oplus U^\bot.
\end{equation}
For any non-degenerate subspace $U \subseteq V$ define 
\[G(U)=\{g\in G\mid gv=v, \forall v\in U^\bot\},\]
and $D(U)$ to be the symmetric space of $G(U)$. We would like to embed $D(U^\bot)$ into $D$. However such an embedding in general can only be defined after the choice of a point $z'\in D(U)$:
\[\rho_{U,z'}:D(U^\bot)\rightarrow D.\]
The image of $\rho_{U,z'}$ is a complex analytic subvariety of $D$ which we denote by $D_{U,z'}$ or $D_{\bx,z'}$.
We then pass to the locally symmetric space level and still denote  by $\rho_{U,z'}$ (by abuse of notation) the induced map
\[\rho_{U,z'}:\Gamma_U\backslash D(U^\bot) \rightarrow M=\Gamma \backslash D,\]
where $\Gamma_U=G(U^\bot)\cap \Gamma$. The image $C_{U,z'}$ or $C_{\bx,z'}$ of $\rho_{U,z'}$ is then an algebraic subvariety of $M$ which we call a generalized special cycle. 
In general, $\rho_{U,z'} $ is not an embedding and $C_{\bx,z'}$ is singular. However by passing to a deeper congruence subgroup of $\Gamma$ we can resolve the singularity (see Lemma \ref{resolutionofsingularity}). 
We can think of $C_{\bx,z'}$ as an element in the Chow group $\mathrm{Ch}_*(M)$ or an element in the Singular homology group $H_*(M)$. 
\begin{remark}\label{homology classindependentofchoice}
It is an important fact that the homology class of $C_{\bx,z'}$ {\bf does not} depends on the choice $z'$ (Proposition \ref{homologyindependent}). Hence when only the homology class is considered, we will often write $[C_\bx]$ instead of $[C_{\bx,z'}]$. 
\end{remark}

\begin{remark}
When $s$ or $r$ is equal to $0$ in cases A, the cycle $C_{\bx,z'}$ is called a special cycle by Kudla and Millson (see \cite{KMCompo}, \cite{KMI}, \cite{KMII}, \cite{KM90}). In these cases $G(U)$ is compact and its symmetric space is a single point. In other words, the choice of $z'$ in the definition of $C_{\bx,z'}$ is not necessary.
\end{remark}

One of the main goals of the paper is to construct Poincar\'e dual of $C_{\bx,z'}$ in terms of differential forms by studying differential forms on $D$ with values in the Weil representation.  
Let $\mathcal{S}(V^m)$ be the Schwartz functions on $V^m$. Then there is a reductive dual pair $(G,G')$ such that $G\times \tilde{G}'$ acts on $\mathcal{S}(V^m)$ smoothly and by a unitary representation $\omega$, where $\tilde{G}'$ is a double cover of $G'$.
In cases of our interests, we have the following data
\begin{enumerate}
    \item Case A: $G=\rU(p,q)$, $G'=\rU(m,m)$ with $m=r+s$, $0\leq r \leq p$, $0\leq s \leq q $, \
    \item Case B: $G=\Sp(2n,\R)$, $G'=\rO(m,m)$ with $m=2r$, $0\leq r \leq n$,\
    \item Case C: $G=\rO^*(2n)$, $G'=\Sp(m,m)$, $0\leq m \leq n$.
\end{enumerate}
The action of $G$ via $\omega$ is just the induced action on functions
\[(\omega(g)f)(\bx)=f(g^{-1}\bx).\]
for $f \in \mathcal{S}(V^m)$, where $\bx\in V^m$ is viewed as a $(p+q)$ by $m$ matrix.

Let $\Omega^\bullet(D,\mathcal{S}(V^m))^G$ be the set of $G$-invariant differential forms on $D$ with values in $\mathcal{S}(V^m)$, it is a chain complex graded by the Hodge bi-degree. In section \ref{Andersoncocyle}, we construct a differential form $\varphi\in \Omega^{(d',d')}(D,\mathcal{S}(V^m))^G$ where $d'$ is the complex codimension of the generalized special cycle $C_{\bx,z'}$ where $\bx\in V^m$ is non-degenerate. In case A, the construction of $\varphi$ depends on a pair of integers $(r,s)$ such that $r+s=m$ and we call the pair of integers $(r,s)$ in the theorem the signature of $\varphi$.
The following is the first main result of this paper, see Theorem \ref{closedness} and its corollary.
\begin{theorem}\label{thmA0}
The form $\varphi$ is closed.
\end{theorem}
\begin{remark}
In case A when $s=0$, the cocycle $\varphi$ is actually the special form defined by Kudla and Millson in \cite{KM90}.
\end{remark}

Recall that $\mathcal{L}$ is a lattice in $V$ fixed by $\Gamma$. By \cite{Weil}, we can choose an arithmetic subgroup $\Gamma'\subset G'$ such that the theta distribution $\theta_{\mathcal{L}} $ 
\[\theta_{\cL}(\psi)=\sum_{\bx\in \cL^m} \psi(\bx),\  \psi\in \Omega^\bullet(D,\mathcal{S}(V^m))^G\]
is $ \Gamma\times \tilde{\Gamma}'$-invariant where $\tilde{\Gamma}'$ is the inverse image of $\Gamma'$ in $\tilde{G}'$. Hence for the $\varphi$ defined in Theorem \ref{thmA0} we can define a function
\begin{equation}
    \theta_{\cL,{\varphi}}(z,g')=\sum_{\bx \in \cL^m} \omega(g')\varphi(z,\bx)\in \Omega^\bullet(M)\otimes \mathcal{A}(\Gamma'\backslash G'),
\end{equation}
where $\mathcal{A}(\Gamma'\backslash G')$ is the space of functions on $ \Gamma'\backslash G'$. We also define 
\begin{equation}
    \theta_{\cL,\beta,{\varphi}}(z,g')=\sum_{\bx\in \cL^m,(\bx,\bx)=\beta} \omega(g'){\varphi}(z,\bx)
\end{equation}
for a matrix $\beta$ which is Hermitian in $M_m(\C)$in case A, skew symmetric in $M_m(\R)$ in case B and  skew Hermitian in $M_m(\H)$ in case C. We have the following Fourier expansion of $\theta_{\cL,\varphi_\infty}$:
\[\theta_{\cL,{\varphi}}(z,g')=\sum_{\beta}  \theta_{\cL,\beta,{\varphi}}(z,g'),\]
where $\beta$ runs over all possible inner product matrices $(\bx,\bx)$. We call $\theta_{\cL,\beta,{\varphi}} $ the $\beta$-th Fourier term of $\theta_{\cL,{\varphi}} $ as each $\theta_{\cL,\beta,{\varphi}} $ is a character function under the action of $\Gamma'\cap N'$, where $N'$ is the unipotent radical of the Siegel parabolic (see Section \ref{dualpair}).

From now on we assume that $\beta$ is non-degenerate. The set 
	    $\{\bx\in \mathcal{L}^m|(\bx,\bx)=\beta\}$
consists of finitely many $\Gamma$-orbits. We choose $\Gamma$-orbit representatives $\{\bx_1,\ldots,\bx_o\}$ and define 
	    \[U_i=\Span \{\bx_i\}, 1\leq i \leq o.\]
For each $1\leq i \leq o$ choose a base point $z_i \in D(U_i)$. Let $C_{\bx_i,z_i}$ be the generalized special cycle. Then all these cycles have the same complex codimension $d'$. Let
\[{\bf z}=\{z_1,z_2,\ldots, z_o\}.\]
Then define
	    \[C_{\beta,{\bf z}}=\sum_{i=1}^o C_{\bx_i, z_i}.\]
$C_{\beta,{\bf z}}$ is a cycle in the Chow group of $\Gamma \backslash D$. By Remark \ref{homology classindependentofchoice}, the homology class $[C_{\beta,{\bf z}}]$ is independent of the choice of ${\bf z}$, so we simply denote by $[C_\beta]$ its homology class.

We want to relate the form $\varphi$ in Theorem \ref{thmA0} to generalized special cycles. We will prove the following theorem in Theorem \ref{generatingseries}. 
\begin{theorem}\label{PDthm}
Let $\varphi$ be as in Theorem \ref{thmA0}. For  $\bx\in \cL^m$ such that $U=\Span\{x\}$ is non-degenerate and in case A of signature $(r,s)$ which matches the signature of $\varphi$, 
we have 
\[\sum_{\by \in \Gamma \cdot \bx}[\varphi(z,g',\by)]=\kappa(g',\beta) \mathrm{PD}([C_{\bx}]),\]
where $[\varphi]$ is the cohomology class of $\varphi$ in $ H^*(M)$
, $\mathrm{PD}([C_\bx])\in H^*(M)$ is the Poincar\'e dual of $[C_\bx]$, and $\kappa$ is a function that is analytic in $G'$.
If  $\beta=(\bx,\bx)$, then we have  
\[[\theta_{\cL,\beta,\varphi}(z,g')]=\kappa(g',\beta) \mathrm{PD}([C_\beta]).\]

\end{theorem}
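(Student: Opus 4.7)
My approach is to establish the first identity (for a fixed $\bx$) and to deduce the second by summing over the finitely many $\Gamma$-orbits in $\{\bx \in \cL^m : (\bx,\bx) = \beta\}$. Since $\kappa(g',\beta)$ depends only on $\beta$, which is common to all orbit representatives $\bx_1,\ldots,\bx_o$, it factors out of the sum, and the second identity then follows from the definition $[C_\beta] = \sum_i [C_{\bx_i}]$ together with the fact that every orbit representative has the same inner product matrix $\beta$.

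For the first identity, both sides lie in $H^{2d'}(M;\C)$, so by Poincar\'e duality on the compact K\"ahler manifold $M$ it suffices to pair with an arbitrary closed test form $\eta\in\Omega^{\dim_\R M - 2d'}(M)$ and verify
\[\int_M \sum_{\by \in \Gamma \cdot \bx} \omega(g')\varphi(z,\by) \wedge \eta \;=\; \kappa(g',\beta) \int_{C_\bx} \eta.\]
Lifting to $D$ and unfolding the orbit sum in the standard way gives
\[\int_{\Gamma_\bx \backslash D} \omega(g')\varphi(z,\bx) \wedge \widetilde{\eta},\]
where $\Gamma_\bx = \Gamma \cap G(U^\bot)$ is the stabilizer of $\bx$ (any $\gamma \in \Gamma$ fixing each $x_i$ must fix $U$ pointwise, hence acts on $U^\bot$) and $\widetilde{\eta}$ is the pullback of $\eta$.

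The heart of the argument is to show that $\omega(g')\varphi(z,\bx)$ localizes along the complex analytic submanifold $D_{\bx,z'} \subset D$ as a Thom/Poincar\'e form, with normalizing constant $\kappa(g',\beta)$. Concretely, using the explicit construction in Section \ref{Andersoncocyle} and the splitting \ref{splittingofV}, I would exhibit local tubular-neighborhood coordinates around $D_{\bx,z'}$ in which the Gaussian factor of $\varphi$ decomposes as a product of (i) a piece in the normal directions that, after applying $\omega(g')$, integrates to $\kappa(g',\beta)$, and (ii) a Thom form for the embedding $D_{\bx,z'} \hookrightarrow D$. Substituting this decomposition and pushing forward along $\rho_{U,z'} : \Gamma_\bx \backslash D(U^\bot) \to C_\bx$ (passing to a deeper congruence subgroup via Lemma \ref{resolutionofsingularity} so that $\rho_{U,z'}$ is an embedding) then produces the right-hand side $\kappa(g',\beta) \int_{C_\bx} \eta$.

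The main obstacle is the Thom-form identification in the indefinite setting. In case A with $s=0$ the restriction $(,)|_U$ is definite and this is the classical calculation of Kudla--Millson \cite{KM90}; but in the new cases treated here (general signature $(r,s)$ in case A, and cases B and C) the form $(,)|_U$ is genuinely indefinite, so the Gaussian piece of $\varphi$ concentrates only along the ``positive-definite'' directions of the normal bundle, while the remaining indefinite directions contribute the analytic factor $\kappa(g',\beta)$. I expect the principal technical work to be: (a) choosing coordinates adapted to the signature splitting of $U$ to make this localization precise; (b) tracking the $\omega(g')$-twist through the normal integration to identify $\kappa(g',\beta)$ and verify that it is analytic in $g' \in G'$; and (c) controlling the tails of the Gaussian so that the non-localized contributions vanish after integration against $\widetilde\eta$ on the (possibly non-compact) quotient $\Gamma_\bx \backslash D$.
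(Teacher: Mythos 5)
Your overall skeleton---pair with a closed test form $\eta$, unfold the orbit sum to an integral over $\Gamma_\bx\backslash D$, and localize along $C_{\bx,z'}$ via a Thom-form argument---does match the structure of the paper's proof, and your items (b) and (c) gesture at real issues. The gap is in your mechanism for the localization. You propose to decompose the Gaussian, in tubular coordinates around $D_{\bx,z'}$, into a factor concentrating only ``along the positive-definite normal directions'' (which would produce $\kappa$) times a Thom form in the remaining directions. There is no such definite/indefinite splitting of the normal bundle, and $\varphi$ does not factor in this way. The paper's actual localization invokes an abstract result, a special case of Theorem 2.1 of \cite{KM88} (quoted here as Theorem \ref{thomform}): any closed form on $E_\bx = \Gamma_\bx\backslash D$ whose norm decays faster than $\exp(-d\rho\cdot r)$ ($r$ the distance to the cycle, $-\rho^2$ bounding the curvature) represents $\kappa$ times the Thom class, where $\kappa$ is the integral over any single fiber of the geodesic fibration $\pi:E_\bx\to\Gamma_\bx\backslash D_{\bx,z'_0}$. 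No explicit coordinate decomposition of $\varphi$ is needed, and none is possible in the general case.

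This changes where the technical work must be done, and it also corrects a second misconception in your plan. Theorem \ref{estimateofgaussian} shows that the majorant $M_{z_0}(z,\bx)$ grows exponentially in \emph{all} directions of the normal bundle, not merely a positive-definite subset; what makes the estimate delicate is that, outside the Kudla--Millson case (case A with $s=0$), the fiber $F_{z_0}D_{U,z'_0}$ is \emph{not} a sub-symmetric space, so one cannot simply reduce to a smaller group and Gaussian as in \cite{KM90}. Proving the decay requires the eigenvalue analysis of Lemmas \ref{closediag}--\ref{twobounds}. For the same reason the fiber integral $\kappa(g',\beta)$ cannot be computed in closed form; the paper only shows it is analytic in $g'$ and, via the method of Laplace (Section \ref{methodofLaplacesection}), generically nonzero. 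Your plan to ``identify $\kappa(g',\beta)$'' by an explicit coordinate computation would not get off the ground without these two pieces: the abstract Thom-class criterion, and the nontrivial exponential-growth estimate on the (non-symmetric-space) fiber.
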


In case A when $s=0$, Theorem \ref{PDthm} and \ref{thmB} are proved in \cite{KM90}, where the exact value of $\kappa(g',\beta)$ is calculated and shown to be never zero. However in the more general case of this paper the exact value of $\kappa(g',\beta)$ is hard to obtain. Instead we calculate certain asymptotic value of $\kappa(g',\beta)$ in Section \ref{methodofLaplacesection} which implies the following theorem (see Theorem \ref{genericnonzero}). 
\begin{theorem}\label{thmB}
Let $\beta$ satisfies the same assumption as in Theorem \ref{PDthm}.
For a generic $g'\in G'$, the function $\kappa(g',\beta) $ in Theorem \ref{PDthm}
is not zero. 
\end{theorem}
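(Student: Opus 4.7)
The plan is to combine the real-analyticity of $\kappa(\cdot,\beta)$ on $G'$ with an asymptotic computation as $g'$ moves to infinity along a one-parameter subgroup. Because $\varphi\in \Omega^{(d',d')}(D,\mathcal{S}(V^m))^G$ is Schwartz-valued and the Weil representation $\omega$ is jointly smooth, $\theta_{\cL,\beta,\varphi}(z,g')$ is a real-analytic family of closed forms on $M$, so passing to cohomology and pairing the equation of Theorem \ref{PDthm} with a test cocycle that detects $\mathrm{PD}([C_\bx])$ non-trivially (for instance integration over a transverse complementary-dimensional cycle in $M$) exhibits $\kappa(\cdot,\beta)$ as a real-analytic function on $G'$. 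The zero locus of a non-identically-zero real-analytic function on a connected real-analytic manifold is a proper real-analytic subset, hence has empty interior and measure zero, so its complement is generic in the sense of the theorem. It therefore suffices to produce a single $g'$ with $\kappa(g',\beta)\neq 0$.

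I would produce such a $g'$ by moving to infinity in the Levi $\mathrm{GL}_m$-factor of the Siegel parabolic of $G'$ along a one-parameter subgroup $a(t)$ that acts, under $\omega$, by $f(\bx)\mapsto t^{c_0}f(t\bx)$ for an explicit exponent $c_0$ (up to a unitary character). The canonical form $\varphi$ produced by Theorem \ref{thmA0} is of Kudla--Millson type, namely a polynomial in the entries of $\bx$ times a Gaussian $e^{-\pi R_z(\bx)}$ in the Siegel majorant $R_z$ attached to $z\in D$; the majorant is positive, and on the locus $\{(\bx,\bx)=\beta\}$ with $\beta$ non-degenerate it attains its minimum precisely on the subvariety $D_{\bx,z'}$ covering $C_\bx$ in $M$. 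Substituting $\omega(a(t))$ into the $\beta$-th Fourier term rescales the Gaussian to $e^{-\pi t^2 R_z(\bx)}$, which, after factoring out the common exponential $e^{-\pi t^2 \min R_z}$, concentrates as $t\to\infty$ on the cycle $D_{\bx,z'}$ for each $\Gamma$-orbit representative.

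Applying the method of Laplace transverse to each $D_{\bx,z'}$ then gives a leading-order asymptotic for the pairing of $\theta_{\cL,\beta,\varphi}(z,a(t))$ against the fixed closed transversal test form; the resulting leading coefficient is a power of $t$ times a positive exponential in $\beta$ times a product of two local factors, namely the reciprocal square root of the determinant of the transverse Hessian of $R_z$ along the cycle and the value of the polynomial factor of $\varphi$ restricted to the transverse directions. The main obstacle, and the substance of Section \ref{methodofLaplacesection}, is showing that this leading coefficient is genuinely non-zero: one needs (i) that the transverse Hessian of the Siegel majorant is strictly positive definite along $D_{\bx,z'}$, which follows from positivity of the majorant, and (ii) that the polynomial part of $\varphi$, a specific highest-weight vector in the Fock model, has non-zero transverse component of the correct Hodge bidegree $(d',d')$ matching the complex codimension of $C_\bx$. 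Once (i) and (ii) are established, $\kappa(a(t),\beta)\neq 0$ for all sufficiently large $t$, and real-analyticity promotes this single non-vanishing value to the claimed generic non-vanishing on $G'$.
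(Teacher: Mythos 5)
Your proposal follows the same strategy as the paper's proof: establish real-analyticity of $\kappa(\cdot,\beta)$ on $G'$ (Lemma \ref{analyticity}), then exhibit a single non-vanishing value by moving to infinity along $m'(a(t))$ in the Levi factor and applying Laplace's method to the fiber integral, which concentrates on $D_{U,z'_0}$ (Section \ref{methodofLaplacesection}). The two non-vanishing conditions you isolate correspond exactly to the paper's Propositions \ref{secondcondition}--\ref{secondconditioncaseC} (strict positivity of the transverse Hessian, which in fact requires the structural Lemmas \ref{boundedbelow}, \ref{boundedbelowSp}, \ref{boundedbelowOstar} rather than following immediately from positivity of the majorant) and to Lemmas \ref{onlyonetermUpq}, \ref{onlyonetermSp}, \ref{onlyonetermOstar} together with Lemma \ref{highesttermofpolynomial} (the leading transverse polynomial coefficient is an explicit non-zero monomial).
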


In the appendix, we will show that the canonical special class $\varphi$ transforms under an irreducible representation of a maximal compact group $\tilde{K}'\subset \tilde{G}'$. Moreover $\theta_{\cL,\varphi}$ can be viewed as a matrix coefficient of an automorphic vector bundle on $\tilde{\Gamma}'\backslash \tilde{G}'/ \tilde{K}'$.

\subsection{Related works}
The modularity of the generating series of intersection numbers of special cycles was first studied in \cite{HZ} in case of Hillbert modular surfaces. Later in a series of work  (\cite{KMCompo}, \cite{KMI}, \cite{KMII}, \cite{KM90}), Kudla and Millson proved the modularity of generating series of special cycles for higher rank locally symmetric spaces associated to $G=\Orth(p,q)$ (resp. $\Uni(p,q)$ and $\Sp(p,q)$). To be more specific they constructed via Weil representation differential forms that are Poincar\'e duals to
$C_\bx$ when $(,)|_{\Span\{\bx\}}$ is positive definite. By applying the theta distribution to these forms one get an automorphic form in the dual group $\Sp(2r,\R)$ (resp. $\Uni(r,r)$, $\Orth^*(2r)$) of $G$. Moreover they prove in \cite{KM90} that these differential forms are holomorphic with respect to the dual group $G'=\Sp(2r,\R)$ (resp. $\Uni(r,r)$) on the cohomology level, thus give rise to holomorphic modular forms after applying theta distribution.

The theory of Kudla and Millson have some generalizations and applications. We just briefly mention some here.
\begin{enumerate}
    \item When $G=\Orth(p,q)$ and $\Gamma$ is not co-compact, the boundary behavior (after the compactification of $\Gamma \backslash D$) of  the special forms constructed by Kudla and Millson has been studied in \cite{FM1}, \cite{FM2}, \cite{FM3} and \cite{FM4}.\ 
    \item Using the results of Kudla and Millson, together with the classification of unitary representations with nonzero cohomology of Vogan-Zuckerman \cite{VoganZuckerman} and the endoscopic classification of automorphic representations of $G'$ (c.f.\cite{mok2015endoscopic}), \cite{BMM1} and \cite{BMM2} are able to prove certain cases of Hodge Conjecture on arithmetic hyperbolic spaces and arithmetic quotients of complex balls.\ 
    \item In some cases one can lift the modularity theorem by Kudla and Millson from cohomology groups to Chow groups (c.f. \cite{borcherds1999}, \cite{zhang2009modularity} and \cite{bruinier2015kudla}) or even arithmetic Chow groups (c.f. \cite{bruinier2017modularity} and \cite{howard2017arithmetic}). \ 
    \item Garcia \cite{garcia} views the special forms of Kudla and Millson as characteristic classes of  super connections and generalizes the construction to certain period domains. 
\end{enumerate}

This paper is an attempt to generalize the work of Kudla and Millson. What is new in this paper is the definition of generalized special cycles and the discovery of a new class of special forms in $\Omega(D,\mathcal{S}(V^m))^G$ which turn out to be Poincar\'e duals of the generalized special cycles and can be used as kernels of geometric theta lifts.
When $G=\Sp(2n,\R)$ or $\rO^*(2n)$ or when $G=\rU(p,q)$ but $(,)|_{\Span\{\bx\}}$ is not positive definite, there is no corresponding special cycles in the sense of Kudla and Millson. So in order to have a similar theory, one has to define generalized special cycles.

In a sequel \cite{MS}, Millson and the author will extend the results of this paper to the groups $G=\Orth(p,q)$, $\Sp(p,q) $, $\Orth(m,\C) $ or $\Sp(2n,\C)$.

\subsection{Sketch of the proof of the main theorems}	 
The form $\varphi$ in Theorem \ref{thmA0} is ultimately constructed from $\varphi^+$ which is a holomorphic differential form in $\Omega^{(d',0)}(D,\mathcal{P})^G $ discovered by \cite{Anderson}, where $ \mathcal{P}$ is certain Fock model of the Weil representation of a compact dual pair. Using the fact that any holomorphic differential form on a K\"ahler manifold is closed together with a result of \cite{Anderson}, we can prove that $\varphi^+$ is closed. This implies that $\varphi$ is closed as well.

In order to prove Theorem \ref{PDthm}, we construct a fiber bundle in Section \ref{fibrationpi}:
\[\pi:\Gamma_U \backslash D\rightarrow \Gamma_U \backslash D_{U,z'},\]
whose fibers are (topologically) Euclidean spaces. We show that $\varphi(z,g',\bx)$ is a constant multiple of the Thom form for the above fibration. To be more precise,
\begin{equation}\label{keyidentity}
   \int_{\Gamma_U \backslash D} \eta\wedge\varphi(z,g'.\bx)=\kappa(g',\beta) \int_{\Gamma_U \backslash D_{U,z'}} \eta,
\end{equation}
where 
\begin{equation}\label{kappaintegralintro}
    \kappa(g',\beta)=\int_{F D_{U,z'}} \varphi(z,g',\bx),
\end{equation}
and $F D_{U,z'}$ is any fiber of the fibration $\Gamma_U \backslash D\rightarrow \Gamma_U \backslash D_{U,z'}$. Notice that the integration in  \eqref{kappaintegralintro} only depends on $\beta=(\bx,\bx)$. The key to proving \eqref{keyidentity} is to show that the norm of $ \varphi(z,g'.\bx)$ is fast decreasing on the fiber $F D_{U,z'}$, which will be done in Section \ref{rapiddecreasechapter}.
Then the standard unfolding lemma tells us that 
\[\int_{\Gamma \backslash D} \eta\wedge\sum_{\by\in \Gamma \cdot \bx}\varphi(z,g'.\by)=\int_{\Gamma_U \backslash D} \eta\wedge\varphi(z,g'.\bx)=\kappa(g',\beta) \int_{\Gamma_U \backslash D_{U,z'}} \eta.\]
This identity is nothing but Theorem \ref{PDthm}.

As we have mentioned, in general $\kappa(g',\beta)$ is hard to compute. One of the reasons is that $F D_{U,z'}$ is not a sub symmetric space of $D$ except in the Kudla-Millson cases.
So instead we show that the asymptotic value of $\kappa(m'(a(t)),\beta)$
when $t\rightarrow \infty$ (see Section \ref{methodofLaplacesection} for the meaning of $m'(a(t))$) is nonzero  using the method of Laplace. This will imply Theorem \ref{thmB}.

\subsection{Outline of the paper}
Section \ref{locallysymmetricspace} reviews the definition of $D$ and constructs compact arithmetic quotients of $D$. Section \ref{generalizedspecial} defines the generalized special cycle $C_{\bx,z'}$ and show that they are algebraic subvarieties. Section \ref{dualpair} reviews some fact about the Weil representation and set up coordinate functions for later use. Section \ref{Andersoncocyle} reviews the results of \cite{Anderson}, constructs the special class $\varphi$ and proves that it is closed. In Section \ref{thomformsection} we will prove Theorem \ref{PDthm} assuming the rapid decrease of ${\varphi}$ on the fiber $F D_{U,z'}$. Section \ref{rapiddecreasechapter} proves the rapid decrease of ${\varphi}$ on the fiber $F D_{U,z'}$. Section \ref{methodofLaplacesection} proves Theorem \ref{thmB} by the method of Laplace. Appendix \ref{appendix} described the $\tilde{K}'$-type of $\varphi$ in terms of highest weight theory. Readers who are familiar with arithmetic groups and the Weil representation can pick up the definition of the generalized special cycles in Section \ref{generalizedspecial} and then proceed to section \ref{Andersoncocyle} directly, only go back to Section \ref{dualpair} if necessary. For Section \ref{dualpair}, Section \ref{Andersoncocyle}, Section \ref{rapiddecreasechapter} and Section \ref{methodofLaplacesection}, one can focus only on the case A for first reading as the other two cases are similar.\\
\subsection{Acknowledgements} First I would like to thank my thesis advisor John Millson for introducing me to the subject, for studying the closed form constructed by \cite{Anderson} together with me, and for asking valuable questions and checking some of the proofs in the paper. I would like to thank Jeffrey Adams for teaching me useful knowledge of the Weil representation and for carefully reading the paper and provide valuable suggestions. I would also like to thank Michael Rapoport and Tonghai Yang for helpful suggestions on the definition of generalized special cycles. I would like to thank Patrick  Daniels and Hanlong Fang for helpful discussions. Lastly, I would like to thank the referee of the paper for valuable comments. The work is partially supported by NSF grant DMS-1518657.

\subsection{Notations and conventions}
In Section \ref{locallysymmetricspace} and Section \ref{thomformsection} we will let $k$ be a totally real number field, $F$ be a CM extension of $k$, $B$ be a division algebra with center $k$, $V$ be a vector space over $k$ and $G$ be an algebraic group over $k$. In case A, $V$ will be equipped with a Hermitian form.

In Section \ref{dualpair}, Section \ref{Andersoncocyle}, Section \ref{rapiddecreasechapter}, Section \ref{methodofLaplacesection} and the appendix, $V$ will denote a real vector space and $G$ will denote a real group. In order to relate case A, B and C using seesaw dual pairs and have uniform statements of results, we equip $V$ with a skew Hermitian form in case A. The identification between Hermitian and skew Hermitian forms is not canonical. In this paper we multiply a skew Hermitian form by $i=\sqrt{-1}$ to get a Hermitian form if such an identification is necessary and adjust the statements of our theorems correspondingly.

\noindent
{\bf Data availability statement:}
The paper has no associated data.

\part{Generalized special cycles on Hermitian locally symmetric spaces}
\section{Hermitian locally symmetric spaces}\label{locallysymmetricspace}
In this section we recall the construction of the Hermitian locally symmetric manifolds that are relevant to us. These manifolds are compact arithmetic quotients of Hermitian symmetric domains associated to the groups $\rU(p,q)$, $\Sp(2n,\R)$ and $\rO^*(2n)$ and are projective algebraic varieties.

\subsection{}\label{compactquotient}
Let $k$ be a totally real number field with $\ell$ distinct embeddings $\sigma_1, \ldots,\sigma_\ell$ into $\R$. Let $S_\infty=\{v_1,\ldots,v_\ell\}$ be the set of Archimedean places and $k_{v_1},\ldots,k_{v_\ell}$ be the corresponding completions. Let $F$ be a CM field whose maximal real subfield is $k$. There are $\ell$ pairs of conjugate embeddings of $F$ into $\C$. We choose one inside each pair, denote them by $\sigma_1,\ldots,\sigma_\ell$ by abusing notation. To obtain compact quotients of the symmetric spaces, we assume $k\neq \Q$. 

Let $(B,\sigma)$ be a $k$-algebra with involution of one of the following types:
		\begin{equation}\label{B}
		    (B,\sigma)=\begin{cases}
		       (\text{the CM field }$F$,\text{the generator of }$Gal(F/k)$), \\
		       (\text{a quaternion algebra with center $k$, the main involution}).
		\end{cases}
		\end{equation}
Let $V=B^n$ regarded as a right $B$ vector space. Let $(,)$ be a non-degenerate skew Hermitian form or Hermitian form on $V$ satisfying 
\begin{equation}\label{(,)}
		    (v b, \tilde{v} \tilde{b})=b^\sigma (v,\tilde{v}) \tilde{b}
\end{equation}
for $v,\tilde{v}\in V$ and $b,\tilde{b}\in B$. Let $G$ be the group defined by the equation
\begin{equation}\label{G}
  G=\{g\in \GL_B(V)\mid (gv,g\tilde{v})=(v,\tilde{v}), \forall v,\tilde{v}\in V\}.
\end{equation}
Define $V_v=V\otimes_k k_v$ and $G_v=G(k_v)$, where $v\in S_\infty$. Extend $(,)$ to $V_{v}$ and denote the new form by $ (,)_{v}$. Also define
		\[V_\infty=\prod_{v\in S_\infty} V_v,\ G_\infty=\prod_{v\in S_\infty} G_v.\]
We require the form $(,)$ to be anisotropic, which is to say that there is no nonzero vector $v\in V$ such that $(v,v)=0$. As a set, the symmetric space $D$ is defined by 
\[D=G_\infty/ K_\infty,\]
where $K_\infty$ is a maximal compact subgroup of $G_\infty$. Later in this section we will see case by case that $D$ can be regarded as an open subset of a (Lagrangian) Grassmannian. Hence $D$ is a complex variety.

Let $\mathcal{O}_k$ be the ring of integers of $k$ and $\Oo_B$ be the integral closure of $\mathcal{O}_k$ in $B$. Let
\[\cL=\Oo_B^n\subset B^n=V,\]
and define
\[G(\mathcal{O}_k)=\{g\in G\mid g \cL = \cL\}.\]
For an ideal $I$ in $\Oo_B$, define $G(I)$ to be the congruence subgroup 
\begin{equation}\label{G(I)}
    G(I)=\{g\in G(\mathcal{O}_k)\mid g \equiv I_n \ (\mathrm{mod} \ I \cL)\}.
\end{equation}
By  Theorem 17.4 of \cite{Borel}, we can choose an ideal $J$ of $\Oo_B$ such that $G(J)$ is neat. In particular, $G(J)$ acts simply on the symmetric space $D$. 
Fix an ideal $I\subseteq J$ and let $\Gamma=G(I)$. Since we assume that $(,)$ is anisotropic, $\Gamma$ is a co-compact subgroup in $G_\infty$. Moreover by a theorem of Baily and Borel (\cite{BailyBorel}), $\Gamma\backslash D$ is a complex projective variety.

We need a lemma.
		\begin{lemma}\label{choiceofc}
		For each $1\leq i\leq \ell$, let $U_i$ is an open subset of $k_{v_i}\cong\R$.
		There is a $c\in k$ such that $\sqrt{c} \notin k $ and $\sigma_i(c)\in U_i$ for all $1\leq i \leq \ell$.
		\end{lemma}
		\begin{proof}
		Choose a prime ideal $\mathfrak{p}$ of $\Oo_k$ such that $\mathcal{O}_k/ \mathfrak{p}$ is not a field of characteristic two. As taking square is a two to one map on $\mathcal{O}_k/ \mathfrak{p}$, there exists a $b\in \mathcal{O}_k$ such that  the equation $x^2\equiv b \ (\mathrm{mod} \ \mathfrak{p})$ has no solution in $\mathcal{O}_k/\mathfrak{p}$. Thus $x^2=b$ has no solution in $\mathcal{O}_k$ and $k$. Now choose $\epsilon$ small enough such that $x^2=a$ has a solution when $|a-1|_{\mathfrak{p}}\leq \epsilon $. By the weak approximation theorem, there exists a $c\in k$ such that 
		\begin{enumerate}
		    \item $\sigma_i (c)\in U_i \ (1\leq i \leq \ell)$, \
		    \item $|c-b|_{\mathfrak{p}}< |b|_{\mathfrak{p}} \cdot \epsilon$.
		\end{enumerate}
		Then $c$ satisfies the assumption of the lemma.
		\end{proof}
	
Now we carry out the above construction and give a more detailed description of $D$ in each case. 
		
		\subsection{Case A}

		Choose $d_1,\ldots, d_{p+q}\in k$ such that
		\begin{enumerate}
		    \item $\sigma_1(d_\alpha)>0$ and $ \sigma_1(d_\mu)<0$ for $1\leq \alpha \leq p$ and $p+1\leq \mu \leq p+q$,\
		    \item $\sigma_i(d_j)>0$ for $2\leq i \leq \ell$ and $1\leq j \leq p+q$.
		\end{enumerate}
		This is possible by the weak approximation theorem. Let V be a $p+q$ dimensional right $F$ vector space and $(,)$ be the Hermitian form defined by the diagonal matrix with diagonal entries $d_1,\ldots,d_{p+q}$. If G is defined by equation \eqref{G}, we have 
		\begin{enumerate}
		    \item  $G_{v_1}\cong \Uni(p,q)$,\
		    \item $G_{v_i}\cong \Uni(p+q)$ for $2\leq i \leq \ell$.
		\end{enumerate}
		Since $(,)_{v_i}$ is definite for $2\leq i \leq \ell$, $V$ is anisotropic. The symmetric space $D$ can be defined by
		\[D=\{z\text{ is a subspace of } V_{v_1}\mid \mathrm{dim}_{\C} z=q, (,)_{v_1}|_{z} \text{ is negative definite}\}.\]
		
		\subsection{Case B}\label{locallysymmetricspaceSp}
		By Lemma \ref{choiceofc}, we can choose $c_1,c_2$ such that
		\begin{enumerate}
		    \item $\sigma_1(c_j)>0$ and $\sqrt{c_j}\notin k$ for $j=1 \text{ or }2$,
		    \item $\sigma_i(c_j)<0$  for $j=1,2$ and $2\leq i \leq \ell$.
		\end{enumerate} 
		Let $B=\H_k(c_1,c_2)$ be the quaternion algebra over $k$ generated by $\epsilon_1,\epsilon_2$ with relations
		\begin{equation}\label{definitionrelationofB}
		    \epsilon_1^2=c_1,\epsilon_2^2=c_2,\epsilon_1 \epsilon_2=- \epsilon_2 \epsilon_1.
		\end{equation}
		We put $\epsilon_3=\epsilon_1 \epsilon_2$. Then an element $\xi\in B$ can be written as $\xi=\xi_0+ \xi_1 \epsilon_1+ \xi_2 \epsilon_2+\xi_3 \epsilon_3$, where $\xi_j\in k$ for $0\leq j \leq 3$. We define an anti-involution $\sigma$ on $B$ by 
		\[\sigma(\xi)=\xi_0- \xi_1 \epsilon_1- \xi_2 \epsilon_2-\xi_3 \epsilon_3.\]
		With the given assumption we know that
	    \[ B\otimes_k k_{v_1}\cong M_2(\R) , \ 
		B\otimes_k k_{v_i}\cong \H\] 
        for $2\leq i \leq \ell$, where $\H$ is the classical Hamiltonian quaternions. Now let V be a $n$-dimensional right $B$ vector space and $(,)$ be a Hermitian form on $V$ satisfying \eqref{(,)}. Let $G$ be the group defined by \eqref{G}. 
        
		Let $E$ be any field that contains $k[\sqrt{c_1}]$. We define an anti-involution $\sigma'$ on $M_2(E)$ by
		\[\sigma'(x)=J  x^t J^{-1},\]
		where $J=\left(\begin{array}{cc}
		    0 & -1 \\
		    1 & 0
		\end{array}\right)$. We can embed $B$ into $M_2(E)$ as follows
		\[i(\xi_0+ \xi_1 \epsilon_1+ \xi_2 \epsilon_2+\xi_3 \epsilon_3)=
		\left(\begin{array}{cc}
		   \xi_0+\xi_1 \sqrt{c_1}  & c_2(\xi_2+\xi_3 \sqrt{c_1}) \\
		    \xi_2-\xi_3 \sqrt{c_1} & \xi_0- \xi_1 \sqrt{c_1}
		\end{array}
		\right).\]
		It is easy to check that $\sigma'\circ i=i\circ \sigma$, so from now on we abuse notation and denote both involutions by $\sigma$. 
		Let $e_{i j}$ be the matrix with the $(i,j)$-th entry 1 and all the other entries zero. Let $e=e_{11}$. As $B\otimes_k E\cong M_2(E)$, we get a decomposition 
		\[V_{E}=V_{E}e+V_{E} e^\sigma\]
		as a $E$ vector space, where $V_E=V\otimes_k E$. Let $S_E$ be the $E$-bilinear form on $ V_E e$ defined by 
		\begin{equation}
		    S_E(x e, y e)e_{21}=(x e,y e).
		\end{equation}
		Following an argument like that of Page 368 of \cite{LiMillson}, we see that $S_E$ is skew symmetric and
		\[G_E\cong  \Sp((V_E e,S_E))\cong \Sp(2n,E).\]
		In particular since $k_{v_1}\cong \R$ and $\sigma_1(c_1) >0$, we know that 
		\[G_{v_1}\cong \Sp((V_{v_1}e,S_{v_1}))\cong \Sp(2n,\R).\]
		And we also have
		\[G_{v_i}\cong \Sp(p_{v_i},q_{v_i})\]
		for $2\leq i \leq \ell$, where $p_{v_i}+q_{v_i}=n$. Moreover we choose the form $(,)$ to be defined by a diagonal matrix with diagonal entries $d_1,\ldots,d_n \in k$ satisfying
		$\sigma_i(d_j)>0$
		for $2\leq i \leq \ell$ and $1\leq j \leq n$. We then have
		$G_{v_i}\cong \Sp(n)$ for $2\leq i \leq n$.
		With this choice $G$ is anisotropic as $\Sp(n)$ is. We have
		\[G_\infty \cong \Sp(2n,\R)\times \Sp(n)^{\ell-1}.\]
		The symmetric space $D$ is then the set of $n$ dimensional complex subspace $z$ of the $2n$ dimensional complex vector space $(V_{v_1} e)\otimes_{\R} \C$ such that
		\begin{enumerate}
		    \item $S_{k_{v_1}}(,)|_z$ is zero ($z$ is Lagrangian for $S_{k_{v_1}}(,)$. 
		    \item The Hermitian form $i S_{k_{v_1}}(\bar{\ },)$ is negative definite on $z$, where $\bar{\ }$ is the complex conjugation on $(V_{v_1} e)\otimes_{\R} \C$.
		\end{enumerate}
		
		\subsection{Case C}\label{locallysymmetricspaceOstar}
		The construction of $G$ in this case is similar to that in case B. Let $k$ be the same totally real number field. By Lemma \ref{choiceofc}, we can choose $c_1,c_2$ such that 
		\begin{enumerate}
		    \item $\sigma_1(c_1)<0$, $\sigma_1(c_2)<0$ and $\sqrt{c_1},\sqrt{c_2}\notin k$,
		    \item $\sigma_i(c_1)>0$, $\sigma_i(c_2)<0$ for $2\leq i \leq \ell$.
		\end{enumerate}
        Let $B=\H_k(c_1,c_2)$ as in equation \eqref{definitionrelationofB}. This time we have
		\[ B\otimes_k k_{v_1}\cong \H,\ B\otimes_k k_{v_i}\cong M_2(\R)\] 
        for $2\leq i \leq \ell$.  Now let V be a $n$-dimensional right $B$ vector space and $(,)$ be a skew Hermitian form on $V$ satisfying \eqref{(,)}. Let $G$ be the group defined as in \eqref{G}. Then we see that \[G_{v_1}\cong \Orth^*(2n).\]
		Following a result on Page 368 of \cite{LiMillson}, we also have
		\[G_{v_i}\cong \Orth(p_{v_i},q_{v_i})\]
		for $2\leq i \leq \ell$, where $p_{v_i}+q_{v_i}=2n$. Moreover we choose $d_1,\ldots,d_n \in k$ such that $\sigma_i(d_j)>0$ for all $2 \leq i \leq \ell$, $1\leq j \leq n $, and let $(,)$ be the skew Hermitian form defined by the diagonal matrix with diagonal entries $d_1 \epsilon_2,\ldots, d_n \epsilon_2 $. By Lemma 2.1 of \cite{LiMillson}, we know that $G_{v_i}$ ($2\leq i \leq \ell$) is the orthogonal group defined by the block diagonal matrix
		$S_{v_i}$ with 2 by 2 diagonal blocks $-J (d_1 \epsilon_2),\ldots ,-J (d_n \epsilon_2)$. Since
		\[-J (d_j \epsilon_2)=\left(\begin{array}{cc}
		   0  & 1 \\
		    -1 & 0
		\end{array}\right)\cdot \left(
		\begin{array}{cc}
		  0   &  c_2 d_j\\
		   d_j  & 0
		\end{array}\right)=\left(
		\begin{array}{cc}
		  d_j   &  0\\
		   0  & -c_2 d_j
		\end{array}\right),\]
		by our assumptions $S_{v_i}$ will be positive definite for $2\leq i \leq \ell$. Thus $G_{v_i}\cong \Orth(2n)$ for $2\leq i \leq \ell$. This implies that $G$ is anisotropic and 
		\[G_\infty \cong \Orth^*(2n,\R)\times \Orth(2n)^{\ell-1}.\]
		We define a skew Hermitian form $H(,)$ on $V_{v_1}$ regarded as a $2n$ dimensional complex vector space by the equation
		\begin{equation}
		    H(v,w)=a+bi \text{ if } (v,w)=a+bi+cj +d k.
		\end{equation}
		Also define a symmetric form $S(,)$ on $V_{v_1}$ regarded as a $2n$ dimensional complex vector space by the equation
		\begin{equation}
		    S(v,w)=H(v j, w).
		\end{equation}
		Then the symmetric space $D$ is the set of $n$ dimensional complex subspace $z$ of $V_{v_1}$ such that 
		\begin{enumerate}
		    \item $S(,)|_z$ is zero ($z$ is Lagrangian for $S(,)$),
		    \item the Hermitian form $ iH(,)|_z$ is negative definite. 
		\end{enumerate}

\section{Generalized special cycles}\label{generalizedspecial}
In this section we define generalized special cycles in $\Gamma\backslash D$ , where $D$ is the symmetric space associated to $G_\infty=\prod_{v\in S_\infty} G_v$ or equivalently $G_{v_1}$ as all the other factors in $G_\infty$ are compact. To ease notations, in this section we write $G$ for $G_{v_1}$ and assume that 
\begin{enumerate}
    \item In case A: $B=\C$, $V\cong\C^{p+q}$, $(,)$ is Hermitian of signature $(p,q)$ on V,
    \item In case B: $B=\R$, $V\cong \R^{2n}$. $(,)$ is symplectic on $V$,
    \item In case C: $B=\H$, $V\cong \H^{n}$. $(,)$ is non-degenerate skew Hermitian on $V$.
\end{enumerate}
In particular $G=\rU(p,q)$ in case A, $\Sp(2n,\R)$ in case B and $\rO^*(2n)$ in case C. On the symmetric space level, a generalized special cycle is the fixed point set of a pair of involutions while a special cycle in the sense of Kudla and Millson is the fixed point set of a single involution. The definition of a generalized special cycle depends on a choice of a non-degenerate $U\subseteq V$ together with a point in the symmetric space associated to $U$.

As we have seen in the previous section, in each case of our interests, the symmetric space $D$ can be described as an open subset of a (Lagrangian) Grassmannian. Meanwhile $D$ can also be described as the set of Cartan involutions of $G$. The relation between these two descriptions is as follows.
For any $z\in D$, we can find an element $r_z\in G$ such that 
\begin{equation}
    D^{r_z}:=\{x\in D\mid r_z x=x\}=\{z\}.
\end{equation}
In case A we can simply take $r_z=Id|_z\oplus (-Id)|_{z^\bot}$. In the other two cases we can take $r_z$ to be an element in the center of $G_z=\{g\in G\mid gz=z\}$ not equal to $\pm 1$. Define $\sigma_z=\mathrm{Ad}(r_z)\in \mathrm{Inn}(G)$. Then $\sigma_z$ is the Cartan involution associated to $z$ and we have
\[G_z=G^{\sigma_z}.\]

Let 
\[\bx=(x_1,\ldots,x_m)\in \cL^m.\]
We require $U=\mathrm{span}_{B}(\bx)$ to be non-degenerate with respect to $(,)$.
We then have the decomposition $V=U+U^{\bot}$. For any non-degenerate $U$ we define 
\begin{equation}
  G(U)=\{g \in G\mid g v=v,\forall v\in U^\bot\}.
\end{equation}
Let $D(U)$ be the symmetric space associated to $G(U)$. 

\begin{remark} In order to be consistent with the notation of the work of Kudla and Millson, we will also use the notations $G_U$ and $ D_U$ :
\begin{equation}
G_U=G(U^\bot), \ D_U= D(U^\bot).
\end{equation}
\end{remark}

Define $\Gamma_U=\Gamma\cap G_U$.  By our construction $\Gamma_{U} \backslash D_U$ is a compact locally symmetric manifold. We want to map $\Gamma_{U} \backslash D_U$ to $ \Gamma \backslash D$ to get an algebraic cycle. This requires the choice of a point in $D(U)$ as we will see.

For any $\sigma\in \mathrm{Aut}(G)$ and $H
\leq G$ such that $\sigma(H)=H $, define
\[H^{\sigma}=\{g\in H \mid   \sigma( g ) =g \}.  \]
Let $r_U=Id|_U \oplus (-Id)|_{U^\bot}$ and $\sigma_U=\mathrm{Ad}(r_U)\in \mathrm{Inn}(G)$.
It is easy to see that
\[G^{\sigma_U}=G(U)\times G(U^\bot).\]
Consequently the symmetric space of $G^{\sigma_U}$ is the product $D(U)\times D(U^\bot)$.
There is a natural embedding $\rho_U$ defined by
\[\rho_U:D(U)\times D(U^\bot)\hookrightarrow D:(z_1,z_2)\mapsto z_1\oplus z_2.\]
For a vector $v\in z $, we have $r_U(v)\in z$ if and only if both $\mathrm{proj}_U(v)$ and $\mathrm{proj}_{U^\bot}(v)$ are in $z$. Hence we have 
\begin{equation}
    z\in D^{r_U}:=\{z\in D\mid r_U(z)=z\}\Leftrightarrow z=z\cap U \oplus z\cap U^\bot.
\end{equation}
Hence $\rho_{U}$ defines an isomorphism from $D(U)\times D(U^\bot)$ onto $D^{r_U}$. From now on we denote $D^{r_U}$ by $D(U,U^\bot)$.

Now let $z'\in D(U)$. 
We define an embedding $i_{z'}$:
\[i_{z'}:D_U\hookrightarrow D(U)\times D(U^\bot), z \mapsto (z',z).\]
Let $\rho_{U,z'}$ be the composition $\rho_U\circ i_{z'} $. In other words
\begin{equation}\label{rhoUz'}
    \rho_{U,z'}(z)=z'\oplus z.
\end{equation}
\begin{definition}\label{specialsubsymmetricspace}
Denote by $ D_{U,z'}$ (or $D_{\bx,z'}$) the image of $D_U$ under the map $\rho_{U,z'}$. We call it a generalized special sub symmetric space of $D$.
\end{definition}

As we have explained, it is possible to choose a $r_{z'}\in G(U)$ such that
\[D(U)^{r_{z'}}=\{z'\},\  G(U)^{\mathrm{Ad}(r_{z'})}=\{g\in G(U)\mid gz'=z'\}.\]
We now define $\sigma_{z'}=\mathrm{Ad}(r(z'))\in \mathrm{Inn}(G) $, where $r(z')=r_{z'}\oplus Id_{U^\bot}$.
Apparently $\sigma_U \sigma_{z'}=\sigma_{z'} \sigma_U$, hence we know that $G^{\sigma_U}$ is  $\sigma_{z'}$-stable. Then we define
\[G^{\sigma_U,\sigma_{z'}}:=(G^{\sigma_U})^{\sigma_{z'}}=(G^{\sigma_{z'}})^{\sigma_U}.\]
Since $G^{\sigma_U}=G(U)\times G(U^\bot)$, we know that 
\[G^{\sigma_U,\sigma_{z'}}=G(U)^{\sigma_{z'}}\times G(U^\bot).\]
Notice that $G(U)^{\sigma_{z'}} $ is the maximal compact subgroup of $G(U)$ fixed by $\sigma_{z'}$. The symmetric space of $G^{\sigma_U, \sigma_{z'}}$ is $D_U$ which we identify with $D_{U,z'}$. Since $D^{r_U}=D(U,U^\bot)$ and $D(U,U^\bot)^{r(z')}=D_{U,z'}$, we have
\begin{proposition}\label{fixpointofpairofinvolution}
\[D_{U,z'}=D^{r_U, r(z')}:=(D^{r_U} )^{r(z')}.\]
In particular $D_{U,z'}$ is totally geodesic.
\end{proposition}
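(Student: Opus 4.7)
The plan is to verify the set-theoretic equality $D_{U,z'} = (D^{r_U})^{r(z')}$ by chasing through the identifications already set up in the text, and then deduce the totally geodesic assertion from the classical fact about fixed loci of isometries.

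For the equality, I would start from the identification $\rho_U : D(U)\times D(U^\bot)\xrightarrow{\sim} D^{r_U}$ established in the paragraph preceding the proposition, under which a pair $(z_1,z_2)$ is sent to $z_1\oplus z_2$. Since by definition $r(z') = r_{z'}\oplus \mathrm{Id}_{U^\bot}$, its action on a point $z_1\oplus z_2 \in D^{r_U}$ is $r_{z'}(z_1)\oplus z_2$; equivalently, via $\rho_U$, the involution $r(z')$ becomes $(r_{z'},\mathrm{Id})$ on $D(U)\times D(U^\bot)$. Hence its fixed locus inside $D^{r_U}$ corresponds to $D(U)^{r_{z'}}\times D(U^\bot)$, and by the defining property of $r_{z'}$ recalled just above, $D(U)^{r_{z'}}=\{z'\}$. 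Therefore
\[
(D^{r_U})^{r(z')} \;=\; \rho_U\bigl(\{z'\}\times D(U^\bot)\bigr)\;=\;\{\, z'\oplus z \mid z\in D(U^\bot)\,\}\;=\;D_{U,z'},
\]
the last equality being exactly the definition \eqref{rhoUz'} of $\rho_{U,z'}$.

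For the totally geodesic assertion I would invoke the well-known fact that the fixed point set of any group of isometries of a Riemannian manifold is a disjoint union of totally geodesic submanifolds (the argument being that at any fixed point the differential preserves the tangent space, and geodesics starting in a fixed tangent direction are pointwise fixed). Both $r_U$ and $r(z')$ lie in $G$ (in case A, $r_U$ preserves the Hermitian form because $\pm\mathrm{Id}$ is unitary on each orthogonal piece; in cases B and C, $r_{z'}$ was chosen from the center of $G_{z'}$, and $r_U$ is constructed analogously) and therefore act by isometries of the $G$-invariant Riemannian metric on $D$. The two involutions commute, so they generate a finite abelian group $\cong (\Z/2)^2$ of isometries whose common fixed set is precisely $D^{r_U,r(z')}$, giving the totally geodesic claim.

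The argument is essentially a bookkeeping exercise, and the main (mild) obstacle is to be sure that in cases B and C — where $D$ is realized as an open subset of a Lagrangian Grassmannian rather than an ordinary Grassmannian — the identification $D^{r_U}\cong D(U)\times D(U^\bot)$ and the resulting product decomposition of the $r(z')$-action are legitimate. Since this identification is already taken for granted in the construction of $\rho_U$ just above the statement, the product decomposition of $r(z') = r_{z'}\oplus \mathrm{Id}_{U^\bot}$ propagates uniformly across the three cases, so no extra work is required.
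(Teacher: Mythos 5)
Your proof is correct and matches the paper's approach: the paper establishes the same two identifications $D^{r_U}=D(U,U^\bot)$ and $D(U,U^\bot)^{r(z')}=D_{U,z'}$ in the paragraph immediately preceding the proposition, with the proposition following as a consequence. Your write-up simply makes explicit the product-decomposition bookkeeping and the standard totally-geodesic-fixed-locus fact that the paper leaves implicit.
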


$ i_{z'}$, $\rho_U$ and $\rho_{U,z'}$ induce  maps (still denoted as  $ i_{z'}$, $\rho_U$ and $\rho_{U,z'}$) of locally symmetric spaces
\[\Gamma_U\backslash D_U\rightarrow \Gamma^{\sigma_U}\backslash D(U,U^\bot),\ \Gamma^{\sigma_U}\backslash D(U,U^\bot)\rightarrow \Gamma \backslash D ,\ \Gamma_U\backslash D_U\rightarrow \Gamma \backslash D\]
respectively.
Apparently the induced map $ i_{z'}$ is always  an embedding. But in general $\rho_U$ (hence $\rho_{U,z'}$) will not be injective and if this is the case the image of $\rho_{U,z'}$ will have self intersections and not be a manifold. The following lemma is Lemma 2.1 of \cite{KM90} which "resolves the singularity" of the image.
\begin{lemma}\label{resolutionofsingularity}
There is an arithmetic subgroup $\Gamma'\subseteq \Gamma$ of finite index such that the following diagram commutes, and $\rho'_U$ is an embedding.
\[ \begin{tikzcd}
(\Gamma')^{\sigma_U} \backslash D(U,U^\bot) \arrow{r}{\rho'_U} \arrow[swap]{d}{} & \Gamma' \backslash D \arrow{d}{} \\%
\Gamma^{\sigma_U} \backslash D(U,U^\bot) \arrow{r}{\rho_U}& \Gamma \backslash D.
\end{tikzcd}
\]
In particular, $\rho'_{U,z'}:\Gamma'_U \backslash D_U \rightarrow \Gamma'\backslash D$ is also an embedding.
\end{lemma}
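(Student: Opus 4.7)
The plan is to verify injectivity of $\rho'_U$; the immersion property comes for free from Proposition \ref{fixpointofpairofinvolution}, which makes $X := D(U, U^\bot)$ totally geodesic in $D$. Setting $\Gamma_{X,X} := \{\gamma \in \Gamma : \gamma X \cap X \neq \emptyset\}$, injectivity of $\rho'_U$ reduces to the condition $\Gamma' \cap \Gamma_{X,X} \subseteq (\Gamma')^{\sigma_U} = \Gamma' \cap \Gamma^{\sigma_U}$.

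First I would show that $\Gamma^{\sigma_U} \backslash \Gamma_{X,X} / \Gamma^{\sigma_U}$ is finite. Since $\Gamma^{\sigma_U} \backslash X$ is compact (a locally symmetric manifold of the algebraic centralizer $\mathbb{H} := Z_\mathbb{G}(r_U)$, built as in Section \ref{locallysymmetricspace}), pick a compact $K \subset X$ with $\Gamma^{\sigma_U} \cdot K = X$. For $\gamma \in \Gamma_{X,X}$, choose $x_0 \in X$ with $\gamma x_0 \in X$ and write $x_0 = \alpha k$, $\gamma x_0 = \beta k'$ with $\alpha, \beta \in \Gamma^{\sigma_U}$ and $k, k' \in K$; then $\beta^{-1} \gamma \alpha$ lies in $F := \{g \in \Gamma : gK \cap K \neq \emptyset\}$, a finite set by proper discontinuity of $\Gamma$ on $D$. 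Hence $\Gamma_{X,X}$ splits as a finite union of double cosets; pick representatives $\gamma_1 = e, \gamma_2, \ldots, \gamma_N$ with $\gamma_i \notin \Gamma^{\sigma_U}$ for $i \geq 2$.

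Next I would take $\Gamma' := \Gamma \cap \Gamma(\mathfrak{m})$ for a principal congruence subgroup with $\mathfrak{m}$ deep enough that $\gamma_i \notin \mathbb{H}(\Oo_k) \cdot \Gamma'$ for each $i \geq 2$. Such $\mathfrak{m}$ exists because $\mathbb{H}(\Oo_k)$ is closed in the congruence topology on $\mathbb{G}(\Oo_k)$: for each $\gamma_i \notin \mathbb{H}$, some polynomial $f$ cutting out $\mathbb{H}$ has $f(\gamma_i) \neq 0$ in $\Oo_k$, and any $\mathfrak{m}$ coprime to $f(\gamma_i)$ suffices. Such $\Gamma'$ is normal in $\Gamma$ of finite index and inherits neatness from $\Gamma$. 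For any $\gamma = \alpha \gamma_i \beta \in \Gamma' \cap \Gamma_{X,X}$ with $\alpha, \beta \in \Gamma^{\sigma_U}$ and $i \geq 2$, normality of $\Gamma'$ yields $\gamma_i \in \Gamma^{\sigma_U} \Gamma' \Gamma^{\sigma_U} = \Gamma^{\sigma_U} \Gamma'$, contradicting the choice; hence $\rho'_U$ is an embedding, and the commutative diagram is immediate.

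For the ``in particular'' on $\rho'_{U,z'}$: any $(g_1, g_2) \in (\Gamma')^{\sigma_U}$ with $g_1 z' = z'$ has $g_1$ in the compact stabilizer $G(U)_{z'}$ and, by the construction of Section \ref{locallysymmetricspace}, $g_1$ also lies in compact groups at the other Archimedean places; Kronecker's theorem then forces every eigenvalue of $g_1$ to be a root of unity, while neatness of $\Gamma'$ excludes non-trivial roots of unity, so $g_1$ is unipotent, and a unipotent element of a compact group is trivial. Thus the induced map $\Gamma'_U \backslash D_U \hookrightarrow (\Gamma')^{\sigma_U} \backslash D(U, U^\bot)$ coming from $i_{z'}$ is injective, and composing with $\rho'_U$ gives the embedding $\rho'_{U,z'}$. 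I expect the main obstacle to be the congruence-closedness input for $\mathbb{H}(\Oo_k)$; everything else is a geometric unfolding of the double coset structure of $\Gamma_{X,X}$.
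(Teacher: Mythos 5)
Your argument is correct. The paper delegates this lemma entirely to Lemma 2.1 of \cite{KM90} without reproducing a proof, and your route — cutting $\Gamma_{X,X}$ down to finitely many $\Gamma^{\sigma_U}$-double cosets via cocompactness of $\Gamma^{\sigma_U}\backslash D(U,U^\bot)$ and proper discontinuity, then using rationality of $Z_{\mathbb G}(r_U)$ (valid since $U$ and the form are defined over $k$) together with the congruence topology to pick a normal $\Gamma'$ missing the nontrivial coset representatives — is exactly the standard mechanism behind that reference. The Kronecker/neatness argument you append for the ``in particular'' is also fine and makes explicit what the paper only records as the apparent fact that $i_{z'}$ is always an embedding; in the paper's setup this is automatic because $\Gamma$ is chosen neat from the start in Section \ref{locallysymmetricspace}, so only the $\rho'_U$ part genuinely requires the congruence-avoidance argument.
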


We denote the image of $\rho_{U,z'}$ by $C_{U,z'} $. We sometimes also use $C_{\bx,z'}$ to denote $C_{U,z'} $ for convenience but it really just depends on $U$.

\begin{theorem}\label{specialcyclesaresubvarieties}
$C_{U,z'}$ is an algebraic subvariety in $M=\Gamma\backslash D$.
\end{theorem}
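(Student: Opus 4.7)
The plan is to realize $C_{U,z'}$ as a closed complex analytic subset of $M$ and then invoke Chow's theorem, since $M = \Gamma \backslash D$ is a complex projective variety by Baily--Borel.

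First I would verify that $D_{U,z'} \subset D$ is a closed complex submanifold. By Proposition \ref{fixpointofpairofinvolution}, $D_{U,z'} = D^{r_U, r(z')}$ is the simultaneous fixed point set of the two commuting involutions of $D$ induced by $r_U$ and $r(z')$. On a Hermitian symmetric domain the geodesic symmetry at any point acts on the holomorphic tangent space by $-\mathrm{Id}$, which is $\C$-linear, so both involutions act holomorphically on $D$; the fixed locus of a holomorphic involution on a complex manifold is a closed complex submanifold, and the same holds for the intersection of two such loci for a commuting pair. Alternatively, one can read this off directly from the Grassmannian descriptions of Section \ref{locallysymmetricspace}: in each case the map $\rho_{U,z'} \colon D_U \to D$, $z \mapsto z' \oplus z$, is given by a polynomial formula in Pl\"ucker-type coordinates and embeds $D_U$ as an open subset of the sub-(Lagrangian) Grassmannian associated to $U^\bot$, which is manifestly a holomorphic embedding.

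Next I would descend to $M$. By Lemma \ref{resolutionofsingularity}, pass to a finite-index subgroup $\Gamma' \subseteq \Gamma$ for which $\rho'_{U,z'} \colon \Gamma'_U \backslash D_U \hookrightarrow \Gamma' \backslash D =: M'$ is an embedding. Since $(,)$ is anisotropic on $V$, its restriction to $U^\bot$ is again anisotropic, so $\Gamma'_U \backslash D_U$ is compact, and by the previous step it is a compact complex submanifold of the projective variety $M'$; Chow's theorem then makes $C'_{U,z'}$ an algebraic subvariety of $M'$. Finally, the covering map $M' \to M$ is a finite (proper, surjective) morphism of projective varieties, so the image $C_{U,z'}$ is a closed analytic subset by Remmert's proper mapping theorem and hence algebraic by Chow/GAGA. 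The only delicate point I foresee is verifying holomorphicity and commutativity of the two involutions in a uniform way across cases A, B and C; the cleanest route is the case-by-case argument via the Grassmannian description in Section \ref{locallysymmetricspace}, where the complex structure on $D$ and the action of the involutions are transparent.
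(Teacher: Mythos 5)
Your proposal is correct and follows essentially the same route as the paper: both establish that $D_{U,z'}$ is a complex analytic submanifold of $D$ by viewing it as the fixed-point locus of holomorphic involutions, pass to a deeper subgroup $\Gamma'$ via Lemma~\ref{resolutionofsingularity} to get an embedding, apply Chow's theorem upstairs, and conclude by pushing forward along the finite covering $\Gamma'\backslash D \to \Gamma\backslash D$. The only cosmetic difference is that the paper uses a single involution $r_U$ plus the factor structure of $D(U,U^\bot)$ where you use the double fixed-point description $D^{r_U,r(z')}$ of Proposition~\ref{fixpointofpairofinvolution}, and the paper justifies finiteness of the covering via GAGA/Serre and the Stacks Project where you invoke Remmert; both are valid.
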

\begin{proof}
First let us assume that the map $\rho_{U,z'}: \Gamma_U \backslash D_U\rightarrow \Gamma \backslash D$ is an embedding.
The subsymmetric space $D_{U,z'}$ is a complex analytic subvariety of $D(U,U^\bot)$ since it is a factor. We know that $r_U$ is an isometry of $D$, hence automatically an holomorphic map (Lemma 4.3 of \cite{Helgason}). Hence $D(U,U^\bot)=D^{r_U}$ is a complex analytic subvariety of $D$. So $D_{U,z'}$ is a complex analytic subvariety of $D$. Since $\rho_{U,z'}$ is an embedding, locally $D_{U,z'}$ and $C_{U,z'}$ are defined by the same analytic equations. In particular, $C_{U,z'}$ is complex analytic. So it is a complex algebraic subvariety of $\Gamma \backslash D$ as well by the main theorem of \cite{Chow}.

In the general case, we apply Lemma \ref{resolutionofsingularity}. By the previous argument we see that  $ C'_{U,z'}$ (the image of $\Gamma'_U \backslash D_U$ under $\rho'_{U,z'}$) is a complex algebraic subvariety of $\Gamma' \backslash D$. Since the map $ f:\Gamma'\backslash D \rightarrow \Gamma \backslash D$ is an analytic covering between complex projective varieties, it is automatically a regular map of complex projective algebriac varieties by \cite{Serre}. Hence $f$ is projective by Lemma 28.41.15 of Stack Project. Being a finite covering map, $f$ is automatically quasi-finite, hence a finite morphism. Hence $f$ is proper and in particular closed.  Then the image $C_{U,z'}=f(C'_{U,z'})$ is a closed subvariety of $\Gamma \backslash D$.
\end{proof}

\begin{definition}\label{definitionofspecialcycle}
 We call $C_{U,z'} $ (or $C_{\bx,z'}$) a generalized special cycle.
\end{definition}

We will need the following lemma later.
\begin{lemma}\label{resolutionofsingularity2}
The map $\Gamma_U \backslash D_U\overset{\rho_{U,z'}}{\longrightarrow} \Gamma \backslash D$ is a finite birational morphism onto its image. 
\end{lemma}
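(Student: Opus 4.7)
The plan is to leverage Lemma~\ref{resolutionofsingularity}, which supplies a finite-index (neat) arithmetic subgroup $\Gamma' \subseteq \Gamma$ for which the analogous map $\rho'_{U,z'} : \Gamma'_U\backslash D_U \to \Gamma'\backslash D$ is already an embedding, and descend to the desired statement for $\Gamma$ through the finite \'etale covers $\alpha:\Gamma'_U\backslash D_U\to\Gamma_U\backslash D_U$ and $\beta:\Gamma'\backslash D\to\Gamma\backslash D$ (of degrees $[\Gamma_U:\Gamma'_U]$ and $[\Gamma:\Gamma']$) appearing in the diagram of that lemma. To begin, $\rho_{U,z'}$ is a morphism of complex projective varieties: both source and target are smooth compact projective varieties by the Baily--Borel construction of Section~\ref{locallysymmetricspace}, and $\rho_{U,z'}$ is the quotient of the holomorphic map $D_U\to D$, $z\mapsto z'\oplus z$, so Serre's GAGA~\cite{Serre} upgrades it to an algebraic morphism, exactly as in the proof of Theorem~\ref{specialcyclesaresubvarieties}.

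For the finiteness statement I would use the commutative square from Lemma~\ref{resolutionofsingularity}. Properness of $\rho_{U,z'}$ is automatic, and the composition $\beta\circ\rho'_{U,z'}=\rho_{U,z'}\circ\alpha$ has finite fibers because $\rho'_{U,z'}$ is injective and $\beta$ is finite; since $\alpha$ is finite and surjective, the fibers of $\rho_{U,z'}$ are finite as well, bounded generically by $[\Gamma:\Gamma']/[\Gamma_U:\Gamma'_U]$. A proper quasi-finite morphism is finite, which handles this step.

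The birationality of $\rho_{U,z'}$ onto $C_{U,z'}$ reduces to showing that the generic fiber is a single point. Given a generic $p=z'\oplus w\in D_{U,z'}$, the fiber in $\Gamma_U\backslash D_U$ corresponds to $\Gamma_U$-orbits of $w'\in D_U$ satisfying $\gamma(z'\oplus w)=z'\oplus w'$ for some $\gamma\in\Gamma$. For generic $w$ such a $\gamma$ must preserve the orthogonal decomposition $V=U\oplus U^\bot$ and fix $z'$, so $\gamma$ lies in $H:=\Gamma\cap(G(U)_{z'}\times G(U^\bot))$. Neatness of $\Gamma$ makes $\Gamma\cap G(U)_{z'}$ a torsion-free discrete subgroup of the compact group $G(U)_{z'}$, hence trivial, so the projection $\pi_2:H\to G(U^\bot)$ is injective and identifies the generic fiber with $\Gamma_U\backslash\pi_2(H)$. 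One then argues, again via neatness together with the compactness of $G(U)_{z'}$, that $\pi_2(H)=\Gamma_U$, giving a one-point generic fiber.

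The main obstacle is this final identification $\pi_2(H)=\Gamma_U$, that is, ruling out ``mixed'' elements $\gamma=\gamma_1\gamma_2\in\Gamma$ with $\gamma_1\in G(U)_{z'}\setminus\{1\}$ and $\gamma_2\in G(U^\bot)$. When the direct neatness argument is delicate, one can deduce it from the degree identity $\deg(\rho_{U,z'})\cdot[\Gamma_U:\Gamma'_U]=\deg(\beta|_{C'_{U,z'}})$ coming from the commutative square (using $\deg(\rho'_{U,z'})=1$), and reduce the problem to computing $\deg(\beta|_{C'_{U,z'}})$ via orbit counting on $\Gamma/\Gamma'$, which is a more tractable combinatorial statement about how many $\Gamma'$-cosets in $\Gamma$ carry $D_{U,z'}$ into itself.
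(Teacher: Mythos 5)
Your plan tracks the paper's argument step for step: GAGA makes $\rho_{U,z'}$ an algebraic morphism of projective varieties, the commutative square from Lemma~\ref{resolutionofsingularity} gives quasi-finiteness which together with projectivity gives finiteness, and birationality comes down to showing that the generic fiber is a single point. The paper packages this last step differently --- it exhibits a proper analytic subset $V=\bigcup_{\gamma\in\Gamma\setminus\Gamma^{\sigma_U,\sigma_{z'}}}V_\gamma$ of $\Gamma_U\backslash D_U$ and asserts $\rho_{U,z'}$ is injective off $V$ --- but the group-theoretic content is exactly your $\pi_2(H)=\Gamma_U$: one must rule out a mixed $\gamma=(\gamma_1,\gamma_2)\in H:=\Gamma\cap(G(U)^{\sigma_{z'}}\times G_U)$ with $\gamma_1\neq 1$ supplying an extra preimage.

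That step is the one real gap in your write-up, and the degree-identity workaround does not close it: unwinding the definitions gives $\deg(\beta|_{C'_{U,z'}})=[H:H']$ with $H'=\Gamma'\cap(G(U)^{\sigma_{z'}}\times G_U)$, and since Lemma~\ref{resolutionofsingularity} already forces $\pi_2(H')=\Gamma'_U$, the identity collapses back to $\deg(\rho_{U,z'})=[\pi_2(H):\Gamma_U]$ --- the very quantity you set out to compute. The direct argument you gestured at does work, however, and deserves to be spelled out. Any $\gamma\in H$ preserves the $k$-rational splitting $V=U\oplus U^\perp$ and the lattice $\mathcal{L}$, so $\gamma_1=\gamma|_U$ is a $k$-rational point of $G(U)$ stabilizing the lattice $\mathcal{L}\cap U$. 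At $v_1$ it lies in the compact stabilizer $G(U)^{\sigma_{z'}}$, and at every other Archimedean place $G(U)_v\subset G_v$ is already compact; so $\gamma_1$ lies in a discrete subgroup of a compact group and has finite order. Its eigenvalues, which are roots of unity, sit among those of the block-diagonal $\gamma\in\Gamma$, and neatness of $\Gamma$ forces the group they generate to be torsion-free, so every eigenvalue of $\gamma_1$ is $1$; a finite-order (hence semisimple) element with only eigenvalue $1$ is the identity, so $\gamma_1=1$ and $\gamma\in\Gamma_U$. The paper leaves this point tacit, absorbed into the earlier remark that $i_{z'}$ is always an embedding and the bald assertion of injectivity off $V$.
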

\begin{proof}\
Since $g D_{U,z'}=D_{gU,gz'}$, we know that the stabilizer of $D_{U,z'}$ is exactly $G^{\sigma_U,\sigma_{z'}}=G(U)^{\sigma_{z'}}\times G_U$.
$\forall \gamma \in \Gamma-\Gamma^{\sigma_U,\sigma_{z'}}$, define
\[D_\gamma=D_{U,z'} \cap \gamma D_{U,z'}.\]
Then $D_\gamma$ is an analytic subset of $D_{U,z'}$. We claim that it is a proper subset. Otherwise $\gamma$ is in the stabilizer of $D_{U,z'}$. Hence $\gamma\in \Gamma^{\sigma_U,\sigma_{z'}}$, a contradiction. 

The image $V_\gamma$ of $D_\gamma$ under the natural quotient map $D_U\rightarrow \Gamma_U \backslash D_U$ is a proper analytic sub variety of $\Gamma_U \backslash D_U$. Define
\[V=\bigcup_{\gamma \in \Gamma-\Gamma^{\sigma_U,\sigma_{z'}}}V_\gamma.\]
Then $\rho_{U,z'}:\Gamma_U \backslash D_U\rightarrow \Gamma\backslash D$ is injective outside $V$.

The map $\rho_{U,z'}:\Gamma_U\backslash D_U\rightarrow \Gamma\backslash D$ factors through $\phi:\Gamma_U\backslash D_U\rightarrow C_{U,z'}$. As in the situation of Lemma \ref{resolutionofsingularity}, we have the following  commutative diagram 
\[ \begin{tikzcd}
\Gamma'_U \backslash D_U \arrow{r}{\phi'} \arrow[swap]{d}{} & C'_{U,z'} \arrow{d}{} \\%
\Gamma_U \backslash D_U \arrow{r}{\phi}& C_{U,z'}.
\end{tikzcd}
\]
By the above commutative diagram and the fact that $\phi'$ is an isomorphism $\phi$  is quasifinite. It is a projective morphism by Lemma 28.41.15 of Stack Project, hence is a finite morphism. By the argument in the previous paragraph, it is injective outside a set of measure $0$ with respect to the measure defined by the K\"ahler metric on $D_U$. Hence the degree of the finite morphism $\rho_{U,z'}$ must be $1$. It must be a birational morphism.
\end{proof}

\begin{proposition}\label{homologyindependent}
The homology class $[C_{U,z'}]\in H_*(\Gamma \backslash D)$ does not depends on the choice of $z'\in D(U)$.
\end{proposition}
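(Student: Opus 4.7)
My plan is to exhibit a continuous family of maps indexed by $D(U)$ that interpolates between $\rho_{U, z'_0}$ and $\rho_{U, z'_1}$ for any two basepoints $z'_0, z'_1 \in D(U)$, and then to appeal to homotopy invariance of singular homology combined with Lemma \ref{resolutionofsingularity2}.

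First I form the total assembly map
\begin{equation*}
H : D(U) \times (\Gamma_U \backslash D_U) \longrightarrow M, \qquad H(z', [z]) = \Gamma \cdot (z' \oplus z),
\end{equation*}
which specializes to $\rho_{U,z'}$ on each slice $\{z'\} \times (\Gamma_U \backslash D_U)$. This map is well-defined: any $\gamma \in \Gamma_U = \Gamma \cap G(U^\bot)$ fixes $U$ pointwise by the very definition of $G(U^\bot)$, so $\gamma z' = z'$ and hence $\gamma(z' \oplus z) = z' \oplus \gamma z$, showing that $H(z', [\gamma z]) = H(z', [z])$. Continuity follows from the continuity of the direct-sum operation on subspaces realized inside the ambient Grassmannian (as in Section \ref{locallysymmetricspace}), post-composed with the quotient map $D \to M$.

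Next I observe that $D(U) = G(U)/K_{G(U)}$ is a symmetric space whose factor corresponding to the place $v_1$ is of non-compact type and whose remaining factors are compact and connected, so $D(U)$ is in particular path-connected. For any two points $z'_0, z'_1 \in D(U)$, choose a path $c : [0,1] \to D(U)$ joining them; then $(t, [z]) \mapsto H(c(t), [z])$ is a continuous homotopy between $\rho_{U, z'_0} = H(z'_0, \cdot)$ and $\rho_{U, z'_1} = H(z'_1, \cdot)$. Homotopy invariance of singular homology yields
\begin{equation*}
(\rho_{U, z'_0})_* \;=\; (\rho_{U, z'_1})_* : H_*(\Gamma_U \backslash D_U) \longrightarrow H_*(M).
\end{equation*}

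It remains to identify $[C_{U,z'}]$ with $(\rho_{U,z'})_*[\Gamma_U \backslash D_U]$. By Lemma \ref{resolutionofsingularity2}, $\rho_{U,z'}$ is a finite birational morphism onto $C_{U,z'}$, so it has topological degree one onto its image and $(\rho_{U,z'})_* [\Gamma_U \backslash D_U] = [C_{U,z'}]$ in $H_*(M)$. Evaluating the previous display on the fundamental class and applying this identification at $z' = z'_0$ and $z' = z'_1$ gives $[C_{U, z'_0}] = [C_{U, z'_1}]$, as desired. The bulk of the argument is routine once the assembly map $H$ is in hand; the only points requiring verification are that $H$ descends through the $\Gamma_U$-action, which relies on the fact that $\Gamma_U \subset G(U^\bot)$ fixes $U$, and the identification of the pushforward of the fundamental class with the cycle class, which is precisely what Lemma \ref{resolutionofsingularity2} supplies.
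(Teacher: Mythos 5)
Your argument is essentially the same as the paper's: both proofs connect $z'_0$ and $z'_1$ by a path in $D(U)$, use $c(t) \oplus z$ to build a homotopy between $\rho_{U,z'_0}$ and $\rho_{U,z'_1}$ (well-defined modulo $\Gamma_U$ because $\Gamma_U \subset G(U^\bot)$ fixes $U$), and conclude by homotopy invariance of singular homology. The one place you go further than the paper is in explicitly invoking Lemma \ref{resolutionofsingularity2} to identify $[C_{U,z'}]$ with $(\rho_{U,z'})_*[\Gamma_U\backslash D_U]$ when $\rho_{U,z'}$ fails to be an embedding — a point the paper's proof leaves implicit — and that extra care is correct and worth having.
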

\begin{proof}
For any two $z', z'' \in D(U)$, there is a continuous path 
\[c:[0,1]\rightarrow D(U)\]
such that $c(0)=z'$, $c(1)=z''$. Thus we can define a map $D_U \times [0,1]\rightarrow D $ by
\[(z,t)\mapsto  c(t)\oplus z.\]
Since $\Gamma_U$ fixes $U$, this map defines a map $\Gamma_U \backslash D_U \times [0,1]\rightarrow  \Gamma\backslash D $  which is a homotopy equivalence between two different embeddings of $\Gamma_U \backslash D_U$.
\end{proof}
\begin{remark}
From now on we specify the choice of embedding if necessary, otherwise we use the notation $[C_U]$ or  $[C_{\bx}]$ to refer the homology class of $C_{U,z'}$.
\end{remark}

We now illustrate the above abstract construction in case A.
\subsection{Example: Case A}
Recall that the symmetric space $D$ can be identified with the set of negative $q$-planes in $V$
		\[D=\{z\in \Gr_q(V)\mid (,)|_z <0 \}.\]
Let $U$ be a $F$-subspace of $V$. If $(,)|_{U}$ has signature $(r,s)$, then we have $G(U^\bot)\cong \Uni(p-r,q-s)$ and
		\[D_U=\{z\in \Gr_{q-s}(U^\bot)\mid   (,)|_z <0 \}.\]
The choice of a point $z'\in D(U)$ is the same as a choice of an orthogonal decomposition of $U$
\[U=U^{+}\oplus U^{-}\]
with respect to $(,)$ such that $(,)|_{U^{+}}$ is positive definite and $(,)|_{U^{-}}$ is negative definite. Given such a decomposition of $U$ we have $z'=U^-$ and is $s$ dimensional. 
Under the embedding $\rho_{U,z'}$ defined in \eqref{rhoUz'} we have 
\begin{equation}\label{Dlambda0}
    D_{U,z'}=\{z\in D\mid U^{-}\subseteq z \subseteq (U^{+})^\bot\}.
\end{equation}

\begin{proposition}\label{DU=DXcapDY}
When $U$ is positive or negative definite, the above embedding is canonically defined. To be more precise, the choice of $z'$ is unnecessary and we have $D_{U,z'}=D(U,U^\bot)$. Moreover we have
\begin{enumerate}
    \item When $U$ is positive definite, $ D_U=\{z\in D\mid  z \subseteq U^\bot\}$.
    \item When $U$ is negative definite, $ D_U=\{z\in D\mid U \subseteq z  \}$.
\end{enumerate}
\end{proposition}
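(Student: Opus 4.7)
The plan is to unpack the two pieces of the claim directly from the definitions, reducing everything to the observation that a definite Hermitian subspace has a compact unitary isometry group, whose symmetric space is a single point.

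First I would check that the choice of $z'\in D(U)$ is empty of content. If $U$ is positive definite of dimension $r$, then $(,)\vert_U$ has signature $(r,0)$, so $G(U)\cong \rU(r)$ is compact. Its symmetric space $D(U)=G(U)/G(U)$ is a single point. Likewise, if $U$ is negative definite of dimension $s$, then $(,)\vert_U$ has signature $(0,s)$, so $G(U)\cong \rU(s)$ and $D(U)$ is again a single point. In either case the ``choice'' $z'\in D(U)$ is canonical, and so is the embedding $\rho_{U,z'}$.

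Next I would identify which single point $z'$ is. Recall that choosing $z'\in D(U)$ is equivalent to giving an orthogonal decomposition $U=U^+\oplus U^-$ with $(,)\vert_{U^\pm}$ definite of the specified sign, and then $z'=U^-$ (cf.\ the discussion preceding \eqref{Dlambda0}). If $U$ is positive definite, the unique such decomposition is $U=U\oplus 0$, so $z'=\{0\}$; if $U$ is negative definite, the unique decomposition is $U=0\oplus U$, so $z'=U$. Plugging into \eqref{Dlambda0} gives
\[
D_{U,z'}=\{z\in D\mid \{0\}\subseteq z\subseteq U^\bot\}=\{z\in D\mid z\subseteq U^\bot\}
\]
in the positive definite case, and
\[
D_{U,z'}=\{z\in D\mid U\subseteq z\subseteq V\}=\{z\in D\mid U\subseteq z\}
\]
in the negative definite case, which are the two formulas asserted.

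Finally, for the identity $D_{U,z'}=D(U,U^\bot)$, I would appeal to the factorization $\rho_{U,z'}=\rho_U\circ i_{z'}$ together with $\rho_U\colon D(U)\times D(U^\bot)\xrightarrow{\sim} D(U,U^\bot)$. Since $D(U)$ is a point, the inclusion $i_{z'}\colon D_U\hookrightarrow D(U)\times D(U^\bot)$ is an equality, and hence $\rho_{U,z'}$ has image exactly $D(U,U^\bot)$. There is no real obstacle here; the only thing to double-check is that the orientation/$K$-fixed point description of $D(U)$ is indeed a single point whenever $G(U)$ is compact, which is immediate. Thus the proposition reduces to two tautological computations after one notes that definiteness forces $G(U)$ to be compact.
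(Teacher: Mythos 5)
Your proof is correct and follows essentially the same route as the paper: both observe that definiteness of $U$ makes $G(U)$ compact so $D(U)$ is a point, deduce $D_{U,z'}=D(U,U^\bot)$ from the factorization through $D(U)\times D(U^\bot)$, and read off the two explicit descriptions from equation \eqref{Dlambda0}. The only difference is that you spell out the identification $z'=\{0\}$ or $z'=U$ explicitly, which the paper leaves implicit.
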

\begin{proof}
When $U$ is positive (negative resp.) definite, the group $G(U)$ is compact and  the symmetric space $D(U)$ consists of a single point $z'$. Since
$D(U,U^\bot)=D(U)\times D_U$, the first statement is proved.
The statements in the enumeration follow from equation (\ref{Dlambda0}).
\end{proof}

\begin{remark}\label{definitionofKM}
When $U$ is positive (resp. negative) definite, we simply denote $C_{U,z'}$ by $C_U$. This is the situation in the work of Kudla and Millson (\cite{KMCompo} \cite{KMI},\cite{KMII} and \cite{KM90}). $C_U$ is called a special cycle there. 
\end{remark}

\subsection{}\label{fibrationpi}
Now we construct a fibration $\pi:D \to D_{U,z'}$. 
For each $z_0\in D_{U,z'}$, we define
\[N_{z_0} D_{U,z'}=\{v\in T_{z_0} D\mid v\perp T_{z_0} D_{U,z'} \}.\]
Then $N D_{U,z'}=\bigcup_{z_0\in D_{U,z'}} N_{z_0} D_{U,z'}$ is the normal bundle of $D_{U,z'}$ in $D$.
Then the Riemannian exponential map induces a map $F: N D_{U,z'} \to D$ by the formula
\begin{equation}
    F(z_0,v) = \exp_{z_0}(v).
\end{equation}
The image of the line through $v\in N_{z_0} D_{U,z'}$ under the exponential map is a geodesic through $z_0$ orthogonal to $D_{U,z'}$. Since $D_{U,z'}$ is totally geodesic in $D$ which is negatively curved, Theorem 14.6 of \cite{Helgason} tells us that 
$D$ is the disjoint union of the geodesics which are perpendicular to $D_{U,z'}$.  Moreover by a standard Jacobi field calculation we can show that
\begin{lemma} \label{expofnormalbundlefibering}
$F:N D_{U,z'} \to D$ is a diffeomorphism.
\end{lemma}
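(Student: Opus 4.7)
The plan is as follows. Smoothness of $F$ is automatic from smoothness of the Riemannian exponential. Bijectivity is already supplied by the quoted Theorem 14.6 of \cite{Helgason}: $D$, being a Hermitian symmetric space of noncompact type, is a Hadamard manifold, and $D_{U,z'}$ is a complete totally geodesic submanifold (Proposition \ref{fixpointofpairofinvolution} together with the fact that it is the fixed locus of commuting isometries $r_U, r(z')$). Each $p\in D$ therefore lies on a unique geodesic perpendicular to $D_{U,z'}$, which is exactly the statement that $F$ is a bijection of sets. It remains to prove that $dF$ is everywhere nonsingular; by dimension count, injectivity suffices, and then $F$ is a bijective local diffeomorphism, hence a diffeomorphism.

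Fix $(z_0,v)\in ND_{U,z'}$ and write a tangent vector there as $(X,W)$ with $X\in T_{z_0}D_{U,z'}$ and $W\in N_{z_0}D_{U,z'}$ (horizontal and vertical summands of the natural connection on the normal bundle). Choose a curve $c(s)\in D_{U,z'}$ with $c(0)=z_0$, $c'(0)=X$ and a section $V(s)\in N_{c(s)}D_{U,z'}$ with $V(0)=v$ and $\tfrac{DV}{ds}(0)=W$. The variation $\alpha(s,t)=\exp_{c(s)}(tV(s))$ is a family of geodesics, so $J(t)=\partial_s\alpha(s,t)|_{s=0}$ is the Jacobi field along $\gamma(t)=\exp_{z_0}(tv)$ with initial data $J(0)=X$ and $J'(0)=W$, and the standard first-variation identity gives $dF_{(z_0,v)}(X,W)=J(1)$.

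Suppose $J(1)=0$ and set $f(t)=\|J(t)\|^2$. The crucial observation is that $X\in T_{z_0}D_{U,z'}$ and $W\in N_{z_0}D_{U,z'}$ are orthogonal, so
\[
f'(0)=2\langle J'(0),J(0)\rangle=2\langle W,X\rangle=0.
\]
Using the Jacobi equation $J''=-R(J,\gamma')\gamma'$ and the fact that $D$ has nonpositive sectional curvature,
\[
f''(t)=2\|J'(t)\|^2-2\langle R(J,\gamma')\gamma',J\rangle\geq 0,
\]
so $f$ is convex. A convex function with $f'(0)=0$ satisfies $f(t)\geq f(0)$ for all $t\geq 0$, hence $\|J(1)\|^2\geq \|X\|^2$. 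The assumption $J(1)=0$ therefore forces $X=0$. With $X=0$, the Jacobi field $J$ vanishes at both $t=0$ and $t=1$; since Hadamard manifolds have no conjugate points, $J\equiv 0$, giving $W=J'(0)=0$ as well. Hence $dF$ is injective.

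The main point of the argument is the orthogonality $X\perp W$: it kills $f'(0)$ and reduces nonsingularity of $dF$ to the two standard Hadamard inputs, namely convexity of $\|J\|^2$ along nonpositively curved geodesics and the absence of conjugate points. Without the orthogonality one would have to work much harder (a Pythagorean/nearest-point-projection comparison argument), but here it comes for free from the very definition of $ND_{U,z'}$. All other steps (smoothness of the exponential map and appeal to Helgason's theorem for bijectivity) are routine.
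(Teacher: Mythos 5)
Your proof is correct and carries out exactly the "standard Jacobi field calculation" that the paper alludes to but does not spell out: identify $dF_{(z_0,v)}(X,W)$ with the Jacobi field $J$ along $\gamma(t)=\exp_{z_0}(tv)$ having $J(0)=X$, $J'(0)=W$ (using total geodesy of $D_{U,z'}$ so the normal and ambient covariant derivatives of $V$ agree), exploit $X\perp W$ and nonpositive curvature to show $\|J\|^2$ is convex with vanishing initial derivative, conclude $X=0$ and then $W=0$ by absence of conjugate points, and combine with the Helgason-supplied bijectivity. This is precisely the argument the paper intends.
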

For $g\in G_U$ one can check that the two geodesics $g F(z_0,vt)$ and $F(gz_0,(L_g)_* vt)$ are the same by checking that they have the same starting point (at $t=0$) and have the same derivative there. Hence we know that 
\begin{equation}\label{FisGequivariant}
    g F(z_0,v)=F(gz_0,(L_g)_* v), \forall g\in G_U.
\end{equation}

By Lemma \ref{expofnormalbundlefibering} we can define $\pi:D\rightarrow D_{U,z'}$ by the equation
\begin{equation}\label{eq:D to D_U projection}
    \pi (F(z_0,v))=z_0.
\end{equation}
$\pi:D\rightarrow D_{U,z'}$ is isomorphic to $ND_{U,z'}\rightarrow D_{U,z'} $ as a fiber bundle. 
We denote by
\begin{equation}\label{fiber}
    F_{z_0} D_{U,z'}:= \pi^{-1}(z_0),
\end{equation}
the fiber of $\pi$ for any $z_0\in D_{U,z'}$.
By equation \eqref{FisGequivariant} and the definition of $\pi$, we see that if $z=F(z_0,v)$ then
\begin{equation}\label{eq:equivariant fibration}
    \pi(g z)=\pi(g F(z_0,v))=\pi(F(g z_0,(L_g)_* v))=g z_0=g\pi(z)
\end{equation}
for any $g\in G_U$. Thus $\pi$ induces a fibration $\pi:\Gamma_U \backslash D\rightarrow \Gamma_U \backslash D_{U,z'}$ which we still denote by $\pi$.

\begin{remark}
In case A when $U$ is positive or negative definite, $F_{z_0} D_{U,z'}$ as in \eqref{fiber} is a sub symmetric space of $D$. We refer the readers to \cite{KM90} for an explanation of this fact as we do not need it in this paper. Except for these cases, $F_{z_0} D_{U,z'}$ is not a sub symmetric space.
\end{remark}

\part{Relative Lie Algebra cohomology of the
Weil representation}

\section{The Weil representation and dual pairs}\label{dualpair}
Let $G$ be one of the algebraic group over $k$ introduced in Section \ref{compactquotient}. Then we can define another algebraic group $G'$ over $k$ such that $(G,G')$ forms a dual pair in the sense of \cite{Howe1}. Let $\bA$ be the ring of Adeles of $k$, then the Weil representation is a certain function space on which (a double cover of) $(G\times G')(\bA)$ acts. 
The main tool of this paper is relative Lie algebra cohomology with values in the Weil representation. However we need two different models of the Weil representation. One is called the Schr\"odinger model, where $G$ acts "geometrically", which is the natural model when we do differential geometry on $D$.
The other model is the Fock model, where the maximal compact group of $(G\times G')_\infty$ acts in a nice way. The Fock model is indispensable for the construction of Anderson's holomorphic forms (\cite{Anderson}). In this section we briefly recall knowledge of the two models and write down formulas of the intertwining operators between the two models. We also review seesaw dual pairs \eqref{seesawone} and \eqref{seesawtwo}.

\subsection{Dual reductive pairs and the Schr\"odinger model}\label{dualreductivepairs}
Let $(B,\sigma)$ be defined as in \eqref{B}. Let $\epsilon$ be $-1$ or $1$. In subsection \ref{compactquotient} we define a non-degenerate $\epsilon $-Hermitian form $(,)$ on a right $B$ vector space $V$. We can also define a non-degenerate $-\epsilon$-Hermitian form $\langle,\rangle$ on a left $B$ vector space $W$ which satisfy 
\[\langle b_1 w_1, b_2 w_2\rangle=b_1 \langle w_1,w_2\rangle b^\sigma_2,\ \langle w_1,w_2\rangle=-\epsilon \langle w_2,w_1\rangle^\sigma,\]
for  $w_1,w_2\in W$ and $b_1,b_2\in B$. Let $G$ be defined as in \eqref{G} and
\begin{equation}
G'=\{g' \in \GL_B(W)\mid \langle w_1 g', w_2 g'\rangle=\langle w_1,w_2\rangle \forall w_1,w_2 \in W \}.    
\end{equation}
We view $G$ and $G'$ as algebraic groups over $k$. Let 
\[n=dim_B V,\ 2m=dim_B W.\]
Also let
\[\W =V\otimes_B W \text{ and }  \langle \langle,\rangle \rangle=\mathrm{tr}_{B/k} ((,)\otimes \langle,\rangle^\sigma).\]

So that $\W$ is a $k$-vector space with the non-degenerate symplectic $\langle \langle,\rangle \rangle $. Then $(G,G')$ is a dual reductive pair in the sense of \cite{Howe2}.
		
Now we assume that $(W,\langle,\rangle)$ is split over $B$. i.e. there is a decomposition
		\[W=X+Y\]
with $B$ subspaces $X$ and $Y$ which are isotropic for $\langle,\rangle$. We can choose a standard symplectic basis with respect to this decomposition. This choice of basis gives rise to an isomorphism 
\[G'\cong \left\{ \left( \begin{array}{cc}
		   a  & b \\
		   c  & d
\end{array}\right)\in \GL_{2n}(B) \mid ad^*-b c^*=1, ab^*=ba^*,cd^*=dc^*\right\}, \]
where $a^*= (a^\sigma)^t$ and to isomorphisms 
\[\W\cong V^{2m},\ \mathbb{X}:=V\otimes_B X\cong V^m,\ \mathbb{Y}:=V\otimes_B Y\cong V^m.\]
The parabolic subgroup $P'\subset G'$ which stabilizes $Y$ then has the form 
		\[P'=\left\{\left( \begin{array}{cc}
		   a  & b \\
		    0 & d
		\end{array}\right)\in G'\right\}\]
and has unipotent radical
\begin{equation}\label{N'}
		  N'=\left\{ n'(b)=\left(\begin{array}{cc}
		   1  & b \\
		   0  & 1
		\end{array}\right)\mid b\in M_m(B) \text{ with } b^* =\epsilon b\right\}  
\end{equation}
and Levi factor
\begin{equation}\label{M'}
		    M'=\left\{m'(a)=\left(\begin{array}{cc}
		   a  & 0 \\
		    0 & \hat{a}
		\end{array}\right)\mid a\in \GL_m(B)  \text{ and } \hat{a}=(a^{-1})^*   \right\}.
\end{equation}
Such a parabolic subgroup is called a Siegel parabolic in the literature. 		
		
Fix a non-trivial additive chracter $\psi$ of $k_\mathbb{A}$ trivial on $k$ and let 
\[(\omega, L^2(\mathbb{X}))\cong (\omega, L^2(V(\mathbb{A})^{m}))\]
be the Schr{\"o}dinger model of the global Weil representation of $\widetilde{\Sp(\W(\mathbb{A}))}$, the two fold metaplectic cover of $\Sp(\W(\mathbb{A}))$, corresponding to $\psi$ and the polarization (1.7) in \cite{Weil} and \cite{Howe2}. Let $\tilde{G}'(\mathbb{A})$ denote the inverse image of $G'(\mathbb{A})$ in $\widetilde{\Sp(\W(\mathbb{A}))}$. Then the action of $\tilde{G}'(\mathbb{A})$ in $L^2(V(\mathbb{A})^m)$ defined by the restriction of $\omega$ to $\tilde{G}'(\mathbb{A})$ commutes with the natural action of $G(\mathbb{A})$ defined by 
\[\omega(g)\varphi(\bx)=\varphi(g^{-1}\bx),\]
where $g\in G(\mathbb{A}),\varphi \in L^2(V(\mathbb{A})^m)$.
		
The action of the parabolic subgroup $ \tilde{P}'(\mathbb{A})$ of $\tilde{G}'(\mathbb{A})$ is easy to describe. Fix a section of the covering $\widetilde{\Sp(\W(\mathbb{A}))}\rightarrow \Sp(\W(\mathbb{A}))$ and hence an identification:
\[\widetilde{\Sp(\W(\mathbb{A}))} \cong  \Sp(\W(\mathbb{A})) \times \mu_2.\]
Then 
\begin{equation}\label{M'action}
		   \omega(m'(a),\zeta)\varphi(\bx)=\zeta |a|^{\frac{m}{2}} \varphi(\bx a) ,
\end{equation}
where $|a|$ is the modulus of multiplying by $a$ on $B(\bA)^n$. Also
\begin{equation}\label{n'action}
		    \omega(n'(b),\zeta) \varphi(\bx)=\zeta \psi(\frac{1}{2}tr(b(\bx,\bx)))\varphi(\bx).
\end{equation}


For the rest of the section, we study real groups. So we switch notation and let $B=\C$ or $\H$, $V$ be a real vector space and $G$ be a real Lie group, etc.

\subsection{}\label{infinitesimalFockmodel} 
In this subsection we recall the construction of the infinitesimal Fock model together with the intertwining operator from the infinitesimal Fock model to the Schr\"odinger model by Section 6 of \cite{KM90}. The key to the construction of the intertwining operator is the action of Weyl algebra. 
In later subsections, we specialize to different dual pairs.

Let $\W$ be a be a vector space over $\R$ with a non-degenerate skew-symmetric form $\langle \langle,\rangle \rangle$ and $J_0$ be a positive definite almost complex structure (i.e. the form $\langle \langle ,  J_0 \rangle \rangle$ is a positive definite symmetric form) on $\W$. We may decompose $\W\otimes \C$ according to
\[\W\otimes \C= \W'+\W'',\]
where $\W'$ is the $+i$ eigenspace of $J_0$ and $\W''$ is the $-i$ eigenspace of $J_0$. Notice that both $\W'$ and $\W''$ are isotropic for $\langle \langle,\rangle \rangle$.

Choose a nonzero complex parameter $\lambda$. Define $\mathcal{W}_\lambda$ to be the quotient of the tensor algebra $T^{\bullet}(\W\otimes \C)$ of the complexification of $\W$ by the ideal generated by the elements $x\otimes y-y\otimes x-\lambda \langle\langle x,y \rangle \rangle 1$, where $x,y\in \W$.
Then $\mathcal{W}_\lambda$ is called the Weyl algebra in the literature.
Let $p:T^{\bullet}(\W\otimes \C)\rightarrow \mathcal{W}_\lambda $ be the quotient map. Clearly $p(T^{\bullet}(\W'))=\Sym^{\bullet}(\W') $ and $p(T^{\bullet}(\W''))=\Sym^{\bullet}(\W'') $. Let $\mathcal{I}$ be the left ideal in $\mathcal{W}_\lambda$ generated by $\W'$. Then $\mathcal{P}_\lambda:=\mathcal{W}_\lambda / \mathcal{I}$ is a $\mathcal{W}_\lambda$-module by left multiplication. The natural projection $\Sym^\bullet (\W'')\rightarrow \mathcal{W}_\lambda$ induces an isomorphism onto $\mathcal{P}_\lambda$ and we obtain an action of $\mathcal{W}_\lambda$ by left multiplication.
The infinitesmal Fock model is 
\begin{equation}
    \mathcal{P}_\lambda\cong\Sym^\bullet (\W")\cong \Pol(\W'),
\end{equation}
where the second isomorphism is induced by the non-degenerate pairing $\langle\langle,\rangle\rangle$ between $\W"$ and $\W'$.

There is an embedding of Lie algebra $j:\mathfrak{sp}(\W\otimes \C) \rightarrow \mathcal{W}_\lambda$, so $ \mathcal{P}_\lambda$ is a $\mathfrak{sp}(\W\otimes \C) $ module (see page 150 of \cite{KM90}). This induces an action $\omega_\lambda$ of $\mathfrak{sp}(\W\otimes \C) $ on $\mathcal{P}_\lambda$.

Explicitly let $\{e_1,\ldots,e_N,f_1,\ldots f_N \}$ be a symplectic  basis for $\W$ such that
\begin{equation}\label{symplecticbasis}
		    J_0 e_j=f_j \text{ and } J_0 f_j=-e_j
\end{equation}
for $1\leq j \leq N$. Define 
\[w'_j=e_j -  f_j  i\text{ and } w''_j=e_j + f_j i\]
for $1\leq j \leq N$. Then $\{w'_1,\ldots, w'_N \}$ (resp. $\{w''_1,\ldots, w''_N \} $) is a basis for $\W'$ (resp. $\W''$). Let $u_j$ be the linear functional on $\W'$ given by 
		\[u_j(w')=\langle \langle w',w''_j \rangle \rangle.\]
Then $\mathcal{P}_\lambda=\Sym^\bullet (\W'')$ can be identified with $\Pol(\W')\cong \Pol(\C^N)=\C[u_1,\ldots,u_N]$. Denote by $\rho_\lambda$ the action of $\mathcal{W}_\lambda $ on $\Pol(\C^N)$. We have (Lemma 6.1 of \cite{KM90})
\begin{lemma}\label{wjaction}
\
	\begin{enumerate}
		    \item $\rho_\lambda(w''_j)=u_j$. \
		    \item $\rho_\lambda (w'_j)=2 i \lambda \frac{\partial}{\partial u_j}$.
	\end{enumerate}
\end{lemma}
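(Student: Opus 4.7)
The proof plan is a direct calculation in the Weyl algebra using the defining quadratic relation.

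For part (1), I would simply unfold the identifications. The Fock space is $\mathcal{P}_\lambda \cong \Sym^\bullet(\W'')$, realized as a quotient by the left ideal $\mathcal{I}$ generated by $\W'$, and left multiplication by any $w'' \in \W''$ preserves $\Sym^\bullet(\W'') \subset \mathcal{W}_\lambda$. Under the isomorphism $\Sym^\bullet(\W'') \cong \Pol(\W')$ induced by the pairing $\langle\langle,\rangle\rangle$, the vector $w''_j \in \W''$ corresponds by construction to the linear functional $u_j$ on $\W'$. Hence left multiplication by $w''_j$ becomes multiplication by $u_j$, which is (1).

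For part (2), the key is to evaluate $\langle\langle w'_j, w''_i \rangle\rangle$ in the symplectic basis \eqref{symplecticbasis}. Using $\langle\langle e_j, f_i\rangle\rangle = \delta_{ij}$, $\langle\langle f_j, e_i\rangle\rangle = -\delta_{ij}$, and the vanishing of $\langle\langle,\rangle\rangle$ on the $e$'s alone and on the $f$'s alone, a two-line expansion of $\langle\langle e_j - i f_j,\, e_i + i f_i\rangle\rangle$ yields $\langle\langle w'_j, w''_i\rangle\rangle = 2i\delta_{ij}$. Combined with the defining relation of the Weyl algebra, this gives the commutator identity
\[
w'_j \cdot w''_i \;=\; w''_i \cdot w'_j \;+\; 2i\lambda\,\delta_{ij}
\]
in $\mathcal{W}_\lambda$.

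With this commutator in hand, I would argue by induction (or equivalently by iterating the commutator, which is a derivation in each factor) that
\[
w'_j \cdot (w''_{i_1}\cdots w''_{i_k}) \;=\; \sum_{\ell=1}^k 2i\lambda\,\delta_{j,i_\ell}\, w''_{i_1}\cdots \widehat{w''_{i_\ell}}\cdots w''_{i_k} \;+\; w''_{i_1}\cdots w''_{i_k}\cdot w'_j.
\]
Passing to the quotient $\mathcal{P}_\lambda = \mathcal{W}_\lambda/\mathcal{I}$ kills the trailing factor of $w'_j \in \W' \subset \mathcal{I}$, and under the identification $\Sym^\bullet(\W'') \cong \C[u_1,\ldots,u_N]$ the remaining sum is exactly $2i\lambda\,\partial/\partial u_j$ applied to the monomial $u_{i_1}\cdots u_{i_k}$. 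By $\C$-linearity this extends to all polynomials, proving (2).

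There is no real obstacle here beyond bookkeeping: the whole content is that the pairing constant $\langle\langle w'_j, w''_i\rangle\rangle$ evaluates to $2i\delta_{ij}$ and that $[w'_j,\cdot]$ is a derivation on $\Sym^\bullet(\W'')$ modulo $\mathcal{I}$. The only place one can slip is in the sign and factor of $i$ in the symplectic pairing of $w'_j$ and $w''_i$, so I would double-check that computation against the normalization of $\langle\langle,\rangle\rangle$ used throughout Section~\ref{dualpair}.
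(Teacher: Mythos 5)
Your proposal is correct, and it is the standard direct computation. Note that the paper does not actually prove this lemma itself — it cites Lemma~6.1 of \cite{KM90} — so there is no in-paper argument to compare against; you have simply supplied the routine verification that the citation stands in for. Both halves check out: with $w'_j = e_j - if_j$, $w''_i = e_i + if_i$ and the normalization $\langle\langle e_j, f_i\rangle\rangle = \delta_{ij}$, one indeed gets $\langle\langle w'_j, w''_i\rangle\rangle = 2i\delta_{ij}$ (the two $f\wedge f$ and $e\wedge e$ terms drop, the two cross terms add), hence $w'_j w''_i - w''_i w'_j = 2i\lambda\delta_{ij}$ in $\mathcal{W}_\lambda$; iterating this commutator across a monomial in the $w''$'s and then killing the residual right factor $w'_j \in \mathcal{I}$ in the quotient $\mathcal{P}_\lambda = \mathcal{W}_\lambda/\mathcal{I}$ gives exactly $2i\lambda\,\partial/\partial u_j$, while part (1) is just the tautological identification of $w''_j$ with the coordinate functional $u_j(\cdot) = \langle\langle \cdot, w''_j\rangle\rangle$ under $\Sym^\bullet(\W'')\cong\Pol(\W')$.
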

From now on we specialize to the case $\lambda= 2\pi i$ and let $\mathcal{P}=\mathcal{P}_{2 \pi i}$, $\omega=\omega_{2\pi i}$ and $\rho=\rho_\lambda$.
If we decompose $\W$ as 
\[\W=\mathbb{X}\oplus \mathbb{Y},\]
where $\mathbb{X}$ and $\mathbb{Y}$ are Lagrangian subspaces of $\W$. The Schr\"odinger model can be viewed as the set of Schwartz functions $\mathcal{S}(\mathbb{X})$ on $\mathbb{X}$. Explicitly if we assume 
\[\mathbb{X}=\Span\{e_1,\ldots,e_N\},\ \mathbb{Y}=\Span\{f_1,\ldots,f_N\},\]
then we have
\begin{equation}\label{ejfjaction}
    \rho(e_j)=\frac{\partial}{\partial x_j},\ \rho(f_j)=2\pi i x_j,
\end{equation}
where $\{x_1,\ldots,x_N\}$ are coordinate functions with respect to the basis $\{e_1,\ldots,e_N\}$. We define
\begin{equation}
    \varphi_0=\exp(-\pi \sum_{i=1}^N x_i^2).
\end{equation}
$\varphi_0$ is the unique vector in $\mathcal{S}(\mathbb{X})$ that is annihilated by $\rho(w_j')$ for all $1\leq j \leq N$. Then under the Weil representation $\varphi_0$
is fixed by $\widetilde{\Uni(N)}$ which is a maximal compact subgroup of $\Mp(2N,\R)$. 	There a unique $\mathcal{W}_\lambda$-intertwining (thus $\mathfrak{sp}(\W)$-intertwining) operator
\begin{equation}\label{eq:intertwining operator}
    \iota:\mathcal{P}\rightarrow\mathcal{S}(\mathbb{X}).
\end{equation}
The image $\iota(\mathcal{P})$ is exactly the $\widetilde{\Uni(N)}$-finite vectors in the Schrodinger model which consists of functions on $\mathbb{X}$ of the form $p({\bx})\varphi_0({\bx}) $, where $p({\bx})$ is a polynomial function on $\mathbb{X}$. 
More specifically, we know that
\begin{equation}\label{iotaof1}
    \iota(1)=\varphi_0.
\end{equation}
Using Lemma \ref{wjaction} and equation \eqref{ejfjaction} one can see immediately that (Lemma 6.3 of \cite{KM90})
\begin{lemma}
\[\iota(u_j)\iota^{-1}=\frac{\partial}{\partial x_j}-2 \pi x_j,\ \iota(-4\pi \frac{\partial}{\partial u_j})\iota^{-1}=\frac{\partial}{\partial x_j}+2 \pi x_j,\]
where $u_j$ and $\frac{\partial}{\partial u_j}$ are regarded as operators in $\mathcal{W}_\lambda$.
\end{lemma}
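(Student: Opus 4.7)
The plan is to exploit the defining property of the intertwiner: since $\iota:\mathcal{P}\to \mathcal{S}(\mathbb{X})$ is an isomorphism of $\mathcal{W}_\lambda$-modules, it satisfies $\iota\circ \rho_{\mathcal{P}}(X) = \rho_{\mathcal{S}(\mathbb{X})}(X)\circ \iota$ for every $X\in \mathcal{W}_\lambda$, or equivalently $\iota\,\rho_{\mathcal{P}}(X)\,\iota^{-1} = \rho_{\mathcal{S}(\mathbb{X})}(X)$. The lemma therefore reduces to identifying the two operators on the right-hand side with $\rho_{\mathcal{S}(\mathbb{X})}(w''_j)$ and $\rho_{\mathcal{S}(\mathbb{X})}(w'_j)$ respectively, and then reading off the actions of $w''_j$ and $w'_j$ in both models.

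On the Schr\"odinger side, I would start from the definitions $w''_j = e_j + i f_j$ and $w'_j = e_j - i f_j$ and substitute the formulas $\rho(e_j) = \partial/\partial x_j$ and $\rho(f_j) = 2\pi i\, x_j$ from \eqref{ejfjaction}, obtaining
\[
\rho(w''_j) = \tfrac{\partial}{\partial x_j} + i\,(2\pi i\, x_j) = \tfrac{\partial}{\partial x_j} - 2\pi x_j, \qquad \rho(w'_j) = \tfrac{\partial}{\partial x_j} - i\,(2\pi i\, x_j) = \tfrac{\partial}{\partial x_j} + 2\pi x_j.
\]
On the Fock side, Lemma \ref{wjaction} directly states $\rho_\lambda(w''_j) = u_j$ and $\rho_\lambda(w'_j) = 2i\lambda\,\partial/\partial u_j$. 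Specializing to the value $\lambda = 2\pi i$ fixed immediately before the lemma, the factor becomes $2i\lambda = 2i(2\pi i) = -4\pi$, so $\rho(w'_j) = -4\pi\,\partial/\partial u_j$ as operators on $\mathcal{P}$.

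Combining these two computations via the intertwining identity gives
\[
\iota(u_j)\iota^{-1} = \iota\,\rho_{\mathcal{P}}(w''_j)\,\iota^{-1} = \rho_{\mathcal{S}(\mathbb{X})}(w''_j) = \tfrac{\partial}{\partial x_j} - 2\pi x_j,
\]
and
\[
\iota\bigl(-4\pi\tfrac{\partial}{\partial u_j}\bigr)\iota^{-1} = \iota\,\rho_{\mathcal{P}}(w'_j)\,\iota^{-1} = \rho_{\mathcal{S}(\mathbb{X})}(w'_j) = \tfrac{\partial}{\partial x_j} + 2\pi x_j,
\]
which are the two identities of the lemma.

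I do not expect any real obstacle here: once Lemma \ref{wjaction}, equation \eqref{ejfjaction}, and the existence of the Weyl-algebra intertwiner $\iota$ are in hand, the lemma is simply transport of structure along the Stone--von Neumann isomorphism. The only bookkeeping point worth underlining is the careful tracking of the factors of $i$ arising from the choice $\lambda = 2\pi i$ and from the complex basis $w'_j = e_j - if_j$, $w''_j = e_j + if_j$; otherwise the argument is purely mechanical.
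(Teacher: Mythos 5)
Your proof is correct and follows exactly the argument the paper intends but does not spell out: conjugation by the $\mathcal{W}_\lambda$-intertwiner $\iota$ transports the Fock-model realizations $\rho_\lambda(w''_j)=u_j$ and $\rho_\lambda(w'_j)=2i\lambda\,\partial/\partial u_j=-4\pi\,\partial/\partial u_j$ (at $\lambda=2\pi i$) to the Schr\"odinger-model realizations computed from $w''_j=e_j+if_j$, $w'_j=e_j-if_j$ and \eqref{ejfjaction}. No gaps; the sign and factor bookkeeping are handled correctly.
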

Since $\iota$ intertwines the $\mathcal{W}_\lambda $ action, the above lemma and \eqref{iotaof1} determine the map $\iota$ completely (see Lemma \ref{intertwiner}).

In the rest of the section by using the above framework we are going to write down coordinate functions of the Fock and the Schr\"odinger model for the dual pairs 
\begin{equation}
    (\Uni(n,n),\Uni(m,m')),\ (\Sp(2n,\R),\Orth(2m,2m')) \text{ and } (\Orth^*(2n),\Sp(m,m')).
\end{equation}
The Fock or Schr\"odinger model of the three dual pairs are the same since they share the same $(\W,\langle\langle,\rangle \rangle)$. In fact the dual pairs can be put into two seesaw dual pairs:
\begin{equation}\label{seesawone}
		    \xymatrixrowsep{0.3in}
		\xymatrixcolsep{0.3in}
		\xymatrix{ \Uni(n,n) &\Orth(2m,2m')\\
			\Sp(2n,\R)\ar[u] \ar[ur] & \Uni(m,m'), \ar[u]\ar[ul]}
\end{equation}
and
\begin{equation}\label{seesawtwo}
		    \xymatrixrowsep{0.3in}
		\xymatrixcolsep{0.3in}
		\xymatrix{ \Uni(n,n) &\Sp(m,m')\\
			\Orth^*(2n)\ar[u] \ar[ur] & \Uni(m,m'). \ar[u]\ar[ul]}
\end{equation}

\subsection{Case A: the $(\Uni(p,q),\Uni(m,m'))$ dual pair}\label{FockmodelUpq}
Let $V$ be a $p+q$ dimensional right complex vector space and $(,)$ be a non-degenerate skew Hermitian form on $V$ with signature $(p,q)$ satisfying \eqref{(,)}. Choose an orthogonal basis $\{v_1,\ldots,v_p,v_{p+1},\ldots, v_{p+q}\}$ of $V$ such that 
\begin{equation}\label{standardbasisUpq}
    (v_\alpha,v_\alpha)=-i \text{ and }  (v_\mu,v_\mu)=i
\end{equation}
for $1\leq \alpha \leq p, p+1 \leq \mu \leq p+q$ (in this subsection we keep this convention of index). 
		
Let $W$ be a $m+m'$ dimensional complex vector space with a non-degenerate  Hermitian form $\langle,\rangle$ of signature $(m,m')$ satisfying 
\[\langle hw,h'w'\rangle=h\langle w_1,w_2\rangle\overline{h'}\]
for $h,h'\in \C$ and $w,w' \in W$. We assume that choose an orthogonal basis $\{w_1,\ldots,w_{m+m'}\}$ of $W$ such that 
\begin{equation}
    \langle w_a,w_a\rangle=1 ,\ \langle w_k,w_k\rangle=-1 
\end{equation}
for $1\leq a \leq m, m+1\leq k \leq m+m'$ (in this subsection we keep this convention of index).
		
Define $\W=V\otimes_\C W$ and $\langle\langle,\rangle\rangle_h$ on $\W$ by
\begin{equation}\label{bigunitaryform}
  \langle\langle v\otimes w, \tilde{v}\otimes \tilde{w}\rangle\rangle_h=(v,\tilde{v})\langle\tilde{w},w\rangle.  
\end{equation}
One checks easily that $\langle\langle,\rangle\rangle_h$ is a skew Hermitian form that is anti-linear in the first variable and linear in the second variable. Define $\langle\langle,\rangle\rangle=\mathrm{Re}\langle\langle,\rangle\rangle_h$, then $\langle\langle,\rangle\rangle$ is a symplectic form on the underlying real vector space of $\W$.
		
Define $J_0=i I_{p,q}\otimes I_{m,m'}$, where $I_{a,b}$ is the matrix
\[\left(\begin{array}{cc}
		    I_a &  0\\
		    0 & -I_b
\end{array}\right).\]
Then $J_0$ is a positive definite complex structure for the symplectic form $\langle \langle,\rangle \rangle$.
		
Now define $\W_\C=\W\otimes_\R \C  \cong V\otimes_\C(W\otimes_\R \C)$. Denote the new complex structure by right multiplication by $i$. Define 
\begin{align}\label{wprime}
    w'_a=w_a-i w_a i, \ & w''_a=w_a+i w_a i,\\
    w'_k=w_k+i w_k i, \ & w''_k=w_k-i w_k i. \nonumber
\end{align}
$\W_\C=\W'\oplus \W''$, where $\W'$($\W''$ resp.) is the $+i$($-i$ resp.) eigenspace of $J_0$. Then we have
\begin{align*}
    &\W'=\Span_\C\{v_\alpha \otimes w'_a, v_\mu \otimes w''_a, v_\alpha \otimes w'_k, v_\mu \otimes w''_k \}, \\
    &\W''=\Span_\C\{v_\alpha \otimes w''_a, v_\mu \otimes w'_a, v_\alpha \otimes w''_k, v_\mu \otimes w'_k \}.
\end{align*}
	
Define linear functionals on $\W'$:
\begin{align}
    u_{\alpha a}^+(x)=\langle\langle x,v_\alpha \otimes w''_a\rangle\rangle,\  &
    u_{\mu a}^+(x)=\langle\langle x, v_\mu \otimes w'_a \rangle\rangle ,\\ \nonumber
    u_{\alpha k}^-(x)=\langle\langle x,v_\alpha \otimes w''_k\rangle\rangle,\  &
    u_{\mu k}^-(x)=\langle\langle x, v_\mu \otimes w'_k\rangle\rangle
\end{align}
for $x\in \W'$. We can now identify $\Sym^\bullet(\W'')\cong \Pol(\W')$ with the space of polynomials in complex variables $\{u_{ i a}^+,u_{i k}^-\mid 1\leq i \leq p+q,1\leq a \leq m,m+1\leq k\leq m+m'\}$ and this will be the Fock model $\mathcal{P}_\lambda$.

Now we assume $m=m'$. Define
\[e_a=\frac{1}{\sqrt{2}}(w_a-w_{a+m}),\ f_a=\frac{1}{\sqrt{2}}(w_a+w_{a+m})\]
for $1\leq a \leq m$. Then $E=\Span\{e_1,\ldots ,e_m\}$
is a Lagrangian subspace of $W$.
The Schr\"odinger model of the Weil representation is given by the space of Schwartz functions on $\mathcal{S}(V\otimes_\C E)\cong\mathcal{S}(V^m)$ on $V^m$. We use complex coordinates ${\bf z}=(z_1,\ldots,z_m)$
with $z_j= (z_{1 j},\ldots,z_{p+q, j})^t $,
where $z_{kj}=x_{kj}+i y_{kj} $ ($1\leq k \leq p+q$) is the coordinate function of the $j$-th copy of $V$ with respect to the basis $\{v_1\otimes e_j,\ldots,v_{p+q}\otimes e_j\}$.

The Weil representation of $\mathfrak{sp}(\W,\langle\langle,\rangle\rangle)$ now arises from the action of Weyl algebra $\mathcal{W}_\lambda$. Using Lemma 2.2 of \cite{kudlanotes}, it is easy to derive the following formulas.
\begin{align}\label{weylalgebraaction}
	\rho_\lambda(v_\alpha\otimes w'_a)=& \frac{1}{\sqrt{2}}(-\lambda i \bar{z}_{\alpha a}+2 \frac{\partial}{\partial z_{\alpha a}}), & \rho_\lambda(v_\mu \otimes w''_a)=& \frac{1}{\sqrt{2}}(-\lambda i z_{\mu a}+2\frac{\partial}{\partial \bar{z}_{\mu a}}),\\ \nonumber
	\rho_\lambda(v_\alpha\otimes w'_{a+m})=& \frac{1}{\sqrt{2}}(\lambda i z_{\alpha a}-2\frac{\partial}{\partial \bar{z}_{\alpha a}}), & \rho_\lambda(v_\mu \otimes w''_{a+m})=& \frac{1}{\sqrt{2}}(\lambda i \bar{z}_{\mu a}-2\frac{\partial}{\partial z_{\mu a}}),\\ \nonumber
	\rho_\lambda(v_\alpha \otimes w''_a)=&\frac{1}{\sqrt{2}}(\lambda i z_{\alpha a}+2\frac{\partial}{\partial \bar{z}_{\alpha a}}), & \rho_\lambda(v_\mu \otimes w'_a)=& \frac{1}{\sqrt{2}}(\lambda i \bar{z}_{\mu a}+2\frac{\partial}{\partial z_{\mu a}}), \\ \nonumber
	\rho_\lambda(v_\alpha \otimes w''_{a+m})=& \frac{1}{\sqrt{2}}(-\lambda i \bar{z}_{\alpha a}-2 \frac{\partial}{\partial z_{\alpha a}}) , & \rho_\lambda(v_\mu \otimes w'_{a+m})=&\frac{1}{\sqrt{2}}(-\lambda i z_{\mu a}-2\frac{\partial}{\partial \bar{z}_{\mu a}}),
\end{align}
where $1\leq \alpha \leq p,p+1 \leq \mu \leq p+q$, $1\leq a \leq m $ and 
\[\frac{\partial}{\partial z_{j k}}=\frac{1}{2}(\frac{\partial}{\partial x_{j k}}-\frac{\partial}{\partial y_{j k}}i),\ \frac{\partial}{\partial \bar{z}_{j k}}=\frac{1}{2}(\frac{\partial}{\partial x_{j k}}+\frac{\partial}{\partial y_{j k}}i).\]
If we fix the parameter $\lambda=2 \pi i$, then we have
\[\varphi_0({\bf z})=\exp(-\pi \sum_{k=1}^{p+q} \sum_{a=1}^m |z_{k a}|^2).\]
$\iota:\mathcal{P}_\lambda\rightarrow \mathcal{S}(V^m)$ maps $1\in \mathcal{P}_\lambda$ to $\varphi_0$. From \eqref{weylalgebraaction} we have the following lemma.
\begin{lemma}\label{intertwiner}
	\begin{align*}
		    \iota ( u_{\alpha a}^+ )\iota^{-1}=&\frac{1}{\sqrt{2}}(-2\pi z_{\alpha a}+2\frac{\partial}{\partial \bar{z}_{\alpha a}}), & \iota ( u_{\mu a}^+)\iota^{-1}=& \frac{1}{\sqrt{2}}(-2\pi \bar{z}_{\mu a}+2\frac{\partial}{\partial z_{\mu a}}), \\
		    \iota ( u_{\alpha k}^- )\iota^{-1}=& \frac{1}{\sqrt{2}}(2\pi \bar{z}_{\alpha a}-2 \frac{\partial}{\partial z_{\alpha a}}) , & \iota ( u_{\mu k}^-)\iota^{-1}=&\frac{1}{\sqrt{2}}(2\pi z_{\mu a}-2\frac{\partial}{\partial \bar{z}_{\mu a}})
	\end{align*}
for $1\leq \alpha \leq p,p+1 \leq \mu \leq p+q$, $1\leq a \leq m $, $k=a+m$.
In particular, if $p$ is a monomial in the variables $\{u_{\alpha a}^+,u_{\mu a}^+,u_{\alpha k}^-,u_{\mu k}^-\mid 1\leq \alpha \leq p,p+1 \leq \mu \leq p+q,1\leq a \leq m,m+1\leq k \leq 2m\}$:
\[p=\prod_{\alpha=1}^p \prod_{\mu=p+1}^{p+q} \prod_{a=1}^m \prod_{k=m+1}^{2m} (u_{\alpha a}^+)^{d_{\alpha a}} (u_{\mu a}^+)^{d_{\mu a}} (u_{\alpha k}^-)^{d_{\alpha k}} (u_{\mu k}^-)^{d_{\mu k}},\]
then we have 
	\begin{align*}
		    \iota(p)=&\prod_{\alpha=1}^p \prod_{\mu=p+1}^{p+q} \prod_{a=1}^m \prod_{k=m+1}^{2m} (\sqrt{2})^{d_{\alpha a}+d_{\mu a}}(-\sqrt{2})^{d_{\alpha k}+d_{\mu k}} \\
		    &\cdot (\frac{\partial}{\partial \bar{z}_{\alpha a}}-\pi z_{\alpha a})^{d_{\alpha a}} (\frac{\partial}{\partial z_{\mu a}}-\pi \bar{z}_{\mu a})^{d_{\mu a}}(\frac{\partial}{\partial z_{\alpha a}}-\pi \bar{z}_{\alpha a})^{d_{\alpha k}}(\frac{\partial}{\partial \bar{z}_{\mu a}}-\pi z_{\mu a})^{d_{\mu a}} \varphi_0.
	\end{align*}
\end{lemma}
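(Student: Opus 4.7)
The plan is to deduce both statements directly from the intertwining property of $\iota$, combined with the normalization $\iota(1)=\varphi_0$ recorded in \eqref{iotaof1} and the explicit Weyl-algebra formulas \eqref{weylalgebraaction}. The key dictionary to set up is the following. Under the chain of isomorphisms $\mathcal{P}_\lambda \cong \Sym^\bullet(\W'') \cong \Pol(\W')$ fixed in Section \ref{infinitesimalFockmodel}, each of the four generator types $u_{\alpha a}^+$, $u_{\mu a}^+$, $u_{\alpha k}^-$, $u_{\mu k}^-$ corresponds, respectively, to one of the basis vectors $v_\alpha \otimes w_a''$, $v_\mu \otimes w_a'$, $v_\alpha \otimes w_k''$, $v_\mu \otimes w_k'$ of $\W''$ (with $k=a+m$). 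Multiplication by such a generator on $\mathcal{P}_\lambda$ is then, by construction, the same as left multiplication by the corresponding element of $\W''$ in $\mathcal{W}_\lambda$ modulo $\mathcal{I}$.

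For the first four displayed identities, the intertwining property of $\iota$ gives, for every $\xi \in \W''$,
$$\iota \circ m_\xi \circ \iota^{-1} \;=\; \rho_\lambda(\xi),$$
where $m_\xi$ denotes multiplication by the polynomial corresponding to $\xi$. Applying this to each of the four $\xi$'s in the dictionary above, reading off $\rho_\lambda(\xi)$ from the relevant lines of \eqref{weylalgebraaction}, and specializing to $\lambda=2\pi i$ reproduces the four claimed formulas directly; in particular the sign flip between the $u^+$ and $u^-$ rows comes from the sign difference between the third and fourth rows of \eqref{weylalgebraaction}.

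For the monomial formula, two observations suffice. First, all of the relevant basis vectors $v_\alpha \otimes w_a''$, $v_\mu \otimes w_a'$, $v_\alpha \otimes w_k''$, $v_\mu \otimes w_k'$ lie in $\W''$, which is isotropic for $\langle\langle,\rangle\rangle$; the Weyl relations therefore imply that they commute pairwise in $\mathcal{W}_\lambda$, so their images under $\rho_\lambda$ commute as operators on $\mathcal{S}(V^m)$ and the product of operator powers below is unambiguous. Second, the monomial $p \in \mathcal{P}_\lambda$ is by definition the product of these generators, raised to the corresponding $d_{\bullet}$ powers, applied to $1 \in \mathcal{P}_\lambda$; iterating the intertwining identity and using $\iota(1)=\varphi_0$ yields
$$\iota(p) \;=\; \prod_{\alpha,\mu,a,k} \rho_\lambda(v_\alpha \otimes w''_a)^{d_{\alpha a}} \rho_\lambda(v_\mu \otimes w'_a)^{d_{\mu a}} \rho_\lambda(v_\alpha \otimes w''_k)^{d_{\alpha k}} \rho_\lambda(v_\mu \otimes w'_k)^{d_{\mu k}} \varphi_0.$$
Factoring the $\tfrac{1}{\sqrt{2}}$ out of each $\rho_\lambda$ and combining it with the explicit coefficient inside the operator gives $\sqrt{2}$ in the $u^+$ factors and $-\sqrt{2}$ in the $u^-$ factors, producing the overall constant $(\sqrt{2})^{d_{\alpha a}+d_{\mu a}}(-\sqrt{2})^{d_{\alpha k}+d_{\mu k}}$ together with the four creation-operator factors exactly as stated. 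There is no real obstacle here: once the dictionary $u \leftrightarrow w''$ is made explicit, the entire lemma is a mechanical consequence of \eqref{weylalgebraaction} and \eqref{iotaof1}, and the only point to monitor carefully is the commutativity of the creation operators that was just established.
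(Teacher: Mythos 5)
Your proof is correct and follows exactly the route the paper intends (the paper itself gives no explicit proof, just stating that the lemma follows from \eqref{weylalgebraaction}). You correctly identify the dictionary $u^\pm \leftrightarrow$ basis vectors of $\W''$, use the intertwining property of $\iota$ together with $\iota(1)=\varphi_0$, specialize $\lambda = 2\pi i$, and — a point the paper leaves implicit — justify the unambiguity of the operator product by noting that $\W''$ is isotropic, so the corresponding Weyl-algebra elements commute.
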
	
We will need the following lemma.
\begin{lemma}\label{highesttermofpolynomial}
Suppose $p$ is the same as in the previous lemma.
\[\iota(p)=\tilde{p}\varphi_0,\]
where $\tilde{p}$ is a polynomials of the variables $\{z_{\alpha a},z_{\mu a},\bar{z}_{\alpha a},\bar{z}_{\mu a}\mid 1 \leq \alpha \leq p,p+1 \leq \mu \leq p+q,1\leq a \leq m\}$ whose unique highest degree term (in every variable) is 
\[\prod_{\alpha=1}^p \prod_{\mu=p+1}^{p+q} \prod_{a=1}^m \prod_{k=m+1}^{2m} (-2\sqrt{2}\pi z_{\alpha a})^{d_{\alpha a}} (-2\sqrt{2}\pi \bar{z}_{\mu a})^{d_{\mu a}} (2\sqrt{2}\pi \bar{z}_{\alpha, k-m})^{d_{\alpha k}} (2 \sqrt{2}\pi z_{\mu, k-m})^{d_{\mu k}}.\]
\end{lemma}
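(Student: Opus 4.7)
The plan is to evaluate the right-hand side of Lemma \ref{intertwiner} by tracking only the top-degree monomial in the polynomial variables. Each of the four operators appearing there can be abbreviated as $D = \partial/\partial w - \pi w^\ast$ for the appropriate conjugate pair $(w,w^\ast)$, and a direct computation using $\partial\varphi_0/\partial w = -\pi w^\ast \varphi_0$ gives, for any polynomial $p$,
\[
D(p\varphi_0) = \bigl(\partial p/\partial w - \pi w^\ast p\bigr)\varphi_0 + p\cdot\partial\varphi_0/\partial w = \bigl(\partial p/\partial w - 2\pi w^\ast p\bigr)\varphi_0.
\]
Thus, after stripping off the Gaussian factor $\varphi_0$, $D$ is represented on polynomials by $\tilde D := \partial/\partial w - 2\pi w^\ast$, and iterated application yields $D^k\varphi_0 = \tilde D^k(1)\cdot\varphi_0$. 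Consequently, up to the explicit constant $(\sqrt 2)^{d_{\alpha a}+d_{\mu a}}(-\sqrt 2)^{d_{\alpha k}+d_{\mu k}}$ of Lemma \ref{intertwiner}, the polynomial $\tilde p$ equals a product $\prod_i \tilde D_i^{d_i}(1)$.

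Next I would verify the commutativity needed to make this product unambiguous. Operators attached to distinct index tuples act on disjoint pairs of variables and therefore commute trivially. The only non-trivial case is a fixed $(\alpha,a)$, for which both $\tilde D_1 = \partial/\partial\bar z_{\alpha a} - 2\pi z_{\alpha a}$ and $\tilde D_3 = \partial/\partial z_{\alpha a} - 2\pi\bar z_{\alpha a}$ are present; one computes
\[
[\tilde D_1,\tilde D_3] = [\partial/\partial\bar z_{\alpha a},-2\pi\bar z_{\alpha a}] + [-2\pi z_{\alpha a},\partial/\partial z_{\alpha a}] = -2\pi+2\pi = 0,
\]
and the same check applies to the paired operators $(\tilde D_2,\tilde D_4)$ attached to $(\mu,a)$. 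This is of course consistent with the fact that the corresponding $u^\pm$ are commuting variables in the Fock model $\Pol(\W')$.

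With commutativity established, a degree count isolates the leading term. Each $\tilde D_i$ splits as a degree-raising piece (multiplication by $-2\pi$ times a variable) and a degree-lowering piece (a first-order partial derivative). Applying a total of $N := \sum_i d_i$ such operators to the constant $1$, the highest possible total degree in $(z,\bar z)$ reached is $N$, and this maximum is attained if and only if at every step one selects the raising piece. Since these raising pieces commute and act on mutually distinct variables, the unique top-degree monomial of $\prod_i \tilde D_i^{d_i}(1)$ is
\[
\prod_{\alpha,\mu,a,k}(-2\pi z_{\alpha a})^{d_{\alpha a}}(-2\pi\bar z_{\mu a})^{d_{\mu a}}(-2\pi\bar z_{\alpha,k-m})^{d_{\alpha k}}(-2\pi z_{\mu,k-m})^{d_{\mu k}}.
\]
Multiplying by the prefactor $(\sqrt 2)^{d_{\alpha a}+d_{\mu a}}(-\sqrt 2)^{d_{\alpha k}+d_{\mu k}}$ converts each coefficient $-2\pi$ into the signed $\pm 2\sqrt 2\pi$ in the statement, finishing the identification of the unique highest-degree term. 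The only mildly delicate step is the commutator computation above; the rest of the argument is direct substitution and elementary degree counting, so I do not anticipate any serious obstacle.
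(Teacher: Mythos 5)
Your proof is correct and follows essentially the same strategy as the paper's: you reduce to tracking the leading term by exploiting commutativity of the conjugated operators, then observe that the top total degree is attained only by always choosing the multiplication-by-variable piece. The paper phrases this as a reduction to the single-variable case (via the factorization of $\varphi_0$) followed by ``an easy induction on the bi-degree,'' whereas you conjugate by $\varphi_0$ globally and do the degree count directly; this is a presentational difference only, and your explicit commutator check is a welcome supplement to the paper's terse assertion.
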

\begin{proof}
Since
\[\varphi_0=\prod_{\alpha=1}^{p+q} \prod_{a=1}^m \exp(-\pi |z_{\alpha a}|^2)\]
and the operators $\{\iota(u_{\alpha a}^+),\iota(u_{\mu a}^+),\iota(u_{\alpha k}^-),\iota(u_{\mu k}^-)\mid 1\leq \alpha \leq p,p+1 \leq \mu \leq p+q,1\leq a \leq m,m+1\leq k \leq 2m\}$ commute with each other, it suffices to prove the lemma for the case of one variable. That is 
\[(\frac{\partial}{\partial \bar{z}}-\pi z)^{d_1} (\frac{\partial }{\partial z}-\pi \bar{z})^{d_2}\cdot \exp(-\pi |z|^2)=\tilde{p} \exp(-\pi |z|^2) ,\]
where the highest degree term of $ \tilde{p}$ is $(-2\pi z)^{d_1} (-2\pi \bar{z})^{d_2}$. But this follows from an easy induction on the bi-dgree $(d_1,d_2)$.
\end{proof}

\subsection{The Dual Pair $(\Sp(2n,\R),\Orth(2r,2s))$}\label{twoWforcaseB}
The purpose of this subsection is to explain the relation between two different constructions of the fundamental module $\W$ of the dual pair $(\Sp(2n,\R),\Orth(2r,2s))$. One of them comes directly from our global construction of the algebraic group in Section \ref{locallysymmetricspace}. The other is what we actually use when studying (the relative Lie algebra cohomology of) the Weil representation. 

In this subsection let $B$ be $M_2(\R)$ and $V_B$ be a free right $B$ module of rank $n$. Let $\sigma$ be the anti-involution of $M_2(\R)$ defined by 
		\[\sigma(x)=J x^t J^{-1},\]
where $J=\left(\begin{array}{cc}
		   0  & -1 \\
		   1  & 0
\end{array}\right)$. Let $(,)_B$ be a Hermitian form on $V_B$ satisfying \eqref{(,)}.
Define $G$ to be the isometry group of $(V_B,(,)_B)$.
Let $e=e_{11}$, then $V_B e$ is a $2n$ dimensional real vector space. Recall from Section \ref{locallysymmetricspaceSp} that we can define a symplectic form $(,)_1$ on $V_B e$ by
\[(v e, v'e)_1 e_{21}=(v e,v' e)_B.\]
This implies that $G\cong \Sp(2n,\R)$.
Let $W_B$ be a free left $B$ module of rank $m$ and $\langle,\rangle_B$ be a skew Hermitian form on $W_B$ satisfying
		\[\langle b w, \tilde{b}\tilde{w}\rangle_B=b \langle w,\tilde{w}\rangle_B \tilde{b}^\sigma.\]
Define $G'$ to be the isometry group of $(W_B,\langle,\rangle_B)$.
$eW_B$ is a $2m$ dimensional real vector space. We can define a symmetric form $\langle,\rangle_\R$ on $eW_B$ by
		\[\langle e w,e w'\rangle_\R e_{12}=\langle e w, e w'\rangle_B.\]
This implies that $G'\cong \Orth(2r,2s)$.
		
Now let $\W=V_B\otimes_B W_B$. Then we have $\W\cong V_B e\otimes_\R e W_B$ as a real vector space and as a $\GL_B(V_B)\times \GL_B(W_B)$ module. In fact if one think of $v\in V_B$ as a $2n$ by $2$ matrix and $w\in W_B$ as a $2$ by $2m$ matrix, the tensor product $v\otimes_B w$ is just the matrix multiplication $vw$. Obviously one get the same space by tensoring $V_B e$ (the set of $2n$ by $1$ matrices) with $eW_B$ (the set of $1$ by $2m$ matrices). Moreover, $\GL_B(V_B)\cong \GL_{2n}(\R)$ (resp. $\GL_B(W_B)\cong \GL_{2m}(\R)$) acts by left (resp. right) multiplication on $\W$. 
		
One can define $\langle\langle,\rangle\rangle$ on $\W$ by 
		\[\langle\langle v\otimes w, \tilde{v}\otimes \tilde{w}\rangle\rangle=\mathrm{tr}_\R[(v,\tilde{v})_B\langle\tilde{w},w\rangle_B].\]
Then $\langle\langle,\rangle\rangle_\R$ is symplectic. One can also define $\langle\langle,\rangle\rangle_\R$ on $\W$ (regarded as $ V_B e\otimes_\R e W_B $) by 
\[\langle\langle v e\otimes e w, \tilde{v} e\otimes e \tilde{w}\rangle\rangle_\R=(v e,\tilde{v} e)_1 \langle e \tilde{w},e w\rangle_\R.\]
We have the following interesting fact
\begin{lemma}\label{twoformsareequal}
There exists a nonzero constant $c\in \R$ such that
		\[\langle\langle,\rangle \rangle=c\langle\langle,\rangle \rangle_\R\]
\end{lemma}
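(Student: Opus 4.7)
The plan is to pin down the constant by an explicit computation on elementary tensors, after fixing an $\R$-linear identification of the two models $V_B\otimes_B W_B$ and $V_B e\otimes_\R e W_B$ for $\W$. The natural map is
\[
\Phi:V_B e\otimes_\R e W_B\longrightarrow V_B\otimes_B W_B,\qquad ve\otimes_\R ew\longmapsto ve\otimes_B ew=v\otimes_B ew.
\]
Both sides have real dimension $4nm$, and surjectivity is straightforward: any $v\otimes_B w$ expands as $ve\otimes_B ew+ve^\sigma\otimes_B e^\sigma w$, and the second summand is moved into the image of $\Phi$ by the identity $e^\sigma=J^{-1}eJ$ with $J=e_{12}-e_{21}\in B$, which lets one rewrite it as $(vJ^{-1})e\otimes_B e(Jw)$. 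So $\Phi$ is an $\R$-linear isomorphism, and it suffices to verify the identity on pure tensors of the form $ve\otimes_\R ew$ and $\tilde v e\otimes_\R e\tilde w$.

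The key observation is that both $B$-valued forms, when restricted to the $e$-components, land in one-dimensional subspaces of $B$. Indeed the identity $(va,\tilde v\tilde a)_B=a^\sigma(v,\tilde v)_B\tilde a$ specializes to
\[
(ve,\tilde ve)_B=e^\sigma(v,\tilde v)_B e\in e^\sigma Be=\R e_{21},
\]
whose coefficient is precisely $(ve,\tilde ve)_1$ by the definition of $(,)_1$. Dually, $<e\tilde w,ew>_B=e<\tilde w,w>_Be^\sigma\in eBe^\sigma=\R e_{12}$, with coefficient $<e\tilde w,ew>_\R$. Multiplying these expressions and using $\mathrm{tr}_\R(e_{21}e_{12})=\mathrm{tr}_\R(e_{22})=1$ then gives
\[
\mathrm{tr}_\R\bigl[(ve,\tilde ve)_B<e\tilde w,ew>_B\bigr]=(ve,\tilde ve)_1\cdot<e\tilde w,ew>_\R,
\]
which is exactly $\langle\langle ve\otimes ew,\tilde ve\otimes e\tilde w\rangle\rangle_\R$. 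So in fact one may take $c=1$.

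The main obstacle is not conceptual but notational: one has to keep straight the two different tensor-product realizations of $\W$ and see that, although the forms look superficially different, they are built from the same data on $V_Be$ and $eW_B$ once one restricts to the $e$-components; making $\Phi$ surjective on the nose is what allows the check on pure tensors of the restricted shape $ve\otimes_\R ew$ to close out the full bilinear identity. A more abstract alternative would be to note that both $\langle\langle,\rangle\rangle$ and $\langle\langle,\rangle\rangle_\R$ are nondegenerate $G\times G'$-invariant bilinear forms on $\W$, with $(G,G')=(\Sp(2n,\R),\Orth(2r,2s))$, and to invoke Schur's lemma once one knows the space of such forms is one-dimensional; but the direct trace calculation has the advantage of pinning down $c=1$ explicitly.
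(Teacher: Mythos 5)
Your proof is correct, but it takes a genuinely different route from the paper's. The paper argues abstractly: it records the Cauchy-type decomposition
\[
\wedge^2(V_B e\otimes e W_B)\cong \bigl[\wedge^2(V_B e)\otimes \Sym^2(e W_B)\bigr]\oplus \bigl[\Sym^2(V_B e) \otimes \wedge^2(e W_B )\bigr],
\]
notes that both forms are nondegenerate, alternating, and $G\times G'$-invariant, and then invokes classical invariant theory to see the invariant subspace is one-dimensional (living in the first summand), so the two forms are proportional --- an argument you yourself flag as an ``abstract alternative'' without developing it. You instead fix the identification $\Phi\colon V_Be\otimes_\R eW_B\to V_B\otimes_B W_B$, verify it is an isomorphism by hand via the idempotent decomposition $1=e+e^\sigma$ and the conjugation $e^\sigma=J^{-1}eJ$, and then evaluate both forms on the spanning set of pure tensors $ve\otimes_\R ew$, reducing to the one-dimensionality of $e^\sigma Be$ and $eBe^\sigma$ and the elementary identity $e_{21}e_{12}=e_{22}$. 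The advantage of your route is that it actually exhibits the constant ($c=1$ with the reduced trace, $c=2$ if $\mathrm{tr}_\R$ denotes the regular trace of $B/\R$ --- a normalization the paper leaves slightly ambiguous, though this does not affect the truth of the lemma), whereas the paper's argument only yields existence of $c\neq0$. The advantage of the paper's route is that it is coordinate-free and makes the invariance structure transparent, which is closer in spirit to how these forms are used downstream.
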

\begin{proof}
We have the following fact (for example see equation (19), page 121 of \cite{fulton1997young} or verify directly)
\[\wedge^2(V_B e\otimes e W_B)\cong [\wedge^2(V_B e)\otimes \Sym^2(e W_B)]\oplus [\Sym^2(V_B e) \otimes \wedge^2(e W_B )].\]
Both $ \langle\langle,\rangle \rangle$ and $\langle\langle,\rangle \rangle_\R$ are non-degenerate, skew symmetric and invariant under $G\times G'$. By classical invariant theory the invariant subspace of $\wedge^2(V_B e)\otimes \Sym^2(e W_B)$ under $G\times G'$ is one dimensional and the invariant subspace in  $\Sym^2(V_B e) \otimes \wedge^2(e W_B)$ is trivial. This proves that the two forms are the same up to a constant multiple. 
\end{proof}

\subsection{Case B: the seesaw dual pairs \ref{seesawone}}\label{seesawonesection}
Let $V_0$ be a $2n$ dimensional real vector space with a non-degenerate skew symmetric form $(,)_1$. Let $V=V_0\otimes \C$ and we extend $(,)_1$ from $V_0$ to $V$  anti-linearly in the first variable and linearly in the second variable. Denote the resulting skew Hermitian form by $(,)$. The Hermitian form $i(),$ has signature $(n,n)$.
In fact let $E_1,\ldots,E_n,F_1,\ldots,F_n$ be a symplectic basis of $(V_0,(,)_1)$. Define
\begin{enumerate}
            \item $v_\alpha=\frac{1}{\sqrt{2}}(E_\alpha-i F_\alpha )$ for $1\leq \alpha \leq n$,\
            \item $v_\mu=\frac{1}{\sqrt{2}}(E_{\mu-n}+i F_{\mu-n})$ for $n+1\leq \mu \leq 2n$.
\end{enumerate}
Then $\{v_1,\ldots,v_{2n}\}$ is an orthogonal basis of $(V,(,))$ such that 
\[(v_\alpha,v_\alpha)=-i,\ (v_\mu,v_\mu)=i \]
for $1\leq \alpha \leq n, n+1 \leq \mu \leq 2n$.
		
Let $W$ be a $m+m'$ dimensional complex vector space with a Hermitian form $\langle,\rangle$ of signature $(m,m')$ which is linear in the first variable and anti-linear in the second variable. And define $\langle,\rangle_\R=\mathrm{Re}\langle,\rangle$. Then $\langle,\rangle_\R$ is a symmetric form of signature $(2m,2m')$. We have 
\begin{equation}\label{V_0^2r=V^r}
    V^{m+m'}\cong V\otimes_\C W=(V_0\otimes_\R\C)\otimes_\C W\cong V_0\otimes_\R W \cong V_0^{2m+2m'}.
\end{equation}
Recall that we can define $\W=V\otimes_\C W$ and a skew Hermitian form $\langle\langle,\rangle\rangle_h$ on $\W$ by \eqref{bigunitaryform}.
Also define a symplectic form $\langle\langle,\rangle\rangle_\R$ on $\W$ (regarded as $V_0\otimes_\R W$) by
\[\langle\langle v\otimes w, \tilde{v}\otimes \tilde{w}\rangle\rangle=(v,\tilde{v})_1\langle\tilde{w},w\rangle_\R.\]
It is easy to check directly that
\begin{lemma}
\[\mathrm{Re}\langle\langle,\rangle\rangle_h=\langle\langle,\rangle\rangle.\]
\end{lemma}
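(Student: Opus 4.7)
The plan is to verify the identity on a generating set by tensors of a particularly simple form, using the observation that both forms are honest $\mathbb{R}$-bilinear forms on the same underlying real vector space. The form $\langle\langle,\rangle\rangle_h$ is $\mathbb{C}$-sesquilinear (antilinear in the first slot, linear in the second), so $\operatorname{Re}\langle\langle,\rangle\rangle_h$ is $\mathbb{R}$-bilinear; the form $\langle\langle,\rangle\rangle$ is $\mathbb{R}$-bilinear by construction. Via the identification \eqref{V_0^2r=V^r}, both are defined on $\mathbb{W} = V_0\otimes_\mathbb{R} W$, and this real tensor product is $\mathbb{R}$-spanned by elementary tensors $v\otimes w$ with $v\in V_0$ and $w\in W$. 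Hence it suffices to check the equality on such tensors.

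Next I would use the key compatibility between the two extensions. Since $(,)$ is the extension of $(,)_1$ that is antilinear in the first slot and linear in the second, and since both $v,\tilde v\in V_0$ are real, we have $(v,\tilde v) = (v,\tilde v)_1 \in \mathbb{R}$. In particular the complex number $(v,\tilde v)$ is already real, so it may be pulled out of the $\operatorname{Re}$ operator. Therefore
\[
\operatorname{Re}\langle\langle v\otimes w,\tilde v\otimes \tilde w\rangle\rangle_h
= \operatorname{Re}\!\bigl[(v,\tilde v)\,\langle \tilde w,w\rangle\bigr]
= (v,\tilde v)_1\,\operatorname{Re}\langle \tilde w,w\rangle
= (v,\tilde v)_1\,\langle \tilde w,w\rangle_\mathbb{R}
= \langle\langle v\otimes w,\tilde v\otimes\tilde w\rangle\rangle,
\]
which is exactly the claimed identity on a generating set.

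There is essentially no analytic or structural obstacle here; the only thing one must be careful about is bookkeeping the different conventions (which slot is antilinear in $(,)$, the fact that the identification $V\otimes_\mathbb{C} W = (V_0\otimes_\mathbb{R}\mathbb{C})\otimes_\mathbb{C} W = V_0\otimes_\mathbb{R} W$ is an equality of real vector spaces, and that the $\operatorname{Re}$ of a sesquilinear form restricted to a real tensor becomes just multiplication of the real scalar $(v,\tilde v)_1$ with $\operatorname{Re}\langle\tilde w,w\rangle$). Once the conventions are pinned down, the verification is a one-line calculation on elementary real tensors, followed by $\mathbb{R}$-bilinear extension to all of $\mathbb{W}$.
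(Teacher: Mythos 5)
Your proof is correct, and it is exactly the ``direct check'' the paper has in mind (the paper only asserts the identity with the comment that it is easy to verify directly, giving no argument). The key observations you isolate --- that $\mathbb{W}=V_0\otimes_\mathbb{R} W$ is $\mathbb{R}$-spanned by tensors $v\otimes w$ with $v\in V_0$, $w\in W$; that on such real vectors $(v,\tilde v)=(v,\tilde v)_1\in\mathbb{R}$ because the extension of $(,)_1$ to $V$ restricts to $(,)_1$ on $V_0$; and that the real scalar then pulls out of $\operatorname{Re}$ --- are precisely what makes the one-line computation go through, and no step is missing.
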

\begin{remark}
The above lemma shows that the seesaw dual pairs in \ref{seesawone} share the same underlying symplectic module $(\W,\langle\langle,\rangle\rangle)$. Thus they give rise to the same Weil representation. Thus the Fock or Schr\"odinger model of $(\Uni(n,n),\Uni(m,m'))$ can serve as the Fock or Schr\"odinger model of $(\Sp(2n,\R),\Orth(2m,2m'))$ as well.
\end{remark}
\begin{remark}
Combine the above remark with the discussion in Section \ref{twoWforcaseB}, the Schr\"odinger model of $(\Sp(2n,\R),\Orth(2m,2m))$ is 
\[\mathcal{S}(V_0^{2m})\cong \mathcal{S}(V^{m}) \cong \mathcal{S}((M_2(\R)^{n})^m). \]
\end{remark}

\subsection{Case C: the seesaw dual pair \ref{seesawtwo}}\label{seesawtwosection}
Let $V$ be a $n$-dimensional right $\H$ vector space with skew Hermitian form $(,)_2$ satisfying
\begin{equation}\label{(,)_2}
		    (v b, \tilde{v} \tilde{b})_2=b^\sigma (v,\tilde{v})_2 \tilde{b}
\end{equation}
for $v,\tilde{v}\in V$ and $b,\tilde{b}\in \H$.
Define a complex skew Hermitian form $(,)$ on $V$ by 
		\[(v,\tilde{v})=a+bi \text{ if } (v,\tilde{v})_2=a+bi+cj+dk.\]
Then the Hermitian form $i(,)$ has signature $(n,n)$. In fact choose an $\H$-basis $\{v_1,\ldots,v_n\}$ of $V$ such that 
\[(v_\alpha,v_\beta)_2=-i \delta_{\alpha \beta}, \ 1\leq \alpha, \beta \leq n.\]
Define 
\[v_\mu := v_{\mu-n} j, \ n+1 \leq \mu \leq 2n.\]
Then $\{v_1,\ldots,v_n,v_{n+1},\ldots,v_{2n}\}$ is an orthogonal basis of $(V,(,))$ such that 
\[(v_\alpha,v_\alpha)=-i \text{ and }  (v_\mu,v_\mu)=i\]
for $1\leq \alpha \leq n, n+1 \leq \mu \leq 2n$.

Let $W$ be a $m+m'$ dimensional left $\C$ vector space with non-degenerate Hermitian form $\langle,\rangle$ of signature $(m,m')$ that is complex linear in the first variable and anti-linear in the second variable. Define $W_\H=\H\otimes_\C W$, extend $\langle,\rangle$ to a form on $W_\H$ denoted as $\langle,\rangle_\H$ satisfying  
\[\langle h v,\tilde{h}\tilde{v}\rangle_\H=h\langle v,\tilde{v}\rangle_\H {\tilde{h}}^\sigma\]
for $h,\tilde{h}\in \H$. Then we have a canonical isomorphism 
\[V\otimes_\H W_\H\cong V\otimes_\C W.\]

Let $\W=V\otimes_\H W_\H$. Define $\langle\langle,\rangle\rangle$ on $\W$ by
\[\langle\langle v\otimes w, \tilde{v}\otimes \tilde{w} \rangle\rangle=\mathrm{Re}[(v,\tilde{v})_2 \langle\tilde{w},w\rangle_\H].\]
$\langle\langle,\rangle\rangle$ is well-defined on $\W$ and is a symplectic form. Also define $\langle\langle,\rangle\rangle_h$ on $\W$ (regarded as $V\otimes_\C W$) by \eqref{bigunitaryform}.
Then we have 
\begin{lemma}
\[\langle\langle,\rangle\rangle=\mathrm{Re}\langle\langle,\rangle\rangle_h.\]
\end{lemma}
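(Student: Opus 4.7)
The plan is a direct computation on pure tensors, combined with one elementary fact about the quaternions. Both forms $\langle\langle,\rangle\rangle$ and $\mathrm{Re}\langle\langle,\rangle\rangle_h$ are $\R$-bilinear on $\W$ (viewed, via the canonical isomorphism $V\otimes_\H W_\H\cong V\otimes_\C W$, as the same underlying real vector space), so it suffices to verify the identity on pure tensors $v\otimes w,\ \tilde v\otimes\tilde w$ with $v,\tilde v\in V$ and $w,\tilde w\in W$. On such tensors the right-hand side is
\[
\mathrm{Re}\langle\langle v\otimes w,\tilde v\otimes\tilde w\rangle\rangle_h=\mathrm{Re}\bigl[(v,\tilde v)\,\langle\tilde w,w\rangle\bigr].
\]

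Under the isomorphism, $v\otimes w$ corresponds to $v\otimes(1\otimes w)$ in $V\otimes_\H W_\H$, and similarly for $\tilde v\otimes\tilde w$. Since $\langle 1\otimes\tilde w,1\otimes w\rangle_\H=\langle\tilde w,w\rangle$ by the defining extension property of $\langle,\rangle_\H$, the left-hand side equals
\[
\mathrm{Re}\bigl[(v,\tilde v)_2\,\langle\tilde w,w\rangle\bigr].
\]
Hence the lemma reduces to showing that $(v,\tilde v)_2\,\langle\tilde w,w\rangle$ and $(v,\tilde v)\,\langle\tilde w,w\rangle$ have the same real part as quaternions.

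For this, write $(v,\tilde v)_2=(v,\tilde v)+\mu j$, where $\mu\in\C$ is the component of $(v,\tilde v)_2$ in the $\C\cdot j$ summand of $\H=\C\oplus\C j$; this is exactly the content of the definition $(v,\tilde v)=a+bi$ when $(v,\tilde v)_2=a+bi+cj+dk$. Thus
\[
(v,\tilde v)_2\,\langle\tilde w,w\rangle=(v,\tilde v)\,\langle\tilde w,w\rangle+\mu j\,\langle\tilde w,w\rangle.
\]
The first summand lies in $\C$. For the second, using the quaternionic relation $j\,z=\bar z\,j$ for $z\in\C$, we obtain $\mu j\,\langle\tilde w,w\rangle=\mu\,\overline{\langle\tilde w,w\rangle}\,j\in\C\cdot j$, whose real part (as a quaternion) is zero. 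Taking real parts of both sides therefore gives the desired equality, proving $\langle\langle,\rangle\rangle=\mathrm{Re}\langle\langle,\rangle\rangle_h$.

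The only potential subtlety, and what I would double-check first, is that the identifications on pure tensors are consistent: every element of $V\otimes_\H W_\H$ can indeed be written in the form $\sum v_i\otimes(1\otimes w_i)$ because $W_\H=W\oplus Wj$ as a complex space and the right $\H$-action on $V$ absorbs the $j$-component of the first tensor slot into $V$. Once this bookkeeping is in place, the computation above is purely formal and, unlike Lemma \ref{twoformsareequal}, requires no appeal to invariant theory; the constant of proportionality in the present case is forced to be $1$.
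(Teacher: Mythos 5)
Your proof is correct and supplies the direct computation the paper leaves implicit (this lemma, like its counterpart in case B, is stated without proof as a routine check). The key observation --- that $(v,\tilde v)_2 = (v,\tilde v) + \mu j$ with $\mu\in\C$, so that $(v,\tilde v)_2<\tilde w,w>$ and $(v,\tilde v)<\tilde w,w>$ differ by $\mu\,\overline{<\tilde w,w>}\,j\in\C j$, which has zero quaternionic real part --- is exactly right.
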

		
\begin{remark}
	The above lemma shows that the seesaw dual pairs in \ref{seesawone} share the same underlying symplectic module $(\W,\langle\langle,\rangle\rangle)$. Thus they give rise to the same Weil representation. Thus the Fock or Schr\"odinger model of $(\Uni(n,n),\Uni(m,m'))$ can serve as the Fock or Schr\"odinger model of $(\Orth^*(2n),\Sp(m,m'))$ as well.
\end{remark}

\section{Special Schwartz classes in the relative Lie Algebra cohomology of the Weil representation}\label{Andersoncocyle}
In this section we review the construction of holomorphic differential forms in \cite{Anderson}. We use this result to construct the special canonical class $\varphi$ as in Theorem \ref{thmA0}. We will prove that $\varphi$  is closed. In this section, $G$ denotes a real Lie group.

Let $\mathfrak{g}_0$ be the Lie algebra of $G$ and $\mathfrak{g}$ be its complexification. Fix a maximal compact subgroup $z_0=K$ of $G$ and the corresponding Cartan decomposition $\mathfrak{g}_0=\mathfrak{k}_0+\mathfrak{p}_0$. Identify $T_{z_0} D$ with $\mathfrak{p}_0$, where $D=G/K$.  By parallel translating the trace form on $\mathfrak{p}_0$ by the group $G$, we endow $D$ with a Riemannian metric denoteed by $\tau$. We assume that $D$ is Hermitian symmetric. Decompose $\mathfrak{p}=\mathfrak{p}_0\otimes\C$ into holomorphic and anti-holomorphic tangent vectors
\[\mathfrak{p}=\mathfrak{p}^{+}+\mathfrak{p}^{-}.\]

For a  $(\mathfrak{g},K)$ module $\mathcal{M}$, define
\[C^{\bullet}(\mathfrak{g},K;\mathcal{M})=\Hom_{K}(\wedge^\bullet(\mathfrak{g}/\mathfrak{k}),\mathcal{M})\cong \Hom_K (\wedge^\bullet\mathfrak{p},\mathcal{M})\cong ( \wedge^\bullet \mathfrak{p}^* \otimes \mathcal{M})^K.\]
It is a cochain complex and gives rise to the relative Lie algebra cohomology $H^\bullet (\mathfrak{g},K;\mathcal{M})$ (see \cite{BW}) .
In his thesis \cite{Anderson}, Anderson constructed cochains $\phi^+$ in $\Hom_{K}(\wedge^\bullet \mathfrak{p}_+,\mathcal{P}_-^{\mathfrak{p}_-})$, where $\mathcal{P}_-$ is the Fock  model of a certain Weil representation. Here, the notation $\mathcal{P}_-^{\mathfrak{p}_-}$ denotes the subspace of $\mathcal{P}_-$ annihilated by $\mathfrak{p}_-$. We can  construct a mirror element $\phi^-\in \Hom_{K}(\wedge^\bullet \mathfrak{p}_-,\mathcal{P}_+^{\mathfrak{p}_+})$. Then we take $\phi$ to be the out wedge product of $\phi^+$ and $\phi^-$ and $\varphi$ to be $\iota(\phi)$ where $\iota$ is as in \eqref{eq:intertwining operator}.
We now carry out this construction case by case and prove some lemmas (Lemma \ref{onlyonetermUpq}, Lemma \ref{onlyonetermSp} and Lemma \ref{onlyonetermOstar}) along the way.

\subsection{Case A}\label{varphiUni}
We follow the assumptions and notations of Section \ref{FockmodelUpq}.
Recall that $V$ is a $p+q$ dimensional complex vector space with a skew Hermitian form $(,)$ such that $i(,)$ has signature $(p,q)$. Let $\bar{V}$ be the conjugate complex vector space of $V$: $\bar{V}$ has the same underlying abelian group as $V$  but the complex multiplication of $\bar{V}$ is the conjugate of $V$.

Let $V^*=\Hom_\C(V,\C)$. There is a complex linear isomorphism $\bar{V}\rightarrow V^*$ given by
\[v\mapsto (v,\cdot).\]
One can also identify $V\otimes_\C \bar{V}$ with $\Hom_\C(V,V)$ by the map
\[v\otimes {\tilde{v}} \mapsto v(\tilde{v},\cdot).\]
Let $\Sym(V\otimes_\C  \bar{V})$  be the symmetric tensor inside $V\otimes_\C \bar{V}$ (this makes sense since $V$ and $\bar{V}$ have the same underlying Abelian group). By the above identification, $\Sym(V\otimes_\C  \bar{V})$ acts on $V$ by 
\[(v\circ \tilde{v}) (x)=v(\tilde{v},x)+\tilde{v}(v,x),\forall x\in V,\]
where $v\circ \tilde{v}=v\otimes \tilde{v}+\tilde{v} \otimes v$. One can check that this action satisfies
\[((v\circ \tilde{v})(x),y)+(x, (v\circ \tilde{v})(y))=0.\]
In fact by \cite[Lemma 3.6]{BMM2}, we have
\begin{equation}
    \Sym(V\otimes_\C  \bar{V})\cong \mathfrak{u}(V,(,))=\mathfrak{u}(p,q).
\end{equation}
Define $z_0=\Span_\C \{v_1,\ldots,v_p\}\in D$. Its stabilizer is $K=\rU(z_0)\times \rU(z_0^\bot)\cong \rU(p)\times \rU(q)$.
We have a corresponding Cartan decomposition of $\mathfrak{g}_0=\mathfrak{u}(V,(,))$:
\[\mathfrak{g}_0=\mathfrak{k}_0+\mathfrak{p}_0,\]
where 
\[\mathfrak{k}_0=(\Hom(z_0^\bot,z_0^\bot)\oplus \Hom(z_0,z_0)) \cap \mathfrak{g}_0,\ \mathfrak{p}_0= (\Hom(z_0^\bot,z_0)\oplus \Hom(z_0,z_0^\bot))\cap \mathfrak{g}_0.\]
More explicitly define 
\begin{equation}\label{LiealgebrabasisofUpq}
		    E_{m n}=v_m\circ v_n \text{ and } F_{m n}=i v_m\circ v_n
\end{equation}
for $1\leq m,n \leq p+q$. Then we have
\begin{enumerate}
		    \item $ \mathfrak{k}_0=\Span_\R\{E_{\alpha \beta}, F_{\alpha \beta},E_{\mu \nu},F_{\mu \nu}\}$, \
		    \item $ \mathfrak{p}_0=\Span_\R\{E_{\alpha \mu},F_{\alpha \mu}\}$,
\end{enumerate}
where $1 \leq \alpha,\beta \leq p$ and $p+1 \leq \mu,\nu \leq p+q$ (in this subsection we keep this convention of index). In terms of matrices we have
\[\mathfrak{k}_0=\left\{\left(\begin{array}{cc}
        A & 0 \\
        0 & B
\end{array}\right)\mid A+A^*=0, B+B^*=0
\right\},
\mathfrak{p}_0=\left\{\left(\begin{array}{cc}
        0 & A \\
        A^* & 0
\end{array}\right)\mid A\in M_{p\times q}(\C)\right\}.\]
		
We now describe the $\mathrm{Ad}(K)$-invariant almost complex structure $J_\mathfrak{p}$ acting on $\mathfrak{p}$ that induces the structure of Hermitian symmetric domain on $D$ (c.f. page 14 of \cite{BMM2}). Let $\zeta=e^{\frac{\pi i}{4}}$. Define $a(\zeta)$ by 
\begin{equation}\label{azeta}
		    a(\zeta)|_{z_0}=  \zeta \cdot \mathrm{Id}_{z_0} \text{ and } a(\zeta)|_{z_0^\bot}=  \zeta \cdot \mathrm{Id}_{z_0^\bot}.
\end{equation}
Now we define 
\begin{equation}\label{J_p}
  J_\mathfrak{p}=\mathrm{Ad}(a(\zeta)).  
\end{equation}
It is easy to check under the identification of $V\otimes_\C \bar{V}\cong \Hom_\C(V,V)$ we have
\[\mathrm{Ad}(a(\zeta))(v\otimes \tilde{v})=(a(\zeta)v)\otimes (a(\zeta)\tilde{v}).\]
This implies that
\[J_\mathfrak{p}(E_{\alpha \mu})=F_{\alpha \mu},\ J_\mathfrak{p}(F_{\alpha \mu})=-E_{\alpha \mu}.\]
Define 
\[X_{\alpha \mu}=E_{\alpha \mu} -i F_{\alpha \mu}=2 e_{\alpha \mu}, \ Y_{\alpha \mu}=E_{\alpha \mu} +i F_{\alpha \mu}=2 e_{\mu \alpha},\]
where $e_{ab}$ is the matrix whose $(a,b)$-th entry is $1$ and all other entries are zero.
Let $\mathfrak{p}_+$(resp. $\mathfrak{p}_-$) be the $+i$ (resp. $-i$) eigenspace of $J_\mathfrak{p}$. We then have
\[\mathfrak{p}_+=\Span_\C\{X_{\alpha \mu}\mid 1\leq \alpha \leq p ,p+1\leq \mu \leq p+q\}, \]
\[ \mathfrak{p}_-=\Span_\C\{Y_{\alpha \mu}\mid 1\leq \alpha \leq p,p+1\leq \mu \leq p+q\}.\]
We also let $\{\xi'_{\alpha \mu}\mid 1\leq \alpha \leq p,p+1\leq \mu \leq p+q\}$ (resp. $\{\xi''_{\alpha \mu}\mid 1\leq \alpha \leq p,p+1\leq \mu \leq p+q\}$) be the basis of $\mathfrak{p}_+^*$ (resp. $\mathfrak{p}_-^*$ ) that is dual to $\{X_{\alpha \mu}\}$ (resp. $\{Y_{\alpha \mu}\}$).
	
Now fix $1\leq r \leq p, 1\leq s \leq q$.
\begin{lemma}\label{lem:K acts on U}
$K$ acts transitively on the set of subspaces $U\subset V$ such that  $U$ has signature $(r,s)$ and $z_0\subset D(U)\times D(U^\bot)$.
\end{lemma}
\begin{proof}
Recall that the condition that $z_0\subset D(U)\times D(U^\bot)$ is equivalent to the condition
\begin{equation}\label{eq:z_0 in D(U)times D(Ubot)}
    z_0=(z_0\cap U) \oplus (z_0\cap U^\bot).
\end{equation}
Let $z_1$ be the perpendicular of $z_0\cap U$ in $U$. Since $U$ has signature $(r,s)$ and $U^\bot$ has signature $(p-r,q-s)$, \eqref{eq:z_0 in D(U)times D(Ubot)} implies that $\mathrm{dim}_\C(z_0\cap U)=s$ and $\mathrm{dim}_\C(z_0\cap U^\bot)=q-s$. It follows that $\mathrm{dim}_\C(z_1)=r$. A simple dimension count shows 
\[U=(z_0\cap U )\oplus (z_0^\bot \cap U).\]
It follows that $K=\rU(z_0)\times \rU(z_0^\bot)$ acts transitively on all such $U$.
\end{proof}

Fix a $U=U^++U^-$ satisfying the conditions in Lemma \ref{lem:K acts on U} where
\[U^+=\Span_\C\{v_1,\ldots,v_r\},\ U^-=\Span \{v_{p+1},\ldots,v_{p+s}\}, \]
and let $z'_0:=U^-\in D(U)$. Recall from equation \eqref{Dlambda0} that 
\[D_{U,z'_0}=\{z\in D\mid U^- \subset z \subset U^+\}\cong D(U^\bot),\]
where $D_{U,z'_0}$ is the generalized special sub symmetric space in Definition \ref{specialsubsymmetricspace}.
Identify $T_{z_0}D$ with $\mathfrak{p}_0$, then
\[T_{z_0} D_{U,z'_0}=\Span_{\R}\{E_{\alpha \mu},F_{\alpha \mu}\mid r+1 \leq \alpha \leq p, p+s+1 \leq \mu \leq p+q\}.\]

Recall that we define a fiber bundle  $\pi:D\rightarrow  D_{U,z'_0}$ in Subsection \ref{fibrationpi}. The tangent space of the fiber $F_{z_0} D_{U,z'_0}=\pi^{-1}(z_0)$ at $z_0$ can be described as
\begin{equation}\label{normalfibreUpq}
		    N_{z_0} {D_{U,z'_0}}=T_{z_0} D_{U,z'_0}^\bot=\Span_\R\{E_{\alpha \mu},F_{\alpha \mu}\mid(\alpha,\mu)\in I\},
\end{equation}
where $I$ is the index set 
\begin{equation}\label{indexI}
		    I=\{(\alpha,\mu)\mid1\leq \alpha \leq r,p+1 \leq \mu \leq p+q\}\cup \{(\alpha,\mu)\mid r+1 \leq \alpha \leq p, p+1\leq \mu\leq p+s\}.
\end{equation}
We also have
\[N_{z_0}^+ {D_{U,z'_0}}=\{X_{\alpha \mu}\mid (\alpha,\mu)\in I\}.\]

Next let $\mathcal{P}_-$ be the infinitesimal Fock model defined in Section \ref{FockmodelUpq} for the dual pair $(\Uni(p,q),\Uni(0,r+s))$. Recall that $\mathcal{P}_-$ is the polynomial space in the variables $\{u_{ik}^-\mid 1\leq i \leq p+q, 1\leq k \leq r+s\}$.
We now define polynomials $f_{1}^-,f_{2}^-\in \mathcal{P}_-$ by 
\begin{definition}\label{specialharmonics}
		\[f_1^-=\mathrm{det}\left(\begin{array}{cccc}
		    u_{1 1}^- & u_{1 2}^- &\ldots & u_{1 r}^- \\
		    \ldots &  \ldots & \ldots & \ldots \\
		    u_{r 1}^- & u_{r 2}^- & \ldots &  u_{r r}^-
		\end{array}
		\right),
		f_{2}^-=\mathrm{det}\left(\begin{array}{cccc}
		    u_{p+1 \ r+1}^- & u_{p+1 \ r+2}^- &\ldots & u_{p+1\ r+s}^- \\
		    \ldots &  \ldots & \ldots & \ldots \\
		    u_{p+s \ r+1}^- & u_{p+s \ r+2}^- & \ldots &  u_{p+s\  r+s}^-
		\end{array}\right).\]
\end{definition}
		
We define an element of $(\wedge^\bullet \mathfrak{p}_+\otimes\mathcal{P}_-^{\mathfrak{p}_-})^{\mathrm{SU}(p,q)}$ following the construction of \cite{Anderson}. To be more precise, let  $\tilde{K}$ be  the preimage of $K$ under the map $\Mp(\mathbb{W})  \rightarrow \Sp(\mathbb{W})$ and $\tilde{K}^0$ be the identity component of $\tilde{K}$. Then $\tilde{K}$ is the $\mathrm{det}^{-\frac{r+s}{2}}$-cover of $K$ (c.f. \cite[Section 1.2]{Paul}):
\[\tilde{K}\cong\{(g,z)\in K\times \C^\times\mid z^2=\mathrm{det}(g)^{-\frac{r+s}{2}} \}.\]
Define
\[e_{D_{U,z'_0}}=\bigwedge_{(\alpha,\mu)\in I} X_{\alpha \mu}.\]
Also define 
\begin{equation}\label{fD_UUpq}
f_{D_{U,z'_0}}=(f_1^-)^{q-s} (f_2^-)^{p-r}.    
\end{equation}
The polynomials $f_1^-$, $f_2^-$ and $f_{D_{U,z'_0}}$ are special cases of the harmonic polynomials studied in \cite{KV}, hence are automatically annihilated by $\mathfrak{p}_-$, see the comment after \cite[(3.14)]{Anderson}.
It can be shown that $\mathfrak{k}$ acts on $\mathcal{P}_-$ by (c.f. equation (3.5) of \cite{Anderson})
\[\omega (e_{\alpha \beta})= \sum_{k=1}^{r+s}u_{\alpha k}^- \frac{\partial}{\partial u_{\beta k}^-}+\frac{1}{2}\delta_{\alpha \beta}(r+s),
\ \omega (e_{\mu \nu})=- \sum_{k=1}^{r+s}u_{\nu k}^- \frac{\partial}{\partial u_{\mu k}^-}-\frac{1}{2}\delta_{\mu \nu}(r+s).\]
The adjoint action of $\mathfrak{k}$ on $\mathfrak{p}_+$ induces an action on $\wedge^\bullet \mathfrak{p}_+$. Define 
\[\mathfrak{b}=\Span_\C\{e_{\alpha \beta}\mid 1\leq \alpha\leq \beta \leq p\}\oplus \Span_\C\{e_{\mu \nu}\mid p+1 \leq \nu \leq \mu \leq p+q\}.\]
Then $\mathfrak{b}$ is a Borel sub-algebra of $\mathfrak{k}$. One can verify that both $e_{D_{U,z'_0}}$ and $f_{D_{U,z'_0}}$ are highest weight vectors with respect to $\mathfrak{b}$. The weight of $e_{D_{U,z'_0}}$ with respect to $\mathfrak{b}$ is
\[(\underbrace{q, \ldots,q}_r,\underbrace{s,\ldots,s}_{p-r},\underbrace{-p,\ldots,-p}_s,\underbrace{-r,\ldots,-r}_{q-s}).\]
The weight of $f_{D_{U,z'_0}}$ with respect to $\mathfrak{b}$ is
the above weight plus  \[(\underbrace{\frac{1}{2}(r-s),\ldots,\frac{1}{2}(r-s)}_{p+q}).\]

Now denote the irreducible representation of $\tilde{K}^0$ generated by $e_{D_{U,z'_0}}$ as $V(U)$. By the theory of highest weight we have a $\tilde{K}^0$-equivariant map $\psi_{r,s}^+:V(U)\rightarrow \mathcal{P}_-^{\mathfrak{p}_-}\otimes  \mathrm{det}^{-\frac{1}{2}(r-s)}$ such that
\[\psi_{r,s}^+(e_{D_{U,z'_0}})=f_{D_{U,z'_0}}\otimes 1.\]
Then Theorem A of \cite{Anderson}  can be rephrased as
\begin{theorem}
		\[\psi_{r,s}^+ \in \Hom_{\tilde{K}^0}(\wedge^{rq+ps-rs} \mathfrak{p}_+,\mathcal{P}_-^{\mathfrak{p}_-}\otimes  \mathrm{det}^{-\frac{1}{2}(r-s)}) .\]
\end{theorem}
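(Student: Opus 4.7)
The plan is to split the statement into three claims: that the source degree is $rq+ps-rs$, that the weight difference between the two highest weight vectors exponentiates precisely to the character $\det^{-(r-s)/2}$ of $\tilde K^0$, and most importantly that $f_{D_{U,z_0'}}$ is annihilated by $\mathfrak p_-$. Once these are in hand, the existence and uniqueness of $\psi_{r,s}^+$ follow from irreducibility of $V(U)$ and standard highest weight theory for $\tilde K^0$, together with complete reducibility, which lets one extend the map from the subspace $V(U)\subseteq \wedge^{rq+ps-rs}\mathfrak p_+$ to all of $\wedge^{rq+ps-rs}\mathfrak p_+$ by projecting onto the $V(U)$-isotypic component.

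The degree check is immediate from \eqref{indexI}: $|I|=rq+(p-r)s=rq+ps-rs$, so $e_{D_{U,z_0'}}\in \wedge^{rq+ps-rs}\mathfrak p_+$. The weight computation for $e_{D_{U,z_0'}}$ uses $X_{\alpha\mu}=2e_{\alpha\mu}$ and the standard commutators of the diagonal Cartan with elementary matrices, tallied over $I$. For $f_{D_{U,z_0'}}=(f_1^-)^{q-s}(f_2^-)^{p-r}$, each $f_i^-$ is a determinant in a contiguous rectangular block of $u^-$-variables and so, by the displayed formula $\omega(e_{\alpha\beta})=\sum_k u_{\alpha k}^-\partial/\partial u_{\beta k}^-+\tfrac12\delta_{\alpha\beta}(r+s)$ and its counterpart for $\omega(e_{\mu\nu})$, is manifestly a highest weight vector for $\mathfrak b$; the scalar shift $\tfrac12(r+s)\delta_{\alpha\beta}$ contributes the constant vector $\tfrac12(r-s)(1,\ldots,1)$ to the weight difference, which exponentiates to $\det^{-(r-s)/2}$ as required.

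The heart of the proof is the harmonicity $\omega(Y)f_{D_{U,z_0'}}=0$ for all $Y\in \mathfrak p_-$. Spanning $\mathfrak p_-$ by $Y_{\alpha\mu}=2e_{\mu\alpha}$ with $1\leq \alpha\leq p<\mu\leq p+q$, one derives from Lemma \ref{wjaction} and the embedding of $\mathfrak{sp}(\mathbb W\otimes\mathbb C)$ in the Weyl algebra $\mathcal W_\lambda$ that $\omega(Y_{\alpha\mu})$ acts on $\mathcal P_-$ as a nonzero scalar multiple of the second-order differential operator
\begin{equation*}
\sum_{k=1}^{r+s}\frac{\partial^2}{\partial u_{\alpha k}^-\,\partial u_{\mu k}^-}.
\end{equation*}
The variables of $f_1^-$ (those $u_{\alpha' k'}^-$ with $\alpha'\leq r$, $k'\leq r$) and of $f_2^-$ (those $u_{\mu' k'}^-$ with $p+1\leq \mu'\leq p+s$, $r+1\leq k'\leq r+s$) are disjoint, so the product rule reduces the result to a multiple of $(\partial f_1^-/\partial u_{\alpha k}^-)(\partial f_2^-/\partial u_{\mu k}^-)$ summed over $k$, and for each fixed $k$ one of the two factors is zero because $k$ cannot lie simultaneously in $[1,r]$ and in $[r+1,r+s]$. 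The edge cases $\alpha>r$ or $\mu>p+s$ are even easier, since then one of the two derivatives already annihilates every variable appearing in $f_{D_{U,z_0'}}$.

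The main obstacle is pinning down the explicit form of $\omega$ on $\mathfrak p_-$, since the excerpt only displays the $\mathfrak k$-formulas; this must be extracted carefully from the Weyl-algebra framework of Section \ref{infinitesimalFockmodel} in order to see that the operator is really second order and that its constant of proportionality is nonzero. With that in hand, the disjointness of the two variable blocks does the rest, and assembling the pieces the $\tilde K^0$-equivariant extension of $e_{D_{U,z_0'}}\mapsto f_{D_{U,z_0'}}\otimes 1$ lands in $\mathcal P_-^{\mathfrak p_-}\otimes \det^{-(r-s)/2}$, giving the claimed $\psi_{r,s}^+$.
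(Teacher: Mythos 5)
The paper does not actually prove this statement: it is a rephrasing of Theorem A of \cite{Anderson}, cited without proof, with the paper supplying only the highest-weight bookkeeping (the two weights differ by $-\tfrac12(r-s)(1,\ldots,1)$). Your proposal reconstructs the argument that the paper delegates to Anderson, and it is correct. The obstacle you flagged --- the explicit form of $\omega$ on $\mathfrak p_-$ --- resolves exactly as you surmise: in this lowest-weight Fock model the elements $e_{\mu\alpha}$ with $\alpha\le p<\mu$ must kill the vacuum $1\in\mathcal P_-$, so they act by second-order operators, and a commutator check against the displayed $\mathfrak k$-formulas, e.g.\ $[\omega(e_{\alpha\beta}),\omega(e_{\mu\alpha})]=-\omega(e_{\mu\beta})$, confirms $\omega(e_{\mu\alpha})=c\sum_{k=1}^{r+s}\partial^2/\partial u^-_{\alpha k}\partial u^-_{\mu k}$ with $c\neq 0$. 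Your disjoint-blocks observation then closes the case, since each summand forces $k\le r$ and $k>r$ simultaneously. Two minor points worth spelling out: that $e_{D_{U,z_0'}}$ is itself a highest-weight vector for $\mathfrak b$ (each positive root vector of $\mathfrak k$ carries $X_{\alpha\mu}$ with $(\alpha,\mu)\in I$ to some $X_{\alpha'\mu'}$ with $(\alpha',\mu')\in I$, producing a repeated factor in the wedge), and that $\mathcal P_-^{\mathfrak p_-}$ is a $\tilde K^0$-submodule because $K$ normalizes $\mathfrak p_-$, so the entire representation $A(U)$ lands in the stated target, not only its highest-weight vector.
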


Let $\{\epsilon_1,\ldots,\epsilon_d\}$ be a basis of $V(U)\subset \wedge^\bullet \mathfrak{p}_+$ such that each $\epsilon_i$ is a weight vector of $\mathfrak{t}$. Extend $\{\epsilon_1,\ldots,\epsilon_d\}$ to a basis of $\wedge^{rq+ps-rs} \mathfrak{p}_+$, take the dual basis inside $\wedge^{rq+ps-rs} \mathfrak{p}_+^*$ and denote the first $d$ basis vectors by $\Omega_1,\ldots,\Omega_d$. We have an isomorphism
\[\Hom_{\tilde{K}^0}(\wedge^\bullet \mathfrak{p}_+,\mathcal{P}_-^{\mathfrak{p}_-}\otimes  \Det^{-\frac{1}{2}(r-s)})\cong (\wedge^\bullet \mathfrak{p}_+^*\otimes\mathcal{P}_-^{\mathfrak{p}_-}\otimes  \Det^{-\frac{1}{2}(r-s)})^{\tilde{K}^0}. \]
Under this isomorphism $\psi_{r,s}^+$ maps to an element $\phi_{r,s}^+\in (\wedge^{rq+ps-rs} \mathfrak{p}_+^*\otimes\mathcal{P}_-^{\mathfrak{p}_-}\otimes  \Det^{-\frac{1}{2}(r-s)})^{\tilde{K}^0}$:
\begin{equation}\label{varphiUpq}
		    \phi_{r,s}^+=\sum_{i=1}^d \psi_{r,s}^+(\epsilon_i) \Omega_i.
\end{equation}
The element thus defined is independent of the choice of the basis $\{\epsilon_1,\ldots, \epsilon_d\}$ and is actually in $(\wedge^\bullet \mathfrak{p}_+\otimes\mathcal{P}_-^{\mathfrak{p}_-})^{\mathrm{SU}(p,q)}$.
		
Let 
\[i:F_{z_0} D_{U,z'_0}\rightarrow D\]
be the natural embedding.
We have the following crucial lemma which states that when restricted to the fiber $F_{z_0} D_{U,z'_0}$ at $z_0$ there is only one term left in $\phi_{r,s}^+$.
		
\begin{lemma}\label{onlyonetermUpq}
		\[i^*(\phi_{r,s}^+(\bx))|_{z_0}=(f_1^-)^{q-s}(f_2^-)^{p-r} i^*(\bigwedge_{(\alpha,\mu)\in I} \xi'_{\alpha,\mu})|_{z_0}.\]
\end{lemma}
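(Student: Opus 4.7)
The plan is to exploit a dimension match. From \eqref{normalfibreUpq} the normal tangent space $N_{z_0}D_{U,z'_0}$ is $J_\mathfrak{p}$-stable (since $J_\mathfrak{p}$ pairs $E_{\alpha\mu}$ with $\pm F_{\alpha\mu}$), so its holomorphic part is
\[
N^+_{z_0}D_{U,z'_0}=\Span_\C\{X_{\alpha\mu}\mid(\alpha,\mu)\in I\}.
\]
A direct count gives $|I|=rq+(p-r)s=rq+ps-rs=d'$, so $\wedge^{d'}N^+_{z_0}D_{U,z'_0}$ is one dimensional and spanned by $e_{D_{U,z'_0}}=\bigwedge_{(\alpha,\mu)\in I}X_{\alpha\mu}$. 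Dually, $i^*(\wedge^{d'}\mathfrak{p}_+^*)|_{z_0}$ is one dimensional, spanned by $i^*\bigl(\bigwedge_{(\alpha,\mu)\in I}\xi'_{\alpha\mu}\bigr)|_{z_0}$, and these pair to $1$.

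Since $\phi_{r,s}^+$ is of pure type $(d',0)$, the restriction $i^*(\phi_{r,s}^+)|_{z_0}$ lies in this one dimensional space and must therefore equal $c\cdot i^*\bigl(\bigwedge_{(\alpha,\mu)\in I}\xi'_{\alpha\mu}\bigr)|_{z_0}$ for some $c\in\mathcal{P}_-^{\mathfrak{p}_-}$. I would pin down $c$ by contracting both sides against the dual vector $e_{D_{U,z'_0}}$. Because $e_{D_{U,z'_0}}$ is a weight vector in $V(U)$, I may choose the weight basis $\{\epsilon_i\}_{i=1}^d$ of $V(U)$ so that $\epsilon_1=e_{D_{U,z'_0}}$; the dual vectors in \eqref{varphiUpq} then satisfy $\Omega_i(e_{D_{U,z'_0}})=\delta_{i1}$, whence
\[
\phi_{r,s}^+(e_{D_{U,z'_0}})=\sum_{i=1}^d\psi_{r,s}^+(\epsilon_i)\,\Omega_i(e_{D_{U,z'_0}})=\psi_{r,s}^+(e_{D_{U,z'_0}})=f_{D_{U,z'_0}}=(f_1^-)^{q-s}(f_2^-)^{p-r}
\]
by the defining property of $\psi_{r,s}^+$ and \eqref{fD_UUpq}. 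Equating the two expressions for $c$ yields the lemma.

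The main obstacle is conceptual rather than technical: once one observes that the holomorphic tangent space to the fiber at $z_0$ has dimension exactly $d'$ (so that $i^*|_{z_0}$ in bidegree $(d',0)$ hits a one dimensional target) and that $e_{D_{U,z'_0}}$ literally spans its top exterior power, the coefficient is forced to be $\psi_{r,s}^+(e_{D_{U,z'_0}})$, which is built into the construction of $\psi_{r,s}^+$ via the highest-weight map $V(U)\to A(U)\otimes\Det^{-\frac{1}{2}(r-s)}$. No information about the other components of the basis extension is ever needed.
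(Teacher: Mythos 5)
Your proof is correct and follows essentially the same approach as the paper's: both identify that only the $I$-indexed basis monomial $\bigwedge_{(\alpha,\mu)\in I}\xi'_{\alpha\mu}$ of $\wedge^{d'}\mathfrak{p}_+^*$ survives restriction to the fiber at $z_0$, and both recover the coefficient by evaluating $\phi^+_{r,s}$ against $e_{D_{U,z'_0}}$, using $\psi^+_{r,s}(e_{D_{U,z'_0}})=f_{D_{U,z'_0}}$. Your presentation is slightly cleaner in making the normalization $\epsilon_1=e_{D_{U,z'_0}}$ explicit (legitimate, since the construction is basis-independent), which spells out why the sum in \eqref{varphiUpq} collapses to its first term.
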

\begin{proof}
Recall that  $\{X_{\alpha \mu}\mid (\alpha,\mu)\in I\}$ ($I$ is defined in Equation \eqref{indexI}) span the holomorphic tangent space $N_{z_0}^+ {D_{U,z'_0}}$ of $F_{z_0} D_{U,z'_0}$ at $z_0$ and $X_{\alpha \mu}$ is perpendicular to $N_{z_0}^+ {D_{U,z'_0}}$ if $(\alpha,\mu)\notin I$. Hence $\{\xi'_{\alpha \mu}\mid (\alpha,\mu)\in I\}$ span the holomorphic cotangent space  of $F_{z_0} D_{U,z'_0}$ at $z_0$ and $i^*(\xi'_{\alpha \mu})|_{z_0}=0$  if $(\alpha,\mu)\notin I$.

Now let $S=\{(\alpha,\mu)\mid 1\leq \alpha \leq p, p+1\leq \mu \leq p+q\}$. Then 
\[\{\bigwedge_{(\alpha,\mu)\in T} \xi'_{\alpha \mu}\mid T\subseteq S, |T|=rq+ps-rs\}\]
is a basis of $\wedge^{rq+ps-rs} \mathfrak{p}_+^* $. If $T\neq I$, by the argument in the last paragraph we have 
\[i^*(\bigwedge_{(\alpha,\mu)\in T} \xi'_{\alpha \mu})|_{z_0}=0.\]
So the only term left in $i^*(\phi_{r,s}^+(\bx))|_{z_0}$ is the first term in \eqref{varphiUpq} which is the right hand side of the lemma by the definition of $\phi_{r,s}^+$.
\end{proof}

Similarly let $\mathcal{P}_+$ be the Fock model for the dual pair $(\Uni(p,q),\Uni(r+s,0))$. 
Recall that $\mathcal{P}_+$ is the polynomial space in the variables $\{u_{ia}^+\mid 1\leq i \leq p+q, 1\leq a \leq r+s\}$.
We now define polynomials $f_{1}^+,f_{2}^+\in \mathcal{P}_+$ by 
\[f_1^+=\Det\left(\begin{array}{cccc}
		    u_{1 1}^+ & u_{1 2}^+ &\ldots & u_{1 r}^+ \\
		    \ldots &  \ldots & \ldots & \ldots \\
		    u_{r 1}^+ & u_{r 2}^+ & \ldots &  u_{r r}^+
		\end{array}
		\right),
f_{2}^+=\Det\left(\begin{array}{cccc}
		    u_{p+1 \ r+1}^+ & u_{p+1 \ r+2}^+ &\ldots & u_{p+1\ r+s}^+ \\
		    \ldots &  \ldots & \ldots & \ldots \\
		    u_{p+s \ r+1}^+ & u_{p+s \ r+2}^+ & \ldots &  u_{p+s\  r+s}^+
\end{array}\right).\]
Then $(f_1^+)^{q-s}(f_2^+)^{p-r}$ is a lowest weight vector with respect to $\mathfrak{b}$ of weight
\[(\underbrace{\frac{1}{2}(s-r)-q,.,\frac{1}{2}(s-r)-q}_r,\underbrace{-\frac{1}{2}(r+s),.,-\frac{1}{2}(r+s)}_{p-r},\underbrace{p+\frac{1}{2}(s-r),.,p+\frac{1}{2}(s-r)}_s,\underbrace{\frac{1}{2}(r+s),.,\frac{1}{2}(r+s)}_{q-s}).\]
$\bigwedge_{(\alpha,\mu)\in I} Y_{\alpha \mu}$ is a lowest weight vector  with respect to $\mathfrak{b}$ of weight
\[(\underbrace{-q, \ldots,-q}_r,\underbrace{-s,\ldots,-s}_{p-r},\underbrace{p,\ldots,p}_s,\underbrace{r,\ldots,r}_{q-s}).\]
Then there is a unique element  
\[\psi_{r,s}^- \in \Hom_{\tilde{K}^0}(\wedge^{rq+ps-rs} \mathfrak{p}_-,\mathcal{P}_+^{\mathfrak{p}_+}\otimes  \Det^{\frac{1}{2}(r-s)}), \]
which maps $\bigwedge_{(\alpha,\mu)\in I} Y_{\alpha \mu}$ to $(f_1^+)^{q-s}(f_2^+)^{p-r}$ and a corresponding element
\[\phi_{r,s}^- \in (\wedge^{rq+ps-rs} \mathfrak{p}_-^*\otimes\mathcal{P}_+^{\mathfrak{p}_+}\otimes  \Det^{\frac{1}{2}(r-s)})^{\tilde{K}^0}.\]

Now let $\mathcal{P}$ be the infinitesimal Fock model for the dual pair $(\Uni(p,q),\Uni(r+s,r+s))$. We have
\[\mathcal{P}=\mathcal{P}_-\otimes \mathcal{P}_+\cong \mathcal{P}_-\otimes \Det^{-\frac{1}{2}(r-s)}\otimes \mathcal{P}_+\otimes \Det^{\frac{1}{2}(r-s)}.\]
We define the following outer wedge product
\[\phi_{r,s}=\phi_{r,s}^+\wedge \phi_{r,s}^-.\]
It is immediate that
\[\phi_{r,s}\in (\wedge^{2rq+2ps-2rs}\mathfrak{p}^*\otimes \mathcal{P})^{K}.\]
We say $(r,s)$ is the signature of $\phi_{r,s}$.

\begin{remark}
A priori the definition of $\psi_{r,s}$ depends on the tuple $(z_0,U,z'_0)$ subject to the condition that $z_0\in D_{U,z'_0}$ or equivalently $z_0\subset D(U)\times D(U^\bot)$ and $z'_0=z\cap U$. However it follows from Lemma \ref{lem:K acts on U} and the $K$-invariance of $\psi_{r,s}$ that the definition of $\psi_{r,s}$ depends only on $z_0$ instead of the tuple  $(z_0,U,z'_0)$.
\end{remark}

\begin{remark}
The form $\phi^+_{r,0}$ and $\phi^-_{r,0}$ are also constructed in \cite{BMM2}. The form $\phi_{r,0}$ is constructed in both \cite{KM90} and \cite{BMM2} and is called the Kudla-Millson form in the literature. 
\end{remark}

\subsection{Case B}\label{Spcocylesection}
We use the fact that $G=\Sp(2n,\R)\cong \Sp(2n,\C)\cap \Uni(n,n)$ (\cite{Anderson}). More precisely let $V_0$ be a $2n$-dimensional real vector space with a skew symmetric form $(,)_1$. Then we can extend $(,)$ linearly to a skew symmetric form $S(,)$ on $V=V_0\otimes_\R \C$. We can also extend $(,)_1$ to a skew Hermitian form $(,)$ on $V$ satisfying \eqref{(,)}. 
Then
\[\Sp(V_0,(,)_1)=\Sp(V,S(,))\cap \rU(V,(,)).\]
Moreover let $\mathfrak{g}_0=\mathfrak{sp}(V_0,(,)_1)$. Then we have (Section 7 of \cite{KM90})
\[\mathfrak{g}_0\cong \Sym^2(V_0),\]
and
\[\mathfrak{g}=\mathfrak{g}_0\otimes \C\cong \mathfrak{sp}(V,S(,))\cong \Sym^2 (V),\]
where $\Sym^2(V)$ acts on $V$ by 
\[(v\otimes \tilde{v}+\tilde{v}\otimes v) (x)=v S(\tilde{v},x)+\tilde{v} S(v,x), \forall x\in V.\]
We will denote $(v\otimes \tilde{v}+\tilde{v}\otimes v)\in \Sym^2(V)$ by $v\diamond \tilde{v}$. The linear transformation $a(\zeta)$ introduced in equation (\ref{azeta}) sits inside $ \Sp(V,S(,))\cap \Uni(V,(,))$. Thus the almost complex structure $J_\mathfrak{p}$ introduced in \eqref{J_p} stabilize $\mathfrak{g}_0$ and induced an almost complex structure on $\mathfrak{g}_0$.

The map $\Sp(V_0,(,)_1) \hookrightarrow \rU(V,(,))$ induces an embedding of symmetric spaces $D\hookrightarrow \tilde{D}$, where $D$ is the symmetric space of $\Sp(V_0,(,)_1)$ and $\tilde{D}$ is the symmetric space of $\rU(V,(,))$. To be more precise, recall that $\tilde{D}$ is the set of $n$-dimensional subspaces of $V$ such that $z\in \tilde{D}$ if and only if the Hermitian form $i (,)$ is negative definite on $z$. Then we have
\[D=\{z\in \tilde{D}\mid S(,)|_z \text{ is zero}\}.\]

Choose a symplectic basis $\{E_1,\ldots, E_n,F_1,\ldots, F_n \}$ of $V_0$ and let 
\[v_\alpha=\frac{1}{\sqrt{2}}(E_\alpha- F_\alpha i), \ v_\mu=\frac{1}{\sqrt{2}}(E_{\mu-n}+ F_{\mu-n} i)\]
for $1\leq \alpha  \leq n, n+1\leq \mu \leq 2n$ (in this subsection we keep this convention of index). Let 
\[z_0=\mathrm{span}_\C \{v_{n+1},\ldots ,v _{2n}\}\in D.\]
Its stablizer is $K\cong \rU(n)$. We have the corresponding Cartan decomposition 
\[\mathfrak{g}_0=\mathfrak{k}_0+\mathfrak{p}_0 \text{ and }\mathfrak{p}=\mathfrak{p}_+ +\mathfrak{p}_-,\]
where $ \mathfrak{k}$ is the $0$ eigenspace of $J_\mathfrak{p}$ and $\mathfrak{p}_+$ (resp. $\mathfrak{p}_-$) is the $+i$ (resp. $-i$) eigenspace of $J_\mathfrak{p}$.
In terms of matrices we have
\begin{equation}\label{pofSp}
           \mathfrak{k}_0=\left\{\left(\begin{array}{cc}
		    A & 0 \\
		   0  & -A^t
\end{array}\right)\mid A \in M_n(\C), A^*=-A\right\},
		    \mathfrak{p}_0=\left\{\left(\begin{array}{cc}
		    0 & A \\
		   A^*  & 0
\end{array}\right)\mid A\in M_n(\C), A^t=A\right\}.
\end{equation}
Define $V^+=\mathrm{span}_\C \{v_{1},\ldots ,v _{n}\}$ and $V^-\mathrm{span}_\C \{v_{n+1},\ldots ,v _{2n}\}$, we have the identification
\[\mathfrak{k}=\Span_\C\{v\diamond \tilde{v}\mid v\in V^+,\tilde{v}\in V^-\}.\]
Define 
\[X_{\alpha \beta}=\frac{1}{i}v_\alpha \diamond v_\beta= e_{\beta , \alpha+n}+e_{\alpha,\beta+n},\ Y_{\alpha \beta}=i v_{\alpha+n} \diamond v_{\beta+n}= e_{\alpha+n, \beta}+e_{\beta+n,\alpha}\]
for $1\leq \alpha \leq \beta \leq n$. Then
\[\mathfrak{p}_+=\Span_\C\{X_{\alpha \beta}\mid  1\leq \alpha \leq \beta \leq n\},\ \mathfrak{p}_-=\Span_\C\{Y_{\alpha \beta}\mid  1\leq \alpha \leq \beta \leq n\}.\]
We also let $\{\xi'_{\alpha \beta}\mid 1\leq \alpha\leq \beta \leq n\}$ (resp.  $\{\xi''_{\alpha \beta}\mid 1\leq \alpha\leq \beta \leq n\}$) be the basis of $\mathfrak{p}_+^*$ (resp. $\mathfrak{p}_-^*$ ) dual to the above basis.
			
Now fix $1\leq r \leq n$. Define
\[U=\Span_\R\{E_1,\ldots,E_r,F_1,\ldots,F_r\}.\]
Let 
\[z'_0=\mathrm{span}_\C \{v_{n+1}, \ldots, v_{n+r}\}\in D(U).\]
Recall that in Definition \ref{specialsubsymmetricspace} we define a sub symmetric space $D_{U,z'_0}$ of $D$. The holomorphic tangent space of  $D_{U,z'_0}$ at $z_0$ is
\[T_{z_0}^{+} {D_{U,z'_0}}=\Span_\C\{X_{\alpha \beta}\mid r+1\leq \alpha \leq \beta\leq n\}.\]
The holomorphic tangent space of the fiber $F_{z_0} D_{U,z'_0}$ (see Subsection \ref{fibrationpi}) at $z_0$ is
\begin{equation}\label{normalspaceSp}
		    N_{z_0}^{+} {D_{U,z'_0}}=\Span_\C\{X_{\alpha \beta}\mid(\alpha,\beta)\in I\},
\end{equation}
where $I$ is the index set
\begin{equation}\label{indexISp}
		  I=\{(\alpha,\beta)\mid 1\leq \alpha \leq r,\alpha  \leq \beta \leq n\}.
\end{equation}
Define $e_{D_{U,z'_0}} \in \wedge^{\bullet} \mathfrak{p}_+$ by
\[e_{D_{U,z'_0}}=\bigwedge_{(\alpha,\beta)\in I} X_{\alpha \beta}.\]

Let $\mathcal{P}_-$ be the infinitesimal Fock model for the dual pair  $(\Sp(2n,\R),\Orth(0,2r))$ defined in Section \ref{seesawonesection}. Recall that $\mathcal{P}_-$ is the same as the Fock model  for the dual pair  $(\rU(n,n),\rU(0,r))$ and is the polynomial space in the variables $\{u_{i k}^-\mid 1\leq i \leq 2n, 1\leq k \leq r\}$.
Define $f^-\in \mathcal{P}_-$ by
		\[f^-=\Det\left(\begin{array}{cccc}
		    u_{1 1}^- & u_{1 2}^- &\ldots & u_{1 r}^- \\
		    \ldots &  \ldots & \ldots & \ldots \\
		    u_{r 1}^- & u_{r 2}^- & \ldots &  u_{r r}^-
		\end{array}
		\right).\]
Also define
\begin{equation}\label{f_DUSp}
    f_{D_{U,z'_0}}=(f^-)^{n-r+1}.
\end{equation}
It can be shown that $\mathfrak{k}$ acts on $\mathcal{W}_-$ by (c.f. formula (4.2) of \cite{Anderson})
\begin{equation}
    \omega(v_\alpha \diamond v_{\beta+n})=\omega(-ie_{\alpha,\beta}+ie_{\beta+n,\alpha+n})=-i\sum_{k=1}^{r}(u_{\alpha+n,k}\frac{\partial}{\partial u_{\beta+n,k}}+u_{\alpha,k}\frac{\partial}{\partial u_{\beta,k}})-i r \delta_{\alpha \beta}.
\end{equation}
The adjoint action of $\mathfrak{k}$ on $\mathfrak{p}_+$ induces an action on $ \wedge^\bullet \mathfrak{p}_+$. Define 
\[\mathfrak{b}=\Span_\C\{v_\alpha \diamond v_{\beta+n}\mid 1\leq \alpha \leq \beta\leq n\}.\]
$\mathfrak{b}$ is a Borel sub-algebra of $\mathfrak{k}$. Both $e_{D_{U,z'_0}}$ and $f_{D_{U,z'_0}}$ are highest weight vectors of $\mathfrak{b}$. Moreover they have the same weight 
		\[(\underbrace{n+1,\ldots,n+1}_r,\underbrace{r,\ldots,r}_{n-r}).\]
Let  $\tilde{K}$ the preimage of $K$ under the map $\Mp(\mathbb{W})  \rightarrow \Sp(\mathbb{W})$. Using the seesaw pair 
\begin{equation}
		    \xymatrixrowsep{0.3in}
		\xymatrixcolsep{0.3in}
		\xymatrix{ \Uni(n,n) &\Orth(0,2r)\\
			\Sp(2n,\R)\ar[u] \ar[ur] & \Uni(0,r) \ar[u]\ar[ul]}
\end{equation}
and facts about $\widetilde{\rU(n,n)}$ (see the last subsection), we can see that 
\[\tilde{K}=K\times \{\pm 1\}.\]
		
Now denote the irreducible representation of $K$ generated by $e_{D_{U,z'_0}}$ as $V(U)$. By the theory of highest weight we have a $K$-equivariant map $\psi_{2r}^+:V(U)\rightarrow \mathcal{P}_-^{\mathfrak{p}_-}$ such that
		\[\psi_{r}^+(e_{D_{U,z'_0}})=f_{D_{U,z'_0}}.\]
Theorem B of \cite{Anderson}  is
\begin{theorem}
		\[\psi_{r}^+ \in \Hom_{K}(\wedge^{\frac{1}{2}n(n+1)-\frac{1}{2}(n-r)(n-r+1)} \mathfrak{p}_+,\mathcal{P}_-^{\mathfrak{p}_-}).\]
\end{theorem}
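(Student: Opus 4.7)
The plan is to reduce the theorem to two concrete verifications: (i) the vector $f_{D_{U,z'_0}}=(f^-)^{n-r+1}$ lies in $\mathcal{P}_-^{\mathfrak{p}_-}$, and (ii) the highest-weight correspondence $e_{D_{U,z'_0}}\mapsto f_{D_{U,z'_0}}$ extends to a well-defined $K$-intertwiner out of $\wedge^{d}\mathfrak{p}_+$ with $d=\tfrac12 n(n+1)-\tfrac12(n-r)(n-r+1)$.  First I would note that the number of factors in $e_{D_{U,z'_0}}=\bigwedge_{(\alpha,\beta)\in I}X_{\alpha\beta}$ is exactly $\sum_{\alpha=1}^{r}(n-\alpha+1)=d$, so both vectors live in the correct graded pieces.

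For (ii), the weights of $e_{D_{U,z'_0}}$ and $f_{D_{U,z'_0}}$ relative to the Borel $\mathfrak{b}\subset\mathfrak{k}$ were already identified in the excerpt as the common weight $(n+1,\ldots,n+1,r,\ldots,r)$.  Since $\wedge^{d}\mathfrak{p}_+$ is completely reducible as a $K$-module, the Cartan–Weyl theory produces a (unique up to scalar) $K$-equivariant surjection $\wedge^{d}\mathfrak{p}_+\to V(U)$ followed by the highest-weight isomorphism $V(U)\cong A(U)$ sending $e_{D_{U,z'_0}}$ to $f_{D_{U,z'_0}}$.  Composing, and extending by zero on the $K$-invariant complement of $V(U)$, produces the required $\psi_r^+\in\Hom_K(\wedge^{d}\mathfrak{p}_+,A(U))$.

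The main obstacle is (i): showing $\omega(Y)\cdot f_{D_{U,z'_0}}=0$ for every $Y\in\mathfrak{p}_-$, because once this holds for the highest-weight vector $f_{D_{U,z'_0}}$, it holds on all of $A(U)$ since $\mathfrak{p}_-$ is $K$-invariant inside $\mathfrak{g}$, so $\mathcal{P}_-^{\mathfrak{p}_-}$ is a $K$-submodule containing the $K$-orbit of $f_{D_{U,z'_0}}$.  To carry this out I would use the embedding $\mathfrak{g}\hookrightarrow\mathcal{W}_\lambda$ together with the explicit formulas from Lemma \ref{wjaction} and the Weyl-algebra action \eqref{weylalgebraaction} transported through the seesaw of Section \ref{seesawonesection}.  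Under these formulas the generators $Y_{\alpha\beta}=i\,v_{\alpha+n}\diamond v_{\beta+n}$ act on $\mathcal{P}_-=\C[u_{ik}^-]$ as pure second-order differential operators of the shape $c\sum_{k=1}^{r}\partial_{u_{\alpha k}^-}\partial_{u_{\beta k}^-}$ (plus analogous contributions involving indices $>r$ which annihilate $f^-$ trivially, since those variables do not occur in $f^-$).

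For the remaining indices $1\le\alpha\le\beta\le r$, applying Leibniz to $(f^-)^{n-r+1}$ produces a term proportional to $(f^-)^{n-r}\sum_k\partial_{u_{\alpha k}^-}\partial_{u_{\beta k}^-}f^-$ and a term proportional to $(f^-)^{n-r-1}\sum_k(\partial_{u_{\alpha k}^-}f^-)(\partial_{u_{\beta k}^-}f^-)$.  The first sum vanishes because $f^-$ is the determinant of an $r\times r$ matrix and the two partials against entries in the same column $k$ yield a minor in which the relevant column has been removed.  The second sum is the Capelli-type expression whose vanishing (for $\alpha\ne\beta$) and proportionality to $f^-$ (for $\alpha=\beta$) combines with the diagonal $\mathfrak{k}$-weight of $(f^-)^{n-r+1}$ to give zero; the exponent $n-r+1$ is precisely the critical value at which these contributions cancel, exactly as in the Kashiwara–Vergne harmonicity criterion cited in \cite{KV}.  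Once (i) is established, combining with (ii) yields the stated theorem.
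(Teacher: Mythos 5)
The paper does not prove this statement; it is quoted verbatim as Theorem~B of Anderson's thesis \cite{Anderson}, so there is no in-paper proof to compare your argument against. Your overall strategy — reduce to (i) harmonicity of $(f^-)^{n-r+1}$ and (ii) the highest-weight intertwiner construction — is the right framework, and step (ii) is essentially correct: $\mathcal{P}_-^{\mathfrak{p}_-}$ is a $K$-submodule because $\mathrm{Ad}(K)$ preserves $\mathfrak{p}_-$, and the Cartan–Weyl matching of highest weights (established just before the theorem) gives the intertwiner after extension by zero on the $K$-complement of $V(U)$.

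The gap is in step (i), and it is a substantive one: you have written the $\mathfrak{p}_-$ operators in the wrong shape. On $\mathcal{P}_-=\C[u^-_{\alpha k},u^-_{\mu k}]$ (with $1\le\alpha\le n$, $n+1\le\mu\le 2n$, $1\le k\le r$), the $\mathfrak{k}=\mathfrak{gl}(n)\oplus\mathfrak{gl}(n)$ action is by
\[
\omega(e_{\alpha\beta})=\textstyle\sum_k u^-_{\alpha k}\partial_{u^-_{\beta k}}+\cdots,\qquad
\omega(e_{\mu\nu})=-\textstyle\sum_k u^-_{\nu k}\partial_{u^-_{\mu k}}-\cdots,
\]
so $u^-_{\alpha k}$ transforms in the standard representation $V^+$ and $u^-_{\mu k}$ in the \emph{dual} of the standard representation of the second factor. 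Since $\mathfrak{p}_-\cong V^-\otimes(V^+)^*$ as a $\mathfrak{k}$-module, the unique (up to scalar) degree-$(-2)$ operator with that equivariance type is
\[
\omega(e_{\mu\alpha})=c\textstyle\sum_{k=1}^r\partial_{u^-_{\mu k}}\,\partial_{u^-_{\alpha k}},\qquad \alpha\le n<\mu,
\]
i.e.\ it necessarily pairs a ``$V^+$'' derivative with a ``$V^-$'' derivative. For $\Sp(2n,\R)$ this gives
\[
\omega(Y_{\alpha\beta})=c\Bigl(\textstyle\sum_k\partial_{u^-_{\alpha+n,k}}\partial_{u^-_{\beta k}}+\textstyle\sum_k\partial_{u^-_{\beta+n,k}}\partial_{u^-_{\alpha k}}\Bigr),\qquad 1\le\alpha\le\beta\le n.
\]
There is \emph{no} term $\sum_k\partial_{u^-_{\alpha k}}\partial_{u^-_{\beta k}}$ with both indices $\le n$. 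Every term involves a derivative in a variable $u^-_{\mu k}$ with $\mu>n>r$, and $f^-$ (hence $(f^-)^N$ for every $N$) does not involve those variables; so the annihilation $\omega(\mathfrak{p}_-)(f^-)^N=0$ holds trivially and degree by degree, without any Leibniz/Capelli cancellation. Your elaborate argument — the vanishing of $\sum_k M_{\alpha k}M_{\beta k}$ for $\alpha\ne\beta$ and the ``critical exponent'' cancellation at $N=n-r+1$ — both relies on a nonexistent ``$\partial_\alpha\partial_\beta$'' piece and is moreover false on its own terms: $\sum_k(\partial_{u^-_{\alpha k}}f^-)(\partial_{u^-_{\beta k}}f^-)=(\mathrm{adj}(u)\,\mathrm{adj}(u)^T)_{\alpha\beta}$ does not vanish generically for $\alpha\ne\beta$. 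The exponent $n-r+1$ plays no role in the $\mathfrak{p}_-$-annihilation; its sole purpose is to match the $\mathfrak{k}$-highest weight of $(f^-)^N$ with that of $e_{D_{U,z'_0}}$, which is part (ii), not part (i).
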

As in case A, the definition of $\psi_{r}^+$ is independent of the choice of $U$. We have an isomorphism
\[\Hom_{K}(\wedge^\bullet \mathfrak{p}_+,\mathcal{P}_-^{\mathfrak{p}_-})\cong (\wedge^\bullet \mathfrak{p}_+^*\otimes\mathcal{P}_-^{\mathfrak{p}_-})^{K},\]
under which $\psi_r^+$ is mapped to an element $\phi_r^+\in (\wedge^{\frac{1}{2}n(n+1)-\frac{1}{2}(n-r)(n-r+1)} \mathfrak{p}_+^*\otimes\mathcal{P}_-^{\mathfrak{p}_-})^K$.

Let 
	    \[i:F_{z_0} D_{U,z'_0}\rightarrow D\]
be the natural embedding. 
Using the definition of $\phi_{r}^+$, we can prove the following lemma in a similar way as Lemma \ref{onlyonetermUpq}.
		
\begin{lemma}\label{onlyonetermSp}
		\[i^*(\phi_{r}^+(\bx))|_{z_0}=(f^-)^{n-r+1} i^*(\bigwedge_{(\alpha,\mu)\in I} \xi'_{\alpha,\mu})|_{z_0}.\]
\end{lemma}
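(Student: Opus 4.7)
The approach is to follow the template of the proof of Lemma \ref{onlyonetermUpq} essentially verbatim, replacing the case-A index set by the case-B set $I$ from \eqref{indexISp} and the case-A highest-weight vectors by $e_{D_{U,z'_0}}=\bigwedge_{(\alpha,\beta)\in I}X_{\alpha\beta}$ and $f_{D_{U,z'_0}}=(f^-)^{n-r+1}$. The geometric input from \eqref{normalspaceSp} plays exactly the role that equation \eqref{normalfibreUpq} played in the unitary case, so the proof reduces to a routine basis-expansion argument.

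The key step is the following observation. By \eqref{normalspaceSp}, the vectors $\{X_{\alpha\beta}\mid (\alpha,\beta)\in I\}$ span the holomorphic tangent space $N_{z_0}^+ D_{U,z'_0}$ of the fiber $F_{z_0}D_{U,z'_0}$ at $z_0$. Let $S=\{(\alpha,\beta)\mid 1\leq\alpha\leq\beta\leq n\}$, so that $\{X_{\alpha\beta}\}_{(\alpha,\beta)\in S}$ is a basis of $\mathfrak{p}_+$; for $(\alpha,\beta)\in S\setminus I$ the vector $X_{\alpha\beta}$ is perpendicular to the fiber. Consequently the dual covectors satisfy $i^*(\xi'_{\alpha\beta})|_{z_0}=0$ whenever $(\alpha,\beta)\notin I$.

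Now expand $\phi_r^+$ in the natural basis $\{\bigwedge_{(\alpha,\beta)\in T}\xi'_{\alpha\beta}\mid T\subseteq S,\ |T|=|I|\}$ of $\wedge^{|I|}\mathfrak{p}_+^*$. For any subset $T\neq I$ of the correct cardinality, the pigeonhole principle forces some pair in $T$ to lie outside $I$, so by the vanishing above $i^*(\bigwedge_{(\alpha,\beta)\in T}\xi'_{\alpha\beta})|_{z_0}=0$. Only the $T=I$ term survives. To compute its coefficient, choose the basis $\{\epsilon_1,\dots,\epsilon_d\}$ of $V(U)\subset\wedge^{|I|}\mathfrak{p}_+$ with $\epsilon_1=e_{D_{U,z'_0}}$; extending to a basis of $\wedge^{|I|}\mathfrak{p}_+$ whose dual element $\Omega_1$ is precisely $\bigwedge_{(\alpha,\beta)\in I}\xi'_{\alpha\beta}$, one reads off from the definition $\phi_r^+=\sum_i \psi_r^+(\epsilon_i)\Omega_i$ that the coefficient is $\psi_r^+(e_{D_{U,z'_0}})=f_{D_{U,z'_0}}=(f^-)^{n-r+1}$ by \eqref{f_DUSp}. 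This gives the desired identity.

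There is no serious obstacle in the argument itself; the only point requiring care is the compatibility of bookkeeping between the intrinsic description of $V(U)$ (as the highest-weight $K$-submodule generated by $e_{D_{U,z'_0}}$) and the monomial basis of $\wedge^{|I|}\mathfrak{p}_+$ indexed by subsets $T\subseteq S$. One must verify that $\bigwedge_{(\alpha,\beta)\in I}\xi'_{\alpha\beta}$ really can be taken as the first element of the dual basis coming from the construction of $\phi_r^+$, but this is guaranteed by the freedom in extending $\{\epsilon_i\}$ from $V(U)$ to a full basis of $\wedge^{|I|}\mathfrak{p}_+$ (and by the canonicity of the resulting element $\phi_r^+$).
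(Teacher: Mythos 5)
Your argument is correct and follows exactly the route the paper intends: the paper's "proof" of Lemma \ref{onlyonetermSp} is simply the remark "similar to Lemma \ref{onlyonetermUpq}", and you have carried out that adaptation faithfully, substituting the case-B data $I$ from \eqref{indexISp}, $N_{z_0}^+D_{U,z'_0}$ from \eqref{normalspaceSp}, and $f_{D_{U,z'_0}}=(f^-)^{n-r+1}$ from \eqref{f_DUSp}. The point you flag about the dual-basis bookkeeping is a genuine (if mild) subtlety present already in the paper's proof of Lemma \ref{onlyonetermUpq}, and your resolution — that one is free to extend $\{\epsilon_i\}$ so that $\Omega_1=\bigwedge_{(\alpha,\beta)\in I}\xi'_{\alpha\beta}$, or equivalently that the $\bigwedge_I\xi'$-coefficient of $\phi_r^+$ is just $\phi_r^+(\bigwedge_I X)=\psi_r^+(e_{D_{U,z'_0}})$ — is correct.
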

		
Let $\mathcal{P}_+$ be the infinitesimal Fock model for the dual pair $(\Sp(2n,\R),\Orth(2r,0))$.
Recall that $\mathcal{P}_+$ is the polynomial space in the variables $\{u_{i a}^+\mid 1\leq i \leq 2n, 1\leq a \leq r\}$.
Define $f^+\in \mathcal{P}_+$ by
		\[f^+=\mathrm{det}\left(\begin{array}{cccc}
		    u_{1 1}^+ & u_{1 2}^+ &\ldots & u_{1 r}^+ \\
		    \ldots &  \ldots & \ldots & \ldots \\
		    u_{r 1}^+ & u_{r 2}^+ & \ldots &  u_{r r}^+
		\end{array}
		\right).\]
Then both $(f^+)^{n-r+1}$ and $\bigwedge_{(\alpha,\beta)\in I} Y_{\alpha \beta}$ are lowest weight vector with respect to $\mathfrak{b}$ of weight 
\[(\underbrace{-n-1,\ldots,-n-1}_r,\underbrace{-r,\ldots,-r}_{n-r}).\]
There is a unique element
\[\psi_{r}^- \in \Hom_{K}(\wedge^{\frac{1}{2}n(n+1)-\frac{1}{2}(n-r)(n-r+1)} \mathfrak{p}_-,\mathcal{P}_+^{\mathfrak{p}_+}),\]
which maps $(f^+)^{n-r+1}$ to $\bigwedge_{(\alpha,\beta)\in I} Y_{\alpha \beta}$ and a corresponding element
\[\phi_{r}^- \in (\wedge^{\frac{1}{2}n(n+1)-\frac{1}{2}(n-r)(n-r+1)} \mathfrak{p}_-^*\otimes\mathcal{P}_+^{\mathfrak{p}_+})^K.\]
		
Now let $\mathcal{P}$ be the infinitesimal Fock model for the dual pair $(\Sp(2n,\R),\Orth(2r,2r))$. We have
		\[\mathcal{P}=\mathcal{P}_-\otimes \mathcal{P}_+.\]
Define 
		\[\phi_{r}=\phi_{r}^+\wedge \phi_{r}^-.\]
Then it is immediate that 
\[\phi_{r}\in (\wedge^{n(n+1)-(n-r)(n-r+1)}\mathfrak{p}^*\otimes \mathcal{P})^K.\]

\subsection{Case C}\label{Ostarcocyclesection}
Let $V$ be a $n$-dimensional right $\H$ vector space with skew Hermitian form $(,)_2$ satisfying \eqref{(,)_2}. Let $V_\C$ denote the underlying complex vector space of $V$.
Define $V^\sigma$ the conjugate left $\H$ vector space of $V$ as follows. The underlying Abelian group of $V^\sigma$ is the same with that of $V$. The scalar multiplication of $V^\sigma$ is defined by 
		\[h v= v h^\sigma, \forall h \in \H,\]
where the left hand side is scalar multiplication in $V^\sigma$ while the right hand side is scalar multiplication in $V$. There is an isomorphism $V^\sigma\rightarrow \Hom_\H(V,\H)$ as left $\H$ module given by the form $(,)_2$
\[v\mapsto (v,\cdot)_2.\]
One can also identify $V\otimes_\H V^\sigma$ with $\Hom_\H(V,V)$ by the map
		\[v\otimes {\tilde{v}} \mapsto v(\tilde{v},\cdot)_2.\]
Define
\[\Sym(V\otimes_\H  V^\sigma)=\Span_\R\{v\otimes \tilde{v}+\tilde{v} \otimes v\mid v,\tilde{v}\in V\}\subset V\otimes_\H  V^\sigma.\]
By the above identification, $\Sym(V\otimes_\H  V^\sigma)$ acts on $V$ by 
\[(v\diamond \tilde{v}) (x)=v(\tilde{v},x)_2+\tilde{v}(v,x)_2, \forall x\in V,\]
where $v\diamond \tilde{v}=v\otimes \tilde{v}+\tilde{v} \otimes v$. One can check that this action is $\H$-linear and satisfies
\[((v\diamond \tilde{v})(x),y)_2+(x, (v\diamond \tilde{v})(y))_2=0.\]
Moreover we have
\begin{lemma}
\[\Sym(V\otimes_\H  V^\sigma)\cong \mathfrak{o}^*(V,(,)_2)=\mathfrak{o}^*(2n).\]
\end{lemma}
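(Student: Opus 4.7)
The plan is to mirror the arguments already carried out for Cases A and B: realize $\mathfrak{o}^*(V,(,)_2)$ as the image of $\Sym(V\otimes_\H V^\sigma)$ under the natural identification $V \otimes_\H V^\sigma \cong \End_\H(V)$, $v\otimes \tilde v \mapsto v(\tilde v,\cdot)_2$, which the excerpt has just set up. Two of the three needed ingredients are already in place: this identification is a bijection of right $\H$-modules (using that $(vh,\tilde v)_2 = h^\sigma(v,\tilde v)_2$ to see that $(v\otimes \tilde v)(x)$ is $\H$-linear), and the identity \[((v\diamond \tilde v)(x),y)_2+(x,(v\diamond \tilde v)(y))_2=0\] displayed in the excerpt shows that $v\diamond \tilde v$ lands in $\mathfrak{o}^*(V,(,)_2)$. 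So the candidate map $\Sym(V\otimes_\H V^\sigma)\to \mathfrak{o}^*(V,(,)_2)$ is well-defined, and injective for free because it is the restriction of the isomorphism $V\otimes_\H V^\sigma \cong \End_\H(V)$ to a subspace.

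What remains is surjectivity, and I would do this by a dimension count. The involution $\tau: v\otimes \tilde v \mapsto \tilde v\otimes v$ is well-defined on $V\otimes_\H V^\sigma$ (compatibility with the tensor relation $(vh)\otimes \tilde v = v\otimes (h\cdot\tilde v) = v\otimes \tilde v h^\sigma$ is a short check using that $\sigma$ is an anti-involution) and splits the $4n^2$-dimensional real space $V\otimes_\H V^\sigma \cong M_n(\H)$ into its $\pm 1$ eigenspaces, with $\Sym(V\otimes_\H V^\sigma)$ being the $+1$ part. Under the identification with $\End_\H(V)$, the $+1$ eigenspace matches the anti-self-adjoint operators for the skew-Hermitian form $(,)_2$ (which is exactly the condition $\tau$ corresponds to after computing how adjoints exchange the two tensor slots), and this space has real dimension $n(2n-1)$, which equals $\dim_\R \mathfrak{o}^*(2n)$. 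Matching dimensions gives surjectivity, completing the isomorphism.

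The main obstacle is the quaternionic bookkeeping: one must verify both that $\tau$ descends cleanly to $V\otimes_\H V^\sigma$ (this is where the hypothesis that $\sigma$ is an anti-involution is used) and that under the tensor-to-endomorphism isomorphism the eigenspaces of $\tau$ line up with the $\pm$ eigenspaces of the adjoint involution coming from $(,)_2$. Once this identification is pinned down, the dimension count $n(2n-1)$ on both sides closes the argument, directly parallel to the unitary case worked out in Section~\ref{varphiUni}.
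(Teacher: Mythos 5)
The paper states this lemma without proof (as it does for the analogous Case A identification $\Sym(V\otimes_\C\bar V)\cong\mathfrak{u}(p,q)$), so there is no argument of the author's to compare yours against; your proposal supplies what the paper leaves implicit, and the core of it is sound. The structure of your argument is the right one: the bilinear map $v\otimes\tilde v\mapsto v(\tilde v,\cdot)_2$ descends to an $\R$-linear isomorphism $V\otimes_\H V^\sigma\cong\Hom_\H(V,V)$, the flip $\tau$ is well-defined because $\sigma$ is an anti-involution, and a short computation with the skew-Hermitian identity $(\tilde v,x)_2^\sigma=-(x,\tilde v)_2$ shows that $\tau$ goes over to the negated adjoint $T\mapsto -T^*$ under this isomorphism, so the $+1$ eigenspace $\Sym$ is carried exactly onto $\{T\in\End_\H(V):T^*=-T\}=\mathfrak{o}^*(V,(,)_2)$.

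Two small points to tighten. First, once you have identified $\tau$ with $T\mapsto -T^*$, the final dimension count is redundant: the eigenspaces of the two involutions already match under the isomorphism, which gives surjectivity for free, so you do not need to invoke $n(2n-1)=\dim_\R\mathfrak{o}^*(2n)$ separately. Second, the phrase ``bijection of right $\H$-modules'' is not quite right: $V\otimes_\H V^\sigma$ (a right module tensored over $\H$ with a left module) is only an $\R$-vector space, and $\Hom_\H(V,V)\cong M_n(\H)$ carries no canonical $\H$-module structure; the identification is simply an $\R$-linear isomorphism, which is all the argument needs.
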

		
Define $(,)$ on $V_\C$ by
		\[(v,\tilde{v})=a+bi \text{ if } (v,\tilde{v})_2=a+bi+cj+dk.\]
Then $i (,)$ is (complex) Hermitian of signature $(n,n)$. We also define $S(,)$ on $V_\C$ by 
		\[S(v,\tilde{v})=(vj,\tilde{v}).\]
One can check that $S(,)$ is symmetric complex bilinear. We have the following fact (\cite{Anderson})
		\[\Orth^*(V,(,)_2)=\Uni(V, (,))\cap \Orth(V,S(,)).\]
It can also be shown that
\[\mathfrak{g}=\mathfrak{g}_0\otimes \C\cong \mathfrak{o}(V,S(,))\cong \bigwedge^2 (V_\C),\]
where $\mathfrak{g}_0=\mathfrak{o}^*(V,(,))$ and $\bigwedge^2 (V_\C)$ acts on $V$ by 
\[v \wedge \tilde{v} (x)=-v S(\tilde{v},x)+\tilde{v} S(v,x), \forall x\in V.\]

The map $\rO^*(V,(,)_2) \hookrightarrow \rU(V,(,))$ induces an embedding of symmetric spaces $D\hookrightarrow \tilde{D}$, where $D$ is the symmetric space of $\rO^*(V,(,)_2)$ and $\tilde{D}$ is the symmetric space of $\rU(V,(,))$. To be more precise, recall that $\tilde{D}$ is the set of $n$-dimensional subspaces of $V_\C$ such that $z\in \tilde{D}$ if and only if the Hermitian form $i (,)$ is negative definite on $z$. Then we have
\[D=\{z\in \tilde{D}\mid S(,)|_z \text{ is zero}\}.\]
		
Explicitly choose an orthogonal basis $\{v_1,\ldots,v_n\}$ of $V$ such that
		\[(v_\alpha,v_\beta)_2=-i \delta_{\alpha \beta}\]
for $1\leq \alpha,\beta \leq n$. Then $\{v_1,\ldots,v_n,v_{n+1} ,\ldots,v_{2n} \}$ is a basis of $V_\C$, where $v_\mu=v_{\mu-n} j $
for $n+1\leq \mu \leq 2n$. The $\C$-linear transformation $a(\zeta)$ introduced in equation (\ref{azeta}) sits inside $ \Orth(V,S(,))\cap \Uni(V,(,))$. Thus the almost complex structure \eqref{J_p} stabilize $\mathfrak{g}_0$ and induces an almost complex structure on  $\mathfrak{g}_0$. Let
\[z_0=\Span_\C\{v_{n+1},\ldots,v_{2n}\}\in D. \]
Its stablizer is $K\cong \rU(n)$. We have the corresponding Cartan decomposition 
\[\mathfrak{g}_0=\mathfrak{k}_0+\mathfrak{p}_0 \text{ and }\mathfrak{p}=\mathfrak{p}_+ +\mathfrak{p}_-,\]
where $ \mathfrak{k}$ is the $0$ eigenspace of $J_\mathfrak{p}$ and $\mathfrak{p}_+$ (resp. $\mathfrak{p}_-$) is the $+i$ (resp. $-i$) eigenspace of $J_\mathfrak{p}$. 
In terms of matrices we have
\begin{equation}\label{pofOstar}
            \mathfrak{k}_0=\big\{\left(\begin{array}{cc}
		    A & 0 \\
		    0  & -A^t
		\end{array}\right)\mid A\in M_n(\C), A^*=-A\big\},
		    \mathfrak{p}_0=\big\{\left(\begin{array}{cc}
		    0 & A \\
		    A^*  & 0
		\end{array}\right)\mid A\in M_n(\C), A^t=-A\big\}.
\end{equation}
If we define $V^+=\Span_\C \{v_1,\ldots,v_n\}$ and $V^-=\Span_\C \{v_{n+1},\ldots,v_{2n}\}$, we have the identification
\[\mathfrak{k}=\Span_\C\{v\wedge \tilde{v}\mid v\in V^+,\tilde{v}\in V^-\}.\]
Define 
\[X_{\alpha \beta}=i v_\alpha \wedge v_\beta=e_{\alpha,\beta+n}-e_{\beta,\alpha+n}, \ Y_{\alpha \beta}=\frac{1}{i}v_{\alpha+n} \wedge v_{\beta+n}= e_{\beta+n,\alpha}- e_{\alpha+n,\beta}\]
for $1\leq \alpha < \beta \leq n$. Then
\[\mathfrak{p}_+=\Span_\C\{X_{\alpha \beta}\mid 1\leq \alpha < \beta \leq n \},\ \mathfrak{p}_-=\Span_\C\{Y_{\alpha \beta}\mid 1\leq \alpha < \beta \leq n \}.\]
We also let $\{\xi'_{\alpha \beta}\mid 1\leq \alpha< \beta \leq n\}$ (resp. $\{\xi''_{\alpha \beta}\mid 1\leq \alpha< \beta \leq n\}$) be the basis of $\mathfrak{p}_+^*$ (resp. $\mathfrak{p}_-^*$ ) dual to the above basis.
		
Now fix $1\leq r \leq n$. Define 
\[U=\Span_\H\{v_1,\ldots,v_r\}.\]
Define 
\[z'_0=\Span_\C\{v_{n+1},\ldots,v_{n+r}\} \in D(U).\]
In Definition \ref{specialsubsymmetricspace} we define a sub symmetric space $D_{U,z'_0}$ of $D$.  The holomorphic tangent space of  $D_{U,z'_0}$ at $z_0$ is
\[T_{z_0}^{+} {D_{U,z'_0}}=\Span_\C\{X_{\alpha \beta}\mid r+1\leq \alpha < \beta\leq n\}.\]
The holomorphic tangent space of the fiber $F_{z_0} D_{U,z'_0}$ (see Subsection \ref{fibrationpi} ) at $z_0$ is
\begin{equation}\label{normalspaceOstar}
		    N_{z_0}^{+} {D_{U,z'_0}}=\Span_\C\{X_{\alpha \beta}\mid (\alpha,\beta)\in I\},
\end{equation}
where $I$ is the index set
\begin{equation}\label{indexIOstar}
   I=\{(\alpha,\beta)\mid 1\leq \alpha \leq r,\alpha  < \beta \leq n\}. 
\end{equation}
Define $e_{D_{U,z'_0}} \in \wedge^{\bullet} \mathfrak{p}_+$ by
\[e_{D_{U,z'_0}}=\bigwedge_{(\alpha,\beta)\in I} X_{\alpha \beta}.\]
		
Let $\mathcal{P}_-$ be the Fock model defined in the last section for the dual pair $(\Orth^*(2n),\Sp(0,r))$ in Section \ref{seesawtwosection}. Recall that $\mathcal{P}_-$ is the same as the Fock model  for the dual pair  $(\rU(n,n),\rU(0,r))$ and is the polynomial space in the variables $\{u_{i k}^-\mid 1\leq i \leq 2n, 1\leq k \leq r\}$. Define $f^-\in \mathcal{P}_-$ by
		\[f^-=\Det\left(\begin{array}{cccc}
		    u_{1 1}^- & u_{1 2}^- &\ldots & u_{1 r}^- \\
		    \ldots &  \ldots & \ldots & \ldots \\
		    u_{r 1}^- & u_{r 2}^- & \ldots &  u_{r r}^-
		\end{array}
		\right).\]
Also define $f_{D_{U,z'_0}}\in \mathcal{W}_-$ by
\begin{equation}\label{f_DUOstar}
    f_{D_{U,z'_0}}=(f^-)^{n-r-1}.
\end{equation}
It can be shown that $\mathfrak{k}$ acts on $\mathcal{W}_-$ by (c.f. equation (5.2) of \cite{Anderson})
\begin{equation}
    \omega(v_\alpha \wedge v_{\beta+n})=\omega(-ie_{\alpha \beta}+ie_{\beta+n,\alpha+n})=-i\sum_{k=1}^{r}(u_{\alpha+n,k}\frac{\partial}{\partial u_{\beta+n,k}}+u_{\alpha,k}\frac{\partial}{\partial u_{\beta,k}})-i r \delta_{\alpha \beta}.
\end{equation}
The adjoint action of $\mathfrak{k}$ on $\mathfrak{p}_+$ induces an action on $ \wedge^\bullet \mathfrak{p}_+$. Define 
\[\mathfrak{b}=\Span_\C\{v_\alpha \wedge v_{\beta+n}\mid 1\leq \alpha \leq \beta\leq n\}.\]
$\mathfrak{b}$ is a Borel sub-algebra of $\mathfrak{k}$. Both $e_{D_{U,z'_0}}$ and $f_{D_{U,z'_0}}$ are highest weight vectors of $\mathfrak{b}$. Moreover they have the same weight 
\[(\underbrace{n-1,\ldots,n-1}_r,\underbrace{r,\ldots,r}_{n-r}).\]
Let $\tilde{K}$ be  the preimage of $K$ under the map $\Mp(\mathbb{W})  \rightarrow \Sp(\mathbb{W})$. Using the seesaw pair 
\begin{equation}
		    \xymatrixrowsep{0.3in}
		\xymatrixcolsep{0.3in}
		\xymatrix{ \Uni(n,n) &\Sp(0,r)\\
			\rO^*(2n)\ar[u] \ar[ur] & \Uni(0,r) \ar[u]\ar[ul]}
\end{equation}
and facts about $\widetilde{\rU(n,n)}$ (see subsection \ref{varphiUpq} or \cite{Paul}), we can see that 
		\[\tilde{K}=K\times \{\pm 1\}.\]
Now denote the irreducible representation of $K$ generated by $e_{D_{U,z'_0}}$ as $V(U)$. By the theory of highest weight we have a $K$-equivariant map $\psi_+:V(U)\rightarrow \mathcal{P}_-^{\mathfrak{p}_-}$ such that
\[\psi_r^+(e_{D_{U,z'_0}})=f_{D_{U,z'_0}}.\]
Then Theorem C of \cite{Anderson}  is
\begin{theorem}
		\[\psi_r^+ \in \Hom_{K}(\wedge^\bullet \mathfrak{p}_+,\mathcal{P}_-^{\mathfrak{p}_-}).\]
\end{theorem}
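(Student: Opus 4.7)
The plan is to split the theorem into a formal representation-theoretic step and an analytic step. Formally, $\psi_r^+$ must be constructed as a $K$-equivariant map on the single irreducible summand of $\wedge^\bullet \mathfrak{p}_+$ generated by $e_{D_{U,z'_0}}$, and then extended by zero on a $K$-invariant complement. Analytically, the image of this map must be shown to lie inside the subspace $\mathcal{P}_-^{\mathfrak{p}_-}$ of vectors annihilated by $\mathfrak{p}_-$.

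For the formal step, I would argue exactly as in the constructions already carried out for cases A and B in subsections \ref{varphiUni} and \ref{Spcocylesection}. The weight computations recorded just above the theorem show that both $e_{D_{U,z'_0}}$ and $f_{D_{U,z'_0}} = (f^-)^{n-r-1}$ are highest weight vectors with respect to the Borel $\mathfrak{b} = \Span_\C\{v_\alpha \wedge v_{\beta+n} \mid 1 \le \alpha \le \beta \le n\}$ of the common weight $(n-1,\ldots,n-1,r,\ldots,r)$. Therefore the irreducible $K$-modules $V(U) \subset \wedge^\bullet \mathfrak{p}_+$ and $A(U) \subset \mathcal{P}_-$ they generate are isomorphic, and highest weight theory produces a unique $K$-equivariant map $\psi_r^+\colon V(U) \to A(U)$ normalized by $\psi_r^+(e_{D_{U,z'_0}}) = f_{D_{U,z'_0}}$. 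Extending by zero on a $K$-complement in $\wedge^\bullet \mathfrak{p}_+$ yields an element of $\Hom_K(\wedge^\bullet \mathfrak{p}_+, \mathcal{P}_-)$.

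For the analytic step I must verify $\omega(Y)\, f_{D_{U,z'_0}} = 0$ for every $Y \in \mathfrak{p}_-$. First, since $[\mathfrak{k}, \mathfrak{p}_-] \subseteq \mathfrak{p}_-$ and the Weil representation satisfies $\omega(k)\omega(Y)\omega(k)^{-1} = \omega(\mathrm{Ad}(k) Y)$, the subspace $\mathcal{P}_-^{\mathfrak{p}_-}$ is $K$-stable inside $\mathcal{P}_-$. Hence it suffices to verify the annihilation for the single highest weight vector $(f^-)^{n-r-1}$ against each root vector $Y_{\alpha\beta} = \tfrac{1}{i}\, v_{\alpha+n} \wedge v_{\beta+n}$ with $1 \le \alpha < \beta \le n$. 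Using Lemma \ref{wjaction} together with the Weyl algebra formulas of subsection \ref{FockmodelUpq}, which transport to the Fock model used here via the seesaw \eqref{seesawtwo}, the operator $\omega(Y_{\alpha\beta})$ becomes a second-order constant-coefficient differential operator on $\mathcal{P}_- = \C[u^-_{ik}]$ that pairs the last $n-r$ rows of the matrix $(u^-_{ik})$ antisymmetrically.

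The main obstacle is the resulting combinatorial vanishing of $\omega(Y_{\alpha\beta})\, (f^-)^{n-r-1}$. I would first attempt the conceptual route provided by the joint harmonics theory of Kashiwara--Vergne \cite{KV}: the determinant $f^-$ is a prototypical determinantal joint harmonic, and the power $n-r-1$ is precisely the exponent that makes $(f^-)^{n-r-1}$ a highest weight vector for the full Howe-dual $\mathfrak{o}^*(2n)$-action, which is equivalent to the $\mathfrak{p}_-$-annihilation condition. If that conceptual argument does not close, I would fall back to a direct Leibniz expansion of $\omega(Y_{\alpha\beta})(f^-)^{n-r-1}$ and use Plücker relations on the minors of $(u^-_{ik})$ to collapse the antisymmetric derivatives to zero, tracking how the combinatorial factor $n-r-1$ forces cancellation.
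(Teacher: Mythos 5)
The paper does not prove this statement; it is Theorem~C of Anderson's thesis \cite{Anderson}, quoted without proof. So there is no internal proof to compare your sketch against, and the question is whether your sketch would plausibly reconstruct Anderson's argument.

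Your overall structure is sound. The ``formal step'' is correct but is not really part of the content of the theorem: it recapitulates the construction of $\psi_r^+$ that the paper already performs just before stating the theorem (matching the highest weights of $e_{D_{U,z'_0}}$ and $f_{D_{U,z'_0}}=(f^-)^{n-r-1}$ with respect to the Borel $\mathfrak{b}$, and invoking highest weight theory for the compact $K\cong\rU(n)$). Your ``analytic step'' correctly isolates the actual content: one must check that the image lies in $\mathcal{P}_-^{\mathfrak{p}_-}$, and since that subspace is $K$-stable (your argument via $[\mathfrak{k},\mathfrak{p}_-]\subseteq\mathfrak{p}_-$ and the covariance $\omega(k)\omega(Y)\omega(k)^{-1}=\omega(\mathrm{Ad}(k)Y)$ is correct), it suffices to verify $\omega(Y_{\alpha\beta})\,(f^-)^{n-r-1}=0$ for all $Y_{\alpha\beta}\in\mathfrak{p}_-$.

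Where the sketch falls short is that neither of your two proposed routes is actually carried out, and the framing of the exponent is misleading. The phrase that $n-r-1$ ``is precisely the exponent that makes $(f^-)^{n-r-1}$ a highest weight vector for the full $\mathfrak{o}^*(2n)$-action, which is equivalent to the $\mathfrak{p}_-$-annihilation condition'' conflates two distinct requirements. Being annihilated by $\mathfrak{p}_-$ (being in the lowest $K$-type of the $\mathfrak{o}^*(2n)$-lowest-weight constituent of $\mathcal{P}_-$) holds for $(f^-)^k$ for \emph{every} nonnegative integer $k$ -- these are the determinantal joint harmonics in the sense of \cite{KV}. The exponent $n-r-1$ enters only to make the $\mathfrak{t}$-weight of $(f^-)^{n-r-1}$ equal to $(n-1,\ldots,n-1,r,\ldots,r)$, the weight of $e_{D_{U,z'_0}}$; it is irrelevant to the annihilation and does not ``force cancellation'' in any Leibniz expansion. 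Your description of $\omega(Y_{\alpha\beta})$ as ``pairing the last $n-r$ rows of $(u^-_{ik})$ antisymmetrically'' is also imprecise -- since $Y_{\alpha\beta}=\frac{1}{i}v_{\alpha+n}\wedge v_{\beta+n}$, the resulting second-order constant-coefficient operator involves the rows indexed by $\alpha,\beta,\alpha+n,\beta+n$ and not an $(n-r)$-block -- and you would need to actually write out the operator from the Weyl algebra formulas before either the Plücker computation or the identification with a Kashiwara--Vergne harmonic can be completed. As written, the proposal identifies the right skeleton and the right reference, but leaves the nontrivial verification as two unexecuted alternatives with an incorrect account of what the exponent does.
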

As in case A, the definition of $\psi_{r}^+$ is independent of the choice of $U$. We have an isomorphism
\[\Hom_{K}(\wedge^\bullet \mathfrak{p}_+,\mathcal{P}_-^{\mathfrak{p}_-})\cong (\wedge^\bullet \mathfrak{p}_+^*\otimes\mathcal{P}_-^{\mathfrak{p}_-})^{K},\]
under which $\psi_r^+$ is mapped to an element $\phi_r^+\in (\wedge^{\frac{1}{2}n(n-1)-\frac{1}{2}(n-r)(n-r-1)} \mathfrak{p}_+^*\otimes\mathcal{P}_-^{\mathfrak{p}_-})^K$.

Let 
	    \[i:F_{z_0} D_{U,z'_0}\rightarrow D\]
be the natural embedding (see equation \eqref{fiber} for the definition of $F_{z_0} D_{U,z'_0}$). 
Using the definition of $\phi_r^+$, we can prove the following lemma in a similar way as Lemma \ref{onlyonetermUpq}: 
		
\begin{lemma}\label{onlyonetermOstar}
		\[i^*(\phi_+(\bx))|_{z_0}=(f^-)^{n-r-1} i^*(\bigwedge_{(\alpha,\mu)\in I} \xi'_{\alpha,\mu})|_{z_0}.\]
\end{lemma}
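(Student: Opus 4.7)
The plan is to follow verbatim the strategy used in Lemma \ref{onlyonetermUpq} and Lemma \ref{onlyonetermSp}, substituting the case C data. First I would note that by equation \eqref{normalspaceOstar} the holomorphic tangent space $N_{z_0}^+ D_{U,z'_0}$ of the fiber $F_{z_0} D_{U,z'_0}$ at $z_0$ is spanned by $\{X_{\alpha\beta}\mid (\alpha,\beta)\in I\}$, with $I$ as in \eqref{indexIOstar}. Since $i^*$ is pullback under the embedding of the fiber into $D$, this means $i^*(\xi'_{\alpha\beta})|_{z_0}=0$ whenever $(\alpha,\beta)\notin I$.

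Next I would expand $\phi_r^+$ in the natural monomial basis of $\wedge^{d'}\mathfrak{p}_+^*$, where $d'=\tfrac{1}{2}n(n-1)-\tfrac{1}{2}(n-r)(n-r-1)$, namely the basis consisting of wedge products $\bigwedge_{(\alpha,\beta)\in T}\xi'_{\alpha\beta}$ with $T$ ranging over $d'$-element subsets of $\{(\alpha,\beta)\mid 1\leq \alpha<\beta\leq n\}$. A direct count gives $|I|=\sum_{\alpha=1}^{r}(n-\alpha)=nr-\tfrac{r(r+1)}{2}=d'$, so $T=I$ is an admissible subset. For every other admissible $T$, some $(\alpha,\beta)\in T$ lies outside $I$, and then $i^*\bigl(\bigwedge_{(\alpha,\beta)\in T}\xi'_{\alpha\beta}\bigr)|_{z_0}=0$ by the previous step.

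Hence only the coefficient of $\bigwedge_{(\alpha,\beta)\in I}\xi'_{\alpha\beta}$ in $\phi_r^+$ survives under $i^*$ at $z_0$. By the very construction of $\phi_r^+$ as the image of $\psi_r^+$ under the isomorphism $\Hom_K(\wedge^\bullet \mathfrak{p}_+,\mathcal{P}_-^{\mathfrak{p}_-})\cong(\wedge^\bullet\mathfrak{p}_+^*\otimes \mathcal{P}_-^{\mathfrak{p}_-})^K$, I would choose the weight-vector basis $\{\epsilon_1,\ldots,\epsilon_d\}$ of $V(U)\subset\wedge^\bullet\mathfrak{p}_+$ so that $\epsilon_1=e_{D_{U,z'_0}}=\bigwedge_{(\alpha,\beta)\in I}X_{\alpha\beta}$; the associated dual basis vector in $\wedge^{d'}\mathfrak{p}_+^*$ is then $\Omega_1=\bigwedge_{(\alpha,\beta)\in I}\xi'_{\alpha\beta}$. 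Since the definition of $\phi_r^+$ is independent of the choice of basis, this choice is harmless, and the surviving coefficient is $\psi_r^+(e_{D_{U,z'_0}})=f_{D_{U,z'_0}}=(f^-)^{n-r-1}$ by \eqref{f_DUOstar}, giving the desired identity.

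The argument has essentially no analytic content and no geometric obstacle: the only mildly fiddly step is the bookkeeping check that $|I|$ matches the prescribed form-degree of $\phi_r^+$ (so that $T=I$ is indeed one of the monomials in the expansion rather than being over- or under-determined), together with confirming the compatibility between the dual bases on $\wedge^{d'}\mathfrak{p}_+$ and $\wedge^{d'}\mathfrak{p}_+^*$. Everything else is a transcription of the case A argument with the index set $I$ in \eqref{indexIOstar} and the harmonic polynomial $(f^-)^{n-r-1}$ in place of the case A data.
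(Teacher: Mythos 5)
Your proof follows exactly the approach the paper takes (which is to mimic Lemma \ref{onlyonetermUpq} mutatis mutandis, as the paper itself indicates): you pull back, kill all monomial terms $\bigwedge_{(\alpha,\beta)\in T}\xi'_{\alpha\beta}$ with $T\neq I$, and identify the remaining coefficient via the defining property $\psi_r^+(e_{D_{U,z'_0}})=f_{D_{U,z'_0}}=(f^-)^{n-r-1}$. The bookkeeping check that $|I|$ equals the form degree is a nice touch of care, but the argument is the paper's.
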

		
Let $\mathcal{P}_+$ be the infinitesimal Fock model for the dual pair $(\rO^*(2n),\Sp(r,0))$.
Recall that $\mathcal{P}_+$ is the polynomial space in the variables $\{u_{i a}^+\mid 1\leq i \leq 2n, 1\leq a \leq 2r\}$.
Define $f^+\in \mathcal{P}_+$ by
		\[f^+=\Det\left(\begin{array}{cccc}
		    u_{1 1}^+ & u_{1 2}^+ &\ldots & u_{1 r}^+ \\
		    \ldots &  \ldots & \ldots & \ldots \\
		    u_{r 1}^+ & u_{r 2}^+ & \ldots &  u_{r r}^+
		\end{array}
		\right).\]
Then both $(f^+)^{n-r-1}$ and $\bigwedge_{(\alpha,\beta)\in I} Y_{\alpha \beta}$ are lowest weight vector with respect to $\mathfrak{b}$ of weight 
\[(\underbrace{-n+1,\ldots,-n+1}_r,\underbrace{-r,\ldots,-r}_{n-r}).\]
There is a unique element
\[\psi_{r}^- \in \Hom_{K}(\wedge^{\frac{1}{2}n(n-1)-\frac{1}{2}(n-r)(n-r-1)} \mathfrak{p}_-,\mathcal{P}_+^{\mathfrak{p}_+}),\]
which maps $(f^+)^{n-r+1}$ to $\bigwedge_{(\alpha,\beta)\in I} Y_{\alpha \beta}$ and a corresponding element
\[\phi_{r}^- \in (\wedge^{\frac{1}{2}n(n-1)-\frac{1}{2}(n-r)(n-r-1)} \mathfrak{p}_-^*\otimes\mathcal{P}_+^{\mathfrak{p}_+})^K.\]
		
Now let $\mathcal{P}$ be the infinitesimal Fock model for the dual pair $(\Orth^*(2n),\Sp(r,r))$. We have
		\[\mathcal{P}=\mathcal{P}_-\otimes \mathcal{P}_+.\]
Define 
		\[\phi_r=\phi_r^+\wedge \phi_r^-.\]
It is immediate that
\[\phi_r\in (\wedge^{n(n-1)-(n-r)(n-r-1)}\mathfrak{p}^*\otimes \mathcal{P})^K.\]
We say $r$ is the rank of $\phi_r$.

\begin{remark}\label{K'actionremark}
We have in each case the following subgroups of the dual group $G'$ of $G$:
\begin{enumerate}
    \item Case A: $G'=\rU(r+s,r+s)$, $K'=\rU(r+s)\times \rU(r+s)$, $K_+=\rU(r+s,0)$, $K_-=\rU(0,r+s)$.
    \item Case B: $G'=\rO(2r,2r)$, $K'=\rO(2r)\times \rO(2r)$, $K_+=\rO(2r,0)$, $K_-=\rO(0,2r)$.
    \item Case C: $G'=\Sp(r,r)$, $K'=\Sp(r)\times \Sp(r)$, $K_+=\Sp(r,0)$, $K_-=\Sp(0,r)$.
\end{enumerate}
In all cases $K'=K_-\times K_+$ and as a $K'$-representation $\mathcal{P}=\mathcal{P}_- \boxtimes \mathcal{P}_+$, where $K_+$ acts trivially on $ \mathcal{P}_-$ and $K_-$ acts trivially on $ \mathcal{P}_+$.
\end{remark}
		
\subsection{Closedness of holomorphic differentials}
In this subsection we simply write $\phi^+$ for $\phi^+_{r,s}$, $\phi^-$ for $\phi_{r,s}^-$ and $\phi$ for $\phi_{r,s}$ in case A and similarly in other cases. 
We will prove that the cochains $\phi^+$, $\phi^-$ and $\phi$ are closed hence cocycles.
First we recall the following well-known fact.
\begin{lemma}\label{holomorphicformoncompactKahler}
A holomorphic form on a compact K\"{a}hler manifold $M$ is closed.
\end{lemma}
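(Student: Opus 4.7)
The plan is to invoke Hodge theory together with the Kähler identities. Let $\alpha$ be a holomorphic $p$-form on $M$, which by definition means $\alpha$ is of Hodge type $(p,0)$ and $\bar\partial \alpha = 0$. First I would observe that $\bar\partial^{*} \alpha = 0$ holds automatically: since $\bar\partial^{*}$ decreases the anti-holomorphic degree by one, it would send $\alpha$ to a ``$(p,-1)$-form,'' which is necessarily zero. Consequently $\alpha$ is $\bar\partial$-harmonic, i.e., $\Delta_{\bar\partial}\alpha = 0$.

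Next I would invoke the Kähler identity $\Delta_d = 2\,\Delta_{\bar\partial}$, valid on any Kähler manifold. This immediately yields $\Delta_d \alpha = 0$. Since $M$ is compact, pairing $\alpha$ with $\Delta_d \alpha$ and integrating gives
\[
0 \;=\; \langle \Delta_d \alpha,\alpha\rangle \;=\; \|d\alpha\|^{2} + \|d^{*}\alpha\|^{2},
\]
from which $d\alpha = 0$ (and incidentally $d^{*}\alpha = 0$) follows. Thus $\alpha$ is closed, as required.

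The argument really uses no new input: the only nontrivial ingredient is the Kähler identity relating $\Delta_d$, $\Delta_\partial$ and $\Delta_{\bar\partial}$, which is a standard consequence of the algebraic Kähler identity $[\Lambda,\bar\partial] = -i\,\partial^{*}$ (see, e.g., Griffiths--Harris). There is no serious obstacle; compactness enters only to convert harmonicity into closedness through the $L^{2}$ pairing above. I note that the result holds more generally for $(p,0)$-forms with $\bar\partial\alpha=0$, which is the form in which it will be applied to Anderson's differential $\phi^{+} \in \Omega^{(d',0)}(D,\mathcal{P})^{G}$ after descending to the compact Kähler quotient $\Gamma\backslash D$.
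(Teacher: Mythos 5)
Your proof is correct and follows essentially the same route as the paper's: use that a holomorphic $(p,0)$-form is automatically $\bar\partial$- and $\bar\partial^{*}$-closed, hence $\Delta_{\bar\partial}$-harmonic, then invoke the Kähler identity $\Delta_d = 2\Delta_{\bar\partial}$ to conclude $\Delta_d$-harmonicity and hence closedness. The only difference is that you spell out the final step (converting $\Delta_d\alpha=0$ into $d\alpha=0$ via the $L^2$ pairing on a compact manifold), which the paper leaves implicit.
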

\begin{proof}
On a compact K\"{a}hler manifold we have the following identity of Laplacians
\[\Delta_d=2\Delta_\partial=2\Delta_{\bar{\partial}}.\]
A holomorphic form $\varphi$ is $\bar{\partial}$-closed. It is also $\bar{\partial}^*$-closed because it has Hodge-type $(p,0)$. Hence $\varphi$ is $\Delta_{\bar{\partial}} $-harmonic, hence $\Delta_d$-harmonic. Hence $\varphi$ is closed.
\end{proof}

\begin{theorem}\label{closedness}
The form  $\phi^+$ ($\phi^-$ resp.) constructed in the previous subsections is closed as an element of $C^{\bullet}(\mathfrak{g},K;\mathcal{P}_-)$ ($C^{\bullet}(\mathfrak{g},K;\mathcal{P}_+)$ resp).
\end{theorem}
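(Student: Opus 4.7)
The plan is to exploit the Hermitian symmetric structure and transfer the closedness question to the compact dual $D^c = G^c/K$, where Lemma~\ref{holomorphicformoncompactKahler} can be applied.

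First, under the standard identification $C^\bullet(\mathfrak{g}, K; \mathcal{P}_-) \cong (\wedge^\bullet \mathfrak{p}^* \otimes \mathcal{P}_-)^K$ and the decomposition $\mathfrak{p}_\C = \mathfrak{p}_+ \oplus \mathfrak{p}_-$, the cocycle $\phi^+$ becomes a $G$-invariant Hodge type $(d', 0)$ form on $D$ with values in the homogeneous bundle associated to $\mathcal{P}_-$. The Chevalley--Eilenberg differential splits as $d = \partial + \bar\partial$; since $[\mathfrak{p},\mathfrak{p}] \subseteq \mathfrak{k}$, the bracket contribution to the CE formula vanishes on $K$-invariant $\mathfrak{p}$-cochains, so the $\bar\partial$-piece is implemented purely by the action of $\mathfrak{p}_-$ on the coefficients. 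Since $\phi^+$ takes values in $\mathcal{P}_-^{\mathfrak{p}_-}$ by construction, this action vanishes, giving $\bar\partial \phi^+ = 0$.

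Second, I would transfer to the compact dual $D^c$. The CE complex and its differential depend only on the complexified data $\mathfrak{g}_\C = \mathfrak{k}_\C \oplus \mathfrak{p}_+ \oplus \mathfrak{p}_-$ together with the Weil representation $\mathfrak{g}_\C$-action on $\mathcal{P}_-$; this data is shared between $\mathfrak{g}$ and the compact real form $\mathfrak{g}^c = \mathfrak{k} \oplus i\mathfrak{p}$. Hence $\phi^+$ equally defines a $G^c$-invariant Hodge type $(d', 0)$ form on the compact K\"ahler manifold $D^c$, still satisfying $\bar\partial \phi^+ = 0$. Because $\mathcal{P}_-$ is a $(\mathfrak{g}_\C, K)$-module, the canonical $G^c$-invariant connection on the associated homogeneous bundle is flat (its curvature vanishes by the Lie algebra module axiom: $X\cdot Y - Y\cdot X = [X,Y]\cdot$ on $\mathcal{P}_-$). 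Since $D^c$ is simply connected, this flat bundle trivializes, and $\phi^+$ can be viewed as an honest $\mathcal{P}_-$-valued $(d', 0)$-form on $D^c$ that is $\bar\partial$-closed componentwise.

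Third, Lemma~\ref{holomorphicformoncompactKahler}, applied in each scalar component, yields $d\phi^+ = 0$ on $D^c$. Translating back via the CE identification gives $d\phi^+ = 0$ in $C^\bullet(\mathfrak{g}, K; \mathcal{P}_-)$. The argument for $\phi^-$ is entirely symmetric, interchanging the roles of $\mathfrak{p}_+$ and $\mathfrak{p}_-$ and using $\mathcal{P}_+^{\mathfrak{p}_+}$ in place of $\mathcal{P}_-^{\mathfrak{p}_-}$. The principal technical point to nail down is the flatness of the homogeneous bundle associated to $\mathcal{P}_-$ and its trivialization on the simply connected $D^c$; once this is in place, the reduction to scalar-valued forms and the application of Lemma~\ref{holomorphicformoncompactKahler} are routine.
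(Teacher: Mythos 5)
Your identification of the key preliminary fact is correct: since $[\mathfrak{p},\mathfrak{p}]\subseteq\mathfrak{k}$, the Chevalley--Eilenberg differential on $K$-invariant cochains has no bracket contribution, and because $\phi^+$ vanishes on $\mathfrak{p}_-$-insertions and takes values in $\mathcal{P}_-^{\mathfrak{p}_-}$, you get $\bar\partial\phi^+=0$. The paper uses this (implicitly) as well. But your passage to the compact dual $D^c$ has a genuine gap at the flat-bundle-trivialization step, and it is not merely a technicality that one can ``nail down.'' The obstruction is that $\mathcal{P}_-$ (and likewise the irreducible submodule $\mathcal{A}$ in which $\phi^+$ takes values) is an \emph{infinite-dimensional} $(\mathfrak{g},K)$-module, and the induced $\mathfrak{g}^c$-module structure does \emph{not} integrate to a representation of the compact group $G^c$: by the Peter--Weyl theorem any $G^c$-module decomposes into finite-dimensional pieces, whereas $\mathcal{P}_-$, by Howe duality, decomposes over $\mathfrak{g}$ into infinite-dimensional unitary lowest-weight modules. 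Consequently the parallel-transport ODE $\dot s=-A(t)s$ for the flat connection on $G^c\times_K\mathcal{P}_-$ has no solutions inside $\mathcal{P}_-$; concretely, for directions in $\mathfrak{p}^c_+$ the connection operator is multiplication by a degree-two polynomial in the Fock variables, and $\exp(-t\cdot\text{poly})$ is not a polynomial, so the trivialization you invoke does not exist. The flat connection is real, but you cannot pass to a global flat frame, and without it ``$\phi^+$ viewed as an honest $\mathcal{P}_-$-valued $(d',0)$-form on $D^c$'' is not well-defined, so Lemma~\ref{holomorphicformoncompactKahler} cannot be applied componentwise.

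The paper's proof circumvents exactly this difficulty by going to a different compact K\"ahler manifold. Instead of the compact dual, it uses a cocompact lattice $\Gamma\subset G$ and the injective $(\mathfrak{g},K)$-map $I:\mathcal{P}_-\to C^\infty(\Gamma\backslash G)$ supplied by Proposition~2.3 of \cite{Anderson} (which requires knowing, via \cite{KV}, that the module $\mathcal{A}$ generated by $f_{D_{U,z_0'}}$ is irreducible, so that nonvanishing of $I$ on one vector gives injectivity on all of $\mathcal{A}$). Since $C^\bullet(\mathfrak{g},K;C^\infty(\Gamma\backslash G))$ \emph{is} the de Rham complex of $\Gamma\backslash G/K$, the image $I_*(\phi^+)$ becomes a genuine scalar-valued $(d',0)$-form on a compact K\"ahler manifold, to which Lemma~\ref{holomorphicformoncompactKahler} applies directly; the injectivity of $I_*$ then transfers closedness back. (The paper also sketches a second route, via the Casimir acting by zero on $\mathcal{A}$ combined with Proposition~3.1 of Chapter~II of \cite{BW}.) If you want an argument in the spirit of yours that avoids arithmetic input, the right tool is not the compact dual but the algebraic K\"ahler identities for $(\mathfrak{g},K)$-cohomology of unitarizable modules, which give $\Delta_d=2\Delta_{\bar\partial}$ directly at the Lie-algebra level.
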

\begin{proof}
We prove the holomorphic case, the anti-holomorphic case is similar.

Recall that in all three cases $\phi^+$ takes values in the $K$-representation generated by $f_{D_{U,z'_0}}$ (see \eqref{fD_UUpq}, \eqref{f_DUSp} and \eqref{f_DUOstar}), a special harmonic polynomial considered in \cite{KV}.
By \cite{KV}, $f_{D_{U,z'_0}}$ is in a representation $\mathcal{A}\boxtimes \theta(\mathcal{A})$, where $\mathcal{A}$ is the an irreducible $(\mathfrak{g},K)$ module and $\theta(\mathcal{A})$ is an irreducible representation of the compact group dual $\tilde{K}_-$, where $K_-$ is as in Remark \ref{K'actionremark}. 
Hence $\phi^+$ is a holomorphic cochain in $H^{(\bullet,0)}(\mathfrak{g},K;\mathcal{A})$. 

By the proof of Proposition 2.3 in \cite{Anderson}, there is a cocompact lattice $\Gamma$ of $G$ and a $(\mathfrak{g},K)$-map 
\[I:\mathcal{P}_-\rightarrow C^\infty (\Gamma \backslash G),\]
such that $I(f)\neq 0$. Since $f\in\mathcal{A}$ and $\mathcal{A}$ is irreducible, we know that $I$ is injective when restricted on $\mathcal{A}$.
Hence the map on the cochains
\[I_*: C^\bullet(\mathfrak{g},K;\mathcal{A})\rightarrow C^\bullet(\mathfrak{g},K;C^\infty (\Gamma \backslash G)) \]
is also injective.

Now $I_*(\phi^+)$ is a holomorphic form on a compact K\"{a}hler manifold. So it is closed by Lemma \ref{holomorphicformoncompactKahler}. Since $I_*$ is a map of chain complexes we know that
\[I_*(d \phi^+)=d I_* (\phi^+ )=0.\]
Because $I_*$ is injective on $C^\bullet(\mathfrak{g},K;\mathcal{A})$, we know that 
\[d\phi^+=0.\]
This finishes the proof of the theorem.
\end{proof}
\begin{remark}
There is an alternative proof of the above theorem.
Let $C$ be the Casimir element of the universal enveloping algebra of $\mathfrak{g}$. Then one can show by explicit computation that $C$ acts trivially on $\mathcal{A}$ in all three cases. Then Proposition 3.1 in Chapter II of \cite{BW} guarantees that any element in $C^\bullet(\mathfrak{g},K;\mathcal{A})$ is closed.
\end{remark}

\begin{corollary}
The cochain $\phi$ is closed.
\end{corollary}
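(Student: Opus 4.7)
The plan is to deduce the closedness of $\phi = \phi^+ \wedge \phi^-$ directly from Theorem \ref{closedness}, using a Leibniz rule for the Chevalley--Eilenberg differential applied to the outer wedge product. Concretely, I would first observe that since $\phi^+$ takes values in $\mathcal{P}_-$ and $\phi^-$ takes values in $\mathcal{P}_+$ (see Remark \ref{K'actionremark} for how $\mathcal{P} \cong \mathcal{P}_- \otimes \mathcal{P}_+$ as a $K$--module), the outer wedge
\[
\phi = \phi^+ \wedge \phi^- \in (\wedge^{\bullet} \mathfrak{p}^* \otimes \mathcal{P})^K
\]
is naturally interpreted as the tensor of $\phi^+$ with $\phi^-$ followed by wedging the $\wedge^\bullet \mathfrak{p}_+^*$ and $\wedge^\bullet \mathfrak{p}_-^*$ factors into $\wedge^\bullet \mathfrak{p}^*$. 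Since the $\mathcal{P}_-$-valued and $\mathcal{P}_+$-valued factors are independent, no cross-terms arise from the module action in the differential.

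Next I would verify the Leibniz rule $d(\phi^+ \wedge \phi^-) = d\phi^+ \wedge \phi^- + (-1)^{\deg \phi^+} \phi^+ \wedge d\phi^-$. This is standard for the Chevalley--Eilenberg differential on $C^\bullet(\mathfrak{g},K; \mathcal{M}_1 \otimes \mathcal{M}_2)$ when one factor is annihilated by one part of a decomposition of $\mathfrak{g}$ and the other by the complementary part. In our setting $\phi^+$ has values in $\mathcal{P}_-^{\mathfrak{p}_-}$ and $\phi^-$ has values in $\mathcal{P}_+^{\mathfrak{p}_+}$, so when one writes out the Chevalley--Eilenberg formula, the terms coming from $\mathfrak{p}_-$ acting on $\phi^+$ vanish and similarly for $\mathfrak{p}_+$ on $\phi^-$; what remains is precisely the graded Leibniz expression above. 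Alternatively one can phrase this via the Hodge decomposition $d = \partial + \bar\partial$ on the K\"ahler manifold $D = G/K$: $\phi^+$ is of pure Hodge type $(d',0)$ and $\phi^-$ of type $(0,d')$, so $d\phi^+ = 0$ already decomposes type by type, and likewise for $\phi^-$.

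Applying Theorem \ref{closedness}, both $d\phi^+ = 0$ and $d\phi^- = 0$, so the Leibniz identity gives $d\phi = 0$, which is the claim. The only step requiring a modicum of care is the justification of the Leibniz rule in the relative Lie algebra cohomology with values in a tensor product of modules, which is a well-known formal fact but worth one sentence of explanation; once that is in place the rest is immediate.
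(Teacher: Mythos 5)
Your argument is essentially identical to the paper's: the paper states that the differential on $C^\bullet(\mathfrak{g},K;\mathcal{P})$ decomposes as $d = d_- \otimes 1 + 1 \otimes d_+$ with respect to $\mathcal{P} = \mathcal{P}_- \otimes \mathcal{P}_+$, which is exactly the graded Leibniz rule you verify, and then invokes Theorem \ref{closedness} for $\phi^\pm$. Your additional remark about Hodge types $(d',0)$ and $(0,d')$ is a nice sanity check but adds nothing the paper needed.
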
	
\begin{proof}
Recall that $\phi=\phi^+\wedge \phi^-$. The differential operator $d$ for the chain complex $C^{\bullet}(g,K;\mathcal{P})$ satisfies
\[d=d_-\otimes 1+1\otimes d_+,\]
where $d_-$ (resp. $d_+$) is the differential operator for the chain complex $C^{\bullet}(g,K;\mathcal{P}_-)$ (resp. $C^{\bullet}(g,K;\mathcal{P}_+$)). The corollary now follows from Theorem \ref{closedness}.
\end{proof}
		
\subsection{Cocycles in the Schr\"odinger Model}
Recall that we have defined a map $\iota:\mathcal{P}\rightarrow \mathcal{S}(V^{m})$, where $m=r+s$ in case A, $m=r$ in case B and C.  We define
\begin{equation}\label{definitionofvarphirs}
    \varphi_{r,s}=\iota(\phi_{r,s}) \text{ in case A},\quad \varphi_r=\iota(\phi_r) \text{ in case B and C}
\end{equation}
in $(\wedge^\bullet\mathfrak{p}^*\otimes \mathcal{S}(V^m))^K$ (recall from \eqref{V_0^2r=V^r} that $V^{r}\cong V_0^{2r}$ in case B). 
Since $\iota$ is an isomorphism of $(\mathfrak{g},K) $  modules between $\mathcal{P}$ and its image in $\mathcal{S}(V^m)$, by the corollary to Theorem \ref{closedness}, $\varphi_{r,s}$ (resp. $\varphi_{r}$ in case B and C) is closed.
More explicitly we have in the Schrodinger Model (see equation (\ref{varphiUpq})):

\begin{equation}\label{varphiSchrodinger}
	        \varphi_{r,s} \text{ or }\varphi_{r}=\varphi_0\sum_{i,j=1}^d p_{i j} \Omega_i \wedge \bar{\Omega}_j,
\end{equation}
where $\Omega_i$ are of Hodge type $(d',0)$ where
\begin{equation}\label{eq:d'}
    d'=\begin{cases}
    pq-(p-r)(q-s) & \text{in case A},\\
   \frac{1}{2}n(n+1)-\frac{1}{2}(n-r)(n-r+1) & \text{in case B},\\
    \frac{1}{2}n(n-1)-\frac{1}{2}(n-r)(n-r-1) & \text{in case C},
    \end{cases}
\end{equation}
and $p_{ij}$ is a polynomial in the variables $\{z_{\alpha a},\bar{z}_{\alpha a},z_{\mu a},\bar{z}_{\mu a}\mid 1\leq \alpha  \leq p,p+1\leq \mu \leq p+q,1\leq a\leq m\}$ (in case B and C, $p=q=n$).
Define 
\[f'_{+}=(-2\sqrt{2}\pi)^r \mathrm{det}\left(\begin{array}{cccc}
		    z_{1 1} & z_{1 2} &\ldots & z_{1 r} \\
		    \ldots &  \ldots & \ldots & \ldots \\
		    z_{r 1} & z_{r 2} & \ldots &  z_{r r}
		\end{array}
		\right),\]
\[f'_{-}=(-2\sqrt{2}\pi)^s \mathrm{det}\left(\begin{array}{cccc}
		    \bar{z}_{p+1 \ r+1} & \bar{z}_{p+1 \ r+2} &\ldots & \bar{z}_{p+1\ r+s} \\
		    \ldots &  \ldots & \ldots & \ldots \\
		    \bar{z}_{p+s \ r+1} & \bar{z}_{p+s \ r+2} & \ldots &  \bar{z}_{p+s\  r+s}
\end{array}\right).\]

Recall that
	    $i:F_{z_0} D_{U,z'_0}\rightarrow D$
is the an embedding (Section \ref{fibrationpi}).
\begin{lemma}\label{highestermSchrodinger}
Let $\varphi$ be $\varphi_{r,s}$ in case A and $\varphi_r$ in case B and C.
The highest term (in terms of the degree of the polynomial in front of $\varphi_0$) of $i^*(\varphi(\bx))|_{z_0}$ is 
\begin{align*}
    (f'_+ \bar{f}'_+)^{q-s} (f'_- \bar{f}'_-)^{p-r} &i^*(\bigwedge_{(\alpha,\mu)\in I} \xi'_{\alpha,\mu}\wedge  \xi''_{\alpha,\mu})|_{z_0}, \\
    (f'_+ \bar{f}'_+)^{n-r+1}  &i^*(\bigwedge_{(\alpha,\mu)\in I} \xi'_{\alpha,\mu}\wedge  \xi''_{\alpha,\mu})|_{z_0}, \\
    (f'_+ \bar{f}'_+)^{n-r-1}  &i^*(\bigwedge_{(\alpha,\mu)\in I} \xi'_{\alpha,\mu}\wedge  \xi''_{\alpha,\mu})|_{z_0},
\end{align*}
respectively in case A, B and C, where $I$ is specified in \eqref{indexI}, \eqref{indexISp} and \eqref{indexIOstar} respectively. Notice that in each case $i^*(\bigwedge_{(\alpha,\mu)\in I} \xi'_{\alpha,\mu}\wedge  \xi''_{\alpha,\mu})|_{z_0}$ is a constant multiple of the volume form of $F_{z_0}D_{U,z'_0}$ at $z_0$.
\end{lemma}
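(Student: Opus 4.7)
The plan is to compute $i^*(\varphi(\bx))|_{z_0}$ in two stages: first determine $i^*(\phi(\bx))|_{z_0}$ exactly in the Fock model using the ``collapse'' lemmas of the previous subsections, and then transport the result to the Schr\"odinger model via the intertwiner $\iota$ to extract the top-degree part.

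For the first stage, recall that $\phi = \phi^+ \wedge \phi^-$, where $\phi^+$ is the Anderson holomorphic class with values in $\mathcal{P}_-$ and $\phi^-$ is its mirror with values in $\mathcal{P}_+$. Lemma \ref{onlyonetermUpq} (and the analogues \ref{onlyonetermSp} and \ref{onlyonetermOstar} in cases B and C) says that after pulling back to $F_{z_0}D_{U,z'_0}$ at $z_0$, the sum defining $\phi^+$ collapses to a single term whose polynomial factor is $f_{D_{U,z'_0}}$ and whose form factor is $i^*(\bigwedge_{(\alpha,\mu)\in I} \xi'_{\alpha,\mu})|_{z_0}$. An entirely parallel argument, obtained by interchanging the roles of $\mathfrak{p}_+$ and $\mathfrak{p}_-$, shows that $i^*(\phi^-(\bx))|_{z_0}$ collapses to a single term whose polynomial part is the analogous product of determinants in the $u^+$-variables; namely $(f_1^+)^{q-s}(f_2^+)^{p-r}$ in case A, $(f^+)^{n-r+1}$ in case B, and $(f^+)^{n-r-1}$ in case C. Taking the outer wedge product yields
\begin{equation*}
i^*(\phi(\bx))|_{z_0} = P \cdot i^*\bigl(\bigwedge_{(\alpha,\mu)\in I} \xi'_{\alpha,\mu} \wedge \xi''_{\alpha,\mu}\bigr)\bigr|_{z_0},
\end{equation*}
where $P$ is the product of the two polynomials just described (in case A, $P = (f_1^-f_1^+)^{q-s}(f_2^-f_2^+)^{p-r}$).

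For the second stage, $\iota$ acts only on the $\mathcal{P}$-valued factor, so it suffices to compute the highest-degree term of $\iota(P)/\varphi_0$. Each of the determinants $f_j^\pm$ expands as a sum over permutations of monomials with pairwise disjoint supports, so by Lemma \ref{highesttermofpolynomial} the top-degree contributions from the monomials do not cancel and reassemble into a single determinant. Applying Lemma \ref{intertwiner} case by case---the $\alpha$-rows (indices in $[1,p]$) and the $\mu$-rows (indices in $[p+1,p+q]$) give different substitutions---one finds that at top degree $\iota(f_1^+) \sim f'_+\cdot\varphi_0$ and $\iota(f_1^-) \sim \overline{f'_+}\cdot\varphi_0$ (up to a harmless sign $(-1)^r$ arising from the factor $2\sqrt{2}\pi$ versus $-2\sqrt{2}\pi$ in Lemma \ref{intertwiner}), and similarly $\iota(f_2^+)\sim f'_-\cdot\varphi_0$ and $\iota(f_2^-)\sim \overline{f'_-}\cdot\varphi_0$. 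Multiplying these together produces $(f'_+\overline{f'_+})^{q-s}(f'_-\overline{f'_-})^{p-r}\cdot\varphi_0$ as the leading factor of $\iota(P)$, giving the claim in case A. Because the forms $\xi'_{\alpha,\mu}, \xi''_{\alpha,\mu}$ with $(\alpha,\mu) \in I$ dualize a $\C$-basis of the holomorphic and antiholomorphic tangent spaces of $F_{z_0}D_{U,z'_0}$ at $z_0$ (see \eqref{normalfibreUpq}, \eqref{normalspaceSp}, and \eqref{normalspaceOstar}), the pullback $i^*(\bigwedge_{(\alpha,\mu)\in I} \xi'_{\alpha,\mu}\wedge \xi''_{\alpha,\mu})|_{z_0}$ is indeed a nonzero constant multiple of the volume form at $z_0$.

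Cases B and C follow the same outline using the single determinant $f^\pm$ in place of the pair $(f_1^\pm, f_2^\pm)$, the appropriate index set \eqref{indexISp} or \eqref{indexIOstar}, and the exponents $(n-r+1)$ or $(n-r-1)$ read off from the polynomials in \eqref{f_DUSp} and \eqref{f_DUOstar}. The principal obstacle in writing out the argument in detail is not conceptual but bookkeeping: one must track the index shift relating the non-symmetric sub-dual pairs $(\Uni(p,q),\Uni(0,r+s))$ and $(\Uni(p,q),\Uni(r+s,0))$---in which $\phi^+$ and $\phi^-$ are originally defined---to the symmetric combined dual pair $(\Uni(p,q),\Uni(r+s,r+s))$, in which Lemmas \ref{intertwiner} and \ref{highesttermofpolynomial} are formulated. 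No further input beyond Lemmas \ref{onlyonetermUpq}, \ref{intertwiner}, and \ref{highesttermofpolynomial} is required.
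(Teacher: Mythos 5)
Your proposal matches the paper's own (one-line) proof: the paper invokes precisely Lemma \ref{onlyonetermUpq} (resp.\ \ref{onlyonetermSp}, \ref{onlyonetermOstar}), the mirror collapse for $\phi^-$, and Lemma \ref{highesttermofpolynomial}, which is exactly the two-stage collapse-then-intertwine argument you give. Your write-up is somewhat more detailed---explicitly flagging the index shift between the sub-dual pairs and the combined dual pair, and the sign bookkeeping---but the route is the same.
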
	    
\begin{proof}
It follows from combining Lemma \ref{onlyonetermUpq} (resp. Lemma \ref{onlyonetermSp} or Lemma \ref{onlyonetermOstar}), the analogous result for $\phi^-$ and Lemma \ref{highesttermofpolynomial}.
\end{proof}

\part{Proof of the main theorems}

\section{Poincar\'e dual and Thom form}\label{thomformsection}
In this section we start to prove Theorem \ref{PDthm} and Theorem \ref{thmB} in the introduction. We resume the notations of Section \ref{locallysymmetricspace}.
We give the symmetric space $D=G_\infty /K_\infty$ the $G_\infty$-invariant Riemannian metric $\tau$ induced by the trace form on $\mathfrak{p}_0$. $D$ is then a negatively curved symmetric K\"ahler manifold whose sectional curvatures are automatically bounded as it is homogeneous. We suppose the sectional curvature of $D$ is bounded below by $-\rho^2$.

Choose $\bx\in V^m $ ($1\leq m\leq n$) satisfying the following assumptions in the three cases of our interests respectively:
\begin{enumerate}
    \item Case A: the Hermitian form $(,)_{v_1}$ restricted to $\mathrm{span}_B\{\bx\}\otimes k_{v_1}$ is non-degenerate and has signature $(r,s)$. In particular $1\leq r \leq p, 1\leq s \leq q$ and $r+s=m$.\
    \item Case B: the Hermitian form $(,)$ restricted to $\mathrm{span}_B\{\bx\}$ is non-degenerate.\
    \item Case C: the skew-Hermitian form $(,)$ restricted to $\mathrm{span}_B\{\bx\}$ is non-degenerate. 
\end{enumerate}

In each case we define a $2m$ dimensional $B$-vector space $W$, a form $\langle,\rangle$ on $W$ that is non-degenerate and split. We assume that it is Hermitian, skew Hermitian and Hermtian respectively in the three cases. Let $G'$ be the group of $B$-linear transformations on $W$ preserving $\langle,\rangle$. For each Archimedean place $v$ of $k$ define $G'_v=G'(k_v)$ and let $G'_\infty=\prod_{v\in S_\infty}G'_v$. We have 
\begin{enumerate}
    \item $G'_\infty=\Uni(r+s,r+s)^{\ell}$ in case A,\
    \item  $G'_\infty=\Orth(2m,2m)^{\ell}$ in case B,\
    \item  $G'_\infty=\Sp(m,m)^{\ell}$ in case C.
\end{enumerate}

We fix a point $z_0$ in the symmetric space $D$ of $G_\infty$, or equivalently a maximal compact group $K_\infty$ of $G_\infty$. By our assumptions, $K_\infty=K_{v_1}\times \prod_{v\in S_\infty,v\neq v_1} G_v$. Let
		\[\mathfrak{g}_0=\mathfrak{p}_0\oplus \mathfrak{k}_0\]
be the corresponding Cartan decomposition on the Lie algebra $\mathfrak{g}_0$ of $G_\infty$ (we drop the subscript $0$ to indicate complexification). Let $ \Omega^\bullet (D)$ be the space of smooth differential forms on $D$ with values in $\C$. 
	    
Let $\mathcal{S}(V_\infty^m)$ be the set of Schwartz functions on $V_\infty^m$. There is an isomorphism given by evaluation at $z_0$:
\begin{equation}
	        (\Omega^\bullet (D)\otimes \mathcal{S}(V_\infty^m))^{G_\infty}\rightarrow (\wedge^\bullet \mathfrak{p}\otimes \mathcal{S}(V_\infty^m))^{K_\infty}.
\end{equation}

Let $\tilde{G}'_{\infty}$ be the metaplectic cover of $G'_{\infty}$.
$\tilde{G}'_\infty$ acts on $\mathcal{S}(V_\infty^n)$ by the Weil representation $\omega$ and the action commutes with that of $G_\infty$.	    
Any form $\psi \in (\wedge^\bullet\mathfrak{p}^*\otimes \mathcal{S}(V_\infty^m))^{K_\infty}$ give rise to the form $\tilde{\psi}$ defined by
\begin{equation}\label{translateform}
    \tilde{\psi}(z,g',\bx)=L^*_{g_z^{-1}}(\omega(g_z,g')\psi(\bx)),
\end{equation}
where $g_z\in G_\infty$ such that $g_z z_0=z$, $g'\in \tilde{G}'_\infty$ and $L_{g}$ denotes the left action by $g$ on $D$. 
Then we have 
\[\tilde{\psi}\in (\Omega^\bullet (D)\otimes \mathcal{S}(V_\infty^m))^{G_\infty}.\]

Let $\varphi_{v}$ be the vacuum vector (Gaussian function) of $\mathcal{S}(V_v)$ for any Archimedean place $v\neq v_1$. Define
\begin{equation}\label{definitionofvarphiinfty}
    \varphi_\infty=\varphi \otimes \prod_{v\in S_\infty,v\neq v_1} \varphi_v  \in (\wedge^{\bullet} \mathfrak{p}\otimes \mathcal{S}(V_\infty^m))^{K_\infty},
\end{equation}
where $\varphi$ is the form defined in Section \ref{Andersoncocyle} such that 
\begin{enumerate}
    \item $\varphi=\varphi_{r,s}$ in case A,
    \item $\varphi=\varphi_{m}$ in case B and C.
\end{enumerate}
Applying equation \eqref{translateform} to $\varphi_\infty$ we get a form $\tilde{\varphi}_\infty\in (\Omega^\bullet (D)\otimes \mathcal{S}(V_\infty^m))^{G_\infty}$.
\begin{remark}\label{rmk:independence of z0}
The definition of the form $\tilde{\varphi}_\infty$ is independent of the choice of the base point $z_0$. Let us focus on case A. Suppose we have two base points $z_0, z'_0\in D$ corresponding to two different maximal compact groups $K_{v_1}$ and $K'_{v_1}$. The choice of $z_0$ (resp. $z'_0$) corresponds to an orthogonal splitting $V_{v_1}=V_+\oplus V_-$ (resp. $V_{v_1}=V'_+\oplus V'_-$) such that $z_0=V_-$ (resp. $z'_0=V_-$). We can
choose a basis $\{v_1,\ldots,v_{p+q}\}$ (resp. $\{v'_1,\ldots,v'_{p+q}\}$) of $V$ such that $V_+=\Span\{v_1,\ldots,v_p\}$ (resp. $V'_+=\Span\{v'_1,\ldots,v'_p\}$) and $V_-=\Span\{v_{p+1},\ldots,v_{p+q}\}$ (resp. $V'_-=\Span\{v'_{p+1},\ldots,v'_{p+q}\}$). This basis determines  holomorphic coordinate functions $(z_{ij})_{1\leq i \leq p+q, 1\leq j \leq m}$ (resp. $(z'_{ij})_{1\leq i \leq p+q, 1\leq j \leq m}$) of $V^m$. Let $g\in G_{v_1}$ such that $g v_i=v'_i$. Then $g z_0=z'_0$ and $L_g^* (z'_{ij})=z_{ij}$. By the construction of $\varphi$ and $\varphi'$ we have \begin{equation}\label{eq:independence of z0}
    L_g^*(\varphi')=\varphi.
\end{equation}
By the $K_{v_1}$-invariance of $\varphi$ and $K'_{v_1}$-invariance of $\varphi$, \eqref{eq:independence of z0} is true for any $g$ such that $g z_0=z'_0$. This implies the desired independence.
\end{remark}

Choose an $\Oo_B$ lattice $\cL$ of $V$ and choose $\Gamma\in G$ as in Section \ref{locallysymmetricspace}, then by \cite{Weil}  we can choose an arithmetic subgroup $\Gamma'\subset G'(k)$ such that the theta distribution
	    \[\theta_{\cL} (\psi)=\sum_{\bx\in \cL^m} \psi(\bx), \forall \psi \in \mathcal{S}(V_\infty^m)\]
is 	$ \Gamma\times \tilde{\Gamma}'$-invariant where $\tilde{\Gamma}'=\Gamma'\times\{\pm 1\}$ is the metaplectic cover of $\Gamma'$.
	    
We now apply $\theta_{\cL}$ to $\tilde{\varphi}_\infty$ to get 
	    \[\theta_{\cL,\tilde{\varphi}_\infty}=\theta_{\cL} (\tilde{\varphi}_\infty)\in \Omega^\bullet(\Gamma\backslash D)\otimes C^\infty(\tilde{\Gamma}' \backslash \tilde{G}'_\infty).\]
We also define 
\begin{equation}\label{fourierexpansionoftheta}
    \theta_{\cL,\beta,\tilde{\varphi}_\infty}(z,g')=\sum_{\bx\in \cL^m,(\bx,\bx)=\beta} \tilde{\varphi}_\infty(z,g',\bx)
\end{equation}
for a matrix $\beta \in M_m(B)$. We have the following Fourier expansion of $\theta_{\cL,\tilde{\varphi}_\infty}$:
	    \begin{align*}
	        \theta_{\cL,\tilde{\varphi}_\infty}(z,g')=&\sum_{\beta} \sum_{\bx\in \cL^m,(\bx,\bx)=\beta} \tilde{\varphi}_\infty(z,g',\bx)\\
	        =&\sum_{\beta} \theta_{\cL,\beta,\tilde{\varphi}_\infty}(z,g'),
	    \end{align*}
where $\beta$ runs over all possible inner product matrices $(\bx,\bx)$.

By the construction of $\varphi_\infty$ we know that it has Hodge bi-degree $(d',d')$ where $d'$ is as in \eqref{eq:d'}.
Let $\eta$ be any closed differential form on $\Gamma\backslash D$, define a smooth function $ \theta_{\cL,\tilde{\varphi}_\infty}(\eta)$ on $\tilde{G}'_\infty$ by
	    \[\theta_{\cL,\tilde{\varphi}_\infty}(\eta)(g')=\int_{\Gamma\backslash D}\eta\wedge \theta_{\cL,\tilde{\varphi}_\infty}(g').\]
We call the above map the {\bf geometric theta lift} defined by $\tilde{\varphi}_\infty$. This geometric theta lift is a map 
	    \[H^{(d-d',d-d')}(\Gamma\backslash D, \C)\rightarrow \calA(\Gamma'\backslash \tilde{G}'_\infty),\]
where $\calA(\Gamma'\backslash \tilde{G}'_\infty)$ is the space of analytic functions on $\Gamma'\backslash \tilde{G}'_\infty$ (see the proof of Lemma \ref{analyticity} for the statement of analyticity). 
Also define $a_{\cL,\beta,\tilde{\varphi}_\infty}(\eta)$ to be the $\beta$-coefficient of $\theta_{\cL,\tilde{\varphi}_\infty}(\eta)$:
\begin{equation}\label{a_beta}
a_{\cL,\beta,\tilde{\varphi}_\infty}(\eta)=\int_{\Gamma\backslash D}\eta\wedge \theta_{\cL,\beta,\tilde{\varphi}_\infty}(z,g')=\int_{\Gamma\backslash D}\eta\wedge\sum_{\bx\in L^m,(\bx,\bx)=\beta} \tilde{\varphi}_\infty(z,g',\bx).
\end{equation}
Then we have the Fourier expansion:
\[\theta_{\cL,\tilde{\varphi}_\infty}(\eta)=\sum_{\beta} a_{\cL,\beta,\tilde{\varphi}_\infty}(\eta).\]

We assume from now on that $\beta$ is non-degenerate (a nonsingular matrix) and $ \sigma_1(\beta)$ has signature $(r,s)$ in case A (recall that $\sigma_i$ is the map $k\rightarrow k_{v_i}$).	    
By a theorem of Borel (\cite{Borel}, Theorem 9.11), the set 
	    \[\{\bx\in \mathcal{L}^m\mid (\bx,\bx)=\beta\}\]
consists of finitely many $\Gamma$-orbits. We choose $\Gamma$-orbit representatives $\{\bx_1,\ldots,\bx_o\}$ and define 
	    \[U_i=\Span \{\bx_i\}, 1\leq i \leq o.\]
For each $1\leq i \leq o$ choose a base point $z_i \in D(U_i)$. Let $C_{\bx_i,z_i}$ be the generalized special cycle (Definition \ref{definitionofspecialcycle}). Then each $C_{\bx_i,z_i}$ has complex codimension $d'$.
Let
\[{\bf z}=\{z_1,z_2,\ldots, z_o\}.\]
Define
	    \[C_{\beta,{\bf z}}=\sum_{i=1}^o C_{\bx_i, z_i}.\]
$C_{\beta,{\bf z}}$ is a cycle in the Chow group of $\Gamma \backslash D$. By remark \ref{homologyindependent}, the homology class $[C_{\beta,{\bf z}}]$ is independent of the choice of ${\bf z}$, so we simply denote by $[C_\beta]$ its homology class. Whenever we take the period of a closed differential form $\eta$ on $\Gamma \backslash D$ we can write $\int_{C_\beta} \eta$ (resp. $\int_{C_\bx} \eta$).

\begin{theorem}\label{fouriercoeff}
Assume that $\beta=(\bx,\bx)$ is non-degenerate and in case A also assume that $\sigma_1(\beta)$ has signature $(r,s)$. Then for any closed differential form $ \eta\in \Omega^{d-d',d-d'}(\Gamma\backslash D)$ we have
\[\int_{\Gamma\backslash D}\eta\wedge\sum_{\by\in \Gamma \cdot \bx} \tilde{\varphi}_\infty(z,g',\by)=\kappa(g',\beta)\int_{C_\bx}\eta,\]
and
		\[a_{\cL,\beta,\tilde{\varphi}_\infty}(\eta)=\kappa(g',\beta)\int_{C_\beta}\eta, \]
where $\kappa$ is an analytic function in $g'$.
\end{theorem}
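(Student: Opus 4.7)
The plan is to prove the first identity; the second follows by summing it over the finitely many $\Gamma$-orbit representatives $\{\bx_1,\ldots,\bx_o\}$ of $\{\bx\in\cL^m\mid (\bx,\bx)=\beta\}$ and using the Fourier expansion defining $\theta_{\cL,\beta,\tilde{\varphi}_\infty}$ in \eqref{fourierexpansionoftheta} together with \eqref{a_beta}. For the first identity I would proceed in three steps: unfolding, fiber integration, and identification with the cycle.

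The first step is the classical unfolding of the $\Gamma$-orbit sum. After possibly shrinking $\Gamma$ as in Lemma \ref{resolutionofsingularity} so that the stabilizer of $\bx$ in $\Gamma$ coincides with $\Gamma_U=\Gamma\cap G_U$, one obtains
\begin{equation*}
\int_{\Gamma\backslash D}\eta\wedge\sum_{\by\in\Gamma\cdot\bx}\tilde{\varphi}_\infty(z,g',\by) \;=\; \int_{\Gamma_U\backslash D}\eta\wedge\tilde{\varphi}_\infty(z,g',\bx).
\end{equation*}
Absolute convergence, needed to interchange integration and summation, is supplied by the rapid decrease of $\tilde{\varphi}_\infty(z,g',\bx)$ along the fibers of $\pi$, which is the content of Section \ref{rapiddecreasechapter}.

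The second step exploits the smooth fibration $\pi:\Gamma_U\backslash D\to \Gamma_U\backslash D_{U,z'}$ of Section \ref{fibrationpi}, whose fibers $F_{z_0}D_{U,z'}$ are diffeomorphic to Euclidean spaces of real dimension $2d'$. Fubini along $\pi$ splits the integral into a base integral and a fiber integral. Because $\tilde{\varphi}_\infty$ is the $G_\infty$-translate of $\varphi_\infty$, it is $G_U$-invariant; since $G_U$ acts transitively on $D_{U,z'}$ and maps fibers to fibers, the fiber integral $\int_{F_{z_0}D_{U,z'}}\tilde{\varphi}_\infty(z,g',\bx)$ is a function of $g'$ and $\bx$ alone, independent of $z_0\in D_{U,z'}$. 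Witt's theorem, applied to the $G$-action on tuples with prescribed inner product matrix, then shows this value depends on $\bx$ only through $\beta=(\bx,\bx)$, so we may denote it $\kappa(g',\beta)$. The Hodge bidegree $(d',d')$ of $\tilde{\varphi}_\infty$ exactly matches the real codimension of $D_{U,z'}$, so only the component of $\tilde{\varphi}_\infty$ of top fiber-degree contributes when wedged with $\eta$; this effectively restricts $\eta$ to $D_{U,z'}$. Combining,
\begin{equation*}
\int_{\Gamma_U\backslash D}\eta\wedge\tilde{\varphi}_\infty(z,g',\bx) \;=\; \kappa(g',\beta)\int_{\Gamma_U\backslash D_{U,z'}}\eta,
\end{equation*}
and by Lemma \ref{resolutionofsingularity2} the map $\rho_{U,z'}:\Gamma_U\backslash D_U\to C_\bx$ is finite birational of degree one, so the right-hand base integral equals $\int_{C_\bx}\eta$. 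Analyticity of $\kappa(g',\beta)$ in $g'$ is inherited from the real-analytic action of $\tilde{G}'_\infty$ on $\mathcal{S}(V_\infty^m)$ via $\omega$, together with differentiation under the integral sign, justified by the rapid decrease invoked above.

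The principal obstacle lies not in this proof itself but in the inputs it requires: establishing the rapid decay of $\tilde{\varphi}_\infty(z,g',\bx)$ along the fibers $F_{z_0}D_{U,z'}$. Outside the Kudla-Millson situation (case A with $s=0$) the fiber $F_{z_0}D_{U,z'}$ is not a sub-symmetric space of $D$, so one cannot reduce to the classical Gaussian decay on a flat tangent space. Instead one must combine the explicit polynomial-times-Gaussian description of $\varphi$ recorded in Lemma \ref{highestermSchrodinger} with sharp estimates for the Riemannian exponential map on the normal bundle $ND_{U,z'}$, using the curvature bound $-\rho^2\leq \operatorname{sec}(D)\leq 0$ to control distortion. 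This is precisely the analysis carried out in Section \ref{rapiddecreasechapter}; once it is in hand, the three-step argument above delivers the theorem.
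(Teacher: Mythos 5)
Your overall route — unfold the orbit sum via $\Gamma_U$, reduce to a fiber integral over the normal-bundle fibration $\pi$, then pass from $\Gamma_U\backslash D_{U,z'}$ to $C_\bx$ via the finite birational map of Lemma \ref{resolutionofsingularity2} — is the paper's route, and you correctly flag the rapid decay of Section \ref{rapiddecreasechapter} as the hard analytic input. But there is a genuine gap in the fiber-integration step. You write that because $\tilde{\varphi}_\infty$ has Hodge bidegree $(d',d')$ matching the fiber codimension, ``only the component of top fiber-degree contributes when wedged with $\eta$; this effectively restricts $\eta$ to $D_{U,z'}$.'' This is not correct as stated: for a general closed $\eta$ on $\Gamma_U\backslash D$ that is \emph{not} pulled back from the base, the pushforward $\pi_*(\eta\wedge\tilde{\varphi}_\infty)$ does \emph{not} factor as $(\iota^*\eta)\cdot\pi_*\tilde{\varphi}_\infty$ — the top fiber-degree part of $\eta\wedge\tilde{\varphi}_\infty$ gets contributions from every bidegree component of $\eta$ along the fiber, not just from $\iota^*\eta$. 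The identity you want,
\[
\int_{\Gamma_U\backslash D}\eta\wedge\tilde{\varphi}_\infty(z,g',\bx)=\kappa(g',\beta)\int_{\Gamma_U\backslash D_{U,z'}}\eta,
\]
is a cohomological statement, not a pointwise Fubini statement. It holds because $\tilde{\varphi}_\infty$ is a \emph{closed} form that decays rapidly along the fibers, hence represents (a multiple of) the Thom class of the fibration; this is exactly Theorem \ref{thomform} (the specialization of Theorem 2.1 of \cite{KM88}), whose hypothesis (1) is closedness. You never invoke the closedness of $\tilde{\varphi}_\infty$, which is Theorem \ref{closedness} and its corollary and is one of the central technical results of the paper (proved via Anderson's holomorphic cocycles and the K\"ahler identity $\Delta_d=2\Delta_{\bar\partial}$). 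Without it the fiber-integral step simply does not go through.

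A secondary omission: after establishing independence of the fiber basepoint $z_0$ by $G_U$-invariance and quoting Witt's theorem for $\bx\mapsto g\bx$, you still need to show $\kappa$ is independent of the auxiliary choice $z'_0\in D(U)$. Witt's theorem moves $(\bx,z'_0)$ to $(\tilde\bx, gz'_0)$, but $gz'_0$ need not be the chosen basepoint for $\tilde\bx$. The paper proves $z'_0$-independence separately, using Proposition \ref{homologyindependent} (the cycle's homology class is independent of $z'_0$) together with Poincar\'e duality to pick a closed $\eta$ with nonzero period, and only then applies Witt's theorem. You should supply this step before concluding that $\kappa$ depends only on $\beta$.
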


\begin{remark}
The proof of Theorem \ref{fouriercoeff} only depends on the following two properties of $\varphi_\infty$:
\begin{enumerate}
    \item $\varphi_\infty \in (\wedge^{(d',d')}\mathfrak{p}^*\otimes \iota(\mathcal{P}))^{K_\infty}$, where $\mathcal{P}$ is the polynomial Fock space (see Section \ref{dualpair}).
    \item $\varphi_\infty$ is closed.
\end{enumerate}
So the conclusion is true for any form $\psi$ that satisfies the above two conditions. 
\end{remark}

Let us also briefly recall Poincar\'e duality in terms of differential forms. For a closed submanifold $C$ inside a compact oriented manifold $M$, we say that $\tau$ is a Poincar\'e dual form of $C$ if it is a closed form such that 
\[\int_M \eta \wedge \tau=\int_C \eta\]
for any closed form $\eta$. Poincar\'e dual form is unique up to exact forms. 

The above definition of Poincar\'e dual form can be applied when $C$ is a subvariety of the projective variety $M$ if we interpret $\int_C \eta$ as the integration of $\eta$ over the nonsingular locus of $C$.

With the above theory of Poincar\'e duality in mind, Theorem \ref{fouriercoeff} is equivalent to the following theorem which is Theorem \ref{PDthm} in the introduction.
\begin{theorem}\label{generatingseries}
Assume that $\beta=(\bx,\bx)$ is non-degenerate and in case A also assume that $\sigma_1(\beta)$ has signature $(r,s)$. Then
\[\sum_{\by \in \Gamma \cdot \bx}[\varphi(z,g',\by)]=\kappa(g',\beta) \mathrm{PD}([C_{\bx}]),\]
where $[\varphi]$ is the cohomology class of $\varphi$ in $ H^*(\Gamma \backslash D)$
, $\mathrm{PD}([C_\bx])\in H^*(\Gamma\backslash D)$ is the Poincar\'e dual of $[C_\bx]$, and $\kappa$ is a function that is analytic in $G'$.
Moreover 
\[[\theta_{\cL,\beta,\varphi}(z,g')]=\kappa(g',\beta) \mathrm{PD}([C_\beta]).\]
\end{theorem}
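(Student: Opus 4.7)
The plan is to interpret $\omega(g')\tilde{\varphi}_\infty(\cdot,\bx)$ as a constant multiple (depending on $g'$ and $\beta$) of a Thom form for the fibration $\pi \colon \Gamma_U \backslash D \to \Gamma_U \backslash D_{\bx,z'}$ built in Section~\ref{fibrationpi}, and then to deduce both asserted identities by a standard unfolding of the $\Gamma$-orbit sum. Closedness of $\varphi$ is already known from the corollary of Theorem~\ref{closedness}, so the new analytic input needed is the rapid decrease of $\omega(g')\varphi$ along each fiber $F_{z_0} D_{\bx,z'}$; I will take this as a black box from Section~\ref{rapiddecreasechapter}.

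Concretely, I will first establish the key fiber-integration identity
\begin{equation*}
\int_{\Gamma_U \backslash D} \eta \wedge \omega(g')\tilde{\varphi}_\infty(z,\bx) \;=\; \kappa(g',\beta)\int_{\Gamma_U \backslash D_{\bx,z'}}\eta, \qquad \kappa(g',\beta) \;:=\; \int_{F_{z_0} D_{\bx,z'}} \omega(g')\tilde{\varphi}_\infty(z,\bx).
\end{equation*}
Three facts are required: (i) the rapid decrease on fibers, which guarantees that every fiber integral converges absolutely and that Fubini applies; (ii) compactness of the base $\Gamma_U \backslash D_{\bx,z'}$, which holds by the arithmetic setup of Section~\ref{locallysymmetricspace}; and (iii) that the fiber integral is independent of the basepoint $z_0 \in D_{\bx,z'}$ and of the particular $\bx$ with $(\bx,\bx)=\beta$, depending only on $\beta$. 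Point (iii) follows from the $G_U$-equivariance of the fibration (equation~\eqref{FisGequivariant}) together with the Siegel-parabolic action formulas \eqref{M'action}, \eqref{n'action}, which show that $\omega(g')\varphi(\bx)$ depends on $\bx$ only through its Gram matrix, modulo an analytic factor in $g'$.

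Given the fiber identity, the standard unfolding lemma, applied to a fundamental domain for $\Gamma \backslash D$ tiled by the $\Gamma_U \backslash \Gamma$-translates, yields
\begin{equation*}
\int_{\Gamma \backslash D}\eta \wedge \sum_{\by \in \Gamma\cdot\bx} \tilde{\varphi}_\infty(z,g',\by) \;=\; \int_{\Gamma_U \backslash D}\eta \wedge \omega(g')\tilde{\varphi}_\infty(z,\bx) \;=\; \kappa(g',\beta)\int_{C_\bx}\eta,
\end{equation*}
which by Poincar\'e duality is the first assertion of the theorem. Summing over the finite set of $\Gamma$-orbit representatives $\bx_1,\ldots,\bx_o$ of $\{\bx \in \cL^m : (\bx,\bx)=\beta\}$ and recalling $C_{\beta,\mathbf{z}} = \sum_i C_{\bx_i,z_i}$ then gives the second identity for $[\theta_{\cL,\beta,\varphi}]$. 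Analyticity of $\kappa(\cdot,\beta)$ in $g'$ follows from smoothness of the Weil action combined with the rapid-decrease estimates, which allow differentiation under the integral sign.

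The main obstacle is (i), the rapid decrease of $\omega(g')\varphi$ along the fiber $F_{z_0} D_{\bx,z'}$. This is delicate because, outside the Kudla--Millson definite case, $F_{z_0} D_{\bx,z'}$ is \emph{not} a totally geodesic sub-symmetric space of $D$, so one cannot directly invoke Gaussian decay of $\varphi_0$ restricted to a sub-symmetric space. Using the Schr\"odinger-model expression of Lemma~\ref{highestermSchrodinger}, the restriction of $\omega(g')\varphi$ to the fiber is a polynomial in the fiber coordinates times the Gaussian $\varphi_0$, with an explicit top-degree term; what must be shown is that as $z$ moves along the fiber, the majorant norm $\|\bx\|_z^2$ entering $\varphi_0$ grows fast enough (quadratically in the fiber-distance parameter) to dominate the polynomial, uniformly for $g'$ ranging over compact subsets of $\tilde{G}'_\infty$. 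Once this is done in Section~\ref{rapiddecreasechapter}, the rest of the argument above is formal.
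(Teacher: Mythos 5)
Your plan follows the same overall route as the paper: establish the fiber-integration (Thom-form) identity for $\tilde{\varphi}_\infty$ over the fibration $\pi\colon\Gamma_U\backslash D\to\Gamma_U\backslash D_{\bx,z'}$, unfold the $\Gamma$-orbit sum to reduce to $\Gamma_U\backslash D$, and cite the rapid-decrease input from Section~\ref{rapiddecreasechapter}. However, your justification for item (iii) --- that the fiber integral $\kappa$ depends on $\bx$ only through its Gram matrix $\beta$ --- is incorrect as stated. The Siegel-parabolic formulas \eqref{M'action}, \eqref{n'action} do \emph{not} show that $\omega(g')\varphi(\bx)$ depends only on $(\bx,\bx)$: the $N'$-action multiplies by a character of $\beta$, but the $M'$-action genuinely rescales the argument $\bx$, and the Schwartz function $\omega(g')\varphi(\bx)$, being a polynomial times a Gaussian in the actual coordinates of $\bx$, certainly distinguishes vectors with the same Gram matrix. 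The correct argument is via Witt's theorem and $G$-equivariance: if $(\bx,\bx)=(\tilde{\bx},\tilde{\bx})$ there is $g\in G_\infty$ with $g\bx=\tilde{\bx}$, and then the $G$-invariance of $\tilde{\varphi}_\infty$ in its first and third arguments converts the fiber integral over $F_{\tilde z_0}D_{\tilde{\bx},\tilde z}$ into one over $F_{g^{-1}\tilde z_0}D_{\bx,g^{-1}\tilde z}$; combined with the already-established independence of the base point $z'$ (which uses the homology argument of Proposition~\ref{homologyindependent}), this shows $\kappa$ is a function of $\beta$ alone.

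A secondary gap: you need to justify $\int_{\Gamma_U\backslash D_{\bx,z'}}\eta=\int_{C_\bx}\eta$ when the map $\rho_{U,z'}\colon\Gamma_U\backslash D_U\to\Gamma\backslash D$ fails to be an embedding, which is the generic situation. The paper does this via Lemma~\ref{resolutionofsingularity2}, which shows the induced map onto $C_{\bx,z'}$ is a finite birational morphism, so integration of a smooth form against the regular locus of $C_{\bx,z'}$ agrees with integration over $\Gamma_U\backslash D_{\bx,z'}$. Without this, the unfolding identity would only produce an integral over $\Gamma_U\backslash D_{\bx,z'}$, not over the cycle $C_\bx$, and Poincar\'e duality would not apply directly.
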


When the function $\kappa(g',\beta)$ is nonzero,  $\frac{1}{\kappa(g',\beta)}[\theta_{\cL,\beta,\tilde{\varphi}_\infty}(z,g')]$ (see equation \eqref{fourierexpansionoftheta}) is the Poincar\'e dual of $C_\beta$. We will prove the following theorem  in section \ref{methodofLaplacesection}.
\begin{theorem}\label{genericnonzero}
There exists $m'\in \tilde{M}'_\infty$ (see \eqref{M'}) such that for sufficiently large $\lambda\in \R$,
\[\kappa(\lambda m',\beta)\neq 0.\]
In particular, $ \kappa(g',\beta)$ is nonzero for a generic $g'$ as it is analytic.
\end{theorem}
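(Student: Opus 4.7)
The plan is to prove non-vanishing of $\kappa(\lambda m', \beta)$ by applying the method of Laplace to the defining fibre integral $\kappa(g', \beta) = \int_{F D_{U,z'}} \varphi(z, g', \bx)$. I will choose $m' = m'(a(t)) \in \tilde{M}'_\infty$ as in \eqref{M'} with $a(t)$ scaling like a power of a single real parameter $t$. By the formula \eqref{M'action}, the effect of this Siegel Levi element under the Weil representation is to rescale the Schwartz-function argument: $\omega(m'(a(t)))\varphi(\bx)$ is essentially $\varphi(\bx \cdot a(t))$ times a determinant factor. After composing with the $G_\infty$-translation $g_z$ that moves $z_0$ to $z$, the integrand becomes a polynomial in the local coordinates on $F D_{U,z'_0}$ multiplied by a Gaussian whose scale contracts as $t \to \infty$.

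The key geometric input is that the Gaussian factor attains its maximum on the fibre uniquely at $z = z_0$: this follows from the description of $z_0$ as the unique minimum of the norm function $z \mapsto \|g_z^{-1}\bx\|^2$ restricted to the fibre, which in turn uses that $F D_{U,z'_0}$ is totally geodesic (Proposition \ref{fixpointofpairofinvolution}) and that the normal-exponential map is a diffeomorphism by Lemma \ref{expofnormalbundlefibering}. Consequently the integral takes the shape $\int_{F D_{U,z'_0}} P_t(z) e^{-t^2 Q(z)}\, dz$, where $Q$ is a Morse function on the fibre with unique minimum at $z_0$ (identified with $0 \in N_{z_0}D_{U,z'_0}$) and $P_t$ is a polynomial in the fibre coordinates depending polynomially on $t$.

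Applying the standard method of Laplace, the leading-order asymptotic as $t \to \infty$ equals a fixed positive Gaussian integral over the tangent space $N_{z_0} D_{U,z'_0}$, multiplied by the evaluation at $z_0$ of the top-$t$-degree part of $P_t$, multiplied by an overall power of $t$. By Lemma \ref{highestermSchrodinger}, the highest-degree term of $i^*\varphi(\bx)|_{z_0}$ along the fibre is an explicit nonzero constant multiple of the volume form of the fibre at $z_0$. Hence the leading asymptotic coefficient of $\kappa(\lambda m'(a(t)), \beta)$ is nonzero, which forces $\kappa$ itself to be nonzero for all sufficiently large $t$ (and correspondingly for sufficiently large $\lambda$ after absorbing the scalar). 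The second assertion of the theorem is then immediate from the analyticity of $\kappa(g', \beta)$ in $g'$ recorded in Theorem \ref{fouriercoeff}: since $\tilde{G}'_\infty$ is connected and $\kappa$ is analytic but not identically zero, its zero set is a proper analytic subset, hence nowhere dense.

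The hard part will be the bookkeeping needed to verify that the surviving leading coefficient is genuinely nonzero after all cancellations built into the sum $\varphi = \varphi_0 \sum_{i,j} p_{ij}\, \Omega_i \wedge \bar{\Omega}_j$ of \eqref{varphiSchrodinger}; only the single index pair with $\bigwedge_{(\alpha,\mu)\in I} \xi'_{\alpha\mu} \wedge \xi''_{\alpha\mu}$ restricts nontrivially to the fibre at $z_0$, and Lemma \ref{highestermSchrodinger} is exactly what is needed to see that its polynomial coefficient is not swallowed by a lower-order error. A secondary technical point, also requiring care, is to confirm that the scaling in $a(t)$ is chosen so that the exponent $t^2 Q(z)$ dominates over the polynomial growth of $P_t$ on the non-compact fibre $F D_{U,z'_0}$, so that the standard Laplace error estimates apply without modification.
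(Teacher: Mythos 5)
Your overall strategy---scaling by a Siegel Levi element $m'(a(t))$, applying the Laplace method to the fibre integral, and reading off the leading term from Lemma~\ref{highestermSchrodinger}---matches the paper's proof. But your justification of the Laplace-method hypotheses has a genuine gap: you claim that the unique-minimum property of the norm function on the fibre follows because $F_{z_0}D_{U,z'_0}$ ``is totally geodesic (Proposition~\ref{fixpointofpairofinvolution}).'' Proposition~\ref{fixpointofpairofinvolution} asserts that $D_{U,z'}$ (the base of the fibration) is totally geodesic, \emph{not} the fibre; indeed the paper explicitly remarks after \eqref{fiber} that, outside the Kudla--Millson cases, $F_{z_0}D_{U,z'}$ is not a sub-symmetric space of $D$. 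And even if the fibre were totally geodesic, that alone does not deliver the quantitative hypotheses (2) and (3) of Theorem~\ref{methodoflaplace}: one needs a uniform positive lower bound $h(Y)\geq h(0)+\tfrac{1}{2}C\|Y\|^2$ and positive definiteness of the Hessian at $0$. In the paper these are Propositions~\ref{secondcondition}, \ref{secondconditioncaseB}, \ref{secondconditioncaseC}, and they rest on eigenvalue/projection estimates (Lemma~\ref{twobounds}, via Lemmas~\ref{continuous}, \ref{boundedbelow}, and the explicit block-matrix analysis of $Y\in N_{z_0}D_{U,z'_0}$). This is real work, not a consequence of the geodesy of the base. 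You also underestimate the role of the rapid-decrease estimate (Theorem~\ref{rapiddecreasingthm} and its corollary): it supplies condition~(1) of the Laplace theorem (absolute convergence of $J(t)$ for all $t>0$), which is not automatic on the noncompact fibre.

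Two smaller points. First, the normalization step is missing: before applying the Laplace method the paper uses the transformation rule $\kappa(g'(m'(a),\zeta),\beta)=\zeta|a|^{m/2}\kappa(g',a^*\beta a)$ together with Gram--Schmidt to put $\beta$ into a standard form, and translates by $G_\infty$ to put $\bx$ and $z_0$ into the standard configuration \eqref{standardbx}; without this, the explicit identification of the Hessian and of the surviving top term in $P_t$ is unavailable. Second, your concern that ``the scaling in $a(t)$ is chosen so that $t^2Q$ dominates $P_t$'' is resolved not by tuning $a(t)$ but precisely by the rapid-decrease estimate already mentioned, so a sharper framing would be to flag that corollary as the prerequisite for convergence rather than as a design choice.
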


These theorems mean that $[\theta_{\cL,\tilde{\varphi}_\infty}]$ can be seen as a "generating" series of $\mathrm{PD}([C_\beta])$. Of course as for now we do not have an explanation for all the "Fourier coefficients". Only for those satisfying the condition of Theorem \ref{generatingseries} do we have an explanation.

\subsection{}The rest of the section will be devoted to proving Theorem \ref{fouriercoeff} under an assumption that we will verify later.

First we need the following "unfolding" lemma:
\begin{lemma}\label{unfoldinglemma}
\[\int_{\Gamma\backslash D}\eta\wedge \sum_{{\bf y} \in \Gamma \cdot \bx_j} \tilde{\varphi}_\infty(z,g', \by)= \int_{\Gamma_{\bx_j}\backslash D} \eta\wedge \tilde{\varphi}_\infty(z,g',{\bx_j}),\]
In particular
		\[\int_{\Gamma\backslash D}\eta\wedge \sum_{\bx\in \mathcal{L}^m,(\bx,\bx)=\beta} \tilde{\varphi}_\infty(z,g',\bx)=\sum_{j=1}^o \int_{\Gamma_{\bx_j}\backslash D} \eta\wedge \tilde{\varphi}_\infty(z,g',{\bx_j}).\]
\end{lemma}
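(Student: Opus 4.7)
The plan is to carry out the standard unfolding of a sum over a $\Gamma$-orbit into an integral over the quotient by a stabilizer. Write $H_j = \Gamma_{\bx_j}$ for the stabilizer of $\bx_j$ in $\Gamma$. Since $\tilde{\varphi}_\infty \in (\Omega^\bullet(D)\otimes \mathcal{S}(V_\infty^m))^{G_\infty \times \tilde{G}'_\infty}$, the form $z \mapsto \tilde{\varphi}_\infty(z,g',\bx_j)$ on $D$ satisfies
\[
L_\gamma^* \tilde{\varphi}_\infty(\cdot,g',\bx_j) = \tilde{\varphi}_\infty(\cdot,g',\gamma^{-1}\bx_j)
\]
for every $\gamma \in G_\infty$. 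When $\gamma \in H_j$, the right-hand side equals $\tilde{\varphi}_\infty(\cdot,g',\bx_j)$, so this form descends to a smooth differential form on $H_j\backslash D$. The form $\eta$ is $\Gamma$-invariant, hence a fortiori $H_j$-invariant, so the wedge product $\eta \wedge \tilde{\varphi}_\infty(\cdot,g',\bx_j)$ descends to $H_j\backslash D$ as well.

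Next I would reindex the orbit sum through the bijection $\Gamma/H_j \xrightarrow{\sim} \Gamma\cdot \bx_j$, $\gamma H_j \mapsto \gamma \bx_j$, which gives
\[
\sum_{\by \in \Gamma\cdot \bx_j} \tilde{\varphi}_\infty(z,g',\by) = \sum_{\gamma \in \Gamma/H_j} \tilde{\varphi}_\infty(z,g',\gamma \bx_j) = \sum_{\gamma \in \Gamma/H_j} L_{\gamma^{-1}}^* \tilde{\varphi}_\infty(z,g',\bx_j).
\]
Wedging with $\eta$ and integrating over a fundamental domain $\mathcal{F}$ for $\Gamma$ acting on $D$, I would then use the tiling $\bigsqcup_{\gamma \in \Gamma/H_j} \gamma \mathcal{F}$ of a fundamental domain $\mathcal{F}_{H_j}$ for $H_j$ acting on $D$, together with the $\Gamma$-invariance of $\eta$, to rewrite
\[
\int_{\Gamma \backslash D} \eta \wedge \sum_{\gamma \in \Gamma/H_j} L_{\gamma^{-1}}^* \tilde{\varphi}_\infty(z,g',\bx_j)
= \int_{H_j \backslash D} \eta \wedge \tilde{\varphi}_\infty(z,g',\bx_j),
\]
which is the first claimed identity.

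The main obstacle is to justify the interchange of the infinite sum with the integral, and to verify that the integral on the right-hand side actually converges. The quotient $H_j \backslash D$ is not compact: $H_j$ is essentially a lattice in $G(U_j^\perp)$, and the $D(U_j)$-directions are uncovered. Convergence will follow once we know that $\tilde{\varphi}_\infty(\cdot,g',\bx_j)$ decays rapidly along these normal directions, i.e., along the fibers of the bundle $\pi:H_j\backslash D\to H_j\backslash D_{U_j,z_j}$ constructed in Section \ref{fibrationpi}. This is precisely the content to be established in Section \ref{rapiddecreasechapter}; granting it, a dominated convergence argument using a fixed Schwartz majorant on each compact subset of the base $H_j\backslash D_{U_j,z_j}$ lets me exchange the sum and integral. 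The compactness of $H_j\backslash D_{U_j,z_j}$ (an arithmetic quotient of the anisotropic locally symmetric space associated to $G(U_j^\perp)$) ensures that the resulting fiberwise integrals can be summed to a finite total.

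Finally, the second identity follows from the first by partitioning $\{\bx \in \mathcal{L}^m \mid (\bx,\bx)=\beta\}$ into its finitely many $\Gamma$-orbits with representatives $\bx_1,\dots,\bx_o$ (finiteness having been recorded via Borel's theorem), applying the first identity to each orbit separately, and summing over $j=1,\dots,o$.
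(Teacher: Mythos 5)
Your proposal is correct and is precisely the argument the paper invokes: the paper simply cites Lemma 4.1 of \cite{KM90}, which carries out exactly this unfolding of the orbit sum via the bijection $\Gamma/\Gamma_{\bx_j}\to\Gamma\cdot\bx_j$ and the change of variables over a fundamental domain, with convergence guaranteed by the rapid decay of $\tilde{\varphi}_\infty$ on the fibers of $\pi$. Your spelled-out version, including the remark that the interchange of sum and integral is justified by the estimates proved in Section \ref{rapiddecreasechapter} and the compactness of $\Gamma_{\bx_j}\backslash D_{U_j,z_j}$, matches that argument.
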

\begin{proof}
The proof is the same as that of Lemma 4.1 of \cite{KM90}.
\end{proof}
		
For $\bx\in V^m$ assume $\beta=(x,x)$ satisfies the condition of Theorem \ref{fouriercoeff}. Let
\[U_\bx=\Span_{B_{v_1}} \{\bx\}, G_\bx=(G_{v_1})_{U_\bx}, \Gamma_\bx=\Gamma \cap G_\bx. \]
Choose $z'_0\in D(U_\bx)$.
The critical topological observation is that $\Gamma_\bx\backslash D_{\bx,z'_0}$ is a totally geodesic sub manifold of the space $E_\bx=\Gamma_\bx \backslash D$ and $E_\bx$ is in a natural way (topologically) a vector bundle over $\Gamma_\bx\backslash D_{\bx,z'_0}$. In fact the fibration $\pi:E_\bx\rightarrow \Gamma_\bx\backslash D_{\bx,z'_0}$ defined in Section \ref{fibrationpi} has fibers  $F_z D_{\bx,z'_0}$ diffeomorphic to the vector space $N_z D_{\bx,z'_0}$. 
The following theorem is a special case of Theorem 2.1 of \cite{KM88}.
\begin{theorem}\label{thomform}
Let $\Phi$ be a  degree $2d'$ differential form on $E_\bx$ satisfying 
\begin{enumerate}
		    \item $\Phi$ is closed.\
		    \item $||\Phi ||\leq \exp(-d \cdot \rho \cdot r) p(r) $ for some polynomial $p$, where $r$ is the geodesic distance to $\Gamma_\bx\backslash D_{\bx,z'_0}$ and $\rho$ is a positive number such that the sectional curvature of $D$ is bounded below by $-\rho^2$.
\end{enumerate}
If $\eta$ is a closed bounded $2(d-d')$-form on $E_\bx$ then we have 
\[\int_{E_\bx} \eta \wedge \Phi=\kappa\int_{\Gamma_\bx\backslash D_{\bx,z'_0}} \eta ,\]
where $\kappa$ is the constant
\[\int_{F_z D_{\bx,z'_0}} \Phi\]
which is independent of the choice of $z\in \Gamma_\bx\backslash D_{\bx,z'_0}$.
\end{theorem}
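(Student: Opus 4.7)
The plan is to use the Thom form / fiber integration strategy, exploiting the vector bundle structure $\pi \colon E_\bx \to \Gamma_\bx \backslash D_{\bx,z'_0}$ established in Section \ref{fibrationpi}. Write $\iota$ for the zero section and note that the fibers of $\pi$ are diffeomorphic to Euclidean spaces via the normal exponential map (Lemma \ref{expofnormalbundlefibering}), so the geodesic contraction $H_t(z,v) = \exp_z(tv)$ gives a smooth homotopy from $\iota \circ \pi$ to the identity on $E_\bx$. Applying the Cartan homotopy formula to the closed form $\eta$ would produce a form $\alpha$ of degree $2(d-d')-1$ satisfying $\eta = \pi^* \iota^* \eta + d\alpha$.

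With this decomposition in hand, I would split the integral as
\[
\int_{E_\bx} \eta \wedge \Phi \;=\; \int_{E_\bx} \pi^* \iota^* \eta \wedge \Phi \;+\; \int_{E_\bx} d\alpha \wedge \Phi.
\]
For the first summand I would apply fiber integration together with the projection formula $\pi_*(\pi^* \beta \wedge \Phi) = \beta \wedge \pi_* \Phi$ to rewrite it as $\int_{\Gamma_\bx \backslash D_{\bx,z'_0}} \iota^*\eta \cdot \pi_*\Phi$. Since $\Phi$ is closed of degree equal to the fiber dimension $2d'$, the function $\pi_*\Phi$ on the base is itself closed, using that the boundary-at-infinity contribution in the identity $d\pi_* = \pm \pi_* d$ vanishes by the rapid decay of $\Phi$; as a closed $0$-form on a connected base it equals the constant $\kappa = \int_{F_z D_{\bx,z'_0}} \Phi$. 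This summand therefore produces $\kappa \int_{\Gamma_\bx \backslash D_{\bx,z'_0}} \eta$, which is the desired right-hand side.

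The remaining task is to show that the second summand vanishes. Because $d\Phi = 0$, we have $d\alpha \wedge \Phi = d(\alpha \wedge \Phi)$, so the integral is a limit of boundary integrals over an exhaustion of $E_\bx$ by geodesic tubes of radius $R$ about $\Gamma_\bx \backslash D_{\bx,z'_0}$. The hard part will be to verify that this boundary contribution tends to zero as $R \to \infty$. This is where the precise hypotheses enter: the boundedness of $\eta$ combined with the explicit form of the homotopy operator implies that $\|\alpha\|$ grows at most polynomially in $R$, while the volume of the tube's boundary at radius $R$ grows at most like $\exp(c \rho R)$ via Rauch comparison with the constant-curvature $-\rho^2$ model. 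The decay $\|\Phi\| \le \exp(-d \rho r) p(r)$ is calibrated precisely to beat this exponential volume growth, forcing the boundary contribution to vanish in the limit. Carrying out these decay estimates rigorously, following the Mathai--Quillen and Kudla--Millson analysis in \cite{KM88}, is the principal technical step.
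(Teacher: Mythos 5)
The paper does not prove this statement itself; it quotes it as a special case of Theorem 2.1 of \cite{KM88}, so there is no internal proof to compare with. Your outline reconstructs what is essentially the argument used there: geodesic contraction of the normal bundle gives $\eta = \pi^*\iota^*\eta + d\alpha$, the projection formula and closedness of the $0$-form $\pi_*\Phi$ handle the first summand, and the exact piece is disposed of by Stokes on an exhaustion by tubes. The whole content of the cited result is precisely the two analytic points you defer, namely that $d\pi_* = \pi_* d$ is valid despite the non-compact fibers and that the boundary terms vanish, so the strategy is correct as far as it goes.

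One claim you assert should not be waved through. You say $\|\alpha\| = O(R)$, $\mathrm{vol}(\partial T_R) = O(e^{c\rho R})$, and that the hypothesis $\|\Phi\| \leq p(r)e^{-d\rho r}$ is "calibrated precisely to beat" the volume growth. A Rauch comparison against the constant-curvature $-\rho^2$ model gives $c = 2d - 1$ (the ambient real dimension is $2d$, and all $2d-1$ non-radial Jacobi fields can grow like $\cosh(\rho R)$ or $\sinh(\rho R)$), so the sup-norm bound on the boundary integrand is of order $R\,p(R)\,e^{(d-1)\rho R}$, which diverges whenever $d > 1$. Either the exponent in the theorem as transcribed in this paper should read $(2d-1)\rho$ or more (a plausible slip when restating KM88), or one must extract cancellation beyond a crude pointwise estimate. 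For the paper's purposes the imprecision is harmless, since Theorem \ref{rapiddecreasingthm} later supplies decay at an arbitrary exponential rate; but your sketch treats the stated bound as exactly sufficient, and that numerology does not check out, so it should be flagged and verified against the hypotheses of KM88 rather than asserted.
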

\begin{remark}
When $\kappa=1$, $\Phi$ is a Thom form of the fiber product $E_\bx\rightarrow \Gamma_\bx\backslash D_{\bx,z'_0}$.
\end{remark}

\noindent
{\bf Proof of Theorem \ref{fouriercoeff} assuming rapid decrease of $\tilde{\varphi}_\infty$ on $E_\bx$}:
By Lemma \ref{unfoldinglemma} it suffices to show the following. 
\begin{proposition}\label{Thomformprop}
Assume that $x\in V^m$ and $\beta=(x,x)$ satisfies the condition of Theorem \ref{fouriercoeff}. We have
\[\int_{\Gamma_{\bx}\backslash D} \eta\wedge \tilde{\varphi}_\infty(z,g',\bx) =\kappa(g',\beta)\int_{C_\bx}\eta,\]
where $\kappa(g',\beta)$ is the function in Theorem \ref{fouriercoeff}. In other words, the cohomology class $[\tilde{\varphi}_\infty(z,g',\bx)]$ in $H^*(E_\bx)$ is $\kappa(g',\beta)$ times the Thom class of the fiber bundle $\pi:E_\bx\rightarrow \Gamma_\bx\backslash D_{\bx,z'_0}$.
\end{proposition}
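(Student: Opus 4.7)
The plan is to apply Theorem \ref{thomform} directly to $\Phi = \tilde{\varphi}_\infty(z, g', \bx)$, regarded as a differential form on $E_\bx = \Gamma_\bx \backslash D$, and then to translate the resulting integral over the base $\Gamma_\bx \backslash D_{\bx, z'_0}$ into an integral over the cycle $C_\bx$. I must check the three hypotheses of Theorem \ref{thomform}, extract the constant $\kappa(g',\beta)$ and verify its claimed properties, and finally descend the integral from $\Gamma_\bx \backslash D_{\bx, z'_0}$ to $C_\bx$.

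\textbf{Checking the hypotheses.} The form $\varphi$ has total real degree $2d'$, which matches the real rank of the normal bundle $ND_{\bx,z'_0}$, since $D_{\bx,z'_0}$ has complex codimension $d'$ in $D$. Closedness of $\tilde{\varphi}_\infty$ on $D$ (and hence on $E_\bx$) follows from the fact that $\varphi_\infty$ in \eqref{definitionofvarphiinfty} is closed in the relative Lie algebra complex by the corollary to Theorem \ref{closedness}, and closedness in that complex translates to closedness of the associated $G_\infty$-invariant form on $D$. The rapid decay hypothesis $\|\tilde{\varphi}_\infty(z,g',\bx)\| \leq e^{-d\rho r}p(r)$, with $r$ the geodesic distance to $D_{\bx,z'_0}$, is exactly the technical statement deferred to Section \ref{rapiddecreasechapter}, which I take for granted here.

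\textbf{Applying the Thom form theorem.} With these three hypotheses in hand, Theorem \ref{thomform} yields
\begin{equation*}
\int_{E_\bx} \eta \wedge \tilde{\varphi}_\infty(z,g',\bx) = \kappa(g',\bx)\int_{\Gamma_\bx \backslash D_{\bx,z'_0}} \eta,
\end{equation*}
with $\kappa(g',\bx) = \int_{F_{z_0} D_{\bx,z'_0}} \tilde{\varphi}_\infty(z,g',\bx)$. To see that $\kappa$ depends on $\bx$ only through $\beta = (\bx,\bx)$, note that by Witt's theorem the $G_\infty$-orbit of $\bx$ in $V_\infty^m$ depends only on its Gram matrix (and, in case A, on its signature, which matches that of $\varphi$ by our hypothesis on $\beta$). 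Then $G_\infty$-invariance of $\varphi_\infty$, combined with the $G_\infty$-equivariance of the fibration $\pi$ from \eqref{FisGequivariant}, shows that the fiber integral is constant on this orbit, so we may write $\kappa(g',\beta)$. Analyticity of $\kappa(g',\beta)$ in $g'$ follows because $\omega(g')\varphi_\infty$ depends analytically on $g' \in \tilde{G}'_\infty$ in the Fock realization of the Weil representation, and the rapid decay estimate, being uniform in $g'$ on compact subsets, lets one commute integration over the fiber with differentiation in $g'$.

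\textbf{From the base integral to the cycle integral.} It remains to identify $\int_{\Gamma_\bx \backslash D_{\bx,z'_0}} \eta$ with $\int_{C_\bx} \eta$. This is where Lemma \ref{resolutionofsingularity2} enters: the induced map $\rho_{U,z'_0} : \Gamma_\bx \backslash D_{\bx,z'_0} \to C_\bx$ is a finite birational morphism onto its image, so an open dense subset of $\Gamma_\bx \backslash D_{\bx,z'_0}$ maps isomorphically to an open dense subset of $C_\bx$, and the complements have measure zero with respect to the K\"ahler volumes, giving $\int_{\Gamma_\bx \backslash D_{\bx,z'_0}} \rho^*_{U,z'_0}\eta = \int_{C_\bx}\eta$ for closed bounded $\eta$ of the correct degree. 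The main obstacle is clearly the rapid decay estimate; in contrast to the classical Kudla--Millson situation, the fibers $F_{z_0} D_{\bx,z'_0}$ are not sub-symmetric spaces of $D$, so the Gaussian factor in $\varphi$ cannot be estimated along a single geodesic split, and the polynomial coefficients of $\iota(\phi)$ (powers of $f_1^\pm, f_2^\pm$, cf.\ Lemma \ref{highestermSchrodinger}) must be controlled against this geometric subtlety.
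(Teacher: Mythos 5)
Your overall strategy mirrors the paper's: apply Theorem \ref{thomform} to $\Phi=\tilde{\varphi}_\infty$ after verifying closedness and decay, obtain the fiber integral $\kappa$, and pass from $\int_{\Gamma_\bx\backslash D_{\bx,z'_0}}\eta$ to $\int_{C_\bx}\eta$ via the birational morphism of Lemma \ref{resolutionofsingularity2}. However, your justification that $\kappa$ depends only on $g'$ and $\beta$ has a genuine gap: the constant produced by Theorem \ref{thomform} a priori depends on the auxiliary choice of $z'_0\in D(U)$, not just on $\bx$, and Witt's theorem together with $G_\infty$-equivariance of $\pi$ does not eliminate this dependence. Indeed, equivariance gives $\kappa(g',g\bx,gz'_0)=\kappa(g',\bx,z'_0)$ for all $g\in G_\infty$; but to compare two choices $z'_1, z'_2\in D(U)$ for the \emph{same} $\bx$, you would need $g$ with $g\bx=\bx$ and $gz'_1=z'_2$. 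The condition $g\bx=\bx$ forces $g\in G_U=G(U^\bot)$, which acts trivially on $D(U)$, so no such $g$ exists. Likewise, after applying Witt's theorem to move $\bx_1$ to $\bx_2$, you land on the fiber integral for $(\bx_2, gz'_1)$, which you cannot compare to $(\bx_2, z'_2)$ without already knowing $z'_0$-independence.

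The paper closes this gap differently, and this step should appear in your proof: choose a closed $\eta$ with $\int_{C_{\bx,z'_0}}\eta\neq 0$ (possible by Poincar\'e duality). The left-hand side $\int_{\Gamma_\bx\backslash D}\eta\wedge\tilde{\varphi}_\infty$ is visibly independent of $z'_0$, and $\int_{C_{\bx,z'_0}}\eta$ is independent of $z'_0$ because the cycles $C_{\bx,z'_0}$ for varying $z'_0$ are homologous (Proposition \ref{homologyindependent}) and $\eta$ is closed. Hence $\kappa(g',\bx,z'_0)$ is independent of $z'_0$, and \emph{then} the Witt-theorem change-of-variables argument you sketch establishes dependence only on $\beta$. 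Your own passage to $\int_{C_\bx}\eta$ actually contains all the needed ingredients, so this is a matter of ordering and of making the $z'_0$-independence explicit rather than a missing idea wholesale, but as written the step "so we may write $\kappa(g',\beta)$" is not justified.
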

\begin{remark}
Some literature such as \cite{BottTu} requires a Thom form to be compactly supported on the fiber. However by the same method as in \cite{KM88}, $\tilde{\varphi}_\infty(z,g',\bx)$ can be shown to be cohomologous to a compactly supported form.
\end{remark}
\begin{proof}
We would like to apply Theorem \ref{thomform} to $\Phi=\tilde{\varphi}_\infty$.
By Theorem \ref{closedness} and its corollary, $\tilde{\varphi}_\infty$ satisfies condition (1) of Theorem \ref{thomform}.
We will verify that $\tilde{\varphi}_\infty$ satisfies condition (2) of Theorem \ref{thomform} in Theorem \ref{rapiddecreasingthm} and its corollary.
Define
\begin{equation}\label{kappaintegral}
    \kappa(g',\bx,z'_0):=\int_{F_{z_0} D_{\bx,z'_0}}\tilde{\varphi}_\infty(z,g',\bx)
\end{equation}
for any $z_0\in C_{\bx,z'_0}$. By Theorem \ref{thomform}, we know that
\[
    \int_{\Gamma_{\bx}\backslash D} \eta\wedge \tilde{\varphi}_\infty(z,g',\bx) =\kappa(g',\bx,z'_0)\int_{\Gamma_\bx\backslash D_{\bx,z'_0}}\eta.
\]
When the map 
$\rho_{U_\bx,z'_0}:\Gamma_\bx\backslash D_{U_\bx} \rightarrow \Gamma\backslash D$ (see Section \ref{generalizedspecial}) is an embedding, one can immediately conclude that 
\begin{equation}\label{thomequation1}
    \int_{\Gamma_{\bx}\backslash D} \eta\wedge \tilde{\varphi}_\infty(z,g',\bx) =\kappa(g',\bx,z'_0)\int_{C_{\bx,z'_0}}\eta.
\end{equation}
In general one can use Lemma \ref{resolutionofsingularity2} to see that the map
\[{\Gamma_\bx\backslash D_{\bx,z'_0}}\rightarrow C_{\bx,z'_0}\]
induced by $\rho_{U_\bx,z'_0}$ is birational, so 
\[\int_{C_{\bx,z'_0}}\eta=\int_{\Gamma_\bx\backslash D_{\bx,z'_0}}\eta\]
if we interpret $\int_{C_{\bx,z'_0}}\eta$ as integration over the nonsingular locus of $C_{\bx,z'_0}$. Hence equation (\ref{thomequation1}) holds again.

Thus in order to prove Theorem \ref{fouriercoeff}, it remains to show that 
\begin{enumerate}
    \item $\kappa(g',\bx,z'_0)$ only depends on $g'$ and $\beta$, so we can define $\kappa(g',\beta):=\kappa(g',\bx,z'_0) $.
    \item $\kappa(g',\beta)$ is an analytic function in $g'$.
\end{enumerate}

\begin{lemma}\label{lem:kappa independent of z'}
$\kappa(g',\bx,z'_0)$ is independent of the choice of $z'_0$. Moreover it only depends on $\beta=(\bx,\bx)$ when $\beta$ satisfies the condition of Theorem \ref{fouriercoeff}. 
\end{lemma}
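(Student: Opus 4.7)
The plan is to reduce the $\beta$-dependence claim to the assertion of independence of $z'_0$ via Witt's theorem and the $G_\infty$-equivariance of $\tilde{\varphi}_\infty$, and then to prove that independence by a Stokes-type cobordism argument on $D$.

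From the $G_\infty$-invariance built into \eqref{translateform} one has $L_g^*\tilde{\varphi}_\infty(\cdot, g', g\bx) = \tilde{\varphi}_\infty(\cdot, g', \bx)$ for every $g \in G_\infty$. Since $g$ is an isometry of $D$ sending geodesics perpendicular to $D_{\bx, z'_0}$ to those perpendicular to $D_{g\bx, gz'_0}$, it follows that $g\, F_{z_0}D_{\bx, z'_0} = F_{gz_0}D_{g\bx, gz'_0}$. Changing variables in \eqref{kappaintegral} then yields
\[
\kappa(g', g\bx, g z'_0) = \kappa(g', \bx, z'_0), \qquad g \in G_\infty.
\]
Under the hypothesis of Theorem \ref{fouriercoeff}, Witt's theorem---applied at the real place $v_1$ and trivially at the compact Archimedean places---says that any two $\bx_1, \bx_2 \in V_\infty^m$ with $(\bx_1, \bx_1) = (\bx_2, \bx_2) = \beta$ are $G_\infty$-conjugate. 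Picking $g \in G_\infty$ with $g\bx_1 = \bx_2$ gives $\kappa(g', \bx_1, z'_1) = \kappa(g', \bx_2, g z'_1)$, and granted $z'$-independence the right-hand side equals $\kappa(g', \bx_2, z'_2)$ for any $z'_2 \in D(U_{\bx_2})$.

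For the $z'$-independence, I would join $z'_0, z''_0 \in D(U)$ by a smooth path $c:[0,1] \to D(U)$, choose a smooth family of base points $z_0(t) \in D_{\bx, c(t)}$ with $z_0(0) = z_0$, and use Lemma \ref{expofnormalbundlefibering} together with the smooth dependence of the normal bundle on $t$ to produce an oriented $(2d'+1)$-chain $N \subset D$ interpolating between $F_0 := F_{z_0(0)}D_{\bx, z'_0}$ and $F_1 := F_{z_0(1)}D_{\bx, z''_0}$. Truncating each fiber at geodesic radius $R$ yields a compact sub-chain $N_R$ with boundary $F_1(R) - F_0(R) + L_R$. Since $\tilde{\varphi}_\infty(\bx)$ is closed on $D$ by Theorem \ref{closedness} and its corollary, Stokes' theorem gives
\[
\int_{F_1(R)} \tilde{\varphi}_\infty(\bx) - \int_{F_0(R)} \tilde{\varphi}_\infty(\bx) = \int_{L_R} \tilde{\varphi}_\infty(\bx).
\]
Letting $R \to \infty$, the left-hand side tends to $\kappa(g', \bx, z''_0) - \kappa(g', \bx, z'_0)$, while the right-hand side tends to zero by the Gaussian-type decay of $\tilde{\varphi}_\infty$ along the fibers of $\pi$, which is the content of Section \ref{rapiddecreasechapter}.

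The main obstacle is precisely the vanishing of $\lim_{R \to \infty} \int_{L_R} \tilde{\varphi}_\infty(\bx)$: one needs the decay estimates of Section \ref{rapiddecreasechapter} applied uniformly as $c(t)$ varies over the compact segment $c([0,1])$, together with polynomial control on the lateral surface area of $L_R$. These are the same estimates that underpin the Thom-form property used to define $\kappa$ intrinsically.
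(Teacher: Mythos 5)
Your reduction of the $\beta$-dependence claim to $z'_0$-independence via Witt's theorem and the identity $\kappa(g',g\bx,gz'_0)=\kappa(g',\bx,z'_0)$ is precisely the argument the paper uses, so that part is on target. Where you diverge is the $z'_0$-independence itself. The paper proves it indirectly: equation \eqref{thomequation1} expresses $\kappa(g',\bx,z'_0)\int_{C_{\bx,z'_0}}\eta$ as $\int_{\Gamma_\bx\backslash D}\eta\wedge\tilde{\varphi}_\infty(\bx)$, whose left factor is visibly independent of $z'_0$; one then chooses $\eta$ with nonzero period (Poincar\'e duality) and invokes Proposition \ref{homologyindependent} --- the homology class $[C_{\bx,z'}]$ does not depend on $z'$, itself a soft homotopy argument --- to conclude $\int_{C_{\bx,z'_0}}\eta$ is independent of $z'_0$, hence so is $\kappa$. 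Your approach instead re-derives the independence directly, by building a $(2d'+1)$-cobordism of fibers over a path $c(t)$ joining $z'_0$ to $z''_0$, applying Stokes' theorem to the closed form $\tilde{\varphi}_\infty(\bx)$ on the truncated chain $N_R$, and killing the lateral boundary term using the decay estimates of Section \ref{rapiddecreasechapter}. Both approaches ultimately rest on the same two analytic facts (closedness and rapid decay of $\tilde{\varphi}_\infty$), but the paper's packages them through Theorem \ref{thomform} once and for all and then gets the $z'_0$-independence essentially for free, while yours re-runs the analytic argument geometrically without invoking Poincar\'e duality on $C_{\bx,z'_0}$. Your route is workable, but you should be explicit about two points that you only gesture at: (i) the constants in Theorem \ref{estimateofgaussian}, in particular $b$ and $c$, depend on $z'_0$, so you need a compactness argument to make the bound $\|\tilde{\varphi}_\infty\|\leq C'_\rho e^{-\rho R}$ uniform over $t\in[0,1]$ on the lateral surface $L_R$; and (ii) the volume growth of $L_R$ in $R$ must be controlled, which follows from the Jacobi-field estimates behind Lemma \ref{expofnormalbundlefibering} (the volume grows at most like $e^{c R}$ for a curvature constant $c$, which is dominated by $e^{-\rho R}$ once $\rho$ is chosen large enough). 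With those two points made precise your cobordism argument is a legitimate alternative to the paper's.
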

\begin{proof}
Let $\eta$ be a $\Gamma$-invariant form on $D$ such that $\int_{C_{\bx,z'_0}}\eta\neq 0 $. For example, one can take $\eta=\omega_k^d$ where $\omega_k$ is the K\"ahler form of $M$ and $d$ is the complex dimension of $C_{\bx,z'_0}$.
For different choices of $z'_0$, $C_{\bx,z'_0}$ are homologous. Thus $\int_{C_{\bx,z'_0}}\eta$ is independent of $z'_0$. The left hand side of \eqref{thomequation1} is visibly independent of $z'_0$.  This shows that $\kappa(g',\bx,z'_0) $ is independent of the choice of $z'_0$ so we we can simply denote it by $\kappa(g',\bx)$.

Let $ {\bf \tilde{x}}\in V^m_\infty$ and $\tilde{z}$ be a point in $D(U_{\bf \tilde{x}})$. Assume that $\tilde{z}_0\in D_{{\bf \tilde{x}},\tilde{z}}$. Suppose $\beta=(\bx,\bx)=({\bf \tilde{x}},{\bf \tilde{x}})$.
Then by Witt's Theorem there is a $g\in G_\infty$ such that $g\bx={\bf \tilde{x}}$ and we have
\begin{align*}
		    \int_{F_{\tilde{z}_0} D_{{\bf \tilde{x}},\tilde{z}}}\tilde{\varphi}_\infty(z,g',{\bf \tilde{x}}) =& \int_{F_{\tilde{z}_0} D_{g\bx,\tilde{z}}}\tilde{\varphi}_\infty(z,g',g\bx)\\
		    =& \int_{F_{\tilde{z}_0} D_{g\bx,\tilde{z}}} L^*_{g^{-1}} (\tilde{\varphi}_\infty (z,g',\bx)) \\
		    =&\int_{L_{g^{-1}}(F_{\tilde{z}_0} D_{g\bx,\tilde{z}})} \tilde{\varphi}_\infty (z,g',\bx)\\
		    =& \int_{F_{g^{-1} \tilde{z}_0} D_{\bx,g^{-1}\tilde{z}}}\tilde{\varphi}_\infty (z,g',\bx).
\end{align*}
This proves that 
$\kappa(g',{\bf \tilde{x}})=\kappa(g',\bx)$. So the proof is finished.

\end{proof}
In particular, when $\beta$ satisfies the condition of Theorem \ref{fouriercoeff}, we can define
		\[\kappa(g',\beta)=\kappa(g',\bx)\]
for any $\bx\in V^m$ such that $(\bx,\bx)=\beta$.
\begin{lemma}\label{analyticity}
The function $\kappa(g',\beta)$ is analytic in $g'$. 
\end{lemma}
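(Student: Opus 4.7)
The plan is to view $\kappa(g',\beta)$ as the evaluation of a continuous linear functional on a real-analytic orbit map into a Schwartz-type Fr\'echet space. Concretely, by the preceding lemma we may write $\kappa(g',\beta) = \int_{F_{z_0} D_{\bx,z'_0}} \tilde{\varphi}_\infty(z,g',\bx)$ for any choice of $\bx$ with $(\bx,\bx)=\beta$ and any basepoint $z_0 \in D_{\bx,z'_0}$. Since $\tilde{\varphi}_\infty(z,g',\bx) = L^*_{g_z^{-1}}(\omega(g_z,g')\varphi_\infty(\bx))$, the entire $g'$-dependence is carried by the Weil representation action $\omega(g')$ on $\mathcal{S}(V_\infty^m)$.

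First I would argue that the orbit map $g' \mapsto \omega(g')\varphi_\infty$ is real-analytic from $\tilde{G}'_\infty$ into the Fr\'echet space $(\wedge^{(d',d')}\mathfrak{p}^*\otimes \mathcal{S}(V_\infty^m))^{K_\infty}$ equipped with the standard Schwartz seminorms. Using an Iwasawa decomposition for $\tilde{G}'_\infty$ this may be checked factor by factor: on the Siegel parabolic $P'$ the formulas \eqref{M'action} and \eqref{n'action} exhibit the action as composition with a linear substitution $\bx\mapsto \bx a$ together with multiplication by $|a|^{m/2}\psi(\tfrac{1}{2}\mathrm{tr}(b(\bx,\bx)))$, both of which depend real-analytically on the parameters in every Schwartz seminorm; on the compact factor $\tilde{K}'$ the form $\varphi_\infty$ sits inside a fixed finite-dimensional $\tilde{K}'$-isotype (as described in the appendix), on which $\tilde{K}'$ acts by an analytic matrix representation.

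Next I would check that the linear map $\Phi \mapsto \int_{F_{z_0} D_{\bx,z'_0}} L^*_{g_z^{-1}}\Phi$ is continuous on $(\wedge^{(d',d')}\mathfrak{p}^*\otimes \mathcal{S}(V_\infty^m))^{K_\infty}$. This is essentially the content of the rapid-decrease estimate of Theorem \ref{rapiddecreasingthm}, once one observes that the proof in Section \ref{rapiddecreasechapter} bounds the pointwise norm of the integrand on the fiber by $e^{-d\rho r}p(r)$ with constants controlled by finitely many Schwartz seminorms of the underlying Schwartz function; running the same argument with a general Schwartz function in place of the specific $\varphi_\infty$ yields such a uniform bound and hence continuity of fiber integration as a linear functional.

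The lemma then follows because the composition of a real-analytic map $\tilde G'_\infty \to \mathcal{F}$ with values in a Fr\'echet space with a continuous linear functional $\mathcal{F}\to \C$ is again real-analytic. The main obstacle, and the only nontrivial bookkeeping, is making Section \ref{rapiddecreasechapter} yield a Schwartz-seminorm-continuous bound rather than a bound only for the specific $\varphi_\infty$: one must track how the explicit Gaussian decay used there arises from the general Schwartz decay of $\omega(g')\varphi_\infty$, uniformly for $g'$ in compact subsets of $\tilde G'_\infty$, so that differentiating under the integral in $g'$ (which introduces polynomial and constant-coefficient differential operators on the Schwartz factor) preserves absolute convergence on the fiber.
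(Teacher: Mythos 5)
Your proposal is correct and follows essentially the same route as the paper: both argue analyticity of the integrand in $g'$ via the decomposition $\tilde{G}'_\infty = P'_\infty\tilde{K}'_\infty$ (explicit parabolic formulas plus $\tilde{K}'_\infty$-finiteness), and both then invoke the rapid-decrease estimate of Theorem \ref{rapiddecreasingthm}, with constants depending continuously on $g'$, to pass analyticity through the fiber integral. Your Fr\'echet-space packaging (analytic orbit map composed with a continuous linear functional) is a slightly more abstract restatement of the same argument, and the ``main obstacle'' you flag is exactly what the remark after Theorem \ref{decayofSchwartzfunction} resolves by noting that the constants $C_N$, $C_\rho$ vary continuously in $g'$ since $\tilde{G}'$ acts smoothly on $\mathcal{S}(V^m)$.
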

\begin{proof}
First we claim that $\tilde{\varphi}_{\infty}(z,g',\bx)$ is an analytic function on $\tilde{G'}_\infty$. First we know that $\tilde{G'}_\infty=P'_\infty \tilde{K}'_\infty$, where $P'$ is the Siegel parabolic as in Section \ref{dualpair} and $\tilde{K}'_\infty$ is the maximal compact subgroup of $\tilde{G'}_\infty$ fixing the Gaussian function. There are analytic functions 
\[s_1:\tilde{G'}_\infty\rightarrow P'_\infty ,\ s_2: \tilde{G'}_\infty\rightarrow \tilde{K}'_\infty\]
such that $g=s_1(g)s_2(g)$. Now any $\phi$ in the polynomial Fock space is $\tilde{K}'_\infty$-finite and $\tilde{K}'_\infty$ analytic. The action of $P'_\infty$ is given by \eqref{M'action} and \eqref{n'action}, hence analytic. This proves the claim.  

Now by Theorem \ref{rapiddecreasingthm}, $\tilde{\varphi}_{\infty}(z,g',\bx)$ is fast decreasing on the fiber $F_{z} D_{U,z'_0}$ and the decrease is locally uniform in $g'$. So $\kappa(g',\beta)$ as defined in \eqref{kappaintegral} is an analytic function in $g'$.
\end{proof}
This finishes the proof of Theorem \ref{fouriercoeff} and Proposition \ref{Thomformprop} under the assumption that $\tilde{\varphi}_\infty$ is rapid decreasing on $E_\bx$. 
\end{proof}

\section{Rapid decrease of Schwartz function valued forms on the normal bundle}\label{rapiddecreasechapter}
For a moment, we keep the notations of the previous section. For $ \psi\in (\bigwedge^\bullet\mathfrak{p}\otimes \mathcal{S}(V_{v_1}^m))^{K_{v_1}}$ define 
\begin{equation}\label{tildepsi}
    \tilde{\psi}(z,g',\bx)=L^*_{g_z^{-1}}(\omega(g_z,g')\psi(\bx)),
\end{equation}
where $g_z\in G_{v_1}$ such that $g_z z_0=z$.

Recall that the Riemannian distance $d(D', z)$ between a totally geodesic submanifold $D'$ and $z \in D$ is the length of the shortest geodesic joining $z$ to a point of $D'$. This geodesic is necessarily normal to $D'$. Choose a base point $z_0 \in D$. If $z = \exp_{z_0} {(tu)}$ for 
$u \in N_{z_0} D'$ and $\|u\|=1$, where $\exp_{z_0}$ denotes the exponential map of $D$ at the base point $z_0$, then 
\[d(D', z) = t.\]
The goal of the section is to prove the following theorem. 
\begin{theorem}\label{rapiddecreasingthm}
Fix a non-degenerate $\bx\in V^m$,  a point $z'_0\in D(U)$, where $U=\Span_{B_{v_1}}\{\bx\}$, an element $g'\in \tilde{G}'_{v_1}$ and  $ \psi\in (\bigwedge^\bullet\mathfrak{p}\otimes \mathcal{S}(V_{v_1}^m))^{K_{v_1}}$. Then for any real number $\rho>0$, there is a positive constant $C'_\rho$ depending on the above fixed data such that 
\[|| \tilde{\psi}(z,g',\bx)||\leq C'_{\rho} \exp({-\rho \cdot d(D_{U,z'_0}, z ) }),\]
where the norm is taken with respect to the Riemannian metric of the symmetric space $D$. The constant $C'_\rho$ depends continuously on $g'$.
\end{theorem}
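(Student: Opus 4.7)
The plan is to work in the Schr\"odinger model and show that, once $z$ moves a positive distance $t=d(D_{U,z'_0},z)$ from the cycle, the Gaussian factor in $\psi$ produces super-exponential decay dwarfing any prescribed rate $e^{-\rho t}$. Since $\psi$ is $K_{v_1}$-finite, it lies in the image of the intertwiner $\iota$ of Section \ref{dualpair} and has the form
\[\psi(\bx) = \varphi_0(\bx)\sum_\alpha P_\alpha(\bx)\,\omega_\alpha,\qquad \varphi_0(\bx)=\exp(-\pi\|\bx\|_{z_0}^2),\]
with polynomials $P_\alpha$, a fixed finite family $\omega_\alpha\in\bigwedge^\bullet\mathfrak{p}^*$, and the standard majorant $\|\cdot\|_{z_0}$ at $z_0$. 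From the Iwasawa decomposition $\tilde{G}'_{v_1}=\tilde{P}'_{v_1}\tilde{K}'_{v_1}$ together with \eqref{M'action}--\eqref{n'action} and the $\tilde{K}'_{v_1}$-finiteness of the polynomial Fock space, $\omega(g')\psi$ is another form of the same shape whose polynomials and Gaussian weight depend continuously on $g'$; on any compact neighbourhood of $g'$ the Gaussian remains bounded by $\exp(-\pi c_{g'}\|\bx\|_{z_0}^2)$ for some $c_{g'}>0$ bounded away from $0$. Thus it is enough to prove the estimate with $\omega(g')\psi$ in place of $\psi$, and continuous dependence of $C'_\rho$ on $g'$ will come for free.

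Writing $g_z=\exp(tu)$ for $u\in N_{z_0}D_{U,z'_0}$ of unit norm, one has
\[\tilde{\psi}(z,g',\bx) = L^*_{g_z^{-1}}\!\left[\exp(-\pi c_{g'}\|g_z^{-1}\bx\|_{z_0}^2)\sum_\alpha Q_\alpha(g_z^{-1}\bx)\,\omega_\alpha\right],\]
with $Q_\alpha$ the polynomials attached to $\omega(g')\psi$. The pullback $L^*_{g_z^{-1}}$ transports each $\omega_\alpha$ via the coadjoint action of $g_z^{-1}$, whose operator norm on $\bigwedge^\bullet\mathfrak{p}^*$ is at most $e^{\lambda_1 t}$ for a constant depending only on the root weights of $\mathfrak{p}$; likewise $|Q_\alpha(g_z^{-1}\bx)|$ is bounded by a polynomial in $\|g_z^{-1}\bx\|_{z_0}\leq e^{\lambda_2 t}\|\bx\|_{z_0}$.

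The central input, and the main obstacle, is the majorant lower bound
\[\|g_z^{-1}\bx\|_{z_0}^2 \;\geq\; c_\bx\cosh(2t) \;\geq\; \tfrac{1}{2}c_\bx e^{2t}\]
for some $c_\bx>0$. By Proposition \ref{fixpointofpairofinvolution}, $D_{U,z'_0}$ is the fixed set of the commuting involutions $\sigma_U,\sigma_{z'_0}$, so $u$ lies in the $(-1,-1)$-isotypic component of $\mathfrak{p}_0$ under $(\sigma_U,\sigma_{z'_0})$; in Case A this forces $u$ to be a real combination of the $E_{\alpha\mu},F_{\alpha\mu}$ with $(\alpha,\mu)\in I$ of \eqref{indexI}, and similarly in Cases B and C via \eqref{indexISp} and \eqref{indexIOstar}. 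A case-by-case diagonalisation of $\mathrm{ad}(u)$ on $V_{v_1}$ produces pairs of $\pm 1$ eigenvalues whose eigenvectors pair non-trivially with $U=\Span_{B_{v_1}}\{\bx\}$---precisely because $u$ is normal to $T_{z_0}D_{U,z'_0}$. Consequently $\mathrm{Ad}(\exp(-tu))$ acts by $e^{\pm t}$ on non-zero components of $\bx$, and the transported majorant grows like $\cosh(2t)$; positivity of $c_\bx$ (and its continuity in $\bx$) follows from the non-degeneracy hypothesis on $\bx$, and in Case A from the signature matching. Combining the three steps yields $\|\tilde{\psi}(z,g',\bx)\| \leq C(g')R_\bx(e^t)\exp(-\tfrac{\pi c_{g'}c_\bx}{2}e^{2t})$ for some polynomial $R_\bx$, and since $\exp(-e^{2t})$ decays faster than $e^{-\rho t}$ for every $\rho>0$, the desired bound follows after absorbing constants.
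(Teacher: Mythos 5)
Your proposal identifies the correct overall strategy — reduce to the Schr\"odinger model, absorb the $\tilde{G}'$-action via $K'$-finiteness and the Siegel parabolic, and then use super-exponential Gaussian decay on the fiber to overwhelm any polynomial or exponential growth — and this is also the broad structure of the paper's proof. However, the central analytic input, which is where essentially all the work in Section \ref{rapiddecreasechapter} lives, is asserted rather than proved, and the assertion as written is not quite right.

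The key claim is $\|g_z^{-1}\bx\|_{z_0}^2 \geq c_\bx\cosh(2t)$ uniformly over unit directions $u \in N_{z_0}D_{U,z'_0}$, justified by ``$\mathrm{Ad}(\exp(-tu))$ acts by $e^{\pm t}$ on non-zero components of $\bx$'' and ``positivity of $c_\bx$ follows from the non-degeneracy hypothesis.'' Neither assertion holds without substantial further argument. First, the eigenvalues of a unit-Hilbert-Schmidt-norm $u\in\mathfrak{p}_0$ satisfy $\sum\lambda_i^2=1$, so they are merely in $[-1,1]$ and can be arbitrarily small in modulus; the exponential growth rate of the majorant along $\exp(-tu)$ is governed by these eigenvalues, so the correct lower bound has the form $c\exp(2bt)$ for some $b>0$ \emph{depending on the direction} rather than $\cosh(2t)$. (Any positive $b$ still beats $e^{-\rho t}$, so this error is recoverable, but it is still incorrect as stated.) Second, and more seriously, the crux of Theorem \ref{estimateofgaussian} is showing that $b$ and $c$ can be chosen uniformly over the unit sphere $S(N_{z_0}D_{U,z'_0})$. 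Non-degeneracy of $\bx$ only directly gives pointwise positivity (Lemma \ref{boundedbelow}: the function $f(Y)$ vanishes nowhere on the sphere); to promote this to a uniform lower bound one needs continuity of $f$ on the sphere plus compactness, and continuity (Lemma \ref{continuous}) is itself delicate because $f$ is built from eigenvalues and eigenprojections, which are not globally continuous in the matrix — this is handled by the stability-of-diagonalization Lemma \ref{closediag}. Your phrase ``follows from'' hides all of this machinery, which in the paper occupies Lemmas \ref{closediag}--\ref{twobounds} and the case-specific Lemmas \ref{boundedbelowSp}, \ref{boundedbelowOstar}.

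Two smaller inaccuracies: (i) the normal space $N_{z_0}D_{U,z'_0}$ is not the $(-1,-1)$-isotypic component of $(\sigma_U,\sigma_{z'_0})$ acting on $\mathfrak{p}_0$; it is the \emph{complement} of the $(+1,+1)$ component, so generically a sum of the $(+1,-1)$, $(-1,+1)$, and $(-1,-1)$ pieces (this is exactly what the index set $I$ of equation \eqref{indexI} records). (ii) Bounding $L^*_{g_z^{-1}}$ via the operator norm of the coadjoint action, producing an $e^{\lambda_1 t}$ factor, is circuitous and conceptually off: the theorem asks for the norm with respect to the intrinsic Riemannian metric of $D$, which $G$ preserves, so $\|L^*_{g^{-1}}\Omega_I\|=\|\Omega_I\|$ exactly — this is how the paper concludes from Theorem \ref{decayofSchwartzfunction} in two lines. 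Your extra exponential factor would still be dominated by the Gaussian, but it obscures the clean isometry argument.
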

\begin{corollary}
The form $\tilde{\varphi}_{\infty}(z,g',\bx)$ defined in \eqref{definitionofvarphiinfty} satisfies condition (2) of Theorem  \ref{thomform}. In particular it is integrable on $F_{z_0} D_{U,z'_0}$ for any $z_0\in D_{U,z'_0}$.
\end{corollary}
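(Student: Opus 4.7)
The plan is to deduce both assertions of the corollary as essentially immediate consequences of Theorem \ref{rapiddecreasingthm}. The content of the corollary is bookkeeping: reducing the global form $\tilde{\varphi}_\infty$ to the single factor at $v_1$, choosing the rate $\rho$ in Theorem \ref{rapiddecreasingthm} large enough to dominate both the constant $d\cdot\rho$ appearing in condition (2) of Theorem \ref{thomform} and the volume growth of the fiber, and finally invoking a standard Jacobi-field comparison.

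First I would reduce to the place $v_1$. Since the factors $G_v$ for $v\in S_\infty\setminus\{v_1\}$ are compact, the symmetric space $D$ coincides with $G_{v_1}/K_{v_1}$, so the variable $z$ lives entirely in that factor. The definition \eqref{definitionofvarphiinfty} expresses $\varphi_\infty$ as the tensor product $\varphi \otimes \prod_{v\neq v_1}\varphi_v$, and the auxiliary factors $\varphi_v(\bx_v)$ are Gaussians that, under the action \eqref{translateform}, contribute only a factor depending on $g'$ and on $\bx$ but independent of $z$; more precisely
\[
\|\tilde{\varphi}_\infty(z,g',\bx)\| \;=\; c(\bx,g')\,\|\tilde{\varphi}(z,g',\bx_{v_1})\|,
\]
where $c(\bx,g')$ depends continuously on $g'$. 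Thus the whole question of exponential decay in $z$ reduces to the $v_1$ factor.

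Next, I would invoke Theorem \ref{rapiddecreasingthm} with $\psi=\varphi$ and with the free rate parameter there chosen to be, say, $d\rho + 1$ (where $d$ and $\rho$ are the constants in condition (2) of Theorem \ref{thomform}). This yields a constant $C'$, depending continuously on $g'$, such that
\[
\|\tilde{\varphi}(z,g',\bx_{v_1})\| \;\leq\; C'\exp\bigl(-(d\rho+1)\,r\bigr),\qquad r = d(D_{U,z'_0},z).
\]
Combined with the reduction above, this is strictly stronger than the bound $\|\Phi\|\leq \exp(-d\rho r)\,p(r)$ demanded by condition (2) of Theorem \ref{thomform}, with $p$ even trivial (constant). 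This settles the first assertion.

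For integrability on the fiber $F_{z_0}D_{U,z'_0}$, I would combine this decay with a volume estimate. By Lemma \ref{expofnormalbundlefibering} the fiber is diffeomorphic to the Euclidean space $N_{z_0}D_{U,z'_0}$ via the exponential map, and since $D$ has sectional curvature bounded below by $-\rho^2$, Rauch's comparison theorem bounds the Jacobian of this exponential map by a polynomial multiple of $e^{d\rho r}$. The extra factor $e^{-r}$ in our estimate therefore dominates both this volume growth and any polynomial weight, making $\|\tilde{\varphi}_\infty\|$ times the volume form integrable on $F_{z_0}D_{U,z'_0}$. The genuine work is hidden in Theorem \ref{rapiddecreasingthm}; the corollary itself is a one-line consequence plus a Jacobi-field comparison.
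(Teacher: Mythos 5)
Your proposal is correct and follows the same route as the paper: reduce to the $v_1$ factor using compactness of the other archimedean factors, then apply Theorem \ref{rapiddecreasingthm} with a rate exceeding $d\rho$ to obtain condition (2). The only cosmetic difference is that for integrability the paper simply cites Theorem \ref{thomform} (where convergence of the fiber integral under condition (2) is part of the imported result from \cite{KM88}), whereas you unpack that step with the Rauch comparison volume estimate; both are fine.
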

\begin{proof}
Recall that by our assumption, $G_\infty=G_{v_1}\times \prod_{v\neq v_1} G_v$ and $G_v$ is compact for $v\neq v_1$. Hence any $ \psi_\infty\in (\bigwedge^\bullet\mathfrak{p}\otimes \mathcal{S}(V_\infty^m))^{K_\infty}$ is constant in $G_v$ for $v\neq v_1$.  Theorem \ref{rapiddecreasingthm} implies immediately that $ \tilde{\varphi}_{\infty}(z,g',\bx)$ satisfies condition (2) of Theorem  \ref{thomform}. The integrablity statement follows from Theorem \ref{thomform}.
\end{proof}
In the rest of this section, we work over real vector spaces and real Lie groups. To simplify notations, we denote $G_{v_1}$ by $G$, $G'_{v_1}$ by $G'$ and $K_{v_1}$ by $K$. In other words $G=\rU(p,q)$ in case A, $\Sp(2n,\R)$ in case B and $\rO^*(2n)$ in case C. Throughout the section we assume that $V$ is a complex vector space with a non-degenerate skew Hermitian form $(,)$ such that $i(,)$ is of signature $(p,q)$. In case B and C, we assume that $p=q=n$ in addition.  We fix an orthogonal basis $\{v_1,\ldots,v_{p+q}\}$ such that 
\[(v_\alpha,v_\alpha)=-i \text{ for } 1\leq \alpha \leq p,\ (v_\mu,v_\mu)=i \text{ for } p+1\leq \mu \leq p+q.\]
In case A we denote by $D=\tilde{D}$ the symmetric space of $\rU(p,q)$. In case B (case C resp.) we denote by $D$ the symmetric space of $\Sp(2n,\R)$ ($\rO^*(2n)$ resp.) while we denote by $\tilde{D}$ the symmetric space of $\rU(n,n)$.

In case A, we let $m=r+s$, $\bx=(\vec{x}_1,\ldots,\vec{x}_{r+s})$ and
\[U:=\Span_\C\{\bx\}.\]
with the assumption that $U$ has signature $(r,s)$ with respect to $(,)$.

In case B, recall from  Section \ref{Spcocylesection} that $V_0$ is a $2n$ dimensional real vector space with a skew symmetric form $(,)_1$ and $V=V_0\otimes_\R \C$. 
Then $\Sp(2n,\R)=G(V_0,(,)_1)$ is a subgroup of $\rU(n,n)$ and its symmetric space $D$ embedds into $\tilde{D}$, the symmetric space of $\rU(n,n)$.
In this case let $m=2r$ and assume $\bx_0=(\vec{x}_1,\ldots,\vec{x}_{m})\in V_0^{m}$.
Define
\[U=\Span_\R\{\bx\}.\]
We assume that $U$ is non-degenerate with respect to $(,)_1$.

In case C, recall from Section \ref{seesawtwosection} and \ref{Ostarcocyclesection} that $V$ is an $n$-dimensional right $\H$-vector space with a skew Hermitian form $(,)_2$. Then $\rO^*(2n)=G(V,(,)_2)$ is a subgroup of $\rU(n,n)$ and its symmetric space $D$ embedds into $\tilde{D}$.
In this case let $m=r$ and assume $\bx=(\vec{x}_1,\ldots,\vec{x}_{r})\in V^r$ and
\[U=\Span_\H\{\bx\}.\]
Assume that $U$ is non-degenerate with respect to $(,)_2$.

In all three cases, a point $z\in D$ (in the latter two cases view $z$ as in $\tilde{D}$ instead of $D$) gives rise to a involution
\begin{equation}
    \theta_{z}:V\rightarrow V, \ \theta_{z}=(-Id_{z})\oplus Id_{z^\bot}.
\end{equation}
Define a positive definite Hermitian form $(,)_{z}$ on $V$ by
\begin{equation}\label{()_z}
    (x,y)_{z}=i (x,\theta_{z} y).
\end{equation}
We call this form the {\bf majorant} of $(,)$ with respect to $z$. We denote $(v,v)_{z}$ as $\|v\|_{z}^2$. $z$ also induces a Cartan decompostion 
\[\mathfrak{g}=\mathfrak{k}+\mathfrak{p}.\]
We identify $T_{z} D$ with $\mathfrak{p}$. Denote $(x,x)_z =$ by $||x||_z^2$.

In all the above cases, define $M:  D \times V^m\rightarrow \R$ to be the function
\begin{equation}\label{definitionMajorant}
    M(z,\bx)=\sum_{\ell=1}^m || \vec{x}_\ell||_{z}^2.
\end{equation}

\begin{lemma}\label{invarianceofmajorant}
For any $h\in G_{v_1}$ and $x,y \in V$, we have
\[(hx,hy)_{hz}=(x,y)_z.\]
In particular,
\[M(h z, h \bx)=M(z,\bx).\]
\end{lemma}
\begin{proof}
Choose a $g$ such that $g z_0=z$, then $h gh^{-1} hz_0=h z$. Hence
\begin{align*}
    (hx,hy)_{hz}=& (hx,\theta_{hz} hy)\\
    =&(hx, h \theta_z h^{-1} hy)\\
    =&(x,\theta_z y)\\
    = & (x,y)_z.
\end{align*}
The second statement of the lemma follows from the above and the definition of $M(z,x)$.
\end{proof}

The main technical ingredient for proving Theorem \ref{rapiddecreasingthm} is the following estimate of $M(z,x)$ for all three cases. 
\begin{theorem}\label{estimateofgaussian}
Let $\bx$, $U$ and $z'_0$ be as in Theorem \ref{rapiddecreasingthm} in each cases. 
There is positive constants $b$ and $c$ depending on $\bx$ and $z'_0$ such that
\begin{equation}
M(z,\bx) \geq c\exp(2b\cdot d(D_{U,z'_0},z))
\end{equation}
for all $z\in D$.
\end{theorem}
It is easy to see that 
\begin{equation}\label{invarianceofdist}
    d(D_{U,z'_0},z)=d(D_{gU,gz'_0},gz), \forall g\in G.
\end{equation}
Equation \eqref{invarianceofdist} and Lemma \ref{invarianceofmajorant} imply that in order to prove Theorem \ref{estimateofgaussian} it suffices to assume (by replacing $(\bx,z'_0)$ by $(g\bx,gz'_0)$ for some $g\in G$) that
\begin{enumerate}
    \item $\Span\{\bx\}=\Span_\C\{v_1,\ldots,v_{r},v_{p+1},\ldots,v_{p+s}\} $ and $z'_0=\Span_\C\{v_{p+1},\ldots,v_{p+s}\}$ in case A,\ 
    \item $\Span_\R\{\bx\}=\Span_\R\{E_1,\ldots,E_r,F_1,\ldots,F_r\} $ and $z'_0=\Span_\C\{E_1+iF_1,\ldots ,E_r+i F_r\}$ in case B,\ 
    \item In case C, $\Span_\H\{\bx\}=\Span_\H\{v_1,\ldots,v_{r}\} $ and $z'_0=\Span_\C\{v_{1} j,\ldots,v_{r} j\}$ in case C.
\end{enumerate}
Let $z_0=\pi(z)$ where $\pi$ is the fibration $D\rightarrow D_{U,z'_0}$ defined in \eqref{eq:D to D_U projection}.
Since $G_U$ fixes $\bx$, $U$ and $z'_0$, by Lemma \ref{invarianceofmajorant} and \eqref{eq:equivariant fibration}, we can further assume that
\begin{enumerate}
    \item $z_0=\Span_\C\{v_{p+1},\ldots,v_{p+q}\}$ in case A,\ 
    \item $z_0=\Span_\C\{E_1+iF_1,\ldots ,E_n+i F_n\}$ in case B,\ 
    \item $z_0=\Span_\C\{v_{1} j,\ldots,v_{n} j\}$ in case C,
\end{enumerate}
and $z\in F_{z_0} D_{U,z'_0}=\pi^{-1}(z'_0)$. In other words
\[z=\exp_{z_0}(X),\]
for some $X\in N_{z_0} D_{U,z'_0}$. It is well-known that (see Section 3 of Chapter IV of \cite{Helgason}) 
\[\exp_{z_0}(X)=\exp(X) z_0,\]
where we identify $N_{z_0} D_{U,z'_0}$ as a subspace of $\mathfrak{p}_0$ and $\exp$ is the exponential map of the group $G$. From now on we assume that  $z=\exp(X)z_0$ with $X\in N_{z_0} D_{U,z'_0}\subset \mathfrak{p}_0$. By Lemma 
\ref{invarianceofmajorant} we have
\begin{equation}\label{eq:Mz and Mz_0}
    M(z,\bx)=M(z_0,\exp(-X)\bx)=\sum_{j=1}^m ||\exp(-X)\vec{x}_j||_{z_0}^2.
\end{equation}

\subsection{Theorem \ref{estimateofgaussian} in case A}
We assume $n=p+q$ in this subsection.
The  theorem will be a consequence of Lemma \ref{closediag} through \ref{twobounds}.  Let $\Herm_n$ be the set of $n\times n$ Hermitian matrices with values in $\C$. For a matrix $A$, its norm is defined by 
\[\|A\|^2=\mathrm{tr}(A A^*).\]
By our choice of  $z_0$, $\{v_1,\ldots,v_{p+q}\}$ is an orthonormal basis for $(V,(,)_{z_0})$. Hence the norm $\|\cdot\|_{z_0}$ is the standard one with respect to the basis.

\begin{lemma} \label{closediag}
Let $A \in \Herm_n$ and $\epsilon >0$ be given.  Then there exists $\delta$ depending on $\epsilon$ and $A$ such that for any $B \in \Herm_n$ with $\|A -B\| < \delta$ there exist
$R,S \in \Uni(n)$ such that $R A R^{-1}$ and $SBS^{-1}$ are diagonal with $ \|R A R^{-1} - SBS^{-1}\| < \epsilon$ and $\|R - S\| < \epsilon$.  
\end{lemma}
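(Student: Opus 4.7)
My plan is to exploit compactness of $\Uni(n)$ by a contradiction argument; this will bypass the subtle non-uniqueness of the diagonalizing unitary when $A$ has repeated eigenvalues (where the choice is ambiguous up to the stabilizer $\Uni(n_1)\times\cdots\times \Uni(n_k)$ inside $\Uni(n)$, with $n_i$ the multiplicities of the distinct eigenvalues of $A$).

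Suppose the conclusion failed. Then I would extract $A\in\Herm_n$, $\epsilon>0$, and a sequence $\{B_\nu\}\subset\Herm_n$ with $\|A-B_\nu\|<1/\nu$ for which no admissible pair $(R,S)$ exists, i.e.\ for every $R,S\in\Uni(n)$ with $RAR^{-1}$ and $SB_\nu S^{-1}$ diagonal, at least one of $\|R-S\|<\epsilon$, $\|RAR^{-1}-SB_\nu S^{-1}\|<\epsilon$ fails. By the spectral theorem, pick an arbitrary $S_\nu\in\Uni(n)$ diagonalizing $B_\nu$. By compactness of $\Uni(n)$, after passing to a subsequence $S_\nu\to R$ for some $R\in\Uni(n)$.

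The key observation is that this limit $R$ must itself diagonalize $A$. By continuity of the conjugation action we have $S_\nu AS_\nu^{-1}\to RAR^{-1}$, while unitary invariance of the Frobenius norm gives
\[\|S_\nu AS_\nu^{-1}-S_\nu B_\nu S_\nu^{-1}\|=\|A-B_\nu\|\to 0,\]
so $S_\nu B_\nu S_\nu^{-1}\to RAR^{-1}$ as well. Since each $S_\nu B_\nu S_\nu^{-1}$ is diagonal and the subspace of diagonal matrices is closed in $\Herm_n$, the limit $RAR^{-1}$ is diagonal. Thus $(R,S_\nu)$ is an admissible diagonalizing pair for $(A,B_\nu)$ for every large $\nu$, and for $\nu$ sufficiently large both $\|R-S_\nu\|<\epsilon$ and $\|RAR^{-1}-S_\nu B_\nu S_\nu^{-1}\|<\epsilon$ hold simultaneously, contradicting the choice of $B_\nu$.

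The obstacle one might a priori worry about is the nonexistence of a continuous section $B\mapsto S(B)$ of diagonalizing unitaries over $\Herm_n$ (indeed such a section cannot exist, precisely because of the $\Uni(n_1)\times\cdots\times \Uni(n_k)$-ambiguity over degenerate $A$). The compactness argument sidesteps this entirely by only requiring convergent subsequences rather than a continuous choice. A quantitative alternative would be to use Kato-type perturbation theory for spectral projectors: construct $W\in\Uni(n)$ close to $I$ intertwining the spectral projectors $P_i$ of $A$ with the corresponding cluster projectors $Q_i$ of $B$, then use the freedom in the choice of $R$ on each $P_i$-block to absorb the nontrivial block-diagonal unitary that diagonalizes $W^{-1}BW$; this route is considerably more work and is not needed here since no explicit rate is required.
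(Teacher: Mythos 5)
Your proof is correct, and it takes a genuinely different route from the paper's. The paper works constructively: it reduces to the case where $A$ is diagonal with distinct nonzero eigenvalues in blocks, introduces the parametrization $\phi(X,m_1,\dots,m_k)=\exp(X)A\,\mathrm{diag}(\exp m_1,\dots,\exp m_k)\exp(-X)$ with $X$ ranging over a complement $\mathfrak{u}_A$ to the stabilizer algebra, shows $d\phi_0$ is an isomorphism, and applies the inverse function theorem to build $R,S$ explicitly from the coordinates of $B$; the zero-eigenvalue case is handled by a shift $A\mapsto A+\lambda I$. Your argument bypasses all of this with a compactness/contradiction scheme: pick arbitrary diagonalizers $S_\nu$ of a sequence $B_\nu\to A$ (for which the conclusion purportedly fails), pass to a subsequential limit $R\in\Uni(n)$, and use unitary invariance of the Frobenius norm together with closedness of the diagonal matrices to see that $R$ diagonalizes $A$ and that $(R,S_\nu)$ eventually satisfies both estimates. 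Your version is shorter and avoids the case analysis and the inverse-function-theorem machinery; the paper's constructive approach, by contrast, exhibits the diagonalizing unitaries explicitly in local coordinates, which could in principle yield quantitative control over $\delta$ (a rate the statement does not ask for, and which you correctly note is unnecessary here). Both are valid; yours is the cleaner proof of the stated qualitative claim.
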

\begin{proof}
For $U\in \rU(n)$, the statement in the lemma is true for $A$ if and only if it is true for $UAU^{-1}$ (with the same $\epsilon$ and $\delta$). Hence without lost of generality we can assume we can assume $A$ is diagonal of the form 
\begin{equation} \label{blockdiagonalmatrix}
A=
\begin{pmatrix} \lambda_1 I_{r_1} & 0 & 0  & \cdots  & 0 & 0 \\                     
0 & \lambda_2 I_{r_2} & 0 & \cdots & 0  &0 \\
\\
 0& 0 & 0 & \cdots & 0 & \lambda_k I_{r_k}
\end{pmatrix},
\end{equation}
 $\lambda_1,\lambda_2,\ldots \lambda_r$ are distinct eigenvalues of $A$ and $r_1+r_2+\cdots+r_k=n$.

First we assume that $\lambda_1,\lambda_2,\ldots \lambda_r$ are all nonzero.

Define a Lie subalgebra $\mathfrak{u}_A\subseteq \mathfrak{u}(n)$ to be the set of matrices of the form 
\begin{equation}
X=
\begin{pmatrix} 0_{r_1} & X_{1 2}   & \cdots   & X_{1 k} \\                     
-X^*_{1 2} & 0_{r_2}  & \cdots & X_{2 k} \\
\\
 -X^*_{1 k}& -X^*_{2 k} &  \cdots  & 0_{r_k}
\end{pmatrix}.
\end{equation}
We define a map
\[\phi: \mathfrak{u}_A \times \Herm_{r_1}\times \cdots \times\Herm_{r_k}\rightarrow \Herm_n\]
by the formula
\begin{equation} 
\phi(X,m_1, \cdots, m_k) =
\exp{(X)}A 
\begin{pmatrix} \exp( { m_1 }) & 0   & \cdots   & 0 \\                     
0 & \exp( { m_2})  & \cdots   &0 \\
\\
 0& 0  & \cdots & \exp ({m_k)}
\end{pmatrix} \exp{(-X)}.
\end{equation}
Then the differential of $\phi$ at $(0,0,\ldots,0)$
\[
d \phi_{0} (X,m_1, \cdots, m_k)= A\begin{pmatrix} m_1  & 0  & \cdots   & 0 \\                     
0 & m_2  & \cdots   &0 \\
\\
 0& 0  & \cdots  &  m_k
\end{pmatrix} + [X,A],
\]
which in turn is equal to 
\[
\begin{pmatrix} \lambda_1 m_1  & (\lambda_2-\lambda_1)X_{1 2}  & \cdots   & (\lambda_k-\lambda_1)X_{1 k} \\                     
(\lambda_2-\lambda_1)X^*_{1 2} & \lambda_2 m_2  & \cdots   & (\lambda_k-\lambda_2) X_{2 k} \\
\\
 (\lambda_k-\lambda_1)X^*_{1 k}& (\lambda_k-\lambda_2)X^*_{2 k}  & \cdots  &  \lambda_k m_k
\end{pmatrix}.
\]
Because we assume that $\lambda_1,\lambda_2,\ldots \lambda_k$ are distinct and nonzero, $d\phi_0$ is an isomorphism from $\mathfrak{u}_A \times \Herm_{r_1}\times \cdots \times\Herm_{r_k}$ to $ \Herm_n$. Hence by inverse function theorem,  $\phi$ is a diffeomorphism from the product of a ball $B(0,\eta)$ of radius $\eta$ around the origin in $\mathfrak{u}_A$ with a ball $B(0,\eta')$ of radius $\eta'$ around the origin of $ \Herm_{r_1}\times \cdots \times \Herm_{r_k}$ to a neighborhood $U(\eta,\eta')$ of $A$ in $\Herm_n$. 

For a given $\epsilon$, shrink the size of $\eta$ and  $\eta'$ if necessary such that  
\begin{equation} \label{choiceeta}
X\in \mathfrak{u}_A \text{ and } ||X||<\eta \Rightarrow ||I-\exp(X)||<\epsilon,
\end{equation} 
and 
\begin{equation} \label{choiceetaprime}
Y\in \Herm_{r_1}\times \cdots \times \Herm_{r_k} \text{ and }\|Y\| \leq \eta' \Rightarrow \|A - A\exp{(Y)}\| < \epsilon.
\end{equation}
Choose $\delta$ such that $B(A,\delta) \subset U(\eta, \eta')$. Suppose $B \in B(A,\delta)$.
Since $B \in U(\eta,\eta')$  we have a unique expression 
$$B = \exp{(X)} A \exp{(Y)} \exp{(-X)} \text{ with } X \in B(0,\eta) \text{ and } Y \in B(0,\eta').$$
Put $R_1 =I$ and $S_1 = \exp{(-X)}$ so
$R_1 A R_1^{-1}=A$ is diagonal and $S_1 B S_1^{-1} = A \exp{(Y)}$ is block diagonal of the form
\begin{equation} S_1 B S_1^{-1}=
\begin{pmatrix} B_{11} & 0   & \cdots   & 0 \\                     
0 & B_{22} & \cdots   &0 \\
\\
 0& 0  & \cdots  &  B_{kk}
\end{pmatrix},
\end{equation}
where $B_{ii}$ is of size $ r_i\times r_i$.

By the above  choices of $\eta$ (equation \eqref{choiceeta}), $\eta'$ (equation \eqref{choiceetaprime}) and $\delta$, it is clear that we have
\begin{equation}\label{firstdiagonalinequality}
   \|A- B\| < \delta \Rightarrow \|R_1 - S_1\|< \epsilon \text{ and } \|R_1 A R_1^{-1} - S_1 B S_1^{-1} \| < \epsilon.
\end{equation}

Now there is a block diagonal unitary matrix
\[R=
\begin{pmatrix} R_{11} & 0   & \cdots   & 0 \\                     
0 & R_{22} & \cdots   &0 \\
\\
 0& 0  & \cdots  &  R_{kk}
\end{pmatrix},
\]
where $R_{ii}\in \rU(r_i)$ such that $ R S_1 B S_1^{-1} R^{-1}$  is diagonal. Notice that $RA=AR$, hence $R A R^{-1}=A$ is also diagonal.

Let $S=RS_1$. Since $R$ is unitary, by \eqref{firstdiagonalinequality} we have 
\[||R-S||=||R(R_1-S_1)||< \epsilon,\]
and
\[||RAR^{-1}-S B S^{-1}||=||R(A-S_1 B S_1^{-1})R^{-1} ||< \epsilon.\]
The lemma is now proved for $A$ a block diagonal matrix of the form \eqref{blockdiagonalmatrix} and $\lambda_1,\ldots, \lambda_k$ are nonzero. 

In general we can choose a $\lambda$ such that $A'=A+\lambda I_n$ does not have zero eigenvalue. By the previous argument, there are $B'\in \Herm_n$ and $R,S \in \rU(n)$ such that $||R-S||< \epsilon $ and $||RA' R^{-1}-S B' S^{-1}||< \epsilon$. Now let $B=B'-\lambda I_n$. The lemma is now proved.
\end{proof}

Recall (see Section \ref{varphiUni}) that $\mathfrak{p}_0$ are Hermitian matrices of the form 
\[\mathfrak{p}_0=\left\{\left(\begin{array}{cc}
    0 & A \\
    A^* & 0
\end{array}\right)| A\in M_{p\times q}(\C)\right\},\]
and the tangent space $N_{z_0} D_{U,z'_0}$ of the fiber $F_{z_0} D_{U,z'_0}$ at $z_0$ can be identified with a subspace of $\mathfrak{p}_0$ given by equation \eqref{normalfibreUpq}.

Let $X \in N_{z_0} D_{U,z'_0}$ so in particular $X\in\mathfrak{p}_{z_0}$ and is Hermitian with respect to $(,)_0$. Let $\tilde{v}_1,\cdots \tilde{v}_n$ be an orthonormal basis for $(\ ,\ )_{z_0}$ of $V$ consisting of eigenvectors of $X$. Then
\[X (\tilde{v}_k) = \lambda_k \tilde{v}_k, 1 \leq k \leq n.\]

Suppose 
\[\vec{x}_j=\sum_{k=1}^n a_j^k \tilde{v}_k\]
for $1\leq k \leq n$. Then 
\[\|\vec{x}_j\|_{z_0}^2 = \sum_{\lambda_k} \|p_{\lambda_k}(\vec{x}_j)\|_{z_0}^2 = \sum_{k=1}^n |a_j^k|^2,
\]
where the summation is over all eigenvalues $\lambda_k$ of $X$ and $p_{\lambda_k}$ is the orthogonal projection with respect to the metric $(\ ,\ )_{z_0}$ onto the eigenspace of eigenvalue $\lambda_k$. 
\begin{remark}
When it is necessary to distinguish to which $X \in N_{z_0} D_{U,z'_0}$ the numbers $a_j^k$ and $\lambda_k$ belong, we will write 
$a_j^k(X)$ and $\lambda_k(X)$.
\end{remark}

\begin{lemma}\label{gxnorm}
We have
\[( \exp{(- tX)} \vec{x}_j, \exp{(-tX)} \vec{x}_j)_{z_0}  = 
\sum_{\lambda_k} \|p_{\lambda_k}(\vec{x}_j)\|_{z_0}^2  \exp{( - 2 \lambda_k t)},\]
where the sum is over all eigenvalues of $X$.
\end{lemma}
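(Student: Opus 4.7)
The plan is to observe that this lemma is essentially immediate from the spectral decomposition of $X$, once we verify that $X$ is self-adjoint with respect to the majorant $(\,,\,)_{z_0}$. The main computational step is just a diagonalization argument.

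First, I would verify the self-adjointness. Since $X \in \mathfrak{p}_0$, we have $X \theta_{z_0} = -\theta_{z_0} X$, and since $X \in \mathfrak{g}_0 = \mathfrak{u}(V,(\,,\,))$, we have $(Xv,w) = -(v,Xw)$. Combining these,
\[(Xv,w)_{z_0} = i(Xv, \theta_{z_0} w) = -i(v, X\theta_{z_0}w) = i(v,\theta_{z_0}Xw) = (v,Xw)_{z_0},\]
so $X$ is Hermitian for $(\,,\,)_{z_0}$. This guarantees the existence of an orthonormal basis $\tilde v_1,\dots,\tilde v_n$ of eigenvectors with real eigenvalues $\lambda_1,\dots,\lambda_n$, as posited just before the lemma statement.

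Next, since $X\tilde v_k = \lambda_k \tilde v_k$, the one-parameter subgroup $\exp(-tX)$ acts diagonally in this basis:
\[\exp(-tX)\tilde v_k = e^{-t\lambda_k}\tilde v_k.\]
Expanding $\vec x_j = \sum_k a_j^k \tilde v_k$ I would then write
\[\exp(-tX)\vec x_j = \sum_{k=1}^n a_j^k e^{-t\lambda_k}\tilde v_k.\]
Since $\{\tilde v_k\}$ is orthonormal for $(\,,\,)_{z_0}$ and the $\lambda_k$ are real, the squared norm is
\[(\exp(-tX)\vec x_j,\exp(-tX)\vec x_j)_{z_0} = \sum_{k=1}^n |a_j^k|^2 e^{-2t\lambda_k}.\]

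Finally I would group terms by distinct eigenvalue. For each eigenvalue $\lambda$, the orthogonal projection onto the $\lambda$-eigenspace is $p_\lambda(\vec x_j) = \sum_{k:\lambda_k = \lambda} a_j^k \tilde v_k$, whose squared norm is $\sum_{k:\lambda_k=\lambda}|a_j^k|^2$. Collecting the sum over $k$ according to the value of $\lambda_k$ yields exactly
\[\sum_{\lambda}\|p_\lambda(\vec x_j)\|_{z_0}^2\, e^{-2\lambda t},\]
as claimed. There is no genuine obstacle here; the only subtlety worth flagging is the verification of self-adjointness of $X$ with respect to the majorant, which is what makes the spectral decomposition available in the first place.
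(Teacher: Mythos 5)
Your proof is correct and follows essentially the same route as the paper: diagonalize $X$ with respect to the majorant, expand $\vec{x}_j$ in the eigenbasis, and let $\exp(-tX)$ act diagonally before taking the squared norm via orthonormality. The only difference is that you spell out the verification that $X\in\mathfrak{p}_0$ is self-adjoint for $(\,,\,)_{z_0}$, a point the paper states but leaves unjustified in the sentence preceding the lemma.
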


\begin{proof}
Since $\{\tilde{v}_k:1\leq k \leq n\}$ is an orthonormal basis for $V$ we have
\begin{align*}
( \exp{(-tX)} \vec{x}_j, \exp{(-tX)} \vec{x}_j)_{z_0}  = & \sum_{i=1}^n |(\tilde{v}_i,(\exp{-(tX)} \vec{x}_j)_{z_0}|^2 \\
=&  \sum_{i=1}^n |\big(\tilde{v}_i,\exp{(-tX)} (\sum_{k=1}^n a_j^k \tilde{v}_k) \big)_{z_0}|^2\\ 
= & \sum_{i=1}^n |(\tilde{v}_i, \sum_{k=1}^n a_j^k \exp{(-\lambda_k t)}\tilde{v}_k)_{z_0}|^2  \\
 =& \sum_{i=1}^n |a_j^i|^2 \exp{( - 2 \lambda_i t)}.
\end{align*}
\end{proof}
We now define 

\begin{equation}\label{definitionoff}
f(X)= -\sum_{j=1}^{r+s} \sum_{\substack{i=1 \\ \lambda_i (X) <0} }^n |a_j^i|^2(X) \lambda_i(X)=
      - \sum_{j=1}^{r+s} \sum_{\lambda (X) <0 }
    \|p_{ \lambda(X)}(\vec{x}_j)\|^2_{z_0} \lambda(X).
\end{equation}

Since all the terms in the sum defining $f(X)$ are nonnegative it follows that $f(X) =0$ if and only all the term in the sum are zero. 
By Lemma \ref{closediag}, we can prove the following.
 \begin{lemma} \label{continuous}
 $f(X)$ is continuous on $\mathfrak{p}_0\cong T_{z_0} D$.
 \end{lemma}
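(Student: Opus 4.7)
The plan is to use Lemma \ref{closediag} to control simultaneously the eigenvalues and the (projected) eigenvectors of a perturbation of a fixed $X_{0} \in \mathfrak{p}_{0}$, and then to show that eigenvalues passing through $0$ do not create a jump because the weight $\lambda(X)$ vanishes there.

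First, I would fix $X_{0} \in \mathfrak{p}_{0}$ and $\epsilon > 0$. Applying Lemma \ref{closediag} with $A = X_{0}$ yields, for every $\epsilon_{1} > 0$, a $\delta > 0$ such that whenever $X \in \mathfrak{p}_{0}$ satisfies $\|X - X_{0}\| < \delta$, there exist $R, S \in \rU(n)$ with $\|R - S\| < \epsilon_{1}$ for which $R X_{0} R^{-1} = \mathrm{diag}(\lambda_{1}^{0},\ldots,\lambda_{n}^{0})$ and $S X S^{-1} = \mathrm{diag}(\mu_{1},\ldots,\mu_{n})$ with $|\mu_{i}-\lambda_{i}^{0}| < \epsilon_{1}$ for every $i$. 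Setting $\tilde{v}_{i}^{0} := R^{-1} e_{i}$ and $\tilde{v}_{i} := S^{-1} e_{i}$ gives $(\cdot,\cdot)_{z_{0}}$-orthonormal bases of eigenvectors of $X_{0}$ and $X$ respectively, with $\|\tilde{v}_{i} - \tilde{v}_{i}^{0}\|_{z_{0}} < \epsilon_{1}$.

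Next, I would write the coefficients appearing in \eqref{definitionoff} in the form $a_{j}^{i}(X) = (\tilde{v}_{i}, \vec{x}_{j})_{z_{0}}$, and similarly for $X_{0}$. The Cauchy--Schwarz inequality then gives $|a_{j}^{i}(X) - a_{j}^{i}(X_{0})| \leq \epsilon_{1} \|\vec{x}_{j}\|_{z_{0}}$, so the coefficients vary continuously (in this controlled basis). Now split the index set $\{1,\ldots,n\}$ into three parts according to the sign of $\lambda_{i}^{0}$:
\[
I_{-} = \{ i : \lambda_{i}^{0} < 0\}, \quad I_{0} = \{i : \lambda_{i}^{0} = 0\}, \quad I_{+} = \{i : \lambda_{i}^{0} > 0\}.
\]
Choose $\epsilon_{1}$ smaller than half the minimum of $|\lambda_{i}^{0}|$ over $i \in I_{-} \cup I_{+}$. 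Then for $i \in I_{-}$ one has $\mu_{i} < 0$ and for $i \in I_{+}$ one has $\mu_{i} > 0$, so the sum defining $f$ is taken over exactly the same indices coming from $I_{-}$ when comparing $f(X)$ and $f(X_{0})$ (up to at most the indices in $I_{0}$).

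For the indices in $I_{-}$, the difference
\[
\bigl| |a_{j}^{i}(X)|^{2} \mu_{i} - |a_{j}^{i}(X_{0})|^{2} \lambda_{i}^{0} \bigr|
\]
is bounded by a constant times $\epsilon_{1}$, by the triangle inequality combined with the uniform bound $|a_{j}^{i}| \leq \|\vec{x}_{j}\|_{z_{0}}$ and $|\mu_{i}|, |\lambda_{i}^{0}| \leq \|X_{0}\| + \epsilon_{1}$. For the indices in $I_{0}$ (the potentially delicate case), $\lambda_{i}^{0} = 0$ contributes nothing to $f(X_{0})$, while any contribution to $f(X)$ comes from $i \in I_{0}$ with $\mu_{i} < 0$, in which case $|\mu_{i}| < \epsilon_{1}$ and the term $|a_{j}^{i}(X)|^{2}|\mu_{i}| \leq \epsilon_{1}\|\vec{x}_{j}\|_{z_{0}}^{2}$ is small. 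Summing over $j = 1,\ldots,r+s$ and over all indices shows that $|f(X) - f(X_{0})|$ can be made smaller than $\epsilon$ by choosing $\epsilon_{1}$ small enough, establishing continuity at $X_{0}$.

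The only step that requires care is the handling of $I_{0}$, since eigenvalues may cross zero under perturbation; however, precisely at such crossings the weight $\lambda(X)$ entering $f$ is itself vanishing, so no jump occurs. This is why the main obstacle, which at first glance looks like it could be a real discontinuity of the individual spectral projectors, dissolves into a routine estimate once Lemma \ref{closediag} is applied.
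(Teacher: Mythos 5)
Your proof is correct and follows essentially the same strategy as the paper's: apply Lemma \ref{closediag} to control eigenvalues and eigenvectors jointly, derive closeness of the coefficients $a_j^i$ and the eigenvalues $\lambda_i$, and then observe that any eigenvalue whose sign changes under the perturbation must itself be small, so its contribution to $f$ is negligible. The only difference is organizational: you partition the index set into $I_-, I_0, I_+$ based on the sign of $\lambda_i^0$ and choose $\epsilon_1$ small enough that the signs in $I_\pm$ are preserved, whereas the paper compares the number $k$ of negative eigenvalues of $X$ with the number $\ell$ of negative eigenvalues of $X_0$ and treats the "tail" indices (those in one set but not the other) separately. Your partition makes the mechanism by which zero-crossings are harmless more transparent and avoids the paper's implicit reliance on a consistent ordering of eigenvalues between $X$ and $X_0$, so if anything it is the tidier version of the same argument.
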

\begin{proof}
 Let $X_1 \in \mathfrak{p}_0$. Then for any $X \in \mathfrak{p}_0$ we have
  \begin{equation}
  |f(X) - f(X_1)| = |\sum_{j=1}^{r+s} \sum_{\substack{i=1 \\ \lambda_i(X) <0}}^n |a_j^i|^2(X) \lambda_i (X)- \sum_{j=1}^{r+s} \sum_{\substack{i=1 \\ \lambda_i(X_1) <0}}^n |a_j^i|^2(X_1) \lambda_i(X_1)|.
    \end{equation}
Let $\epsilon > 0$ be given. 
Apply Lemma \ref{closediag} with  $ A =X_1$ to find $\delta$ such that whenever $ \|X - X_1\| < \delta $, there exist unitary matrices $R,S$ such that $R X_1 R^{-1}$ and $S X S^{-1}$ are diagonal and
$$ \|R-S\| < \epsilon \text{ and } \| RAR^{-1} - SBS^{-1}\|< \epsilon . $$

But $\|R-S\| < \epsilon$ implies that suitably chosen  eigenvectors  of $A$ and $B$ are close. More precisely, if $\tilde{v}_i(X)$ (resp. $\tilde{v}_i(X_1)$), $1 \leq i \leq n$ is the  eigenvector of $X$ (resp. $X_1$), corresponding to the eigenvalue $\lambda_i(X)$ (resp. $\lambda_i(X_1) $) which is the $i$-th row of $R$ (resp. $i$-th row of $S$), we have
$$\epsilon^2 > \|R- S\|^2 = \sum_{i=1}^n \|\tilde{v}_i(X) - \tilde{v}_i(X_1)^2\|_{z_0}^2 \Rightarrow \|\tilde{v}_i(X)-\tilde{v}_i(X_1)\|_{z_0}^2 <\epsilon ^2, 1 \leq i \leq m. $$
Hence,  for all $i,j, 1 \leq i \leq n \text{ and } 1 \leq j \leq n$ we have 
\begin{equation*}
| a_j^i(X) - a_j^i(X_1)|^2 = | (\vec{x}_j, \tilde{v}_i(X) - v_i(X_1))_{z_0} |^2 \leq \|\vec{x}_j\|_{z_0}^2 \ \|\tilde{v}_i(X) - \tilde{v}_i(X_1)\|_{z_0} ^2 < \|\vec{x}_j\|_{z_0}^2 \ \epsilon^2.
\end{equation*}
Similarly $|a_j^i(X_1)|^2< \|\vec{x}_j\|_{z_0}^2 $.
Also 
$$\| RAR^{-1} - SBS^{-1}\|^2 = \sum_{i=1}^m(\lambda_i(X) - \lambda_i(X_1))^2,$$
consequently 
$$\| RAR^{-1} - SBS^{-1}\|^2< \epsilon^2 \Rightarrow  \sum_{i=1}^n(\lambda_i(X) - \lambda_i(X_1))^2 < \epsilon \Rightarrow  (\lambda_i(X) - \lambda_i(X_1))^2  < \epsilon^2, 1 \leq i \leq n.$$
Hence, for all $i, 1 \leq i \leq n$ we have 
$$|\lambda_i(X) - \lambda_i(X_1)|  < \epsilon.$$
Since $X$ is fixed, we assume that
\[\lambda_i(X)\leq M, \lambda_i(X_1) \leq M\]
for $1 \leq i \leq n$. 

  Now  using the identity $| ab - a'b' | \leq |b||a - a'| + |a'||b-b'|$ we obtain
  $$
  ||a_j^i|^2(X) \lambda_i (X) - |a_j^i|^2(X_1) \lambda_i(X_1)| 
  \leq |\lambda_i (X)||a_j^i|^2(X) - |a_j^i|^2(X_1)| +
  |a_j^i|^2(X_1)||\lambda_i (X) - \lambda_i(X_1)|.$$  
  Since $|\lambda_i (X)| \leq M \text{ and }  |a_j^i|^2(X_1) \leq \|\vec{x}_j\|_{z_0}^2$
  we have 
  \begin{align*}
  ||a_j^i|^2(X) \lambda_i (X) - |a_j^i|^2(X_1) \lambda_i(X_1)| \leq 
& M ||a_j^i|^2(X) - |a_j^i|^2(X_1)| + \|\vec{x}_j\|_{z_0} ^2|\lambda_i (X) - \lambda_i(X_1)|\\ \leq &M||\vec{x}_j||_{z_0}^2 \  \epsilon^2 + ||\vec{x}_j||_{z_0} ^2\epsilon=  ||\vec{x}_j||_{z_0} ^2 (M\epsilon^2 + \epsilon).
\end{align*}

Suppose that the strictly negative eigenvalues of $X$ are
$\lambda_1(X), \cdots, \lambda_k(X)$ and the strictly negative eigenvalues of $X_1$ are $\lambda_1(X_1), \cdots, \lambda_{\ell}(X_1)$. We assume $k > \ell$. The case $k = \ell$ is easier (in this case, we have only the first sum in Eqation \eqref{eigenvaluetail} below)  and the case $k<\ell$ can be treated in a manner symmetrical to that of   the case $k> \ell$. 

We have
\begin{equation}\label{eigenvaluetail}
  |f(X) - f(X_1)| = |\sum_{j=1}^{r+s} \sum_{i=1}^{\ell} \bigg( |a_j^i|^2(X) \lambda_i (X)-  |a_j^i|^2(X_1) \lambda_i(X_1)\bigg)
  -\sum_{j=1}^{r+s} \sum_{i=\ell+1}^{k} |a_j^i|^2(X) \lambda_i (X)|
 \end{equation}
The first sum is clearly majorized by $ \ell 
\sum_{j=1}^{r+s}  \|\vec{x}_j\|_{z_0} ^2 ( 2M \epsilon^2 + \epsilon )$ using the inequality immediately above. To majorize the second sum
we note that 
$$ \ell < i \leq k \Rightarrow \lambda_i(X) < 0 \text{ and } \lambda_i(X_1) \geq 0.$$
Hence  $|\lambda_i(X) - \lambda_i(X_1)| = -\lambda_i(X) +\lambda_i(X_1)$.  Note that each of the two terms is positive.
But $$|\lambda_i(X) - \lambda_i(X_1)| < \epsilon \Rightarrow -\lambda_i(X) +\lambda_1(X_1) < \epsilon \Rightarrow -\lambda_i(X) < \epsilon.$$
Hence the second summand is majorized by $(k - \ell) \sum_{j=1}^{r+s} \|\vec{x}_j\|_{z_0} ^2 \epsilon$.

Lemma \ref{continuous} follows. 

\end{proof}

Let $S(N_{z_0} D_{U,z'_0})$ be the unit sphere of $N_{z_0} D_{U,z'_0} $, then we have

\begin{lemma} \label{boundedbelow}
 $f(Y)$ does not take the value zero on $S(N_{z_0} D_{U,z'_0})$. As $S(N_{z_0} D_{U,z'_0})$ is compact and $f(Y)\geq 0$, there exists $C >0$ so that 
 $$f(Y) \geq C, \ \forall Y \in S(N_{z_0} D_{U,z'_0}).$$
 \end{lemma}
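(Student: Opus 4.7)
The plan is as follows. The function $f$ is manifestly non-negative on $N_{z_0} D_{U,z'_0}$, so by the continuity established in Lemma \ref{continuous} and compactness of the unit sphere it suffices to show that $f(Y)=0$ together with $Y\in N_{z_0} D_{U,z'_0}$ forces $Y=0$. Unwinding \eqref{definitionoff}, the vanishing $f(Y)=0$ is equivalent to each $\vec{x}_j$ lying in the sum $V_Y^{\geq 0}$ of non-negative eigenspaces of $Y$ (with respect to $(\,,\,)_{z_0}$), i.e.\ $U=\Span\{\vec{x}_j\}\subseteq V_Y^{\geq 0}$.

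The first key step is to show that this already forces $Y|_U=0$. Because $Y\in\mathfrak{p}_0$ anticommutes with the Cartan involution $\theta_{z_0}$, the involution $\theta_{z_0}$ interchanges the $\lambda$- and $(-\lambda)$-eigenspaces of $Y$. For $u\in V^+$ (so $\theta_{z_0}u=u$), writing $u=\sum_\lambda u_\lambda$ with $u_\lambda\in V_Y^\lambda$ and applying $\theta_{z_0}u=u$ yields $u_\lambda=\theta_{z_0}u_{-\lambda}$ for each $\lambda$; assuming in addition $u\in V_Y^{\geq 0}$ kills all components with $\lambda<0$, and hence by the preceding symmetry also with $\lambda>0$, so $u\in\ker Y$. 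The same conclusion holds on $V^-$. Applied to $U^\pm\subseteq V^\pm$ coming from the splitting determined by $z'_0$, this gives $U\subseteq\ker Y$.

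The last step uses the block structure of $N_{z_0} D_{U,z'_0}$ to pass from $Y|_U=0$ to $Y=0$. In case A, identifying $\mathfrak{p}_0$ with matrices $\bigl(\begin{smallmatrix}0&A\\ A^*&0\end{smallmatrix}\bigr)$ where $A\colon V^-\to V^+$, the condition $Y\in N_{z_0} D_{U,z'_0}$ read off from \eqref{normalfibreUpq} says that in the fourfold decomposition $V=U^+\oplus V^+_0\oplus U^-\oplus V^-_0$ (with $V^\pm_0:=V^\pm\cap U^\perp$) the $(V^+_0,V^-_0)$-block of $A$ vanishes, leaving only three potentially nonzero blocks. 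The condition $Y|_U=0$ says exactly that $A^*$ vanishes on $U^+$ and $A$ vanishes on $U^-$, and a direct inspection zeros out the three remaining blocks, so $A=0$ and $Y=0$.

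Cases B and C are handled by the same three-step scheme, replacing \eqref{normalfibreUpq} with \eqref{normalspaceSp} and \eqref{normalspaceOstar} respectively and adjusting for the symplectic or quaternionic structure on $V$; the anticommutation-with-$\theta_{z_0}$ argument and the reduction $U\subseteq V_Y^{\geq 0}\Rightarrow Y|_U=0$ are uniform across the three cases. I expect the main friction to be bookkeeping the correct block description of the normal space and matching it precisely to the constraint $Y|_U=0$ in cases B and C, but the essential geometric input---that opposite-sign eigenspaces of $Y$ are swapped by the Cartan involution---is identical in all three settings.
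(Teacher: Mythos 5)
Your proof is correct and takes a genuinely, if modestly, different route from the paper's. The paper picks a single eigenvector $v$ of $Y$ with strictly negative eigenvalue $\lambda$, observes that $f(Y)=0$ forces $v\perp U$, hence $v=(0,w_1,0,w_2)$ in the fourfold block decomposition, and then compares the block form of $Y(v)$ with $\lambda v$ to force $w_1=w_2=0$, contradicting $v\neq 0$. (This tacitly uses that a nonzero Hermitian element of $\mathfrak{p}_0$ must have a strictly negative eigenvalue, which follows from the $\theta_{z_0}$-symmetry of the spectrum that you make explicit.) You instead upgrade the pointwise orthogonality to the global inclusion $U\subseteq V_Y^{\geq 0}$, use the Cartan-involution swap $\theta_{z_0}\colon V_Y^{\lambda}\to V_Y^{-\lambda}$ together with $U^{\pm}\subseteq V^{\pm}$ to get the stronger conclusion $U\subseteq\ker Y$, and then read off $Y=0$ from the block form by inspecting the columns of $Y$ indexed by $U^{\pm}$, rather than (as in the paper) inspecting the image of a vector supported on $V_0^{\pm}$. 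Both arguments rely essentially on \eqref{normalfibreUpq}, \eqref{normalspaceSp}, \eqref{normalspaceOstar}, but your intermediate step ($Y\in\mathfrak{p}_0$ anticommutes with $\theta_{z_0}$ and $U\subseteq V_Y^{\geq 0}$ imply $Y|_U=0$) is a clean lemma that is uniform across the three cases, and concluding $Y=0$ directly sidesteps the unstated symmetric-spectrum remark the paper needs. The trade-off is that the paper's contradiction argument is shorter once one grants that remark; your version buys a cleaner logical structure and a reusable sublemma.
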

 \begin{proof}
Assume $Y\in S(N_{z_0} D_{U,z'_0})$ and $f(Y) =0$. Suppose $v$ is an eigenvector of $Y$ corresponding to a strictly negative eigenvalue so
 $$Y(v) = \lambda v, \lambda <0  .$$
 
 Then 
 \[f(Y) = 0 \Rightarrow \|\p_{\lambda}(\vec{x}_j)\|_{z_0}^2  = 0 \Rightarrow  (\vec{x}_j,v)_{z_0} = 0 , 1\leq j \leq r+s.\]

Let $U_+ = \Span \{ v_1,\ldots,v_r \}$,
 $U_- = \Span \{ v_{p+1},\ldots,v_{p+s} \}$,
 $U^\bot_+ = \Span \{ v_{r+1},\ldots,v_p \}$ and 
 $U^\bot_- = \Span\{ v_{p+s+1},\ldots,v_{p+q} \}$.
Then $v \perp U_+\oplus U_-$ as $\Span\{\vec{x}_1,\ldots,\vec{x}_{r+s}\}=U_+\oplus U_-$.
For any $u \in V$, we may write
$u= (v_1, w_1, v_2,w_2)$ with $v_1 \in U_+,w_1 \in U^\bot_+, v_2 \in U_-, w_2 \in U^\bot_-$.  Then in this representation we have 
$$v = ( 0, w_1, 0, w_2) \text{ with } w_1 \neq 0 \text{ or } w_2 \neq 0, $$
and 
$$Y(v) = \lambda v = ( 0,\lambda w_1, 0, \lambda w_2).$$
But since $Y \in N_{z_0} D_{\bx,z'_0}$ we  have (see equation (\ref{normalfibreUpq}))
$$ Y = \begin{pmatrix} 0 & 0 & a & b\\
 0 & 0 & c & 0\\
 a^* &  c^* & 0 & 0\\
  b^* &0 &0 &0
\end{pmatrix}.$$
Hence 
 $$Y(v) = (bw_2,0, c^* w_1,0) = \lambda v = (0,\lambda w_1,0, \lambda w_2).$$
Since $\lambda < 0$ the equation immediately above implies  $w_1 \text{ and } w_2 = 0 $, a contradiction.
\end{proof}

\begin{lemma} \label{twobounds}
Let $\bx=(\vec{x}_1,\ldots,\vec{x}_{m})$ and $U=\Span\{\bx\}$ and suppose $X\in S(N_{z_0}D_{U,z'_0})$. Then there exists  strictly positive numbers  $b, c$ depending only on $\bx$ and $z'_0$ and a negative eigenvalue $\lambda$ of $X$ such that for some $j, \text{ with } 1 \leq j \leq m$, such that
\[|\lambda| = - \lambda \geq b  \text{ and } \|p_{\lambda}(\vec{x}_j)\|_{z_0}^2 \geq c.\]
\end{lemma}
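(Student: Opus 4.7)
\medskip

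\noindent\textbf{Proof plan.} The lemma is essentially a pigeonhole extraction from the lower bound already established in Lemma \ref{boundedbelow}, together with trivial a~priori upper bounds. The only real content has already been packaged into Lemma \ref{boundedbelow}; the remaining task is to convert the sum-level lower bound on $f$ into a simultaneous lower bound on a single eigenvalue and a single projection norm.

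\smallskip

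\noindent First I would recall that by Lemma \ref{boundedbelow} there is an absolute constant $C>0$, depending only on $\bx$ and $z_0'$, such that for every $X\in S(N_{z_0}D_{U,z'_0})$
\begin{equation*}
f(X)\;=\;\sum_{j=1}^{m}\ \sum_{\lambda(X)<0}\,\|p_{\lambda(X)}(\vec x_j)\|_{z_0}^{2}\,\bigl(-\lambda(X)\bigr)\;\geq\;C.
\end{equation*}
The inner sum ranges over at most $n$ eigenvalues of $X$, so the double sum has at most $m n$ terms. By pigeonhole, for each $X$ there exist an index $j=j(X)$ with $1\le j\le m$ and a negative eigenvalue $\lambda=\lambda(X)$ of $X$ such that
\begin{equation*}
\|p_{\lambda}(\vec x_j)\|_{z_0}^{2}\,(-\lambda)\;\geq\;\frac{C}{m n}.
\end{equation*}

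\smallskip

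\noindent Next I would use the two obvious uniform upper bounds available on the unit sphere. Since $X$ is Hermitian with respect to $(\,,\,)_{z_0}$ and lies in $S(N_{z_0}D_{U,z'_0})$, we have $\|X\|^{2}=\sum_i \lambda_i(X)^{2}=1$, so every eigenvalue satisfies $|\lambda|\le 1$. Likewise, since $p_{\lambda}$ is an $(\,,\,)_{z_0}$-orthogonal projection,
\begin{equation*}
\|p_{\lambda}(\vec x_j)\|_{z_0}^{2}\;\le\;\|\vec x_j\|_{z_0}^{2}\;\le\;M,\qquad M:=\max_{1\le j\le m}\|\vec x_j\|_{z_0}^{2}.
\end{equation*}
Substituting these two upper bounds into the previous displayed inequality, applied to the \emph{same} pair $(j,\lambda)$ selected by pigeonhole, yields separately
\begin{equation*}
-\lambda\;\geq\;\frac{C}{mnM},\qquad \|p_{\lambda}(\vec x_j)\|_{z_0}^{2}\;\geq\;\frac{C}{mn}.
\end{equation*}
Thus the constants $b:=C/(mnM)$ and $c:=C/(mn)$ work, and they depend only on $\bx$ and $z_0'$ (through $C$ and $M$), as required.

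\smallskip

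\noindent The only step that required genuine work is Lemma \ref{boundedbelow}, which in turn relied on the continuity of $f$ (Lemma \ref{continuous}) and on the structural fact that no $Y\in N_{z_0}D_{U,z'_0}$ with $\|Y\|=1$ can annihilate all of $\Span\{\bx\}$ in its negative-eigenspace projection. Given that input, the present lemma is a one-step pigeonhole argument and I anticipate no obstacle beyond carefully recording the constants; in particular, the same $j$ and $\lambda$ must be used for both inequalities, which is automatic because both are obtained by upper-bounding the \emph{other} factor in the product $\|p_{\lambda}(\vec x_j)\|_{z_0}^{2}(-\lambda)$.
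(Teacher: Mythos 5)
Your proof is correct and follows essentially the same route as the paper's: invoke the uniform lower bound $f\geq C$ on the unit sphere from Lemma \ref{boundedbelow}, pigeonhole to isolate a single product term $\|p_{\lambda}(\vec x_j)\|_{z_0}^{2}(-\lambda)\geq C/N$, then use the trivial bounds $|\lambda|\leq 1$ (from $\|X\|=1$) and $\|p_{\lambda}(\vec x_j)\|_{z_0}^{2}\leq\|\vec x_j\|_{z_0}^{2}$ to decouple the two factors. The only cosmetic difference is that you carry out the pigeonhole on the eigenvalue-indexed form of $f$ and name the global upper bound $M=\max_j\|\vec x_j\|_{z_0}^{2}$ at the outset, whereas the paper uses the eigenvector-coefficient form $|a_j^i|^2$ and fixes $b=c/\|\vec x_j\|_{z_0}^{2}$ for the particular $j$ chosen; these are interchangeable.
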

\begin{proof}
Since
\[f(X)= -\sum_{j=1}^{r+s} \sum_{\begin{subarray}{c}i=1 \\\lambda_i (X) <0 \end{subarray}}^n |a_j^i|^2(X) \lambda_i(X)\]
is   bounded  below by $C$, at least one of the terms in the sum  is  bounded below by  $c=\frac{C}{N}$, where $N$ is the number of terms in the sum ($N\leq n(r+s)$). Suppose this term is $-|a_j^i|^2 \lambda_i$.  Hence
$$|a_j^i|^2| \lambda_i| = -|a_j^i|^2 \lambda_i  \geq c \text{ for some } i,j.$$
But since  $\sum_{i=1}^n |a_j^i|^2 =\|\vec{x}_j\|_{z_0}^2$ it follows that 
$$|a_j^i|^2 \leq \|\vec{x}_j\|_{z_0}^2, 1 \leq i \leq n, 1 \leq j \leq r+s .$$
Hence 
$$|\lambda_i| \geq \frac{c}{\|\vec{x}_j\|_{z_0}^2}.$$
We put
\[b=\frac{c}{\|\vec{x}_j\|_{z_0}^2}.\]

Since $\|X\|=\sum_{i=1}^n \lambda_i^2=1$, it follows that
$$|\lambda_i| \leq 1,$$
hence
\[\|p_{\lambda_i}(\vec{x}_j)\|_{z_0}^2\geq|a_j^i|^2 \geq c.\]
\end{proof}

\noindent{\bf Proof of Theorem \ref{estimateofgaussian} in Case A:}
We assume $X\in S(N_{z_0} D_{\bx,z'_0})$, $z=\exp(Xt)z_0$ and $g=\exp(Xt)$. Then 
\[M(z,\bx)= \sum_{j=1}^{r+s} ( \exp{(-tX)} \vec{x}_j, \exp{(- tX)} \vec{x}_j)_{z_0}=\sum_{j=1}^{r+s} \sum_{i=1}^n |a^i_j|^2  \exp{( - 2 \lambda_i t)}.\]
But all the terms in the sum immediately above are nonnegative and we have  proved in Lemma \ref{twobounds} that one of them is minorized by
$c  \exp{( 2 b t)}$. Hence the entire sum is also minorized by $c  \exp{( 2 b t)}$ and we obtain 
$$M(z,\bx) \geq c \exp{(2  b t)}.$$

Since 
$$d( D_{U,z'_0}, z) =t,$$
 Theorem \ref{estimateofgaussian} is proved. 
\qedsymbol

\subsection{Proof of Theorem \ref{estimateofgaussian} in case B}\label{estimateofgaussianSp}
One way to proceed is to use the seesaw dual pair:
\begin{equation}
		    \xymatrixrowsep{0.3in}
		\xymatrixcolsep{0.3in}
		\xymatrix{ \Uni(n,n) &\Orth(2r,2r)\\
			\Sp(2n,\R)\ar[u] \ar[ur] & \Uni(r,r) \ar[u]\ar[ul]}
\end{equation}
together with a relation between $D_{U,z'_0}$ and $\tilde{D}_{U\otimes \C,z'_0}$
to reduce a substantial part of the problem to the unitary dual pair case. However we use a direct approach here instead. We proceed quickly by omitting the proofs that are similar to Case A. 

We know that $\Sp(2n,\R)\hookrightarrow \Uni(n,n)$ and the symmetric space $D$ embedds into $\tilde{D}$ (see Section \ref{Spcocylesection}). Recall that we have assumed that
\[\Span_\R\{\bx\}=\Span_\R\{E_1,\ldots,E_r,F_1,\ldots,F_r\} \text{ and }z_0=\Span_\C\{E_1+iF_1,\ldots ,E_n+i F_n\}.\]
The first condition is equivalent to (recall from Section \ref{seesawonesection} for our convention of the basis)
\begin{equation}\label{spanofy}
    \Span_\C\{\bx\}=\Span_\C\{v_1,\ldots,v_r,v_{n+1},\ldots,v_{n+r}\}.
\end{equation}
Also recall equation \eqref{eq:Mz and Mz_0}.
Since $T_{z_0} D \subset T_{z_0} \tilde{D}$, for $X \in T_{z_0} D$ we can define $f(X)$ as in equation \eqref{definitionoff} (see the paragraphs before equation \eqref{definitionoff} for the definition of $\lambda(X)$ and $p_{\lambda(X)}$):
\begin{equation}
    f(X)= - \sum_{j=1}^{2r} \sum_{\begin{subarray}{c}i=1 \\\lambda_i(X) <0 \end{subarray}}^n 
    \|p_{ \lambda_i(X)}(\vec{x}_j)\|_{z_0}^2 \lambda_i(X).
\end{equation}
By Lemma \ref{continuous}, we know $f$ is continuous on $T_{z_0} D$.

\begin{lemma}\label{boundedbelowSp}
$f(Y)$ does not take zero value on $S(N_{z_0}D_{{U},z'_0})$. As $S(N_{z_0}D_{U,z'_0})$ is compact there exists $C>0$ such that 
\[f(Y)\geq C, \forall Y\in S(N_{z_0}D_{{U},z'_0}).\]
\end{lemma}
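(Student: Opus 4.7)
The plan is to mirror the proof of Lemma \ref{boundedbelow} from case A, the only substantive new input being the matrix-block description of $N_{z_0}D_{U,z'_0}$ inside $\mathfrak{p}_0$ that we can extract from \eqref{pofSp} and \eqref{normalspaceSp}. Assume for contradiction that some $Y\in S(N_{z_0}D_{U,z'_0})$ satisfies $f(Y)=0$. Since every summand in the definition of $f$ is nonnegative, any negative eigenvalue $\lambda$ of $Y$ with eigenvector $v$ must satisfy $\|p_\lambda(\vec{x}_j)\|_{z_0}^2=0$, hence $(\vec{x}_j,v)_{z_0}=0$ for all $j=1,\ldots,2r$.

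After the reduction recorded just before Theorem \ref{estimateofgaussian}, the $\vec{x}_j$ are $E_1,\ldots,E_r,F_1,\ldots,F_r$, whose complex span inside $V=V_0\otimes\C$ equals $U_\C=\Span_\C\{v_1,\ldots,v_r,v_{n+1},\ldots,v_{n+r}\}$. Because $\{v_1,\ldots,v_{2n}\}$ is orthonormal for the majorant $(,)_{z_0}$, the vanishing of $(\vec{x}_j,v)_{z_0}$ forces $v\in U_+^\perp\oplus U_-^\perp$, where $U_+^\perp:=\Span_\C\{v_{r+1},\ldots,v_n\}$ and $U_-^\perp:=\Span_\C\{v_{n+r+1},\ldots,v_{2n}\}$; write $v=(0,w_1,0,w_2)^t$ in the four-block decomposition $U_+\oplus U_+^\perp\oplus U_-\oplus U_-^\perp$.

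By \eqref{pofSp}, any $Y\in\mathfrak{p}_0$ has block form $Y=\left(\begin{smallmatrix}0&A\\A^*&0\end{smallmatrix}\right)$ with $A\in M_n(\C)$ complex symmetric, so $A^*=\overline{A}$. By \eqref{normalspaceSp}, the index set $I$ contains no pair $(\alpha,\beta)$ with $r+1\leq\alpha\leq\beta\leq n$, so membership of $Y$ in $N_{z_0}D_{U,z'_0}$ forces the $(n-r)\times(n-r)$ submatrix of $A$ at positions $\{r+1,\ldots,n\}\times\{r+1,\ldots,n\}$ to vanish. Writing $A=\left(\begin{smallmatrix}A_{11}&A_{12}\\A_{12}^t&0\end{smallmatrix}\right)$ in the $r,n-r$ block decomposition, a direct computation yields $Yv=(A_{12}w_2,\,0,\,\overline{A_{12}}\,w_1,\,0)^t$, whose components in $U_+^\perp$ and $U_-^\perp$ both vanish. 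Comparing with $Yv=\lambda v=(0,\lambda w_1,0,\lambda w_2)^t$ gives $\lambda w_1=\lambda w_2=0$; since $\lambda<0$, we get $v=0$, contradicting that $v$ is a nonzero eigenvector.

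I do not expect a genuine obstacle. The only bookkeeping point—the symplectic analogue of the key structural fact used in case A—is the vanishing of the $(n-r)\times(n-r)$ block of $A$ for $Y$ in the normal bundle, which can be read directly off the index set $I$ in \eqref{normalspaceSp}. Everything else is a verbatim transcription of the case A argument.
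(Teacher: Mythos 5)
Your proof is correct and follows essentially the same route as the paper's own argument for Lemma \ref{boundedbelowSp}: assume $f(Y)=0$ for $Y\in S(N_{z_0}D_{U,z'_0})$, pass to an eigenvector $v$ of a negative eigenvalue, deduce from the vanishing of $\|p_\lambda(\vec{x}_j)\|_{z_0}^2$ that $v$ lies in $U_+^\perp\oplus U_-^\perp$, and then use the vanishing of the $(n-r)\times(n-r)$ block of $A$ (forced by the index set $I$ in \eqref{normalspaceSp}) to see that $Yv$ has zero components in $U_+^\perp$ and $U_-^\perp$, contradicting $\lambda<0$. The only cosmetic difference is that the paper writes $Y$ directly in the $4\times 4$ block form over $U_+\oplus U_+^\perp\oplus U_-\oplus U_-^\perp$, whereas you first record the $2\times 2$ block form $\left(\begin{smallmatrix}0&A\\A^*&0\end{smallmatrix}\right)$ with $A=A^t$ and then subdivide $A$; after identifying $A_{11}=a$, $A_{12}=b$, $A_{12}^t=b^t$, and $A^*=\overline{A}$, these are the same matrix.
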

\begin{proof}
Assume $Y\in S(N_{z_0}D_{{U},z'_0}) $ and $f(Y) =0$. 
Suppose $v\in V$ is an eigenvector of $Y$ corresponding to a strictly negative eigenvalue so
\[Y(v) = \lambda v, \lambda <0  .\]
Then 
\[f(Y) = 0 \Rightarrow \|\p_{\lambda}(\vec{x}_j)\|_{z_0}^2  = 0 \Rightarrow  (\vec{x}_j,v)_{z_0} = 0 \]
for $1\leq j \leq 2r$. Under the basis $\{v_1,\ldots,v_n,v_{n+1},\ldots,v_{2n}\}$ and the assumption \eqref{spanofy}, we have 
\[v=(0,w_1,0,w_2) \text{ with } w_1\neq 0 \text{ or } w_2\neq 0,\]
where $w_1\in \Span_\C\{v_{r+1},\ldots,v_n\}$ and  $w_2\in \Span_\C\{v_{n+r+1},\ldots,v_{2n}\}$.

Recall that for $Y \in N_{z_0} D_{{U},z'_0}$ we  have (see equation (\ref{normalspaceSp}) and equation (\ref{pofSp}))
\[ Y = \begin{pmatrix} 0 & 0 & a & b\\
 0 & 0 & b^t & 0\\
 a^* & \bar{b} & 0 & 0\\
 b^* &0 &0 &0
 \end{pmatrix},\]
where $a=a^t$. Hence 
 \[Y(v) = (bw_2,0, \bar{b} w_1,0) = \lambda v = (0,\lambda w_1,0, \lambda w_2).\]
Since $\lambda < 0$ the equation immediately above implies  $w_1=0 $ and $ w_2 = 0 $, a contradiction.
\end{proof}
 
With Lemma \ref{boundedbelowSp}, the conclusion of Lemma \ref{twobounds} holds for $Y\in N_{z_0} D_{U,z'_0}$ as well. Hence the rest of the proof of Theorem \ref{estimateofgaussian} in case B is the same as that of case A.
\qedsymbol

\subsection{Proof of Theorem \ref{estimateofgaussian} in case C}\label{estimateofgaussianOstar}
We know that $\rO^*(2n)\hookrightarrow \Uni(n,n)$ and the symmetric space $D$ embedds into $\tilde{D}$ (see section \ref{Ostarcocyclesection}). Recall that we have assumed that  
\begin{equation}\label{spanofxOstarcase}
    \Span_{\H}\{\bx \}=\Span_{\H}\{v_1,\ldots,v_r\} \text{ and }z_0=\Span_\C\{v_{n+1},\ldots ,v_{2n}\}.
\end{equation}
Also recall equation \eqref{eq:Mz and Mz_0}.
Since $T_{z_0} D \subset T_{z_0} \tilde{D}$, for $X \in T_{z_0} D$ we can define $f(X)$ as in equation \eqref{definitionoff} (see the paragraphs before equation \eqref{definitionoff} for the definition of $\lambda(X)$ and $p_{\lambda(X)}$):
\begin{equation}
    f(X)= - \sum_{j=1}^{r} \sum_{\begin{subarray}{c}i=1 \\\lambda_i(X) <0 \end{subarray}}^n 
    \|p_{ \lambda_i(X)}(\vec{x}_j)\|_{z_0}^2 \lambda_i(X).
\end{equation}
By Lemma \ref{continuous}, we know $f$ is continuous on $T_{z_0} D$.
\begin{lemma}\label{boundedbelowOstar}
$f(Y)$ does not take zero value on $S(N_{z_0}D_{{U},z'_0})$. As $S(N_{z_0}D_{{U},z'_0})$ is compact there exists $C>0$ such that 
\[f(Y)\geq C, \ \forall Y\in S(N_{z_0}D_{{U},z'_0}).\]
\end{lemma}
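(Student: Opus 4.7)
The plan is to follow closely the template of Lemma~\ref{boundedbelowSp}: assume for contradiction that $Y \in S(N_{z_0} D_{U,z'_0})$ satisfies $f(Y) = 0$. Since the elements of $\mathfrak{p}_0$ described in \eqref{pofOstar} are traceless and $\|Y\| = 1$, the operator $Y$ (symmetric with respect to $(,)_{z_0}$) admits a strictly negative eigenvalue $\lambda$ with some nonzero eigenvector $v$. The hypothesis $f(Y) = 0$ then forces $p_\lambda(\vec{x}_j) = 0$, and therefore $(v, \vec{x}_j)_{z_0} = 0$ for each $j = 1, \ldots, r$.

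The key feature specific to case C is that $\bx = (\vec{x}_1, \ldots, \vec{x}_r)$ generates $U$ as a right $\H$-module, not as a complex subspace, so the complex orthogonality above is a priori weaker than orthogonality to $U \otimes_\R \C$. I would close this gap using the fact that $Y \in \mathfrak{o}^*(V, (,)_2)$ acts $\H$-linearly on $V$: since $\lambda \in \R$, one has $Y(vj) = (Yv)j = \lambda (vj)$, so the $\lambda$-eigenspace is stable under right multiplication by $j$, giving $(vj, \vec{x}_j)_{z_0} = 0$ as well. Combined with the normalization \eqref{spanofxOstarcase}, this forces $v \in \Span_\C\{v_{r+1}, \ldots, v_n, v_{n+r+1}, \ldots, v_{2n}\}$, so we may write $v = (0, w_1, 0, w_2)$ with $(w_1, w_2) \neq (0,0)$.

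The contradiction is then extracted from the block form of $Y$. By \eqref{normalspaceOstar} and \eqref{pofOstar},
\[Y = \begin{pmatrix} 0 & A \\ A^* & 0 \end{pmatrix} \text{ with } A \in M_n(\C),\ A^t = -A,\]
and the normal-space condition further forces the lower-right $(n-r) \times (n-r)$ block of $A$ to vanish, so $A = \begin{pmatrix} a & b \\ -b^t & 0 \end{pmatrix}$ with $a^t = -a$. Writing $Yv = \lambda v$ blockwise in sizes $r, n-r, r, n-r$ yields $\lambda w_1 = 0$ and $\lambda w_2 = 0$ from the second and fourth components; since $\lambda \neq 0$ this gives $w_1 = w_2 = 0$, the desired contradiction. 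The existence of $C > 0$ then follows from compactness of $S(N_{z_0} D_{U,z'_0})$ together with continuity of $f$, the latter being supplied by the argument of Lemma~\ref{continuous}, whose proof carries over verbatim from $T_{z_0} \tilde{D}$ to the subspace $T_{z_0} D$.

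The main obstacle is precisely the lift from complex to $\H$-orthogonality in the middle paragraph: in cases A and B the analogous step was automatic because $\bx$ already generated $U$ over the relevant ground field, while here the $\H$-linearity of $Y$ is genuinely needed and constitutes the essential new input distinguishing case C from case B.
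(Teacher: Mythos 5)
Your proof is correct and follows essentially the same argument as the paper: the $\mathbb{H}$-linearity of $Y\in\mathfrak{o}^*(2n)$ makes the negative eigenspace stable under right multiplication by $j$, which upgrades the complex orthogonality $(v,\vec{x}_\alpha)_{z_0}=0$ to orthogonality of $v$ to $U$ as a quaternionic subspace, after which the block form of $Y\in N_{z_0}D_{U,z'_0}$ forces $w_1=w_2=0$. You also correctly identify the $\mathbb{H}$-linearity step as the essential new input beyond cases A and B.
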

\begin{proof}
 Assume $Y \in N_{z_0} D_{{U},z'_0}$ identified with a subspace of $\mathfrak{p}_0$. Since $Y\in \mathfrak{o}^*(2n)$, it commutes with right multiplication by $j$ on V. In particular if $\lambda$ (recall that $\lambda$ must be real since $Y$ is Hermitian) is an eigenvalue of $Y$ and $V_\lambda$ is the $\lambda$-eigenspace of $Y$, $V_\lambda$ is preserved by right multiplication by $j$.
 
Suppose $V_\lambda$ is the $\lambda$-eigenspace of $Y$ corresponding to a strictly negative eigenvalue $\lambda$. Assume $f(Y)=0$, then
\[f(Y) = 0 \Rightarrow \|\p_{\lambda}(\vec{x}_\alpha)\|_{z_0}^2  = 0 \Rightarrow  \vec{x}_\alpha\bot V_\lambda ,\]
for $1\leq \alpha \leq r$. As $V_\lambda$ and $(,)_{z_0}$ are preserved by right multiplication by $j$, the above implies 
\[\vec{x}_\alpha j \bot \ V_\lambda\]
for $1\leq \alpha \leq r$. Under the basis $\{v_1,\ldots,v_n,v_1 j,\ldots, v_n j\}$ and the assumption \eqref{spanofxOstarcase}, we then have 
\[v=(0,w_1,0,w_2) \text{ with } w_1\neq 0 \text{ or } w_2\neq 0,\]
where $w_1\in \Span_\C\{v_{r+1},\ldots,v_n\}$ and  $w_2\in \Span_\C\{v_{r+1} j,\ldots,v_{n} j\}$.
  
Recall that for $Y \in N_{z_0} D_{{\bf U},z'_0}$ we  have (see equation (\ref{normalspaceOstar}) and equation (\ref{pofOstar}))
\[ Y = \begin{pmatrix} 0 & 0 & a & b\\
 0 & 0 & -b^t & 0\\
 a^* & -\bar{b} & 0 & 0\\
 b^* &0 &0 &0
 \end{pmatrix},\]
where $a=-a^t$. Hence 
 \[Y(v) = (bw_2,0, -\bar{b} w_1,0) = \lambda v = (0,\lambda w_1,0, \lambda w_2).\]
Since $\lambda < 0$ the equation immediately above implies  $w_1=0 $ and $ w_2 = 0 $, a contradiction.
\end{proof}

With Lemma \ref{boundedbelowSp}, the conclusion of Lemma \ref{twobounds} holds for $Y\in N_{z_0} D_{{U},z'_0}$ as well. Hence the rest of the proof of Theorem \ref{estimateofgaussian} in case C is the same as that of case A.
\qedsymbol

\subsection{Rapid decrease of the cocycles on the fiber $F_{z_0} D_{\bx,z'_0}$}\label{rapiddecreasing}
In this subsection we prove Theorem \ref{rapiddecreasingthm}.
\begin{proposition}
\label{decayofSchwartzfunction}
Let $\bx$, $U$ and $z'_0$ be as in Theorem \ref{rapiddecreasingthm} in each cases.  For any Schwartz function $ \psi \in \mathcal{S}(V^m)$ ($\mathcal{S}(V_0^m) $ in case B) and any constant $\rho>0$, there is a constant $C_\rho$ depending on $\bx$, $z'_0$ and $\psi$ such that 
\begin{equation}
    \psi (g^{-1}\bx)\leq C_\rho \exp(-\rho \, d(D_{U,z'_0}, gz_0 )), \ \forall g\in G.
\end{equation}
\end{proposition}
\begin{proof}
Since $\psi$ is a Schwartz function, for any positive integer $N$, there is a positive constant $C_N$ such that
\[\psi(\bx)\leq \frac{C_N}{(||\bx||_{z_0}^2)^N}.\]
By Theorem \ref{estimateofgaussian}, we know that
\[\|g^{-1}\bx\|_{z_0}^2=\sum_{j=1}^m \|g^{-1}x_j\|_{z_0}^2\geq c\cdot \exp(2b\cdot d(D_{U,z'_0}, gz_0 )).\]
Hence
\[\psi(g^{-1}\bx) \leq \frac{C_N}{c}\exp(-2Nb \cdot d(D_{U,z'_0}, gz_0 )).\]
We fix a $N>\frac{\rho}{2b}$ and let $C_\rho=\frac{C_N}{c}$, the theorem is proved.
\end{proof}
\begin{remark}
Suppose $\psi_{g'}=\omega(g')\psi_0$ for $g'\in \tilde{G}'$. Since $\tilde{G}'$ acts smoothly on $\mathcal{S}(V^m)$, the constant $C_N$ and $C_\rho$ in the above proof for $\psi_{g'}$ depends continuously on $g'$.
\end{remark}

\noindent {\bf Proof of Theorem \ref{rapiddecreasingthm}:}
Fix a $g\in G$ such that $ gz_0=z$. 
Assume
\[\psi= \sum_{I}^d \psi_{I}  \Omega_{I},\]
where $\psi_{I}\in\mathcal{S}(V^m)$ ($\mathcal{S}(V_0^m) $ in case B) are polynomial, $\Omega_{I} \in \bigwedge^\bullet \mathfrak{p}^*$ and are mutually perpendicular. Then
\[\tilde{\psi}(z,g',\bx)=\sum_{I}(\omega(g')\psi_I)(g^{-1}\bx)L^*_{g^{-1}}(\Omega_I).\]
Weil representation preserves the space of Schwartz functions, hence $\omega(g')\psi_I\in \mathcal{S}(V^m)$ ($\mathcal{S}(V_0^m) $ in case B).

By Proposition \ref{decayofSchwartzfunction}, we know that for any $I$ and $\rho>0$, there is a constant $C^I_\rho>0$ such that 
\[ (\omega(g')\psi_I )(g^{-1}\bx)\leq C^I_\rho \exp(-\rho \, d(D_{U,z'_0}, z )).\]
Since the left action of $G$ on $D$ is isometric, we know that
\[||L^*_{g^{-1}}(\Omega_I)||=||\Omega_I||.\]
Hence define 
\[C'_\rho=\sqrt{\sum_I (C^I_\rho)^2||\Omega_I||^2}.\]
We know that $|| \tilde{\psi}_{\infty}(z,g',\bx)||\leq C'_{\rho} \exp({-\rho d(D_{U,z'_0}, z ) })$.

By the remark after Theorem \ref{decayofSchwartzfunction} each constant $C_\rho^I$ depends continuously on $g'$, hence $C'_\rho$ also depends continuously on $g'$.
\qedsymbol

\section{Asymptotic evaluations of fiber integrals}\label{methodofLaplacesection}
We go back to the settings of Section \ref{thomformsection}. The goal of this section is to prove Theorem \ref{genericnonzero}.
We want to compute the fiber integral $\kappa(g',\beta)$ defined in \eqref{kappaintegral} by
\[\kappa(g',\beta)=\int_{F_{z_0} D_{U,z'_0}} \tilde{\varphi}_\infty(z,g',\bx),\]
where $(\bx,\bx)=\beta$, $U=\Span_{B_{v_1}}\{\bx\}$, $z'_0\in D(U)$ and $z_0\in D_{U,z'_0}$.  Our goal is to prove Theorem \ref{genericnonzero}.
Recall that $\varphi_\infty=\varphi\otimes\prod_{v\neq v_1} \varphi_v$, where $\varphi$ is the cocycle specified after equation \eqref{definitionofvarphiinfty}, $\varphi_v$ is the Gaussian function of $V^m_{v}$ and $v$ is an Archimedean place for the number field $k$. 
We know that $\varphi_v(g_v,g'_v)$ only depends on $g'_v$ and is nonzero for $v\neq v_1$. We have
\[\kappa(g',\beta)=\int_{F_{z_0} D_{U,z'_0}} \tilde{\varphi}(z,g'_{v_1},\bx)\cdot \prod_{v\in S_\infty, v\neq v_1} \varphi_{v}(g'_v).\]
So in order to prove the integral $\kappa(g',\beta)$ is nonzero, it suffices to compute the following rescaled integral
\begin{equation}
    \int_{F_{z_0} D_{U,z'_0}} \tilde{\varphi}(z,g',\bx)
\end{equation}
with $g'\in G'_{v_1}$, where $\tilde{\varphi}$ is defined in \eqref{tildepsi}. So from now on in this section, we change our notation and let $G=G_{v_1}$, $G'=G'_{v_1}$ and $\kappa(g',\beta)=\int_{F_{z_0} D_{U,z'_0}} \tilde{\varphi}(z,g',\bx)$.
Recall that $(G,G')$ can be the following three dual pairs
\begin{enumerate}
	        \item case A: $(\Uni(p,q), \Uni(r+s,r+s))$, \
	        \item case B: $(\Sp(2n,\R), \Orth(2r,2r))$, \
	        \item case C: $(\Orth^*(2n), \Sp(r,r))$. 
\end{enumerate} 
Let $B$ be $\C$ in case A, $\R$ in case B and $\H$ in case C. Let $m$ be $r+s$ in case A, $2r$ in case B and $r$ in case C.
Recall that the group $M'\subset G'$  is 
\[M'=\left\{m'(a)=\left(\begin{array}{cc}
		   a  & 0 \\
		    0 & (a^*)^{-1}
		\end{array}\right)\mid a\in \GL_m(B)\right\}.\]
An element $ (m'(a),\zeta)$ in its double cover acts by \eqref{M'action}. From this we know that 
\begin{equation}
    \kappa(g' (m'(a),\zeta),\beta)=\zeta |a|^{\frac{m}{2}} \kappa (g',a^* \beta a).
\end{equation}
Suppose $\beta$ satisfies the condition of Theorem \ref{fouriercoeff}, namely, $i \beta$ is non-degenerate and is of signature $(r,s)$ in case A. 
By the above formula and Gram-Schmidt process, we can choose $m=m'(a)$ such that $a^* \beta a$ is of the following form:
\begin{enumerate}
    \item the $r+s$ by $r+s$ diagonal matrix with diagonal entries $\{\underbrace{-i,\ldots,-i}_r,\underbrace{i,\ldots,i}_s\}$ in case A, \
    \item the $2r$ by $2r$ matrix $\left(\begin{array}{cc}
                0 & -I_r \\
                I_r & 0
            \end{array}\right)$ in case B,
            \
    \item the $r$ by $r$ $\H$-valued diagonal matrix with diagonal entries $\{\underbrace{-i,\ldots,-i}_r\}$ in case C. 
\end{enumerate}
So from now on we assume $\beta$ is of the above form. 
By translating by appropriate $g\in G$ together with the fact that $\kappa(g',\beta)$ does not depend on $z'_0$ (see Lemma \ref{lem:kappa independent of z'}), we can further assume 
\begin{align}\label{standardbx}
    &\bx=(v_1,\ldots,v_r,v_{p+1},\ldots,v_{p+s}), &z_0=\Span_\C\{v_{p+1},\ldots,v_{p+q}\}, \\ \nonumber
    &\bx=(E_1,\ldots,E_r,F_1,\ldots,F_r),  &z_0=\Span_\C\{v_{n+1},\ldots,v_{2n}\},\\  \nonumber
    &\bx=(v_1,\ldots,v_r), &z_0=\Span_\C\{v_{n+1},\ldots,v_{2n}\}
\end{align}
in the three cases respectively.
Let $a(t)\in \GL_m(B)$ be the scalar matrix $t \cdot Id$.
The exact value of $\kappa(g',\beta)$ is hard to compute in general, instead we approximate $\kappa(g',\beta)$ for $g'=(m'(a(t)),1)$ as $t\rightarrow \infty$. We need the following theorem.     
\begin{theorem}\label{methodoflaplace}
Let $f(x),h(x)$ be smooth functions on $\R^n$. And let $J(t)$ be the integral
        \[J(t)=\int_{\R^n}f(x)e^{-t  h(x)}dx.\]
And we assume that 
\begin{enumerate}
        \item The integral $J(t)$ converges absolutely for all $t >0$.
        \item For every $\epsilon >0$, $\rho (\epsilon)>0$, where
        \[\rho(\epsilon)=\inf\{h(x)-h(0):x\in\R^n,|x-0|\geq \epsilon\}.\]
        \item the Hessian matrix 
        \[A=(\frac{\partial^2 h}{\partial x_i \partial {x_j}})|_{x=0}\]
        is positive definite.
\end{enumerate}
Then we have 
\[J(t)\sim(\frac{2\pi}{t})^{\frac{n}{2}} f(0) \mathrm{det}(A)^{-\frac{1}{2}}\exp[-t h(0)]\]
as $t\rightarrow \infty$.
\end{theorem}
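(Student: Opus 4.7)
The plan is to prove this classical multidimensional Laplace asymptotic by the standard localize-and-rescale argument. First I would factor out the boundary value of $h$ by writing
\[J(t) = e^{-t h(0)} \int_{\R^n} f(x)\, e^{-t(h(x)-h(0))}\, dx,\]
so the remaining task is to show that the integral appearing is asymptotic to $(2\pi/t)^{n/2} f(0)\, \det(A)^{-1/2}$. Fix a small $\delta>0$ (to be chosen at the end) and split the domain into the ball $|x|\leq \delta$ and its complement.

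For the tail $|x|>\delta$ I would use hypothesis (2). By absolute convergence at $t=1$, the quantity $M:=\int_{|x|>\delta} |f(x)|\, e^{-(h(x)-h(0))}\, dx$ is finite. For $t\geq 1$, writing $e^{-t(h-h(0))} = e^{-(t-1)(h-h(0))}\,e^{-(h-h(0))}$ and using $h(x)-h(0)\geq \rho(\delta)>0$ on this region gives a bound $M\,e^{-(t-1)\rho(\delta)}$, which decays much faster than $t^{-n/2}$, so it is negligible compared to the predicted leading term.

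For the central piece, I would Taylor expand: $h(x)-h(0) = \tfrac{1}{2} x^T A x + R(x)$ with $|R(x)|\leq C|x|^3$ on $|x|\leq \delta$, and $f(x)=f(0) + O(|x|)$ there. After the substitution $x = y/\sqrt{t}$, the integral over $|x|\leq \delta$ becomes
\[t^{-n/2} \int_{|y|\leq \delta\sqrt{t}} f(y/\sqrt{t})\, \exp\!\left(-\tfrac{1}{2}\, y^T A y - t\,R(y/\sqrt{t})\right) dy.\]
Since $t\,R(y/\sqrt{t}) = O(|y|^3/\sqrt{t})$, the integrand converges pointwise as $t\to\infty$ to $f(0)\,e^{-\tfrac{1}{2}y^T A y}$, and the Gaussian integral evaluates to $(2\pi)^{n/2}\det(A)^{-1/2}$. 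Multiplying through gives the claimed asymptotic.

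The main obstacle, and the reason for choosing $\delta$ small, is justifying passage to the limit under the integral sign via dominated convergence. On $|x|\leq \delta$ one has $|t\,R(y/\sqrt{t})|\leq C\delta\,|y|^2$, so the exponent is bounded above by $-\tfrac{1}{2} y^T A y + C\delta|y|^2$; choosing $\delta$ small enough (relative to the smallest eigenvalue of $A$) this is dominated by $-\tfrac{1}{4} y^T A y$, providing an integrable Gaussian majorant on all of $\R^n$ (after extending $f(y/\sqrt t)$ by zero outside the ball, using continuity of $f$ at the origin to bound $|f(y/\sqrt t)|$ by a constant on $|y|\leq \delta\sqrt t$). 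With that majorant, dominated convergence delivers the leading coefficient and completes the proof.
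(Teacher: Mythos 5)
Your argument is correct; it is the standard localize--and--rescale proof of the multidimensional Laplace asymptotic, and it is in fact a proof the paper does not supply: the paper simply cites Section~5 of Chapter~IX of the reference \cite{Wong} rather than proving the theorem. The only step you leave implicit is that the Taylor expansion $h(x)-h(0)=\tfrac12 x^T A x + R(x)$ has no linear term, which needs the observation that hypothesis~(2) forces $x=0$ to be the unique global minimizer of $h$ (indeed $h(x)-h(0)\geq\rho(|x|)>0$ for $x\neq 0$), hence $\nabla h(0)=0$ by differentiability. With that remark inserted, the tail estimate via $e^{-t(h-h(0))}=e^{-(t-1)(h-h(0))}e^{-(h-h(0))}\leq e^{-(t-1)\rho(\delta)}e^{-(h-h(0))}$ together with absolute convergence at $t=1$, and the dominated-convergence argument with the Gaussian majorant $e^{-\tfrac14 y^T A y}$ obtained by choosing $\delta$ small relative to $\lambda_{\min}(A)$, are both sound and give exactly the stated limit $t^{n/2}e^{th(0)}J(t)\to (2\pi)^{n/2}f(0)\det(A)^{-1/2}$.
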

The above theorem is one special case of the so-called method of Laplace. The proof can be found in Section 5 of Chapter IX in \cite{Wong}. To apply Theorem \ref{methodoflaplace}, we choose a base point $z_0\in D_{U,z'_0}$.
Recall that we identify $T_{z_0}D\cong\mathfrak{g}_0 /\mathfrak{k}_0$ with  $\mathfrak{p}_0$. 

\noindent
\textbf{Proof of theorem  \ref{genericnonzero} under assumptions that will be checked later}:

\noindent
Define $\rho:N_{z_0} D_{U,z'_0}\rightarrow F_{z_0} D_{U,z'_0}$ by 
\[\rho: Y \mapsto \exp(-Y) z_0,\]
where we regard $N_{z_0} D_{U,z'_0} $ as a linear subspace of $\mathfrak{p}_0$.
Our strategy is to  apply Theorem \ref{methodoflaplace} to
\begin{equation}\label{J(t)}
    J(t)=\int_{F_{z_0} D_{U,z'_0}} \tilde{\varphi} (z,g'(t),\bx)=\int_{N_{z_0} D_{U,z'_0}}\rho^*(\tilde{\varphi}) (Y ,g'(t),\bx)
\end{equation}
for  $g'(t)=(m'(a(t)),1)$. Recall that 
\[\tilde{\varphi}_\infty(z,g')=(L_{g^{-1}})^*(\omega(g')\varphi)=(L_{g^{-1}})^*(\omega(g')\varphi),\]
where $g z_0=z$. By equation \eqref{varphiSchrodinger}, we have
\[\varphi=\varphi_0 \sum_{i,j=1}^d p_{ij}  \Omega_i \wedge  \bar{\Omega}_j,\]
where $p_{ij}$ are polynomial functions on $ V^m$, $\Omega_i \in \bigwedge^\bullet \mathfrak{p}_+^*$ and
\[\varphi_0(\bx)=\exp(-\pi \sum_{i=1}^m ||\vec{x}_m||_{z_0}^2)\]
with $\bx$ as in \eqref{standardbx}.
Define $h:N_{z_0} D_{U,z'_0}\rightarrow \R$ by 
\begin{equation}\label{h(Y)}
    h(Y)=\pi\cdot  M(\exp(Y)z_0,\bx)=\pi\cdot  M(z_0,\exp(-Y)\bx),
\end{equation}
where $M(z,\bx)$ is the function defined in \eqref{definitionMajorant}. Then we have
\[\varphi_0(\exp(-Y)\bx)=\exp(-h(Y)).\]
We also have (recall \eqref{M'action})
\begin{align}
    &(\rho^*)(\tilde{\varphi})(Y,g'(t),\bx)\\ \nonumber
    =&  t^{e}\exp(-h(Y)t^2)\sum_{i,j} p_{ij}(\exp(-Y)\bx \cdot t ) \rho^*(L^*_{\exp(-Y)}(\Omega_i)\wedge L^*_{\exp(-Y)}(\overline{\Omega}_j)),
\end{align}
where $e$ is $(p+q)(r+s)$ in case A and is $2nr$ in case B and C.
Since $\tilde{\varphi}$ is of Hodge degree $(N,N)$, where $N$ is the dimension of $N_{z_0} D_{U,z'_0}$, each term $\rho^*(L^*_{\exp(-Y)}(\Omega_i)\wedge L^*_{\exp(-Y)}(\overline{\Omega}_j)) $ in the above equation is some function times $\mathrm{dvol}$, the volume form of the Euclidean space $N_{z_0} D_{U,z'_0}$.
After combining terms according to $t$-degree, we have
\begin{equation}\label{finalsummation}
  (\rho^*)(\tilde{\varphi})(Y,g'(t),\bx)= t^{e}\sum_{i} t^i f_i(Y) \exp(-h(Y)t^2) \mathrm{dvol}.  
\end{equation}
Theorem \ref{methodoflaplace} can be applied to compute each 
\[\int_{N_{z_0} D_{U,z'_0}} f_i(Y) \exp(-h(Y)t^2) \mathrm{dvol}.\]
Condition (1) of Theorem \ref{methodoflaplace} is checked in the corollary of  Theorem \ref{rapiddecreasingthm}. Condition (2) and (3) will be checked later in this section.

As we are interested in asymptotic value when $t \rightarrow \infty$, only the highest degree term of $t$ in Equation \eqref{finalsummation} matters if it is nonzero. By Lemma \ref{highestermSchrodinger}, we know that the highest degree term of $ (\rho^*)(\tilde{\varphi})(Y,g'(t),\bx)$ evaluated at $\bx$ and $Y=0$ is nonzero. Hence the asymptotic value of $J(t)$ as $t\rightarrow \infty$ is nonzero and Theorem \ref{genericnonzero} is proved.
\qedsymbol

The rest of the section will be devoted to verifying condition (2) and (3) of Theorem \ref{methodoflaplace} (see Proposition \ref{secondcondition}, Proposition \ref{secondconditioncaseB} and Proposition \ref{secondconditioncaseC}) and computing $J(t)$ as in equation \eqref{J(t)} in each case. We will emphasize on  case A and proceed the other two cases quickly.

\subsection{Case A}
As before we assume \eqref{standardbx}.
Let 
\[U_j=\Span_\C \{v_j\},D_j=D_{U_j},1\leq j \leq p+q.\]
Recall that the definition of $D_j$ does not require a base point (see Remark \ref{definitionofKM}). 
\begin{lemma}\label{intersectionofDj}
\[D_{U,z'_0}=(\bigcap_{j=1}^r D_j)\bigcap (\bigcap_{j=p+1}^{p+s} D_j).\]
\end{lemma}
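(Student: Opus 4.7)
The plan is a direct set-theoretic comparison using the explicit descriptions already established in Section \ref{generalizedspecial} for the rank-one summands. First I would recall that, with the sign convention $(v_\alpha,v_\alpha)=-i$ for $1\le\alpha\le p$ and $(v_\mu,v_\mu)=i$ for $p+1\le\mu\le p+q$, the relevant Hermitian form $i(,)$ is positive on each line $U_j=\Span_\C\{v_j\}$ with $1\le j\le r$ and negative on each line $U_j$ with $p+1\le j\le p+s$. Thus each $U_j$ is definite, so Proposition \ref{DU=DXcapDY} applies and gives the canonical descriptions
\[
D_j=\{z\in D\mid z\subseteq v_j^\bot\}\quad(1\le j\le r),\qquad
D_j=\{z\in D\mid v_j\in z\}\quad(p+1\le j\le p+s).
\]

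Next I would intersect these conditions over $j$. A subspace $z\in D$ lies in $\bigcap_{j=1}^r D_j$ iff $z\perp v_j$ for every $1\le j\le r$, equivalently $z\subseteq (U^+)^\bot$, where $U^+=\Span_\C\{v_1,\ldots,v_r\}$. Similarly $z\in\bigcap_{j=p+1}^{p+s}D_j$ iff $v_j\in z$ for every $p+1\le j\le p+s$, equivalently $U^-\subseteq z$, where $U^-=\Span_\C\{v_{p+1},\ldots,v_{p+s}\}$. Combining,
\[
\bigcap_{j=1}^r D_j\cap\bigcap_{j=p+1}^{p+s}D_j=\{z\in D\mid U^-\subseteq z\subseteq(U^+)^\bot\}.
\]

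Finally I would compare this with the explicit formula \eqref{Dlambda0} for $D_{U,z'_0}$ with the chosen orthogonal decomposition $U=U^+\oplus U^-$ and $z'_0=U^-\in D(U)$, which is exactly the same set. This gives the claimed equality. There is no genuine obstacle here: the entire content lies in matching the two descriptions of definite one-dimensional subspaces from Proposition \ref{DU=DXcapDY} with the explicit model \eqref{Dlambda0} of $D_{U,z'_0}$, and in checking the signature convention so that the positive indices $1,\ldots,r$ correspond to the ``orthogonality'' condition and the negative indices $p+1,\ldots,p+s$ correspond to the ``containment'' condition.
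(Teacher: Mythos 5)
Your proposal is correct and follows essentially the same route as the paper: apply Proposition \ref{DU=DXcapDY} to each definite line $U_j$ to get the ``$z\subseteq v_j^\bot$'' and ``$v_j\in z$'' characterizations, intersect, and compare with \eqref{Dlambda0}. The paper's proof is the same argument stated more tersely, while you spell out the sign-convention check and the intersection step explicitly.
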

\begin{proof}
By Proposition \ref{DU=DXcapDY}, we know that
By definition for $1\leq j \leq p$, we have 
\[D_\alpha=\{z\in D\mid  z \subseteq U_\alpha^\bot\},\ D_\mu=\{z\in D\mid U_\mu \subseteq z  \}\]
for $1\leq \alpha \leq p$ and $p+1\leq \mu \leq p+q$.
On the other hand by \eqref{Dlambda0} we have
\[D_{U,z'_0}=\{z\in D\mid \oplus_{j=p+1}^{\ p+s} U_j \subset z \subset (\oplus_{j=1}^{r} U_j)^\bot\}.\]
The lemma follows.
\end{proof}

\begin{lemma}\label{sphericality}
For any element $z\in D$, we have
\[M(z,v_j)=cosh^2(t)+sinh^2(t),\]
where $t=d(z,D_j)$. In particular,
\[M(z,v_j)\geq M(z_0,v_j).\]
Equality holds if and only if $z\in D_{j}$.
\end{lemma}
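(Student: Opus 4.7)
My approach would be to reduce to a one-parameter computation along a geodesic perpendicular to $D_j$ through a well-chosen base point, and then do an explicit matrix calculation in a two-dimensional subspace of $V$. First, from the definition $(x,y)_{z_0}=i(x,\theta_{z_0}y)$ and the identity $g\theta_{z_0}g^{-1}=\theta_{gz_0}$ one verifies $(g^{-1}x,g^{-1}y)_{z_0}=(x,y)_{gz_0}$ for any $g\in G$, so
\[M^j(z)=(g_z^{-1}v_j,g_z^{-1}v_j)_{z_0}=(v_j,v_j)_z\]
is simply the majorant squared length of $v_j$ measured at $z$. By Lemma \ref{invarianceofmajorant} this function is invariant under $G_{U_j}$, which fixes $v_j$ and acts transitively on $D_j$; combining with the orthogonal fibration $\pi:D\to D_j$ of Section \ref{fibrationpi} (Lemma \ref{expofnormalbundlefibering}), I may assume that the foot of the perpendicular from $z$ to $D_j$ is $z_0$ itself, i.e. $z=\exp(tX)z_0$ for some $X\in N_{z_0}D_j\subset \mathfrak{p}_0$ with $t=d(z,D_j)$.

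Next I would describe $N_{z_0}D_j$ concretely for $j=\alpha\le p$, the case $j=\mu>p$ being entirely analogous with $z_0$ and $z_0^\bot$ swapped and $D_\mu=\{z\in D\mid v_\mu\in z\}$ replacing $D_\alpha=\{z\in D\mid v_\alpha\in z^\bot\}$. The basis of $\mathfrak{p}_0$ in \eqref{LiealgebrabasisofUpq} together with the explicit form of $T_{z_0}D_\alpha$ shows that every $X\in N_{z_0}D_\alpha$ has the form $X=v_\alpha\circ w$ for a unique $w\in z_0$. Normalizing $w$ so that $\|w\|_{z_0}=1$, hence $(w,w)=i$, and using $(v_\alpha,v_\alpha)=-i$ and $(v_\alpha,w)=0$, the formulas for the $\circ$-action give
\[Xv_\alpha=-iw,\qquad Xw=iv_\alpha,\qquad X^2v_\alpha=v_\alpha.\]
Exponentiating,
\[\exp(tX)v_\alpha=\cosh(t)\,v_\alpha+\sinh(t)\,Xv_\alpha=\cosh(t)\,v_\alpha-i\sinh(t)\,w.\]

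Finally, since $v_\alpha\in z_0^\bot$ and $w\in z_0$ are orthogonal with respect to the majorant and each has unit $(\cdot,\cdot)_{z_0}$-length,
\[M^\alpha(z)=\|\exp(-tX)v_\alpha\|_{z_0}^2=\cosh^2(t)+\sinh^2(t).\]
The bound $M^j(z)\ge M^j(z_0)=1$ and the equality condition are then immediate, since $\cosh^2(t)+\sinh^2(t)\ge 1$ with equality iff $t=0$ iff $z\in D_j$. The main obstacle is the reduction in the first paragraph, which rests on the totally geodesic property of $D_j$ (Proposition \ref{fixpointofpairofinvolution}) and the perpendicular-geodesic fibration of Lemma \ref{expofnormalbundlefibering}. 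A secondary bookkeeping point is that the normalization $\|w\|_{z_0}=1$ makes $X=v_\alpha\circ w$ a unit vector in the trace-form metric on $\mathfrak{p}_0$, so that the parameter $t$ of the geodesic $\exp(tX)z_0$ coincides with its arc length; this is a direct matrix computation in $\mathfrak{u}(p,q)$.
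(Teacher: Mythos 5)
Your proof is correct and takes essentially the same route as the paper's: reduce by the $G_{U_j}$-invariance of $M^j$ and the perpendicular fibration of Section \ref{fibrationpi} to $z=\exp(tX)z_0$ with $X\in N_{z_0}D_j$, then compute the exponential action on $v_j$ to obtain $\cosh^2(t)+\sinh^2(t)$. The one cosmetic slip is that after setting $z=\exp(tX)z_0$ you display $\exp(tX)v_\alpha$ rather than $g_z^{-1}v_\alpha=\exp(-tX)v_\alpha$; the sign flip only negates the $\sinh$ coefficient and does not change the majorant norm, so the stated identity is unaffected. The only organizational difference from the paper's argument is that you package the normal direction abstractly as $X=v_\alpha\circ w$ with $w\in z_0$ and $\|w\|_{z_0}=1$, while the paper spells out $X=\sum_\mu x_{1\mu}E_{1\mu}-y_{1\mu}F_{1\mu}$ and then sets $v=\tfrac{1}{t}\sum_\mu(x_{1\mu}+iy_{1\mu})v_\mu$ so that $X=t\,v_1\circ v$; this is the same computation. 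Both versions implicitly use the normalization of the trace-form metric on $\mathfrak p_0$ so that $t$ is the arc length (you at least flag this as a bookkeeping point to verify); as the precise constant only rescales $t$, it does not affect the inequality $M^j(z)\geq M^j(z_0)$ or the equality criterion.
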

\begin{proof}
Let us assume $1\leq j \leq p$. The case $p+1\leq j\leq p+q$ is similar. Without loss of generality we can assume $j=1$. 
It is easy to see that
\[\exp(-t E_{\alpha \mu})v_1=
\begin{cases}
cosh(t) \cdot v_1- i sinh(t) \cdot v_\mu & \text{if } \alpha=1,\\
v_1 & \text{otherwise},
\end{cases}\]
\[\exp(-t F_{\alpha \mu})v_1=
\begin{cases}
cosh(t) \cdot v_1-sinh(t) \cdot v_\mu & \text{if } \alpha=1,\\
v_1 & \text{otherwise},
\end{cases}\]
where $E_{\alpha \mu}$ and $F_{\alpha \mu}$ are defined in Section \ref{varphiUni}.
Recall that the group $G_{U_1}$ fixes $v_1$. Hence
\[M^1(z)=M^1(g z), \forall g\in G_{U_1}.\]
We have a $G_{U_1}$-equvariant fibration $\pi_1:D\rightarrow D_1$ (see Section \ref{fibrationpi}). By translating $z$ using an element in $G_{U_1}$, we can assume that $\pi_1(z)=z_0$ and is of the form $z=\exp(X)z_0$, where $X\in N_{z_0} D_1\subseteq \mathfrak{p}_0$. We have
\[N_{z_0} D_1=\Span_\R\{E_{1 \mu},F_{1 \mu}\mid p+1\leq \mu \leq p+q\}.\]
We assume that  
\[X=\sum_{\mu=p+1}^{p+q} x_{1 \mu} E_{1 \mu}- \sum_{\mu=p+1}^{p+q} y_{1 \mu} F_{1 \mu}=\sum_{\mu=p+1}^{p+q} v_1 \circ (x_{1 \mu }+i y_{1 \mu}) v_\mu.\]
We define 
\[t=\sqrt{\sum_{\mu=p+1}^{p+q}(|x_{1 \mu}|^2+|y_{1 \mu}|^2)}.\]
and
\[v=\frac{1}{t}\sum_{\mu=p+1}^{p+q} (x_{1 \mu }+i y_{1 \mu}) v_\mu.\]
Then $(v,v)=i$ and $(v_1,v)=0$. We have 
\[\exp(-X)(v_1)=cosh(t) \cdot v_1- i sinh(t) \cdot v.\]
So we have 
\[(z,v_1)=\|cosh(t) \cdot v_1- i sinh(t) \cdot v\|^2_{z_0}=cosh^2(t)+sinh^2(t).\]
Since $z=\exp(t v_1\circ v) z_0$ we know that $t=d(z,z_0)=d(z,D_1)$. The claim of the lemma is proved. 
\end{proof}

Define $M:D\rightarrow \R$ by (see \eqref{definitionMajorant})
\[M(z)=M(z,\bx).\]
Define  $h^j:N_{z_0} D_{U,z'_0}\rightarrow \R$ by
\begin{equation}
   h^j(X)=M(\exp (X) z_0,v_j).
\end{equation}
Then we have (see \eqref{h(Y)} for the definition of $h(Y)$)
\begin{equation}\label{hYcaseA}
    h(Y)=\pi(\sum_{j=1}^r h^j(Y)+\sum_{j=p+1}^{p+s} h^j(Y)).
\end{equation}

\begin{proposition}\label{secondcondition}
The function $h$ satisfies condition $(2)$ and $(3)$ of Theorem \ref{methodoflaplace}. In particular, $0$ is the unique minimal point of $h$.
\end{proposition}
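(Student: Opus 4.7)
\textbf{Proof proposal for Proposition \ref{secondcondition}.}

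The plan is to analyze $h$ via the decomposition $h(Y)=\pi\sum_{j\in J} h^j(-Y)$ where $J=\{1,\ldots,r,\,p+1,\ldots,p+s\}$, using the spherical formula from Lemma \ref{sphericality} and the intersection identity from Lemma \ref{intersectionofDj}. First I would establish that $0$ is the unique minimum of $h$: each $h^j$ satisfies $h^j(-Y)\geq h^j(0)$ with equality exactly when $\exp(Y)z_0\in D_j$ (Lemma \ref{sphericality}), so $h(Y)=h(0)$ forces $\exp(Y)z_0$ to lie in every $D_j$ for $j\in J$. By Lemma \ref{intersectionofDj} this means $\exp(Y)z_0\in D_{U,z'_0}$; but $Y\in N_{z_0}D_{U,z'_0}$ and the exponential map restricted to $N_{z_0}D_{U,z'_0}$ is a diffeomorphism onto the fiber $F_{z_0}D_{U,z'_0}$ which meets $D_{U,z'_0}$ only at $z_0$, forcing $Y=0$.

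For condition (2) I would split the region $\{|Y|\geq\epsilon\}$ into a compact annulus and an exterior. On the compact annulus $\{\epsilon\leq|Y|\leq R\}$ the continuous function $h(Y)-h(0)$ attains a positive minimum because of the uniqueness just established. On the exterior $\{|Y|>R\}$, Theorem \ref{estimateofgaussian} applied to $z=\exp(Y)z_0$ (noting that $d(D_{U,z'_0},z)=|Y|$ on the normal fiber) gives $h(Y)\geq \pi c\exp(2b|Y|)$, which dominates $h(0)$ once $R$ is chosen large enough. Taking the minimum of the two lower bounds yields $\rho(\epsilon)>0$.

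For condition (3) I would compute the Hessian directly. Using $h^j(-Y)=\|\exp(Y)v_j\|_{z_0}^2$ and the fact that any $Y\in\mathfrak{p}_0$ is Hermitian with respect to the majorant form $(\,,\,)_{z_0}$ (since $Y$ anti-commutes with $\theta_{z_0}$ and is anti-Hermitian for $(\,,\,)$), a direct Taylor expansion at $0$ gives
\[
\tfrac{1}{2}\mathrm{Hess}_h(0)(Y,Y)\;=\;2\pi\sum_{j\in J}\|Yv_j\|_{z_0}^2.
\]
Thus positive definiteness reduces to showing: if $Y\in N_{z_0}D_{U,z'_0}$ and $Yv_j=0$ for every $j\in J$, then $Y=0$. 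Writing $Y=\sum_{(\alpha,\mu)\in I}(c_{\alpha\mu}E_{\alpha\mu}+d_{\alpha\mu}F_{\alpha\mu})$ with $I$ as in \eqref{indexI}, I would evaluate $Yv_j$ using $E_{j\mu}v_j=-iv_\mu$, $F_{j\mu}v_j=v_\mu$ for $1\leq j\leq r$, and $E_{\alpha j}v_j=iv_\alpha$, $F_{\alpha j}v_j=-v_\alpha$ for $j=p+k$, $1\leq k\leq s$. The first family of vanishings kills all coefficients with $1\leq\alpha\leq r$ (the first block of $I$), and the second kills all coefficients with $p+1\leq\mu\leq p+s$ (the second block). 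Together these exhaust $I$, forcing $Y=0$.

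The main obstacle I anticipate is purely bookkeeping in step three: making sure the index set $I$ is exactly covered by the two test classes of $v_j$'s, which in turn is the reason $J$ had to include both the negative-norm and positive-norm vectors defining $U$. The other two steps are essentially immediate combinations of the lemmas and estimates already established.
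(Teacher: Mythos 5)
Your proof is correct, and the uniqueness-of-minimum part is obtained exactly as in the paper (via Lemmas \ref{sphericality} and \ref{intersectionofDj} together with the fact that the fiber meets $D_{U,z'_0}$ only at $z_0$). For conditions (2) and (3), however, you take a genuinely different route. The paper restricts $h$ to rays $t\mapsto h(tX)$ with $X$ on the unit sphere of $N_{z_0}D_{U,z'_0}$ and uses Lemma \ref{twobounds} to extract a \emph{uniform} lower bound $C>0$ on $\tfrac{d^2}{dt^2}h(tX)$ valid for all $t$ and all such $X$; integrating twice gives $h(X)\geq h(0)+\tfrac{1}{2}C\|X\|^2$, which settles (2) and (3) in one stroke and even gives a quantitative quadratic rate. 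You instead handle the two conditions separately: for (2), you split $\{|Y|\geq\epsilon\}$ into a compact annulus (where continuity of $h$ and uniqueness of the minimum give a positive gap) and the exterior (where Theorem \ref{estimateofgaussian}, via $d(D_{U,z'_0},\exp(Y)z_0)=\|Y\|$, forces exponential growth of $h$); for (3), you Taylor-expand $h$ to second order, using that any $Y\in\mathfrak{p}_0$ is Hermitian for the majorant form $(\,,\,)_{z_0}$, arriving at the coordinate-free identity $\tfrac{1}{2}\mathrm{Hess}_h(0)(Y,Y)=2\pi\sum_{j\in J}\|Yv_j\|_{z_0}^2$, from which definiteness is a direct algebraic check that the two families of equations $Yv_\alpha=0$ $(1\leq\alpha\leq r)$ and $Yv_\mu=0$ $(p+1\leq\mu\leq p+s)$ annihilate the full index set $I$. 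Both arguments are valid; yours is more transparent about what the Hessian is (your formula is essentially a coordinate-free restatement of Corollary \ref{Hessian}, which the paper proves separately for later use in Section \ref{methodofLaplacesection}), while the paper's ray argument gives the sharper global estimate $h(X)-h(0)\gtrsim\|X\|^2$ for free and avoids invoking Theorem \ref{estimateofgaussian} a second time.

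One small slip, harmless for the argument: with the paper's conventions one has $F_{\alpha\mu}(v_\alpha)=-v_\mu$, not $+v_\mu$ (see the proof of Corollary \ref{hessian}). Since in your final step you only need that $Yv_\alpha=0$ forces the real coefficients $c_{\alpha\mu},d_{\alpha\mu}$ to vanish — and the coefficient of $v_\mu$ in $Yv_\alpha$ is $-ic_{\alpha\mu}\mp d_{\alpha\mu}$, which vanishes for real $c,d$ iff both are zero regardless of the sign — the sign does not affect the conclusion.
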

\begin{proof}
By Lemma \ref{sphericality} and Lemma \ref{intersectionofDj}, $M(z)$ obtains its minimal value at $z$ if and only if $z\in D_{U,z'_0}$. In particular, $z_0$ is the unique minimal point of the function $M(z)$ on $F_{z_0} D_{U,z'_0}$. Hence $0$ is the unique minimal point of the function $h$ on $N_{z_0} D_{U,z'_0}$.
Suppose that $X\in S(N_{z_0} D_{U,z'_0})$. We define $h_{X}: \R \rightarrow \R $ by 
\[h_{X}(t)=h(\exp_{z_0}(X t)).\]
Then $t=0$ is a global minimum for $h_{X}(t)$, thus we have 
\begin{equation}\label{derivativeofh}
\frac{d}{dt}h_{X}(t)|_{t=0}=0.
\end{equation}
By Lemma \ref{gxnorm}, we have
\[( \exp{(- tX)} v_j, \exp{(-tX)} v_j)_{z_0}  = 
\sum_{k=1}^m |a_j^{k}|^2  \exp{( - 2 \lambda_k t)}.\]
From this we know
\begin{equation}\label{doubletderivativeofh}
    \frac{d^2}{dt^2}h_{X}(t)= \pi\cdot
\sum_{k=1}^n\sum_{j=1}^{m} 4 |a_j^{k}|^2 \lambda_k^2 \exp{( - 2 \lambda_k t)}.
\end{equation}
With Lemma \ref{twobounds} in mind, we have a uniform lower bound of $\frac{d^2}{dt^2}h_{X}(t)$ for all $X\in S(N_{z_0} D_{U,z'_0})$ and $t\geq 0$. A similar argument works for $t<0$. So we can assume that $\frac{d^2}{dt^2}h_X(t)\geq C$ for a positive constant $C$ and all $t\in \R, X\in S(N_{z_0} D_{U,z'_0})$.
It follows that 
\[h(X)\geq h(0)+\frac{1}{2} C ||X||^2.\]
Hence $h$ satisfies condition $(2)$ of Theorem \ref{methodoflaplace}. Condition (3) is also satisfied because we know from the above that the Hessian matrix of $h$ is positive definite with the smallest eigenvalue bigger or equal to $C$.
\end{proof}

Any $X\in N_{z_0} D_{U,z'_0}$ can be written as 
\[X=\sum_{(\alpha ,\mu )\in I} x_{\alpha \mu} E_{\alpha \mu}+\sum_{(\alpha,\mu )\in I} y_{\alpha \mu} F_{\alpha \mu},\]
where $I$ is the index set defined in equation \eqref{indexI}. 
We need
\begin{corollary}\label{hessian}
Suppose $(\alpha,\mu),(\beta,\nu)\in I$. For $1\leq j  \leq p$, we have
\[\frac{\partial^2 h^j}{\partial x_{\alpha \mu} \partial x_{\beta \nu}}|_{X=0}=\frac{\partial^2 h^j}{\partial y_{\alpha \mu} \partial y_{\beta \nu}}|_{X=0}=4 \delta_{\alpha \beta} \delta_{\alpha j} \delta_{\mu \nu},\text{ and }\frac{\partial^2 h^j}{\partial x_{\alpha \mu} \partial y_{\beta \nu}}|_{X=0}=0.\]
For $p+1\leq j \leq p+q$, we have
\[\frac{\partial^2 h^j}{\partial x_{\alpha \mu} \partial x_{\beta \nu}}|_{X=0}=\frac{\partial^2 h^j}{\partial y_{\alpha \mu} \partial y_{\beta \nu}}|_{X=0}=4 \delta_{\alpha \beta} \delta_{\mu j} \delta_{\mu \nu}, \text{ and } \frac{\partial^2 h^j}{\partial x_{\alpha \mu} \partial y_{\beta \nu}}|_{X=0}=0.\]
\end{corollary}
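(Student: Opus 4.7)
Proof proposal for Corollary \ref{hessian}.

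The plan is to Taylor expand
\[h^j(X)=(\exp(-X)v_j,\exp(-X)v_j)_{z_0}\]
around $X=0$ to second order and then evaluate the resulting quadratic form on the basis $\{E_{\alpha\mu},F_{\alpha\mu}:(\alpha,\mu)\in I\}$ of $N_{z_0}D_{U,z'_0}$ given by \eqref{indexI}.

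The key structural observation is that every $X\in\mathfrak{p}_0$ is self-adjoint with respect to the majorant form $(,)_{z_0}$: this follows from the Cartan relation $\theta_{z_0}X\theta_{z_0}^{-1}=-X$ for $X\in\mathfrak{p}_0$, the skew-Hermiticity of $X$ for $(,)$, and the definition $(x,y)_{z_0}=i(x,\theta_{z_0}y)$. Writing $\exp(-tX_a-sX_b)v_j=v_j-(tX_a+sX_b)v_j+\tfrac12(tX_a+sX_b)^2v_j+O(|X|^3)$ and expanding $(f,f)_{z_0}$, the mixed partial at the origin collapses to
\[\frac{\partial^2 h^j}{\partial c_a\,\partial c_b}\bigg|_{X=0}=4\,\mathrm{Re}(X_av_j,X_bv_j)_{z_0},\]
where $(c_a,X_a)$ ranges over the pairs $(x_{\alpha\mu},E_{\alpha\mu})$ and $(y_{\alpha\mu},F_{\alpha\mu})$; the first-order terms drop out because $X=0$ is already known to be a critical point of $h^j$.

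The remaining step is pure bookkeeping. From $E_{\alpha\mu}=v_\alpha\circ v_\mu$, $F_{\alpha\mu}=iE_{\alpha\mu}$ (as operators), $(v_\alpha,v_\alpha)=-i$, $(v_\mu,v_\mu)=i$, a direct calculation yields
\[E_{\alpha\mu}v_j=\begin{cases}-i\,\delta_{\alpha j}\,v_\mu,& 1\le j\le p,\\ \phantom{-}i\,\delta_{\mu j}\,v_\alpha,& p+1\le j\le p+q,\end{cases}\qquad F_{\alpha\mu}v_j=iE_{\alpha\mu}v_j.\]
Substituting these into the Hessian formula and using the $(,)_{z_0}$-orthonormality of $\{v_1,\ldots,v_{p+q}\}$ together with the conjugate-linearity of $(,)_{z_0}$ in the second slot (so that both $|{-}i|^2$ and $|i|^2$ collapse to $1$), the diagonal entries $\partial_{x_{\alpha\mu}}\partial_{x_{\beta\nu}}h^j$ and $\partial_{y_{\alpha\mu}}\partial_{y_{\beta\nu}}h^j$ evaluate to the stated $4\delta_{\alpha\beta}\delta_{\alpha j}\delta_{\mu\nu}$ (respectively $4\delta_{\alpha\beta}\delta_{\mu j}\delta_{\mu\nu}$), while the cross entry $\partial_{x_{\alpha\mu}}\partial_{y_{\beta\nu}}h^j$ vanishes because the inner product $(E_{\alpha\mu}v_j,F_{\beta\nu}v_j)_{z_0}$ comes out purely imaginary.

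No serious obstacle is expected: the argument is a second-order Taylor expansion followed by a case-by-case evaluation. The only points requiring mild care are the two different conjugate-linearity conventions---$(,)$ is skew-Hermitian while $(,)_{z_0}$ is Hermitian---and the sign bookkeeping when $j$ lies in the first block $1\le j\le p$ versus the second block $p+1\le j\le p+q$.
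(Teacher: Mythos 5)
Your argument is correct and follows the same route as the paper: expand the exponential to second order, reduce the Hessian at the origin to $4\,\mathrm{Re}(X_a v_j, X_b v_j)_{z_0}$ using that elements of $\mathfrak{p}_0$ are self-adjoint for the majorant, then evaluate on the basis $\{E_{\alpha\mu},F_{\alpha\mu}\}$. One small bookkeeping slip worth noting: $F_{\alpha\mu}$ is not $iE_{\alpha\mu}$ as an operator on all of $V$ --- since $(,)$ is conjugate-linear in its first slot one gets $F_{\alpha\mu}(v_\alpha)=-v_\mu$, so $F_{\alpha\mu}v_j=-iE_{\alpha\mu}v_j$ for $1\le j\le p$ while $F_{\alpha\mu}v_j=iE_{\alpha\mu}v_j$ only for $p+1\le j\le p+q$ --- but this stray sign has modulus one and so leaves every Hessian entry unchanged.
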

\begin{proof}
We need to compute the following
\[\frac{\partial^2}{\partial s \partial t}[(\exp{(sX+tY)}v,\exp{(sX+tY)}v)_{z_0}]|_{s=t=0}.\]
Using the second order approximation of the exponential map
\[\exp{(sX+tY)}=I+(sX+tY)+\frac{1}{2}(s^2 X^2+ t^2 Y^2 + s t X Y+s t Y X)+\text{higher order terms},\]
one can check that 
\begin{align}
    &\frac{\partial^2}{\partial s \partial t}[(\exp{(sX+tY)}v,\exp{(sX+tY)}v)_{z_0}]|_{s=t=0} \\ \nonumber
    =& \frac{1}{2}((XY+YX)v,v)_{z_0}+\frac{1}{2}(v,(XY+YX)v)_{z_0}+(Yv,Xv)_{z_0}+(Xv,Yv)_{z_0}\\ \nonumber
    =&2(Xv,Yv)_{z_0}+2(Yv,Xv)_{z_0}. \nonumber
\end{align}
The last equality follows from the fact that $(Xv,w)_{z_0}=(v,Xw)_{z_0}$ for any $X\in \mathfrak{p}_0$ and $v,w\in V$.
We also have
\[E_{\alpha \mu}(v_\alpha)=-i v_\mu,\ E_{\alpha \mu}(v_\mu)=i v_\alpha,\]
\[F_{\alpha \mu}(v_\alpha)=- v_\mu,\ F_{\alpha \mu}(v_\mu)=- v_\alpha\]
for $1\leq \alpha \leq p,p+1\leq \mu \leq p+q$. Also recall that $(,)_{z_0}$ is a positive definite Hermitian form with an orthonormal basis $\{v_1,\ldots,v_{p+q}\}$. With these preparations, the formulas in the lemma follow from straightforward calculations.
\end{proof}

The Hessian matrix $A$ of $h$ at $0$ is a $2(rq+ps-rs)$ by $2(rq+ps-rs)$ matrix.
\begin{corollary}\label{Hessian}
$A$ is diagonal with diagonal entries
\[\frac{\partial^2 h}{\partial^2 x_{\alpha \mu}}=\frac{\partial^2 h}{\partial^2 y_{\alpha \mu}}=8\pi\]
for $ 1\leq \alpha \leq r, p+1\leq \mu \leq p+s$. And
\[\frac{\partial^2 h}{\partial^2 x_{\alpha \mu}}=\frac{\partial^2 h}{\partial^2 y_{\alpha \mu}}=4\pi\]
for $ 1\leq \alpha \leq r, p+s+1\leq \mu \leq p+s$ or $ r+1\leq \alpha \leq p, p+1\leq \mu \leq p+s$. In particular it is positive definite with determinant
\[\mathrm{det}(A)=4^{2rq+2ps-rs}\cdot \pi^{2(ps+rq-rs)}.\]
\end{corollary}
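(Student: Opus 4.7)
The plan is to reduce everything directly to Corollary \ref{hessian} and a counting argument. By \eqref{hYcaseA} we have
\[
h(Y) = \sum_{j=1}^{r} h^j(-Y) + \sum_{j=p+1}^{p+s} h^j(-Y),
\]
and since replacing $Y$ by $-Y$ does not change the Hessian at the critical point $0$, the Hessian of $h$ at $0$ is the sum of the Hessians of $h^j$ at $0$ over $j \in \{1,\dots,r\}\cup\{p+1,\dots,p+s\}$.

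First I would verify that $A$ is diagonal: by Corollary \ref{hessian}, for every $j$ and every pair $(\alpha,\mu),(\beta,\nu) \in I$, the mixed partial $\frac{\partial^2 h^j}{\partial x_{\alpha\mu}\partial y_{\beta\nu}}|_{X=0}=0$, and the pure partials $\frac{\partial^2 h^j}{\partial x_{\alpha\mu}\partial x_{\beta\nu}}|_{X=0}$ and $\frac{\partial^2 h^j}{\partial y_{\alpha\mu}\partial y_{\beta\nu}}|_{X=0}$ vanish unless $(\alpha,\mu)=(\beta,\nu)$. Summing over the relevant $j$ preserves these vanishings, so $A$ is diagonal in the coordinates $\{x_{\alpha\mu}, y_{\alpha\mu} \mid (\alpha,\mu)\in I\}$, and the $xx$- and $yy$-diagonals coincide.

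Next I would compute the diagonal entries by distinguishing the three types of indices in $I$ (see \eqref{indexI}). Using Corollary \ref{hessian}, for $(\alpha,\mu)\in I$,
\[
\frac{\partial^2 h}{\partial x_{\alpha\mu}^2}\bigg|_{Y=0} = \sum_{j=1}^{r} 4\,\delta_{\alpha j} + \sum_{j=p+1}^{p+s} 4\,\delta_{\mu j}.
\]
When $1\le \alpha \le r$ and $p+1\le \mu\le p+s$, both sums contribute, giving $8$. When $1\le \alpha \le r$ and $p+s+1\le \mu\le p+q$, only the first sum contributes, giving $4$; symmetrically for $r+1\le \alpha \le p$ and $p+1\le \mu\le p+s$. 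Identical computations apply to the $y_{\alpha\mu}^2$-partials, producing the diagonal values $8$ and $4$ claimed in the statement.

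Finally, for the determinant I would count the multiplicities: there are exactly $rs$ pairs $(\alpha,\mu)$ falling into the first case above, contributing $2rs$ diagonal entries equal to $8$, and the remaining $2(rq+ps-rs)-2rs=2(rq+ps-2rs)$ diagonal entries equal to $4$. Hence
\[
\det(A) = 8^{2rs}\cdot 4^{2(rq+ps-2rs)} = 2^{6rs+4rq+4ps-8rs} = 2^{4rq+4ps-2rs} = 4^{2rq+2ps-rs},
\]
and positive definiteness is immediate from the positivity of all diagonal entries. The only nontrivial step is the bookkeeping in the case split; everything else is mechanical given Corollary \ref{hessian}.
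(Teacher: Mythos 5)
Your proof is correct and follows the same route the paper intends (the paper's own proof is just the one-line reference "follows from Corollary \ref{hessian} and equation \eqref{hYcaseA}", which you have filled in explicitly and accurately). Your counting of the index set $I$ into the three blocks, the sign-flip observation for $h^j(-Y)$ at the critical point, and the exponent arithmetic $8^{2rs}\cdot 4^{2(rq+ps-2rs)}=4^{2rq+2ps-rs}$ all check out; you also silently corrected the evident typo in the corollary's range $p+s+1\le\mu\le p+s$, which should read $p+s+1\le\mu\le p+q$.
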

\begin{proof}
The corollary follows from \ref{hessian} and \eqref{hYcaseA}.
\end{proof}

Recall that $J(t)$ is defined in \eqref{J(t)}.
\begin{proposition}\label{fiberintegralUpq}
\[J(t)  \sim  (-i)^{ps+rq-rs} 2^{ps+rq-5rs} \pi^{2ps+2rq-4rs} t^{(p+q)(r+s)-2rs} \exp(-(r+s)\pi t^2).\]
\end{proposition}
\begin{proof}
Recall the proof of Theorem \ref{genericnonzero} at the beginning of this section.
By Lemma \ref{highestermSchrodinger}, we know that the highest degree term of \eqref{finalsummation} evaluated at 
\[\bx=(v_1,\ldots,v_r,v_{p+1},\ldots, v_{p+s})\]
is
\[(2\sqrt{2}\pi t)^{2(rq+ps-rs)} i^*(\bigwedge_{(\alpha,\mu)\in I} \xi'_{\alpha \mu}\wedge  \xi''_{\alpha \mu})|_{z_0},\]
where $I$ is specified at \eqref{indexI}.
By Theorem \ref{methodoflaplace} we know
\begin{align*}
    &J(t)\\
    \sim& t^{(p+q)(r+s)}\cdot (\frac{2\pi}{t^2})^{ps+rq-rs} \cdot (-2\sqrt{2}\pi t)^{2r(q-s)+2s(p-r)} \cdot \mathrm{det}( A)^{-\frac{1}{2}}\\
    \cdot&\exp(-t^2\pi h(0))\cdot (-\frac{1}{2}i)^{ps+rq-rs}\\
    =& (-i)^{ps+rq-rs} 2^{ps+rq-5rs} \pi^{2ps+2rq-4rs} t^{(p+q)(r+s)-2rs} \exp(-(r+s)\pi t^2).
\end{align*}
The theorem is proved.
\end{proof}

\subsection{Method of Laplace for Case B}

We want to apply Theorem \ref{methodoflaplace} to compute $J(t)$
for the dual pair $(\Sp(2n,\R),\Orth(2r,2r))$. We need to check conditions (2) and (3) of Theorem \ref{methodoflaplace} again. We proceed quickly by omitting the proofs that are similar to those of case A. In this subsection we use the assumptions and notations of Section \ref{seesawonesection}.

Recall that we assume that 
\[z_0=\Span\{v_{n+1},\ldots, v_{2n}\}, \text{ and } \bx=(\vec{x}_1,\ldots,\vec{x}_{2r})=(E_1,\ldots, E_r, F_1,\ldots,F_r).\]
Recall from \eqref{h(Y)} that $h: N_{z_0} D_{U,z'_0}\rightarrow \R$ is defined by
\[h(Y)=\pi\cdot \sum_{j=1}^{2r} \|\exp(-Y) \vec{x}_j\|_{z_0}^2=\pi\cdot\sum_{j=1}^r (\|\exp(-Y) E_j\|_{z_0}^2+\|\exp(-Y) F_j\|_{z_0}^2).\]
Since $(,)_{z_0}$ is Hermitian, we have the following identity 
\begin{equation}
    \|x\|^2_{z_0}+\|y\|^2_{z_0}=\frac{1}{2}\|x-iy\|^2_{z_0}+\frac{1}{2}\|x+iy\|^2_{z_0}, \forall x,y \in V.
\end{equation}
In our coordinates, it is more natural to write 
\begin{align*}
    h(Y)=&\frac{1}{2}\pi\cdot\sum_{j=1}^r (\|\exp(-Y)( E_j-F_j i)\|_{z_0}^2+\|\exp(-Y) (E_j+F_j i)\|_{z_0}^2)\\
    =&\pi\cdot \sum_{j=1}^r (\|\exp(-Y)v_j\|_{z_0}^2+\|\exp(-Y) v_{n+j}\|_{z_0}^2).
\end{align*}
We then have the following proposition.
\begin{proposition}\label{secondconditioncaseB}
The function $h$ satisfies condition (2) and (3) of Theorem \ref{methodoflaplace}.
\end{proposition}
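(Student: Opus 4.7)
The plan is to mirror the proof of Proposition \ref{secondcondition} from Case A, using the embedding $D \hookrightarrow \tilde{D}$ of the symplectic symmetric space into the unitary one as set up in Section \ref{Spcocylesection}. First I would introduce auxiliary functions $M^j(z) = \|g_z^{-1} v_j\|_{z_0}^2$ for $1\le j \le 2n$ (well-defined since $K$ preserves $(,)_{z_0}$), and set $h^j = M^j \circ \exp_{z_0} \circ i$ on $N_{z_0} D_{U,z'_0}$. Using the identity $\|x\|_{z_0}^2 + \|y\|_{z_0}^2 = \tfrac{1}{2}\|x-iy\|_{z_0}^2 + \tfrac{1}{2}\|x+iy\|_{z_0}^2$ already recorded, we rewrite
\[
h(Y) = \sum_{j=1}^r \bigl(h^j(-Y) + h^{n+j}(-Y)\bigr),
\]
with the vectors $v_j = \tfrac{1}{\sqrt 2}(E_j - iF_j)$ and $v_{n+j} = \tfrac{1}{\sqrt 2}(E_j + iF_j)$ for $1 \le j \le r$.

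Next I would establish the sphericality statement analogous to Lemma \ref{sphericality}: each $M^j$ depends only on the distance to a totally geodesic sub-symmetric space $D_j \subset D$ (defined as $D \cap \tilde{D}_j$, where $\tilde{D}_j$ is the unitary special cycle attached to $v_j$), and is strictly increasing in that distance, with minimum on $D_j$. Then I would prove the intersection identity
\[
D_{U,z'_0} = \Bigl(\bigcap_{j=1}^r D_j\Bigr) \cap \Bigl(\bigcap_{j=n+1}^{n+r} D_j\Bigr),
\]
which holds because $\{v_1,\ldots,v_r, v_{n+1},\ldots,v_{n+r}\}$ spans $U \otimes_\R \C$, so lying in every $D_j$ on the right forces $z$ to satisfy the Lagrangian/negativity conditions defining $D_{U,z'_0}$. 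Together these show that $0$ is the unique global minimum of $h$ on $N_{z_0} D_{U,z'_0}$.

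For the infimum condition (2) and Hessian positivity (3), I would run the same directional derivative argument as in Case A. For $X \in S(N_{z_0} D_{U,z'_0})$, setting $h_X(t) = h(\exp_{z_0}(Xt))$, Lemma \ref{gxnorm} gives
\[
\tfrac{d^2}{dt^2} h_X(t) = \sum_{k=1}^{2n}\sum_{j=1}^{2r} 4\, |a_j^k(X)|^2\, \lambda_k(X)^2 \exp(-2\lambda_k(X) t),
\]
and Lemma \ref{boundedbelowSp} (the Case B substitute for Lemma \ref{boundedbelow}) combined with the argument of Lemma \ref{twobounds} produces a uniform constant $C>0$ with $\tfrac{d^2}{dt^2}h_X(t) \ge C$ for all $X \in S(N_{z_0} D_{U,z'_0})$ and $t \in \R$. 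Integrating twice yields $h(X) \ge h(0) + \tfrac{C}{2}\|X\|^2$, which gives both condition (2) (since $\rho(\epsilon) \ge \tfrac{C\epsilon^2}{2} > 0$) and condition (3) (the Hessian at $0$ has all eigenvalues bounded below by $C$, hence is positive definite).

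The main obstacle is the intersection identity $D_{U,z'_0} = \bigcap D_j$ in the symplectic setting: unlike Case A, the cycle $D_{U,z'_0}$ is not a priori a Kudla-Millson special cycle, and the characterization of $D$ inside $\tilde{D}$ involves the Lagrangian condition for $S(,)$. However, this should unwind directly from the concrete description of $D_{U,z'_0}$ via \eqref{Dlambda0} applied to the complexified subspace $U \otimes_\R \C$, together with the observation that the Lagrangian condition is automatic once the component subspaces are correctly aligned. All remaining steps are routine analogues of the Case A arguments.
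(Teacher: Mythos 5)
Your final paragraph (the directional derivative argument via $h_X''(t)\ge C$ and integration) is exactly the paper's proof and is correct. However, the preamble you build around it --- the auxiliary functions $M^j$, $h^j$, a symplectic sphericality lemma, and the intersection identity $D_{U,z'_0}=\bigcap D_j$ --- is a detour the paper avoids entirely, and it is there that your proposal has a genuine gap. You flag the intersection identity as "the main obstacle," but the sphericality lemma is at least as problematic: Lemma \ref{sphericality} is proved by computing along explicit $\rU(p,q)$-geodesics $\exp(-tE_{\alpha\mu})$, $\exp(-tF_{\alpha\mu})$, and in Case B the relevant distance must be measured inside $D_{\Sp(2n,\R)}$, not inside $\tilde{D}$, so the computation would have to be redone. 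Moreover, your $D_j = D\cap\tilde{D}_j$ for a single complex vector $v_j\in V_0\otimes_\R\C$ is not a generalized special cycle for $\Sp(2n,\R)$ in the paper's sense (which requires a non-degenerate real subspace of $V_0$, necessarily even-dimensional), so its basic properties (totally geodesic, nonempty, the exact sphericality formula) would need independent verification. None of this is carried out, and the only place your plan actually uses it is to conclude $h_X'(0)=0$ before integrating.

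The paper bypasses all of this by establishing $h_X'(0)=0$ by a short direct computation: the identity $\tfrac{d}{dt}(\exp(Xt)v,\exp(Xt)v)_{z_0}|_{t=0}=(Xv,v)_{z_0}+(v,Xv)_{z_0}$, combined with the block form of $X\in N_{z_0}D_{U,z'_0}$ coming from \eqref{normalspaceSp} and \eqref{pofSp}, shows that $Xv_j$ lands in the block of $V$ that is $(,)_{z_0}$-orthogonal to $v_j$ for each $j\in\{1,\dots,r\}\cup\{n+1,\dots,n+r\}$, so both terms vanish. With that in hand, the uniform bound $h_X''(t)\ge C$ from Lemma \ref{twobounds} (valid in Case B via Lemma \ref{boundedbelowSp}) integrates directly to $h(X)\ge h(0)+\tfrac{C}{2}\|X\|^2$, giving conditions (2) and (3). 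Replace your first two paragraphs by this first-derivative computation; then the proof is complete and coincides with the paper's.
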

\begin{proof}
For $X\in S(N_{z_0} D_{U,z'_0})$, define $h_{X}: \R \rightarrow \R $ by 
\[h_{X}(t)=h(X t).\]
Then as in the proof of Proposition \ref{secondcondition}, \eqref{doubletderivativeofh} still holds and by Lemma \ref{twobounds} we know that $\frac{d^2}{dt^2}h_{X}(t)\geq C$  for all $t\in \R$ and $X\in S(N_{z_0} D_{U,z'_0})$. Using the formula
\[\frac{d}{dt}(\exp(Xt)v,\exp(Xt)v)_{z_0}|_{t=0}=(Xv,v)_{z_0}+(v,Xv)_{z_0},\]
one can also show that all first order derivatives of $h$ at $Y=0$ vanish.
This suggests that $Y=0$ is the unique minimal point of $h$ on $N_{z_0} D_{U,z'_0}$. These facts imply that 
\[h(X)\geq h(0)+\frac{1}{2} C ||X||^2.\]
Hence $h$ satisfies condition (2)
The Hessian matrix of $h$ is positive definite with the smallest eigenvalue no smaller than $C$. Hence condition (3) of Theorem \ref{methodoflaplace} is also satisfied.. 
\end{proof}
Recall that $J(t)$ is defined in \eqref{J(t)}. In this case we have
\begin{proposition}\label{fiberintegralSp}
\[J(t)\sim  (-i)^{\frac{1}{2}(2nr+r-r^2)} 2^{5nr+\frac{15}{4}r-\frac{19}{4}r^2}\pi^{2r(n-r+1)} t^{2nr+r-r^2} \exp(-r\pi t^2).\]
\end{proposition}
\begin{proof}
Apply Lemma \ref{highesttermofpolynomial}, Lemma \ref{onlyonetermSp} and Theorem \ref{methodoflaplace}. The details are similar to those of the proof of Proposition \ref{fiberintegralUpq}.
\end{proof}

\subsection{Method of Laplace for  Case C}
We want to apply theorem \ref{methodoflaplace} to compute $J(t)$ in case of the dual pair $(\Orth^*(2n),\Sp(r,r))$. We need to check conditions (2) and (3) of theorem \ref{methodoflaplace}. We use the assumptions and notations of Section \ref{seesawtwosection}.

Recall that we assume that 
\[z_0=\Span\{v_{n+1},\ldots, v_{2n}\}, \text{ and } \bx=(\vec{x}_1,\ldots,\vec{x}_{r})=(v_1,\ldots,v_r).\]

Recall from \eqref{h(Y)} that $h: N_{z_0} D_{U,z'_0}\rightarrow \R$ is defined by
\[h(Y)=\pi\cdot\sum_{s=1}^r \|\exp(-Y) \vec{x}_s\|_{z_0}^2=\pi\cdot\sum_{s=1}^r \|\exp(-Y) v_s\|_{z_0}^2.\]
We then have
\begin{proposition}\label{secondconditioncaseC}
The function $h $ satisfies condition (2) and (3) of Theorem \ref{methodoflaplace}.
\end{proposition}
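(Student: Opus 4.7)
The plan is to mimic the proofs of Propositions \ref{secondcondition} and \ref{secondconditioncaseB}. For each $X \in S(N_{z_0} D_{U,z'_0})$, regarded as an element of $\mathfrak{p}_0$, set $h_X(t) = h(tX) = \sum_{s=1}^r \|\exp(-tX) v_s\|_{z_0}^2$. Applying Lemma \ref{gxnorm} with $\vec{x}_s = v_s$ yields
\[
\frac{d^2}{dt^2} h_X(t) = \sum_{s=1}^r \sum_{k=1}^{2n} 4 \, |a_s^k(X)|^2\, \lambda_k(X)^2 \, \exp(-2\lambda_k(X) t).
\]

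First I would invoke Lemma \ref{twobounds}: its proof from Case A transfers verbatim to the present setting once Lemma \ref{boundedbelowOstar} is in hand, since the only place the Case A argument uses information specific to $\mathrm{U}(p,q)$ is through the non-vanishing result on the unit sphere of the normal space. Thus there exist positive constants $b, c$, depending only on $\bx$ and $z'_0$, so that for every $X \in S(N_{z_0}D_{U,z'_0})$ some negative eigenvalue $\lambda_k(X)$ of $X$ satisfies $|\lambda_k(X)| \geq b$ and $|a_s^k(X)|^2 \geq c$ for some index $s$. For $t \geq 0$ this already gives $\frac{d^2}{dt^2} h_X(t) \geq 4cb^2$. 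Replacing $X$ by $-X$ (which preserves $S(N_{z_0}D_{U,z'_0})$) and running the same argument takes care of $t < 0$, so I obtain a uniform bound $\frac{d^2}{dt^2} h_X(t) \geq C := 4cb^2$ for all $t \in \R$ and all $X \in S(N_{z_0}D_{U,z'_0})$.

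Next I would check that all first partial derivatives of $h$ vanish at $Y = 0$. Differentiating the majorant yields
\[
\frac{d}{dt}\|\exp(tX) v_s\|_{z_0}^2\Big|_{t=0} = (Xv_s, v_s)_{z_0} + (v_s, Xv_s)_{z_0} = 2\,\mathrm{Re}\,(Xv_s, v_s)_{z_0}.
\]
Since $\{v_1,\ldots,v_{2n}\}$ is orthonormal for $(,)_{z_0}$, the quantity $(Xv_s, v_s)_{z_0}$ equals the $(s,s)$ entry of the matrix representing $X$ in this basis. Inspecting the block form of elements of $N_{z_0}D_{U,z'_0}\subset \mathfrak{p}_0$ recalled in equations \eqref{normalspaceOstar} and \eqref{pofOstar} (the top-left $n \times n$ block is the antisymmetric matrix $a$ with $a = -a^t$, hence with zero diagonal), the $(s,s)$ entry vanishes for $1 \leq s \leq r$. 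Summing over $s$ from $1$ to $r$ shows $\frac{d}{dt}h_X(t)|_{t=0} = 0$ for every $X$, so $Y = 0$ is the unique critical point of $h$.

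Combining the vanishing first derivative with the uniform second-derivative bound gives $h(X) \geq h(0) + \tfrac{1}{2}C \|X\|^2$ for all $X \in N_{z_0}D_{U,z'_0}$, which implies condition (2); the same bound shows the Hessian of $h$ at the origin has all eigenvalues at least $C$, hence is positive definite, giving condition (3). The only non-formal ingredient is the vanishing of first derivatives, but this is a one-line matrix computation using the explicit description of $N_{z_0}D_{U,z'_0}$, so there is no substantive obstacle beyond Lemma \ref{boundedbelowOstar}, which has already been established.
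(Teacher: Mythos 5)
Your argument follows the paper's own route exactly: the paper simply defers to Proposition \ref{secondconditioncaseB}, which uses Lemma \ref{gxnorm} to express $h_X''$, Lemma \ref{twobounds} (transferred via Lemma \ref{boundedbelowOstar}) for the uniform lower bound $h_X'' \geq C$, vanishing of first-order derivatives at $Y=0$ from the block structure of $\mathfrak{p}_0$, and then the estimate $h(X)\geq h(0)+\frac{1}{2}C\|X\|^2$ for conditions (2) and (3). One factual slip in your derivation of the first-derivative vanishing: in \eqref{pofOstar} the top-left $n\times n$ block of an element of $\mathfrak{p}_0$ is identically $0$ (as always for a Hermitian symmetric Cartan decomposition), and it is the \emph{off-diagonal} block $A$ that is the antisymmetric matrix with $A^t=-A$; the $r\times r$ matrix $a$ with $a=-a^t$ appearing in the proof of Lemma \ref{boundedbelowOstar} is the $(1,3)$ entry of the refined $4\times4$ block form, not the $(1,1)$ block. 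Your conclusion that the $(s,s)$ entry of $X$ vanishes for $1\leq s\leq r$ is still correct, but for the simpler reason that all diagonal entries of $X\in\mathfrak{p}_0$ are zero; this small misattribution does not affect the validity of the proof.
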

\begin{proof}
Similar to that of Proposition \ref{secondconditioncaseB}.
\end{proof}
Recall that $J(t)$ is defined in \eqref{J(t)}. In this case we have
\begin{proposition}\label{fiberintegralOstar}
\[J(t)\sim  (-i)^{\frac{1}{2}(2nr-r^2-r)} 2^{4nr-\frac{13}{4}r-\frac{15}{4}r^2}\pi^{2r(n-r-1)} t^{2nr-r^2-r} \exp(-r\pi t^2).\]
\end{proposition}
\begin{proof}
Apply Lemma \ref{highesttermofpolynomial}, Lemma \ref{onlyonetermOstar} and Theorem \ref{methodoflaplace}. The details are similar to those of the proof of Proposition \ref{fiberintegralUpq}.
\end{proof}

\appendix
\section{The associated vector bundle}\label{appendix}
We go back to the settings of Section \ref{thomformsection}.
Recall that $\tilde{G}'_\infty$ is the metaplectic cover of $G'_\infty$, where 
${G}'_\infty= \prod_{v} G'_{v}$. Let $\tilde{K}'_\infty$ be the subgroup of  $\tilde{G}'_\infty$ which fixes the Gaussian function in equation \eqref{definitionofvarphiinfty}. Then $\tilde{K}'_\infty$ is a maximal compact subgroup of $\tilde{G}'_\infty$ which is the metaplectic cover of ${K}'_\infty= \prod_{v} K'_{v}$, a maximal compact subgroup of $G'_\infty$.

$\tilde{G}'$ acts on $\mathcal{S}(V_\infty^n)$ by the Weil representation $\omega$ and the action commutes with that of $G$.	 
In this appendix, we show that $\theta_{\cL,\tilde{\varphi}}$ is a matrix coefficient of an automorphic vector bundle 
\[\mathcal{E}\rightarrow \tilde{\Gamma}' \backslash \tilde{G}'_\infty / \tilde{K}'_\infty.\]
For each Archimedean place $v$ of the number field $k$ we have 
\begin{enumerate}
    \item $G'_v=\Uni(r+s,r+s)$, $K_v=\rU(r+s)\times \rU(r+s)$,
    \item  $G'_v=\Orth(2r,2r)$, $K_v=\rO(2r)\times \rO(2r)$, 
    \item  $G'_v=\Sp(r,r)$, $K_v=\Sp(r)\times \Sp(r)$,
\end{enumerate}
in case A, B and C respectively.

In order to determine the $\tilde{K}'_\infty$ action on
\[\varphi_\infty=\varphi \otimes \prod_{v\neq v_1}\varphi_v,\]
it suffices to compute the 
$\tilde{K}'_{v_1}$ action on $\varphi$ since  $\tilde{K}'_v$ acts on $\varphi_v$ trivially for $v\neq v_1$. It turns out that often times $\tilde{K}'_{v_1}$ is the trivial two-fold cover of $K'_{v_1}$ and the action descends to $K'_{v_1}$. The following general argument applies to both $\tilde{K}'_{v_1}$ and $K'_{v_1}$ representations so we just deal with the $\tilde{K}'_{v_1}$ case for brevity. 

We will see that $\varphi$ is a highest weight vector of an irreducible representation of $\tilde{K}'_{v_1}$. We denote this representation by $\sigma$.
To be more precise there is an irreducible representation $ (E_\sigma, \sigma)$  of $ \tilde{K}'_\infty$ inside, where $ E_\sigma \subset \mathcal{P}\subset\mathcal{S}(V_\infty^m)$ such that $\varphi_\infty \in E_\sigma$ and
\begin{equation}\label{K'actbysigma}
    \omega(g'k')\phi=\omega(g') (\sigma(k')\phi)
\end{equation}
for all $g'\in \tilde{G}'_\infty$, $k'\in \tilde{K}'_\infty $ and $\phi \in E_\sigma$.

Let $E_\sigma^*=\Hom_\C(E_\sigma,\C)$ be the dual representation of $E$. There is a canonical element $\Phi \in E_\sigma\otimes_\C E_\sigma^*$ which corresponds to the identity element in $E_\sigma \otimes_\C E^*_\sigma\cong \Hom_\C(E_\sigma,E_\sigma)$. Explicitly we choose a basis $\{\varphi^1,\ldots, \varphi^d\}$ of $E_\sigma$ and we assume $\varphi^1=\varphi_\infty$. Let $\{e_1,\ldots,e_d\}$ be the corresponding dual basis of $E_\sigma^*$. Then we have
\[\Phi=\sum_{i=1}^d \varphi^i \otimes e_i.\]
By definition the diagonal action of $\tilde{K}'_\infty$ on $\Phi$ leaves it invariant:
\begin{equation}
    (\sigma \otimes \sigma^*)(k')\Phi=\Phi, \, k'\in \tilde{K}'_\infty.
\end{equation}
Equivalently
\begin{equation}\label{K'equivariance}
    (\sigma \otimes Id)(k')\Phi=(Id \otimes \sigma^*)((k')^{-1})\Phi, \,\forall k'\in \tilde{K}'_\infty,
\end{equation}
where $Id$ stands for the trivial action. For each $\bx \in V_\infty^m$, 
\[\Phi(g',\bx)=((\omega(g')\otimes Id )\Phi)(\bx)\]
is a function on $\tilde{G}'_\infty$ with values in $E^*_\sigma$. Equation \eqref{K'actbysigma} and equation \eqref{K'equivariance} together imply that
\begin{equation}\label{associatedbundleequation}
    \Phi(g'k',\bx)=(Id\otimes \sigma^*)((k')^{-1}) \Phi(g',\bx).
\end{equation}
In other words, $\Phi(\cdot, \bx)$ is a section of the vector bundle $\mathcal{E}\rightarrow \tilde{G}'_\infty / \tilde{K}'_\infty$ associated to the representation $(E_\sigma^*,\sigma^*)$ of $\tilde{K}'_\infty$.

We now apply $\theta$-distribution to get
\[\theta_{\mathcal{L},\Phi}(g')=\sum_{\bx \in \mathcal{L}^m}\Phi(g',\bx). \]
Then $\theta_{\mathcal{L},\Phi}(g')$ is $\tilde{\Gamma}'$-invariant:
\[\theta_{\mathcal{L},\Phi}(\gamma'g')=\theta_{\mathcal{L},\Phi}(g'),\gamma'\in \tilde{\Gamma}'.\]
By equation \eqref{associatedbundleequation} and the $\tilde{\Gamma}'$-invariance of $\theta_{\mathcal{L},\Phi}$,
$\theta_{\mathcal{L},\Phi}$ is a section of the bundle $\tilde{\Gamma}' \backslash \mathcal{E}\rightarrow \tilde{\Gamma}' \backslash \tilde{G}'_\infty/ \tilde{K}'_\infty$. The cocycle $\theta_{\mathcal{L},\varphi_\infty}$ is then a matrix coefficient of $ \theta_{\mathcal{L},\Phi}$:
\[\theta_{\mathcal{L},\varphi_\infty}(g')=\langle \varphi_\infty \theta_{\mathcal{L},\Phi}(g')\rangle,\]
where $\langle,\rangle$ is an $\tilde{K}'_\infty$-invariant bilinear pairing between $E_\sigma$ and $E_\sigma^*$.

In case A, we will decide the $(\tilde{K}'_{v_1})^0$ (the identity component of $\tilde{K}'_{v_1}$) action by computing highest weights.
In case B and C, $\tilde{K}'_{v_1}$ is the trivial two-fold cover of $K'_{v_1}$ and the action descends to $K'_{v_1}$, so we will decide the $K'_{v_1}$ action. In case A and B the calculations are essentially done in \cite{KV}. In each case $\varphi=\iota (\phi^+\wedge \phi^-)$ (see \eqref{definitionofvarphirs}) and $(\tilde{K}'_{v_1})^0=K^{-}\times K^{+}$ such that $K^{-}$ acts trivially on $ \phi^-$ and $K^{+}$ acts trivially on $ \phi^-$ (see Remark \ref{K'actionremark}). So it is enough to determine the $K^{-}$ action on $\phi^+$ and $K^{+}$ action on $\phi^-$.

\subsection{Case A}
In this case ${G}'_{v_1}\cong \rU(m,m)$, where $m=r+s$. Recall that in Section \ref{FockmodelUpq} we choose a basis $\{w_1,\ldots, w_m,w_{m+1},\ldots,w_{2m}\}$ of $W_{v_1}$ with the Hermitian form $\langle,\rangle$ such that 
\begin{enumerate}
		    \item $\langle w_a,w_a\rangle=1 $, \
		    \item $\langle w_k,w_k\rangle=-1 $
\end{enumerate}
for $1\leq a \leq m, m+1\leq k \leq 2m$ and $\langle w_j,w_k\rangle=0$ if $j \neq k$.
Define 
\[W_+=\Span_\C\{w_1,\ldots,w_m\},W_-=\Span_\C\{w_{m+1},\ldots,w_{2m}\}.\]
Denote the group $\rU(W_+,\langle,\rangle)$ ($\rU(W_-,\langle,\rangle)$ resp.) by $\rU(m,0)$ ($ \rU(0,m)$ resp.).
Then $K'_{v_1}=\rU(0,m)\times \rU(m,0)$ is a maximal compact subgroup of $G'_{v_1}$.
Let $\mathfrak{k}'_0$ be its Lie algebra and $\mathfrak{k}'$ be its complexification.

Following a calculation like that of Section 7 of \cite{KM90}, we can show that $\mathfrak{k}'$ acts on the Fock model by 
\[\omega(e_{ab})=-\sum_{\alpha=1}^p u_{\alpha,b}^-\frac{\partial}{\partial u_{\alpha,a}^-}+\sum_{\mu=p+1}^{p+q} u_{\mu,a}^-\frac{\partial}{\partial u_{\mu,b}^-}-\frac{1}{2}\delta_{ab}(p-q)\]
for $1\leq a,b \leq m$ and
\begin{equation}\label{ursaction}
  \omega(e_{k \ell})=\sum_{\alpha=1}^p u_{\alpha,k}^-\frac{\partial}{\partial u_{\alpha,\ell}^-}-\sum_{\mu=p+1}^{p+q} u_{\mu,\ell}^-\frac{\partial}{\partial u_{\mu,k}^-}+\frac{1}{2}\delta_{k\ell}(p-q)  
\end{equation}
for $m+1\leq k,\ell \leq 2m$.

Recall that in Section \ref{Andersoncocyle} we define the element $\phi^+_{r,s}$ using the special harmonic $(f_1^-)^{q-s} (f_2^-)^{p-r}$.
Here $f_1^-$ and $f_2^-$ are as in Definition \ref{specialharmonics} except that we shift $b$ to $b+m$, where $b$ is the second index of the variable $u_{ab}^-$. This is because in Definition \ref{specialharmonics} our assumption is that $W$ is negative definite.

Let $\mathfrak{t}$ be the diagonal torus of $\mathfrak{u}(0,m)$, $\mathfrak{n}$ be the strictly upper triangular Lie algebra of $\mathfrak{u}(0,m)$:
\[\mathfrak{n}=\Span_\C\{e_{k \ell}\mid m+1\leq k <\ell \leq 2m\}.\]
Using equation (\ref{ursaction}), it is easy to see that $(f_1^-)^{q-s} (f_2^-)^{p-r}$ has weight 
\[(\underbrace{-s+\frac{1}{2}(p+q),\ldots,-s+\frac{1}{2}(p+q)}_r,\underbrace{r-\frac{1}{2}(p+q),\ldots,r-\frac{1}{2}(p+q)}_s)\]
under $\mathfrak{t}$. Moreover we can show that $(f_1^-)^{q-s} (f_2^-)^{p-r}$ is annihilated by $\mathfrak{n}$. We have three cases
\begin{enumerate}
    \item $m+1\leq k <\ell \leq m+r$,\
    \item $m+1\leq k \leq m+r$,$m+r+1\leq \ell \leq m+r+s$,\
    \item $m+r+1\leq k <\ell \leq m+r+s$.
\end{enumerate}
In case (1), $e_{k \ell}$ ($k<\ell$) replaces a column of $f_1^-$ by an existing column and acts trivially on all the variables in $f_2^-$. In case (2), $e_{k \ell}$
acts trivially on all the variables in $f_1^-$ and $f_2^-$. In case (3), $e_{k \ell}$ acts trivially in all the variables in $f_1^-$ and replaces a column of $f_2^-$ by an existing column. In any case $e_{k \ell}$ annihilates $(f_1^-)^{q-s} (f_2^-)^{p-r}$.

The conclusion is that $\phi^+_{r,s}$ is a highest weight vector of $\mathfrak{u}(0,m)$. Similarly $\phi^-_{r,s}$ is a lowest weight vector of 
$\mathfrak{u}(m,0)$ with weight
\[(\underbrace{s-\frac{1}{2}(p+q),\ldots,s-\frac{1}{2}(p+q)}_r,\underbrace{-r+\frac{1}{2}(p+q),\ldots,-r+\frac{1}{2}(p+q)}_s).\]
It generates an irreducible representation of highest weight
\[(\underbrace{-r+\frac{1}{2}(p+q),\ldots,-r+\frac{1}{2}(p+q)}_s,\underbrace{s-\frac{1}{2}(p+q),\ldots,s-\frac{1}{2}(p+q)}_r).\]

\subsection{Case B}
In this case $G'_{v_1}=\rO(2r,2r)$. 

Let us use the notation of subsection \ref{seesawonesection}. Recall that $W$ is a complex vector space with a Hermitian form $\langle,\rangle$ of signature $(r,r)$. We denote by $W_\R$ the underlying real vector space of $W$ and let $\langle,\rangle_\R=\mathrm{Re}\langle,\rangle$.
 $G'_{v_1}$ is the linear isometry group of $(W_\R,\langle,\rangle_\R)$. We can choose an orthonormal basis $\{w_a,w_k\mid 1\leq a \leq r, r+1\leq k \leq 2r\}$ of $W$ such that 
 \[\langle w_a,w_a\rangle=1,\langle w_k,w_k\rangle=-1\]
for $1\leq a \leq r, r+1\leq k \leq 2r$. Define 
\[W_+=\Span_\C\{w_1,\ldots,w_r\},W_-=\Span_\C\{w_{r+1},\ldots,w_{2r}\}.\]
Denote the group $\rO(W_+,\langle,\rangle_\R)$ ($\rO(W_-,\langle,\rangle_\R)$ resp.) by $\rO(2r,0)$ ($ \rO(0,2r)$ resp.).
Then $K'_{v_1}=\rO(0,2r)\times \rO(2r,0)$ is a maximal compact subgroup of $G'_{v_1}$.
Let $\mathfrak{k}'_0$ be its Lie algebra and $\mathfrak{k}'$ be its complexification. Then (c.f. Section 7 of \cite{KM90})
\[\mathfrak{k}'_0\cong \wedge^2_\R (W_+)\oplus \wedge^2_\R (W_-)=\mathfrak{o}(2r,0)\oplus \mathfrak{o}(0,2r),\  \mathfrak{k}'\cong \wedge^2_\C (W_+\otimes_\R \C)\oplus \wedge^2_\C (W_-\otimes_\R \C).\]

First we focus on $\rO(0,2r)$. Denote the complex structure $1\otimes i$ on $W_-\otimes_\R \C$ by right multiplication by $i$. Define 
		\[w'_k=w_k+i w_k i, \ w''_k=w_k-i w_k i,\]
where $r+1 \leq k \leq 2r$. We take $\{w''_{r+1},\ldots,w''_{2r},w'_{r+1},\ldots,w'_{2r}\}$ to be the basis of $W_-\otimes_\R \C$.
Notice that if we extend the form $\langle,>_\R$ complex linearly to a symmetric form on $W_\R \otimes_\R \C$. Then
		\[\langle w'_k,w''_\ell>_\R=- 2\delta_{k \ell},\ \langle w'_k,w'_\ell>_\R=\langle w''_k,w''_\ell>_\R=0.\]
Then we have a split torus $\mathfrak{t}$ of $\mathfrak{o}(0,2r)\otimes_\R \C$ spanned by
\[\{w'_k\wedge w''_k\mid r+1 \leq k \leq 2r\}.\]
We also have a nilpotent algebra $\mathfrak{n}$ of $\mathfrak{o}(0,2r)\otimes_\R \C$:
\[\mathfrak{n}=\Span\{w''_k\wedge w''_\ell \mid r+1 \leq k,\ell \leq 2r\}\oplus \Span\{w'_k\wedge w''_\ell \mid r+1 \leq k<\ell \leq 2r\}.\]
The Lie algebra $\mathfrak{o}(0,2r)\otimes_\R \C$ acts on the Fock model in the following way:
\[\omega(w'_k\wedge w''_\ell)=
\omega(2e_{r+k,r+\ell}-2e_{\ell,k})=2\sum_{\alpha=1}^n(u_{\alpha+n,k}^-\frac{\partial}{\partial u_{\alpha+n,\ell}^-}-u_{\alpha,\ell}^- \frac{\partial}{\partial u_{\alpha,k}^-}),\]
\[\omega(w'_k\wedge w'_\ell)=\omega(2e_{r+k,\ell}-2e_{r+\ell,k})=2\sum_{\alpha=1}^n(u_{\alpha+n,k}^-\frac{\partial}{\partial u_{\alpha,\ell}^-}-u_{\alpha+n,\ell}^-\frac{\partial}{\partial u_{\alpha,k}^-}),\]
\[\omega(w''_k\wedge w''_\ell)=\omega(2e_{k,r+\ell}-2e_{\ell,r+k})=2\sum_{\alpha=1}^n(u_{\alpha,k}^-\frac{\partial}{\partial u_{\alpha+n,\ell}^-}-u_{\alpha,\ell}^-\frac{\partial}{\partial u_{\alpha+n,k}^-}).\]

Recall that in Section \ref{Andersoncocyle} we define the element $\phi^+_r$ using the special harmonic $(f^-)^{n-r+1}$.
Here $f^-$ is as in Definition \ref{specialharmonics} except that we shift $b$ to $b+r$, where $b$ is the second index of the variable $u_{ab}^-$. This is because in Definition \ref{specialharmonics} our assumption is that $W$ is negative definite.

Then it is easy to see that $(f^-)^{n-r+1}$ is annihlated by $\mathfrak{n}$ and has weight 
\[(\underbrace{(n-r+1),\ldots,(n-r+1)}_r)\]
under $\mathfrak{t}$. So $\phi^+_r$ is a highest weight vector of $\mathfrak{so}(0,2r)$. 

Under the group $\rO(0,2r)$, $\phi^+_r$ generates an irreducible representation that splits into two irreducible representations of $\mathfrak{so}(0,2r)$ with highest weights 
\[(\underbrace{(n-r+1),\ldots,(n-r+1)}_{r-1},\pm(n-r+1)).\]

Similarly $\phi^-_r$ is a lowest weight vector of $\mathfrak{so}(2r,0)$ of weight 
\[(\underbrace{-(n-r+1),\ldots,-(n-r+1)}_r).\]
It generates an irreducible representation of $\rO(2r,0)$ with highest weights
\[(\underbrace{(n-r+1),\ldots,(n-r+1)}_{r-1},\pm(n-r+1)).\]

\subsection{Case C} In this case $G'_{v_1}=\Sp(r,r)$. 

Let us use the notation of subsection \ref{seesawonesection}. Recall that $W_\H$ is a $\H$-vector space with a Hermitian form $\langle,_\H$ of signature $(r,r)$.
 $G'_{v_1}\cong $ is the linear isometry group of $(W_\H,\langle,\rangle_\H)$.
We can choose an orthonormal basis $\{w_a,w_k\mid 1\leq a \leq r, r+1\leq k \leq 2r\}$ of $W_\H$ such that 
 \[\langle w_a,w_a\rangle_\H=1,\langle w_k,w_k\rangle_\H=-1\]
for $1\leq a \leq r, r+1\leq k \leq 2r$.  Define 
\[W_+=\Span_\H\{w_1,\ldots,w_r\},W_-=\Span_\H\{w_{r+1},\ldots,w_{2r}\}.\]
Denote the group $\Sp(W_+,\langle,\rangle_\H)$ ($\Sp(W_-,\langle,\rangle_\H)$ resp.) by $\Sp(r,0)$ ($ \Sp(0,r)$ resp.).
Then $K'_{v_1}=\Sp(0,r)\times \Sp(r,0)$ is a maximal compact subgroup of $G'_{v_1}$.
Let $\mathfrak{k}'_0$ be its Lie algebra and $\mathfrak{k}'$ be its complexification. Then 
\[\mathfrak{k}'\cong \Sym^2_\C (W_+)\oplus \Sym^2_\C (W_-)\cong \mathfrak{sp}(r,0)\otimes_\R \C\oplus \mathfrak{sp}(0,r)\otimes_\R \C\cong\mathfrak{sp}(2r,\C)\oplus\mathfrak{sp}(2r,\C).\]

First we focus on $\Sp(0,r)$. We take the complex basis $\{w_{r+1},\ldots,w_{2r},j w_{r+1},\ldots,j w_{2r}\}$ of $W_-$.
We have a split torus $\mathfrak{t}$ of $\mathfrak{sp}(2r,\C)$:
\[\mathfrak{t}=\Span_\C\{e_{a,a}-e_{a+r,a+r}\mid 1 \leq a \leq r\}.\]
We also have a nilpotent algebra $\mathfrak{n}$ of $\mathfrak{sp}(2r,\C)$:
\[\mathfrak{n}=\Span\{e_{r+\ell,r+k}-e_{k,\ell} \mid r+1 \leq k<\ell \leq 2r\}\oplus \Span\{ e_{k,s+\ell}+e_{\ell,s+k}\mid r+1 \leq k,\ell \leq 2r\}.\]
The Lie algebra $\mathfrak{sp}(0,r)$ acts by the Weil representation in the following way:
\[\omega(e_{k,\ell}-e_{r+\ell,r+k})
		    =\sum_{\alpha=1}^n( u_{\alpha,k}^-\frac{\partial}{\partial u_{\alpha,\ell}^-}-u_{\alpha+n,\ell}^-\frac{\partial}{\partial u_{\alpha+n,k}^-}),\]
\[
		    \omega(e_{k,r+\ell}+e_{\ell,r+k})
		    =\sum_{\alpha=1}^n(u_{\alpha,k}^-\frac{\partial}{\partial u_{\alpha+n,\ell}^-}+u_{\alpha,\ell}^-\frac{\partial}{\partial u_{\alpha+n,k}^-}),\]
\[
		    \omega(e_{r+k,\ell}+e_{r+\ell,k})
		    =\sum_{\alpha=1}^n(u_{\alpha+n,\ell}^-\frac{\partial}{\partial u_{\alpha,k}^-}+u_{\alpha+n,k}^-\frac{\partial}{\partial u_{\alpha,\ell}^-}).
\]

Recall that in Section \ref{Andersoncocyle} we define the element $\phi^+_r$ using the special harmonic $(f^-)^{n-r-1}$.
Here $(f^-)$ is as in Definition \ref{specialharmonics} except that we shift $b$ to $b+r$, where $b$ is the second index of the variable $u_{ab}^-$. This is because in Definition \ref{specialharmonics} our assumption is that $W$ is negative definite. 

Moreover it is easy to see that $(f^-)^{n-r-1}$ is annihlated by $\mathfrak{n}$ and has weight 
\[(\underbrace{(n-r-1),\ldots,(n-r-1)}_r)\]
under $\mathfrak{t}$. So $\phi^+_r$ is a highest weight vector of $\Sp(0,r)$.

Similarly $\phi^-_r$ is a lowest weight vector of $\Sp(r,0)$ of weight 
\[(\underbrace{-(n-r-1),\ldots,-(n-r-1)}_r).\]
It generates an irreducible representation of $\Sp(r,0)$ with highest weight
\[(\underbrace{(n-r-1),\ldots,(n-r-1)}_r).\]

\noindent
{\bf Conflict of Interest Statement:}
On behalf of all authors, the corresponding author states that there is no conflict of interest.

\bibliographystyle{alpha}
\bibliography{reference}

\end{document}